\documentclass[a4paper,english,10pt]{amsart}
\usepackage[applemac]{inputenc}
\usepackage{babel}
\usepackage{amsfonts}
\usepackage{amsmath}
\usepackage{amssymb}
\usepackage{amsthm}
\usepackage{graphicx}
\usepackage[all]{xy}
\usepackage{textcomp}
\usepackage{tikz-cd}
\usepackage{calligra,mathrsfs}
\usepackage{hyperref}
\hypersetup{
  colorlinks = true,
     urlcolor =   blue,
    linkcolor =  blue,
     citecolor = purple}

\usepackage[shortlabels]{enumitem}

\newtheorem{thm}{Theorem}[section]  % gli sto dicendo usare per tutto la stessa numerazione, ricominciando ogni sezione
\newtheorem{cor}[thm]{Corollary}
\newtheorem{lem}[thm]{Lemma}
\newtheorem{defi}[thm]{Definition}
\newtheorem{prop}[thm]{Proposition}
\newtheorem{es}[thm]{Example}
\newtheorem{rem}[thm]{Remark}

\DeclareMathOperator{\Homs}{\mathscr{H}\text{\kern -3pt {\calligra\large om}}\,}

%opening
\title{A study of compatible deformations in non-Archimedean geometry}
\author{John Welliaveetil}

\begin{document}

\maketitle

\begin{abstract} 
In 2010, Hrushovski--Loeser showed that the Berkovich
 analytification of a quasi-projective variety over a non-Archimedean valued field admits a 
 deformation retraction onto a finite simplicial complex. 
In this article, we adapt the tools and methods developed by Hrushovski--Loeser 
to study if such deformation retractions can be obtained to be compatible with 
respect to a given morphism. Amongst other results, 
we show that compatible deformation retractions exist over 
a constructible partition of the base and prove the general statement in the case of a morphism 
of relative dimension 1 where the target is a smooth connected curve. 
\end{abstract}

 {\hypersetup {linkcolor = black} 
\tableofcontents
}
   \section{Introduction} 
    
          Over the course of the twentieth century there have been 
          several approaches, each with its merits, towards developing a
          theory of geometry over non-Archimedean valued fields similar to the theory over 
          the complex numbers. 
%          Notable amongst these are Tate's theory of Rigid geometry which developed a 
%          framework within which one can discuss a 
%          theory of coherent sheaves, Raynaud's theory of formal schemes, Huber's theory of adic spaces and 
          However it was only
          in the late 1980's that Berkovich proposed a theory          that
%          It is this was only in the late 1980's that V. Berkovich outlined a theory 
%          of non-Archimedean geometry that 
          provided for the first time 
          non-Archimedean analytic spaces with good topological properties similar 
          to those that one takes for granted in complex geometry. 
          
              Berkovich proved several important results that shed light on 
              the topological nature of such analytic spaces. 
           In \cite{berk90}, he showed that Berkovich analytic spaces 
           are locally compact and locally path connected. Furthermore, there 
           is an analytification functor that takes 
           varieties over a non-Archimedean complete field $K$ to Berkovich $K$-analytic spaces and 
            several GAGA type results that serve to connect the topology 
            of the analytification of the variety to 
            scheme theoretic properties of the variety. 
            In \cite{berk93}, he showed that every 
            smooth $K$-analytic space is locally contractible. More generally, 
            if $X$ is locally isomorphic to a strictly $K$-analytic domain 
            of a smooth $K$-analytic space then $X$ is locally contractible. 
            The question of local contractibility and more general questions regarding the nature of the 
            homotopy type of a general $K$-analytic space remained open until 2010, when 
            Hrushovski and Loeser used techniques from Model theory to 
            study the homotopy types of the Berkovich analytification of quasi-projective varieties 
            over a valued field. 
            In \cite{HL}, they showed that the homotopy types of such analytifications were 
            determined by finite simplicial complexes embedded in the analytifications. 
            Furthermore, they proved that these spaces are locally contractible and 
%            in \cite[Theorem 14.3.1]{HL}, 
%            they showed that
             there can be only finitely many distinct homotopy 
            types of the analytifications of quasi-projective varieties 
            that vary in a family.  
                     It is important to note that they did not require any assumption of smoothness 
            on the underlying varieties. 
                          
                The work of Hrushovski and Loeser in \cite{HL} offers a 
             theory of non-Archimedean geometry which enables one to use powerful techniques and tools from 
             Model theory such as the notion of definability and model theoretic compactness. 
              A brief overview of \cite{HL} can be found in the excellent survey article
              \cite{DUCHL}.
             Inspired by the results and ideas in \cite{HL}, we study 
           the extent to which 
           the deformations of the analytifications of quasi-projective varieties onto finite 
           simplicial complexes is functorial. 
             More precisely, we study the validity of Statement \ref{statement}.  
             
             By a variety over a field $F$, we mean a reduced and separated scheme of finite type over $F$. We fix a field $K$ that is non-Archimedean non-trivially real valued, algebraically closed and
             complete.
\\
           
\noindent $\mathbf{Statement}$ $\mathbf{1:}$ \label{statement} 
\emph{Let $\phi \colon V' \to V$ be a flat
 surjective morphism between quasi-projective $K$-varieties of finite type. 
% Let $\{\xi_i \colon V' \to \mathbb{R}_{\geq 0}\}$ be a finite collection of 
%   $K$-definable functions (in the model theoretic sense for the theory $\mathrm{ACVF}$). 
%   The functions $\xi_i$ can be extended to functions $\xi_i \colon V'^{\mathrm{an}} \to \mathbb{R}_{\geq 0}$ [\cite{HL}, Section 13].
%   Let $G$ be a finite algebraic group acting on $V'$ such that the action of $G$ respects the fibres 
%   of the morphism $\phi$. 
    There exist deformation retractions
    $H \colon I \times V^{\mathrm{an}} \to V^{\mathrm{an}}$ and 
    $H' \colon I \times V'^{\mathrm{an}} \to V'^{\mathrm{an}}$ 
    which are compatible with the morphism $\phi^{\mathrm{an}}$ i.e.
  the following diagram commutes.}
$$
 \begin{tikzcd} [row sep = large, column sep = large] 
I \times V'^{\mathrm{an}} \arrow[r,"H'"] \arrow[d, "id \times \phi^{\mathrm{an}}"] &
V'^{\mathrm{an}}  \arrow[d, "\phi^{\mathrm{an}}"] \\
I \times V^{\mathrm{an}} \arrow[r,"H"] &
V^{\mathrm{an}} 
\end{tikzcd}
$$
\emph{Furthermore, if $e$ denotes the end point of the interval $I$ then  
      the images of the deformations $\Upsilon := H(e,V^{\mathrm{an}})$
       and $\Upsilon' = H'(e,V'^{\mathrm{an}})$ are homeomorphic to
      finite simplicial complexes.}

 \subsection{Hrushovski-Loeser spaces}     
   
          As mentioned above, in order to study the homotopy types of certain non-Archimedean 
          analytic spaces, Hrushovski and Loeser introduced a model theoretic 
          analogue of the Berkovich space. 
          The first order properties of non-Archimedean valued fields can be described within
          the theory ACVF of algebraically closed valued fields.  
          Any model of ACVF is an algebraically closed valued field $L$
          whose value group we denote $\Gamma(L)$ and residue field 
          we denote $k(L)$.
          Let $\mathrm{val} \colon L \to \Gamma(L)$ denote the 
          valuation.
           Note that we write the group structure on $\Gamma(L)$ 
          additively.   
%          In what follows, we restrict our attention to objects defined over the field $K$.
%          which was introduced prior to Statement \ref{statement}. 
          
          Let $F$ be a valued field. 
         Analogous to the analytification functor, 
         to any variety $V$ over $F$,
          Hrushovski--Loeser associate 
          a functor $\widehat{V}$ that takes a model $L$ of ACVF extending 
          $F$ to the set $\widehat{V}(L)$ of $L$-definable stably dominated 
          types that concentrate on $V$.  
%          We do not define the notion of a stably dominated type \footnote{One can find a detailed discussion on 
%          stably dominated types in \cite[\S 2.6]{HL}.}  but instead highlight 
%          a few of the flexible features that such spaces are endowed with 
%          and which are exploited in the constructions of \cite{HL}. 
%          For the remainder of this article, we fix a very large model $\mathbb{U}$ of ACVF 
%          and assume that every model of interest to us is embedded in $\mathbb{U}$. 
%            Let $S$ be a $k$-variety and 
%           $F$ be a field extension of $k$. 
           The set 
           $\widehat{V}(L)$ can be endowed with a topology 
           whose pre-basic open sets are 
           of the form $\{p \in \widehat{U}| \mathrm{val}(f)_*(p) \in O\}$
            where 
           $O$ is an open subspace of the value group 
           $\Gamma_\infty(L)$, \footnote{$\Gamma_\infty(L) := \Gamma(L) \cup \{\infty\}$.}
            $U$ is a Zariski open subset of $V \times_F L$, 
            $f$ is regular on $U$, $\mathrm{val}(f) \colon U \to \Gamma_\infty$ 
            is the function $u \mapsto \mathrm{val}(f(u))$ and $\mathrm{val}(f)_*$ which is induced by $\mathrm{val}(f)$
            maps types on $U$ to types on $\Gamma_\infty$. 
%            The space $\widehat{V}(\mathbb{U})$ which we will often simplify to write 
%            $\widehat{S}$ is endowed with the topology whose pre-basic open sets are 
%            those of $\widehat{S}(F)$ for any model $F$ of ACVF that extends $k$. 
            A map $f \colon V' \to V$ of $F$-varieties
            induces a continuous map
           $\widehat{f} \colon \widehat{V'} \to \widehat{V}$.
            Furthermore, we have an injection $V \hookrightarrow \widehat{V}$ such that the topology 
           induced on $V$ is the valuative topology.  
           More generally, this formalism allows us to define $\widehat{S}$ 
           when $S$ is any definable subset of $V$ and endow it with the induced topology. 
           
           The space $\widehat{V}$ is closely related to 
           the associated Berkovich space and homotopies constructed on 
           $\widehat{V}$ induce homotopies on $V^{\mathrm{an}}$ (cf. \cite[\S 14]{HL}). Furthermore,
           if $L$ is an extension of $F$
            that is maximally complete and $\Gamma(L) = \mathbb{R}$ then 
           $\widehat{V}(L) = V_L^{\mathrm{an}}$. 
           Lastly, $\widehat{V}$ is almost always not 
           definable, it is however strictly pro-$F$-definable i.e.
           a pro-object in the category of $F$-definable sets.  
           
             As stated previously,
             one of the goals
              of \cite{HL} is to study the homotopy type of the space $\widehat{V}$. 
              Theorem 11.1.1 in loc.cit. implies in particular that given a quasi-projective $F$-variety
               $V$ there exists a deformation 
            retraction 
            $H \colon I \times \widehat{V} \to \widehat{V}$ such that the image of 
            $H$ is an iso-definable subset $\Upsilon \subset \widehat{V}$ that
            admits an $F$-definable homeomorphism
            with a definable subset of $\Gamma_\infty^w$ where $w$ is a
            finite $F$-definable set.
            The interval $I$ is a generalized interval which means that it is obtained by glueing copies of
            $[0,\infty]$ end to end.

\subsection{Compatible deformations exist generically} 

    In \S \ref{finite obstruction}, we prove a version
     of Statement \ref{statement}
     for Hrushovski--Loeser spaces
      that holds generically over the base. 
    We show that if $\phi \colon V' \to V$ is a morphism between quasi-projective varieties then there exists an 
    open dense subspace $U \subset V$ such that we have deformations 
    $H'$ and $H$ of $\widehat{\phi^{-1}(U)}$ and $\widehat{U}$ respectively which are
    compatible with the morphism $\widehat{\phi}$.  
  Since our result holds only generically, we do not require that the morphism 
  $\phi \colon V' \to V$ be flat. 
  The precise statement is as follows. 
    \begin{thm} \label{generic cdr}
%     Let $V$ be a quasi-projective $K$-variety. 
   Let $\phi \colon V' \to V$ be a morphism between quasi-projective $K$-varieties whose image is dense. Let $G$ be a finite algebraic group acting on $V'$
   which restricts to a well defined action along the fibres of the morphism $\phi$
   and 
   $\xi_i \colon V'\to \Gamma_{\infty}$ be a finite collection of $K$-definable functions.
   There exists an open dense subset $U \subseteq V$ such that if $V'_U := \phi^{-1}(U)$ then
   there exists a generalized interval $I$, deformation retractions $H \colon I \times \widehat{U} \to \widehat{U}$        
     and $H' \colon I \times \widehat{V'_U} \to \widehat{V'_U}$ which satisfy the following properties. 
       \begin{enumerate} 
      \item  The images of $H'$ and $H$ are iso-definable $\Gamma$-internal subsets of $\widehat{V'_U}$ and 
      $\widehat{U}$ respectively. Let $\Upsilon'$ and $\Upsilon$ denote the images of $H'$ and $H$ respectively. 
      \item The homotopy $H'$ is invariant for the action of the group $G$, 
    respects the levels of the functions $\xi_i$ and is compatible with the homotopy 
    $H$. 
    \item The homotopy $H'$ is Zariski generalizing. 
    \item  If the fibres of the morphism $\phi$ are pure over an open dense subset of $V$ and of dimension $n$ then
    for every $z \in \Upsilon$, $\Upsilon'_z$ is pure of dimension $n$ where 
   the notion of dimension for iso-definable $\Gamma$-internal 
    sets is as introduced in \cite[\S 8.3]{HL}. 
    If $Y \subset V$ is an irreducible component then 
    $\Upsilon \cap Y$ is pure of dimension $\mathrm{dim}(Y)$. 
%    \item Suppose $V$ is an integral projective $K$-variety and $D := V \smallsetminus U$. 
%    Let $d_v : V \to \Gamma_\infty$ be the schematic distance (cf. \cite[\S 3.12]{HL}) to $D$. 
%    For every $\lambda < \infty$, the homotopy 
%    $H$ respects the definable function $d_v$. 
 \item If $Z \subset V$ is a Zariski closed subset such that for every 
 irreducible component $Y$ of $V$, $Z \cap Y$ is strictly contained in $Y$ then 
 $\widehat{Z} \cap \Upsilon = \emptyset$. 
 \item If the variety $V$ is integral, projective and normal then 
       the deformation retraction $H$ extends to a deformation retraction 
      $I \times \widehat{V} \to \widehat{V}$ which preserves the closed subset 
      $\widehat{V \smallsetminus U}$. 
      Furthermore, if $d_{bord} \colon V \to \Gamma_\infty$ denotes the schematic
      distance \cite[\S 3.12]{HL} to $V \smallsetminus U$ then $H$ respects $d_{bord}$. 
      (In the event that $V$ satisfies assertions (1)-(5), we assume that (6) is vacuously true.)
         \end{enumerate}
     \end{thm}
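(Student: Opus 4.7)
The plan is to proceed by induction on the relative dimension $n$ of $\phi$, reducing to the relative-curve case where one can invoke the Hrushovski--Loeser curve homotopy, and then applying the main Hrushovski--Loeser deformation theorem \cite[Theorem 11.1.1]{HL} on the base. First, I would perform a series of generic reductions on $V$: by generic flatness I may shrink $V$ to assume $\phi$ is flat; by generic smoothness I may assume that $V$ and $V'_U$ are smooth on each irreducible component and that the fibers of $\phi$ are equidimensional of dimension $n$ when the purity hypothesis holds. Since the statement is local on the base, I may assume $V$ is integral; for assertion (6), I may further replace $V$ by its normalization and a projective compactification, with the schematic distance $d_{bord}$ to $V \smallsetminus U$ provided by \cite[\S 3.12]{HL}. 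One can also arrange the $G$-action to be well-defined on the relevant opens and that the $\xi_i$ extend definably.

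For the base case $n = 0$, the morphism $\phi$ is generically finite, and after shrinking $U$ becomes finite étale. Applying \cite[Theorem 11.1.1]{HL} on $\widehat{U}$ yields a deformation $H$, which lifts to a $G$-equivariant deformation $H'$ on $\widehat{V'_U}$ by functoriality of the pro-definable construction under finite étale base change. The $\xi_i$-level condition is imposed by first shrinking $U$ so that each $\xi_i$ is compatible with the fibration structure, then choosing $H$ to respect the pushforwards of the $\xi_i$ as in the HL formalism for handling a finite collection of definable functions.

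For the inductive step $n \geq 1$, I would factor $\phi$ generically as $V'_U \xrightarrow{\psi} W \xrightarrow{\pi} U$, with $\psi$ a relative curve and $\pi$ of relative dimension $n-1$; such a factorization follows from a generic projection after embedding $V'_U$ in a projective space over $V$, or by Noether normalization of the generic fiber. Apply the inductive hypothesis to $\pi$ (together with appropriate pushforwards of the $\xi_i$) to produce compatible $G$-equivariant deformations $H_W$ on $\widehat{W}$ and $H$ on $\widehat{U}$. Then apply the relative curve homotopy of \cite[\S 7]{HL} to $\psi$, run $G$-equivariantly and preserving the levels of the $\xi_i$ and of the pulled-back functions used by $H_W$. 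Concatenating the two homotopies along a generalized interval yields an $H'$ on $\widehat{V'_U}$ compatible with $H$ under $\widehat{\phi}$. The Zariski generalizing property, iso-definability, and $\Gamma$-internality of the images are preserved by the HL construction at each step, and the fiber-dimension claim in (4) follows by tracking dimensions through both stages.

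For assertion (6), I would first apply an inflation homotopy on $\widehat{V}$ driven by $d_{bord}$ to move the flow inside $\widehat{U}$, following the schematic distance formalism in \cite[\S 10]{HL}, and then concatenate with the previously constructed $H$; points of $\widehat{V \smallsetminus U}$ are fixed throughout because $d_{bord} = \infty$ there. The main obstacle, I expect, is the simultaneous enforcement of all the compatibilities --- $G$-equivariance, preservation of the $\xi_i$-levels, compatibility with $\widehat{\phi}$, Zariski generalizing, and the extension in (6) --- across every stage of the induction. Each is individually accessible within the HL framework, but maintaining all of them forces repeated shrinkings of $U$ and careful coordination of auxiliary definable data, most delicately in ensuring that the relative curve factorization $\psi$ at the inductive step can be chosen $G$-equivariantly and that the curve homotopy on $\psi$ can be made to respect both $G$ and all previously used functions on $\widehat{W}$ simultaneously.
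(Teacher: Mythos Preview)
Your inductive scheme is broadly the same as the paper's (induction on fibre dimension, curve homotopy followed by induction on a lower-dimensional base), but there are two genuine gaps.

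First, the curve homotopy in \cite[\S 7, \S 10]{HL} is not defined on all of $V'_U$: it requires a finite map to $\mathbb{P}^1 \times W$ and only lifts on a constructible subset away from a divisor. The paper handles this with \emph{inflation homotopies} (Lemma \ref{inflation for families}): one first inflates $V'_U$ away from a carefully chosen horizontal divisor $D'_U$ so that the image lands in the domain of the curve homotopy. In fact two inflation steps are needed ($H'_{inff-primary}$ on $V'_U$, then $H'_{inff}$ on a blow-up $V'_{1U}$), because the blow-up used to produce the curve fibration introduces an exceptional divisor that a homotopy on $V'_{1U}$ need not respect when descending to $V'_U$. Your proposal omits inflation entirely, so the curve step is not well-defined on the whole space.

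Second, and more seriously, concatenating the curve homotopy with the inductive homotopy on $W$ does \emph{not} produce a deformation retraction: there is no reason the inflation and curve steps fix the final image. The paper resolves this with a \emph{relative tropical homotopy} $H'_{\Gamma f}$ (Lemma \ref{tropical homotopy part 1}), built via a cell decomposition in $\Gamma^{N+M}$ that flows the image of $H'_{bf}\circ H'_{curvesf}$ onto a subset on which suitable coordinates are $h$-bounded; the inflation homotopies are then chosen to respect exactly those coordinate functions, so that they fix the final image by \cite[Lemma 8.3.1(2)]{HL}. This is the most delicate part of the argument and cannot be skipped. Relatedly, the passage from a homotopy on $W$ to one on the relatively $\Gamma$-internal image $\Upsilon'_2 \subset \widehat{V'_{1U}/W}$ is not ``functoriality'': it goes through \cite[Theorem 6.4.4]{HL} and a pseudo-Galois cover $F' \to W$, which furnishes the definable functions $\mu_j$ that the inductive homotopy must respect in order to lift.
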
 
    
 Theorem \ref{generic cdr} implies the following theorem concerning 
 the analytifications of morphisms between quasi-projective varieties. 
 It can be easily deduced using 
 \cite[Corollary 14.1.6]{HL}. 
 
 \begin{cor} 
    Let $\phi \colon V' \to V$ be a surjective morphism between quasi-projective $K$-varieties. 
    There exists a finite partition $\mathcal{V}$
    of $V$ into locally closed sub-varieties such that 
    for every $W \in \mathcal{V}$, there exists a
     generalized real interval $I_W$ and a
      pair of deformation retractions 
     $$H'_{W} \colon I_W \times {V'_W}^{\mathrm{an}} \to {V'_W}^{\mathrm{an}}$$ 
     and 
     $$H_{W} \colon I_W \times W^{\mathrm{an}} \to W^{\mathrm{an}}$$
     which are compatible with respect to the morphism $(\phi_{|V'_W})^{\mathrm{an}}$
     and whose
      images are
     homeomorphic to finite simplicial complexes. 
 \end{cor}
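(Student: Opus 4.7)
The plan is to iterate Theorem \ref{generic cdr} via Noetherian induction on the Zariski closed subsets of $V$, producing the required locally closed partition at the level of Hrushovski--Loeser spaces, and then transfer the resulting pairs of compatible deformation retractions to the Berkovich analytifications by invoking \cite[Corollary 14.1.6]{HL}.

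First I would apply Theorem \ref{generic cdr} to $\phi \colon V' \to V$ (taking the group action to be trivial and the collection of definable functions to be empty) to obtain an open dense subset $U_1 \subseteq V$ equipped with a pair of compatible deformation retractions $H_1, H'_1$ on $\widehat{U_1}$ and $\widehat{V'_{U_1}}$ whose images are iso-definable $\Gamma$-internal. Set $W_1 := U_1$ and $Z_1 := V \smallsetminus U_1$, a proper Zariski closed subvariety of $V$ endowed with its reduced structure. Surjectivity of $\phi$ implies that the restriction $\phi^{-1}(Z_1) \to Z_1$ is again a surjective morphism of quasi-projective $K$-varieties, in particular its image is dense, so Theorem \ref{generic cdr} applies once more and yields an open dense $W_2 \subseteq Z_1$ carrying compatible deformation retractions at the Hrushovski--Loeser level. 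Iterating produces a strictly decreasing chain $V \supsetneq Z_1 \supsetneq Z_2 \supsetneq \cdots$ of Zariski closed subvarieties of $V$ which, by Noetherianity, must terminate after finitely many steps. The resulting pieces $W_i := Z_{i-1} \smallsetminus Z_i$ then form the desired finite partition $\mathcal{V}$ of $V$ into locally closed subvarieties.

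For each stratum $W \in \mathcal{V}$ the pair of compatible deformation retractions produced by Theorem \ref{generic cdr} lives on $\widehat{W}$ and $\widehat{V'_W}$, is parametrised by a generalised definable interval, and has iso-definable $\Gamma$-internal images. I would then apply \cite[Corollary 14.1.6]{HL} to descend these to continuous deformation retractions $H_W \colon I_W \times W^{\mathrm{an}} \to W^{\mathrm{an}}$ and $H'_W \colon I_W \times {V'_W}^{\mathrm{an}} \to {V'_W}^{\mathrm{an}}$ parametrised by a generalised real interval $I_W$, and to identify the images $H_W(e, W^{\mathrm{an}})$ and $H'_W(e, {V'_W}^{\mathrm{an}})$ with finite simplicial complexes, using that the images at the model-theoretic level are iso-definable $\Gamma$-internal. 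The compatibility of $H_W$ and $H'_W$ with $(\phi_{|V'_W})^{\mathrm{an}}$ follows from the compatibility of the model-theoretic homotopies with $\widehat{\phi_{|V'_W}}$ combined with the functoriality of both the assignment $V \mapsto \widehat{V}$ and the transfer to Berkovich spaces.

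The main obstacle I anticipate is not conceptual but rather verifying the cleanness of the transfer step: one must check that the diagram expressing compatibility with $(\phi_{|V'_W})^{\mathrm{an}}$ commutes pointwise on the analytifications, that the generalised definable interval of Theorem \ref{generic cdr} does induce a generalised real interval on Berkovich spaces, and that the iso-definable $\Gamma$-internal images are promoted to bona fide finite simplicial complexes in the target. Granted the content of \cite[Corollary 14.1.6]{HL}, all of these are formal consequences, and the rest of the argument is a routine Noetherian induction.
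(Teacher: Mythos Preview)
Your proposal is correct and matches the paper's approach: the paper states only that the corollary ``can be easily deduced using \cite[Corollary 14.1.6]{HL}'' from Theorem \ref{generic cdr}, and your Noetherian induction on the closed complement together with the transfer to Berkovich spaces is precisely the intended argument. The points you flag as potential obstacles (commutativity of the compatibility diagram after transfer, passage from generalised definable to generalised real intervals, and the identification of iso-definable $\Gamma$-internal images with finite simplicial complexes) are indeed handled by \cite[Corollary 14.1.6]{HL} and the discussion in \cite[\S 14]{HL}.
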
    
    
%     pseudo-Galois cover $\alpha : U_! \to U$ and a finite family of definable functions 
%    $\{\epsilon_i : U_! \to \Gamma_{\infty}\}$ such that if $H : I \times \widehat{U_0} \to \widehat{U_0}$ is a well defined homotopy, 
%    where $U_0 \subset U$ is an open sub scheme,
%     that 
%    lifts to a homotopy $H_! : I \times \widehat{\alpha^{-1}(U_0)} \to \widehat{\alpha^{-1}(U_0)}$ that respects the levels of the functions $\epsilon_i$ then 
%    there exists a well defined homotopy $H' : I \times \widehat{\phi^{-1}(U_0)} \to \widehat{\phi^{-1}(U_0)}$ that is invariant for the action of the group $G$, 
%    respects the levels of the functions $\xi_i$ and is compatible with the homotopy 
%    $H$. 
%  Furthermore, the homotopy $H'$ is Zariski generalizing. 
%\end{enumerate} 
%     In particular, we have that there exists deformations 
%     $H' : I \times \widehat{U'} \to \widehat{U'}$ and
%     $H : I \times \widehat{U} \to \widehat{U}$ which are compatible for the morphism 
%     $\widehat{\phi}$ and whose images are $k$-definable $\Gamma$-internal sets. 
%  \end{thm} 
  
\subsubsection{Sketch of the proof of Theorem \ref{generic cdr}}
  
 If $V = \mathrm{Spec}(K)$ then Theorem \ref{generic cdr} is essentially 
 \cite[Theorem 11.1.1]{HL}. 
  Recall that in this case, the construction of the homotopy 
  $H'$ is the composition of inflation and curve homotopies 
  built using the following commutative diagram
   
 $$
\begin{tikzcd}[row sep = large, column sep = large]
V'_{1} \arrow[r,"g'"] \arrow[d,"b'"]  & E \arrow[r,"p"] \arrow[d,"b"] & F \\
V' \arrow[r,"g"] \arrow[d,"\phi"] & \mathbb{P}^m \\
\mathrm{Spec}(K)
\end{tikzcd}
$$   
  where the square is cartesian, $b$ is the blow up of $\mathbb{P}^m$ at a point, 
  $F := \mathbb{P}^{m-1} \subset \mathbb{P}^m$
   and 
  $p$ is the projection.  
  
  To prove Theorem \ref{generic cdr}, we make use of the tools developed in 
  \cite{HL}. As a first step, we use Lemma \ref{prove the simpler case}
  to reduce to the case where $V$ is integral, normal and projective, 
  $V'$ is projective and there exists a Zariski open dense subset 
  $U \subseteq V$ such that the fibres of the morphism $\phi$ are pure and 
  equidimensional over $U$. 
  In this situation, after shrinking $U$, Lemma \ref{Local factorization} implies the following diagram 
  $$
\begin{tikzcd}[row sep = large, column sep = large]
V'_{1U} \arrow[r,"g'_{U}"] \arrow[d,"b'_{U}"]  & E \times U \arrow[r,"p_{U}"] \arrow[d,"b_U"] & F \times U \\
V'_{U} \arrow[r,"g_{U}"] \arrow[d,"\phi_{U}"] & \mathbb{P}^m \times U \\
U
\end{tikzcd}
$$
  where the square is cartesian, $b_U$ is of the form $b \times \mathrm{id}_U$
  where $b \colon E \to \mathbb{P}^m$ is the blow up at a point and 
  $p_U := p \times \mathrm{id}_U$ where 
  $p \colon E \to F$ is the projection map. 
  
  The deformation retraction $H'$ as required by Theorem \ref{generic cdr} is the composition
  of homotopies 
  $H'_{\Gamma f} \circ H'_{bf} \circ H'_{curvesf} \circ H'_{inff} \circ H'_{inff-primary}$. 
  The homotopy $H'_{curvesf}$ is constructed on the
  Hrushovski--Loeser space associated to
   a suitable constructible subset 
  of $\widehat{V'_{1U}}$ while making use of the fact that 
  $p'_U := p_U \circ g'_U$ is a fibration of curves. We can extend to a
  homotopy on the entire space $\widehat{V'_{1U}}$ by composing with an inflation homotopy
  that makes use of the finite morphism $g'_U$ and by choosing an appropriate horizontal divisor 
  over $U$
  using Lemma \ref{existence of D} that guarantees that outside of this divisor, 
  for every $u \in U$, the morphism $\widehat{V'_{1u}} \to \widehat{E_u}$ 
  is a homeomorphism locally around the simple points. 
  We hence get a homotopy on $\widehat{V'_{1U}}$ 
  such that the fibres of the morphism $\widehat{p'_U}$ 
  restricted to this image are $\Gamma$-internal. 
  We then show as in \cite[Theorem 6.4.4]{HL} that 
  we can reduce to proving Theorem \ref{generic cdr}
  for a certain pseudo-Galois cover of $F \times U$. 
  Note that it is not automatic that a homotopy
  of $V'_{1U}$ that restricts to a well defined homotopy on 
  the exceptional divisor descends to a homotopy on $V'_U$. 
  To rectify this, we begin with an inflation homotopy 
  $H'_{inff-primary}$ on $\widehat{V'_U}$ that 
  enables us to escape the exceptional divisor. 
  The composition of the sequence of inflation and curve homotopies does not 
  fix their image. We construct as in \cite[\S 11.5]{HL} a 
  tropical homotopy $H'_{\Gamma f}$
   on an iso-definable $\Gamma$-internal subset of 
  $\widehat{V'_{1U}}$ such that the composition 
  $H'$ is indeed a deformation retraction. 
   At every stage, if necessary, we shrink the open set $U$. 

  An important point to note is that the homotopies 
  ($H'_{inff-primary}$), $H'_{inff}$, $H'_{curvesf}$ and $H'_{\Gamma f}$ 
  move along the fibres of the morphism 
  $(\widehat{\phi_U})$, $\widehat{\phi_U \circ b_U}$.
  Hence, we see that $H'$ firsts seeks
   to deform $\widehat{V'_U}$ into a subset 
  whose fibres are $\Gamma$-internal over $U$ followed by a deformation induced
  by an 
  appropriate deformation retraction of the base. 
  This is the content of Proposition \ref{generic relative cdr}.

\subsection{When the base is a curve}        
 In \S \ref{base is a curve}, we study Statement \ref{statement} when the base 
     is a smooth connected curve. While Theorem \ref{generic cdr} shows the existence of 
     deformations generically over the base, Theorem \ref{weak cdr for curves} proves that 
     if the base is a smooth connected curve then 
     we can get compatible homotopies over Zariski open neighbourhoods of an 
     arbitrary point. The precise statement is 
     Theorem \ref{weak cdr for curves}.
As before, \cite[Corollary 14.1.6]{HL} implies the following analogous statement 
for Berkovich spaces. 

\begin{cor}
    Let $S$ be a smooth connected $K$-curve and $X$ be a quasi-projective $K$-variety. 
    Let $\phi \colon X \to S$ be a surjective morphism such that every irreducible component of 
    $X$ dominates $S$. 
    Let $s \in S(K)$. There exists a Zariski open subset $U \subset S$ containing $s$ 
    and a
      pair of deformation retractions 
     $$H' \colon I \times X_U^{\mathrm{an}} \to X_U^{\mathrm{an}}$$ 
     and 
     $$H \colon I \times U^{\mathrm{an}} \to U^{\mathrm{an}}$$
     which are compatible with respect to the morphism $\phi^{\mathrm{an}}$
     and whose
      images are
     homeomorphic to finite simplicial complexes. 
     The interval $I$ is a generalized real interval. 
    \end{cor}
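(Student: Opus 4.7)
The plan is to deduce this corollary as a direct translation of Theorem \ref{weak cdr for curves} from the Hrushovski--Loeser setting to the Berkovich setting, using the comparison machinery of \cite[\S 14]{HL}. The argument splits into two essentially independent steps: produce compatible homotopies on the pro-definable spaces $\widehat{X_U}$ and $\widehat{U}$, and then transport them to $X_U^{\mathrm{an}}$ and $U^{\mathrm{an}}$.

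For the first step, I would apply Theorem \ref{weak cdr for curves} to the morphism $\phi \colon X \to S$ and the point $s \in S(K)$. Since $S$ is a smooth connected curve and every irreducible component of $X$ dominates $S$ (so that the fibres of $\phi$ are equidimensional of the expected relative dimension $1$ over a dense open of $S$), the hypotheses of that theorem are satisfied. It produces a Zariski open neighbourhood $U \subseteq S$ containing $s$, a generalized interval $I$, and a pair of deformation retractions $\widehat{H'} \colon I \times \widehat{X_U} \to \widehat{X_U}$ and $\widehat{H} \colon I \times \widehat{U} \to \widehat{U}$ which are compatible with $\widehat{\phi_U}$, where $\phi_U := \phi_{|X_U}$, and whose images $\widehat{\Upsilon'} \subset \widehat{X_U}$ and $\widehat{\Upsilon} \subset \widehat{U}$ are iso-definable $\Gamma$-internal subsets.

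For the second step, I would invoke \cite[Corollary 14.1.6]{HL} to pass to the Berkovich analytifications. Since $K$ is complete, algebraically closed and non-trivially real-valued, the analytifications $X_U^{\mathrm{an}}$ and $U^{\mathrm{an}}$ are identified with suitable sets of $\mathbb{K}$-points of $\widehat{X_U}$ and $\widehat{U}$ (for a maximally complete extension $\mathbb{K}$ of $K$ with value group $\mathbb{R}$), and the cited corollary then yields continuous deformation retractions $H' \colon I \times X_U^{\mathrm{an}} \to X_U^{\mathrm{an}}$ and $H \colon I \times U^{\mathrm{an}} \to U^{\mathrm{an}}$ that remain compatible with $\phi^{\mathrm{an}}$. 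The iso-definable $\Gamma$-internality of $\widehat{\Upsilon'}$ and $\widehat{\Upsilon}$ is precisely what guarantees that the induced images $\Upsilon'$ and $\Upsilon$ in the Berkovich spaces are homeomorphic to finite simplicial complexes, and the generalized interval $I$ gives rise to a generalized real interval in this transfer, as required.

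All genuinely nontrivial content is absorbed into Theorem \ref{weak cdr for curves}; the only points to verify in the passage to analytifications are the preservation of the compatibility square and of the $\Gamma$-internal image condition, both of which are built into the formalism of \cite[Corollary 14.1.6]{HL}. The main obstacle is therefore not in this corollary at all but in establishing Theorem \ref{weak cdr for curves} itself, the Berkovich statement being a formal consequence.
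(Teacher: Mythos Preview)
Your approach is essentially the paper's: the corollary is stated in the introduction with the remark that it follows from Theorem~\ref{weak cdr for curves} via \cite[Corollary 14.1.6]{HL}, and you have correctly identified this two-step structure.

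Two small corrections. First, your parenthetical ``the expected relative dimension $1$'' is a slip: this corollary places no constraint on the relative dimension of $\phi$, and Theorem~\ref{weak cdr for curves} does not require it either (you may be conflating this with Corollary~\ref{statement 1 wbc}). Second, Theorem~\ref{weak cdr for curves} as stated only asserts that $H$ is a deformation retraction; $H'$ is merely a homotopy with $\Gamma$-internal image (the proof explicitly notes ``we do not claim that the homotopy $H'$ on $\widehat{X_U}$ fixes its image''). The corollary's wording ``pair of deformation retractions'' is thus slightly stronger than what the cited theorem literally delivers; this is a tension internal to the paper rather than a flaw in your reduction.
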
 

    One observes from the 
    proof of Theorem \ref{generic cdr} that the existence of the 
    curve and inflation homotopies relies on the existence of divisors 
     which are flat over the base. 
      The reason we can prove the stronger result when the base is of dimension $1$ is a
      consequence of the following fact. Let $\phi \colon X \to S$ be as in the statement 
      of Theorem \ref{weak cdr for curves} and assume in addition that
       the fibres of $\phi$ are pure. 
       Let $D \subset X$ be Zariski closed and generically of codimension 1
       and pure 
      in the fibres of $\phi$ i.e. there exists an open set $U \subset X$ such that 
      if $s \in U$ then  $D \cap X_s$ is of codimension $1$ in $X_s$. 
      We suppose in addition that if $\eta \in S$ is the generic point of $S$ then 
      $\overline{D_\eta} = D$ where $\overline{D_{\eta}}$ denotes the Zariski closure 
      of $D_\eta$ in $X$. 
      It then follows that 
      $D$ is of codimension 1 and pure
       in every fibre of $\phi$. This follows from dimensionality concerns.

    \subsubsection{When $\phi$ is of relative dimension $1$.}
      
     One of the advantages of working within the framework of 
     Hrushovski--Loeser is the flexibility it allows us when 
     constructing our homotopies.  
     For instance, the construction of the inflation homotopy in
     \cite[Lemma 10.3.2]{HL} makes use of a suitable \emph{cut-off} 
     to extend a homotopy to the entire space. 
     This is the motivation behind Lemma \ref{foreshortening}
     which shows how we may extend 
     homotopies that move along the fibres of a projective morphism 
     in such a way that the extensions coincide with the original 
     homotopy over a reasonably large subspace of the base. 
     This result is the key ingredient in our proof of 
     Theorem \ref{statement 1 wbc} which verifies 
     Statement \ref{statement} 
     when $S$ is a smooth connected curve,
       the morphism $\phi$ is flat and the fibres of $\phi$ are of dimension 
       $1$.  
       Since the fibres are of dimension $1$, we go through 
       the steps in the proof of Theorem \ref{weak cdr for curves} to find that 
      for every $s \in S$, there exists a neighbourhood $U$ of $s$ and a homotopy 
      $h_U  \colon [0,\infty] \times X_U \to \widehat{X_U/U}$ 
      whose image is relatively $\Gamma$-internal. 
       We then extend 
       suitable cut-offs of 
       the homotopies $h_U$ to the whole of $X$ using Lemma \ref{foreshortening}.  
      As before, we have the following analogous result for Berkovich spaces. 
      
     \begin{cor} \label{statement 1 wbc}
    Let $S$ be a smooth connected $K$-curve and $X$ be a quasi-projective $K$-variety. 
    Let $\phi \colon X \to S$ be a surjective morphism such that every irreducible component of 
    $X$ dominates $S$. We assume in addition that the fibres of $\phi$ are of dimension $1$. 
     There exists a pair of deformation retractions 
     $$H' \colon I \times X^{\mathrm{an}} \to X^{\mathrm{an}}$$ 
     and 
     $$H \colon I \times S^{\mathrm{an}} \to S^{\mathrm{an}}$$
     which are compatible with respect to the morphism $\phi^{\mathrm{an}}$
     and whose
      images are
     homeomorphic to finite simplicial complexes. 
     The interval $I$ is a generalized real interval. 
    \end{cor}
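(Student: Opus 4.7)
The plan is to establish the desired pair of deformations first on the Hrushovski--Loeser spaces $\widehat{X}$ and $\widehat{S}$, and then transfer the conclusion to the Berkovich setting via \cite[Corollary 14.1.6]{HL}. The strategy splits into two movements: first deform $\widehat{X}$ fibrewise over $\widehat{S}$ into a relatively $\Gamma$-internal subset while keeping the base fixed, and then compose with a deformation of $\widehat{S}$ lifted coherently to $\widehat{X}$.

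For the fibrewise step, at each point $s \in S$ I would apply Theorem \ref{weak cdr for curves} to obtain a Zariski open neighbourhood $U_s$ of $s$ and extract from the resulting deformation its fibrewise part: a homotopy
$$h_{U_s} \colon [0,\infty] \times \widehat{X_{U_s}} \to \widehat{X_{U_s}}$$
which commutes with $\widehat{\phi}$ and whose image is relatively $\Gamma$-internal over $\widehat{U_s}$. The hypothesis that the fibres of $\phi$ are of dimension one is what makes this step direct, because the curve-fibration machinery of \cite[\S 7]{HL} can then be applied to $\phi$ itself, without the intermediate blow-up of the base which is needed for higher relative dimension in Theorem \ref{generic cdr}. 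Using quasi-compactness of $S$ I would cover it by finitely many such neighbourhoods $U_{s_1},\ldots,U_{s_n}$.

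The point of Lemma \ref{foreshortening} is precisely to allow extension of suitable cut-offs of the local homotopies $h_{U_{s_i}}$ to homotopies $\tilde{h}_i$ defined on the whole of $\widehat{X}$, each still commuting with $\widehat{\phi}$ and still coinciding with $h_{U_{s_i}}$ over a sufficiently large definable subset of $\widehat{U_{s_i}}$ so that, taken together, the union of these subsets still covers $\widehat{S}$. Composing the $\tilde{h}_i$ produces a single fibrewise homotopy $H'_{\mathrm{fib}}$ on $\widehat{X}$ whose image is relatively $\Gamma$-internal over $\widehat{S}$. Since $S$ is a smooth curve, \cite[Theorem 11.1.1]{HL} provides a deformation retraction $H$ of $\widehat{S}$ onto a $\Gamma$-internal subset, and the relative $\Gamma$-internality of the image of $H'_{\mathrm{fib}}$ allows $H$ to be lifted canonically to a deformation $\widetilde{H}$ on that image which is compatible with $H$ by construction. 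Concatenating $H'_{\mathrm{fib}}$ with $\widetilde{H}$, and if necessary adjoining a concluding tropical homotopy in the style of \cite[\S 11.5]{HL} to enforce the deformation-retraction property on the image, would give the desired pair on the Hrushovski--Loeser side; \cite[Corollary 14.1.6]{HL} then yields the Berkovich statement.

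The main obstacle I anticipate is the foreshortening step. The cut-off levels for the $h_{U_{s_i}}$ must be chosen coherently enough so that the extensions $\tilde{h}_i$ remain fibrewise homotopies, so that their composition still has relatively $\Gamma$-internal image over $\widehat{S}$, and so that this composition can be merged with a final tropical correction into a genuine deformation retraction rather than a mere homotopy. All of the other ingredients are already established in the paper or follow directly from the Hrushovski--Loeser toolkit; the combinatorial orchestration of cut-offs is the delicate part.
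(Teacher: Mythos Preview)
Your overall strategy matches the paper's: local fibrewise homotopies over a Zariski cover, extended to all of $\widehat{X}$ via Lemma \ref{foreshortening}, composed, and then followed by a lift of a base homotopy through \cite[Theorem 6.4.4]{HL}. Two points, however, are handled differently in the paper and deserve attention.

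First, the local homotopies are not extracted from Theorem \ref{weak cdr for curves}. The fibrewise part of that theorem is in general a composite (inflation followed by curves), and Lemma \ref{foreshortening} as stated applies to a single $[0,\infty]$-homotopy. The paper instead builds the local homotopies directly: for each $s$ one factors $\phi$ over a neighbourhood $U$ through a finite $f \colon X_U \to \mathbb{P}^1 \times U$ (Lemma \ref{Local factorization}) and takes the unique lift $h$ of the standard $\mathbb{P}^1$-homotopy with a suitable stopping divisor (Lemma \ref{the divisor in the lift}). This yields a single $[0,\infty]$-homotopy with relatively $\Gamma$-internal image, to which Lemma \ref{foreshortening} applies cleanly.

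Second, and more importantly, the deformation-retraction property is not obtained by a tropical correction. The paper arranges that the composition $h_{fn}\circ\cdots\circ h_{f0}$ already fixes its image, and this is the content of Lemma \ref{lem : to ensure image is fixed}. That lemma needs the local homotopies to be lifts of standard $\mathbb{P}^1$-homotopies with respect to \emph{coordinated} divisors: one takes the divisors $P_{ii}$ furnished on each chart, forms the closure $P'$ of their union $\bigcup_i f_i^{-1}(P_{ii})$ inside $X$, and then replaces each $P_{ii}$ by $P_i := f_i(P'\cap X_{U_i})$. This global coordination is what makes the convex-hull argument of Lemma \ref{lem : to ensure image is fixed} go through fibrewise. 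Your proposal, taking arbitrary local homotopies and hoping a tropical homotopy will repair the image, does not set this up; and the tropical machinery of \S\ref{relative tropical homotopy section} is tailored to a rather different configuration (with distinguished coordinates $x_h$, $x_v$ coming from an inflation step that is absent here).

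In short: same architecture, but the paper's key device is the divisor coordination plus Lemma \ref{lem : to ensure image is fixed}, not a tropical tail.
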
 
      
%       As a consequence of Theorem \ref{weak cdr for curves}, we verify 
%       Statement \ref{statement} when the morphism $\phi \colon X \to S$ of 
%       quasi-projective varieties is such that        The result is stated precisely in
%       Corollary \ref{statement 1 wbc}.
      %       We do so in Lemma \ref{foreshortening} by means of an interesting trick inspired by the 
%       construction of the inflation homotopy. 

\subsection{Locally trivial morphisms}

       In \S \ref{rank 1 projective bundles}, we study the validity of Statement 
       \ref{statement} in the context of \emph{locally trivial morphisms} of relative dimension $1$. 
       A morphism $\phi \colon X \to S$ is said to be locally trivial if for every 
       point $s \in S(K)$, there exists a Zariski open neighbourhood 
       $U$ of $s$ and a $U$-isomorphism 
       $f_U \colon X_U \to V \times U$ where $V$ is a quasi-projective $K$-variety that is independent of $s$ and $U$. 
       In the event that $\widehat{V}$ admits a homotopy whose image is iso-definable $\Gamma$-internal and the generalized interval over which the homotopy runs is $[0,\infty]$, we can 
       employ 
       Lemma \ref{foreshortening} to verify the existence of 
       compatible homotopies 
       on $\widehat{X}$ and $\widehat{S}$ whose images 
       are iso-definable and $\Gamma$-internal. 
       This is the content of 
      Corollary \ref{locally trivial morphisms}.
       
       In the case when $V = \mathbb{P}^1_K$, we refer to the morphism as a $\mathbb{P}^1$-bundle. 
%       By a $\mathbb{P}^1$-bundle, we mean a morphism $\phi : X \to S$ of quasi-projective varieties 
%       such that for every $s \in S$, there exists a Zariski open neighbourhood $U$ of $s$ and 
%       a $U$-isomorphism $X \times_S U \simeq U \times \mathbb{P}^1$. 
       The class of $\mathbb{P}^1$-bundles allows us examples of objects for which relative homotopies can be constructed explicitly. 
       We choose a system of coordinates on $\mathbb{P}^1$. 
       Recall from \cite[\S 7.5]{HL} that if we are 
       given a divisor $D$ on $\mathbb{P}^1$ then we have a canonical 
       deformation retraction of $\widehat{\mathbb{P}^1}$ onto the convex hull 
       of $D$. 
       In the case of a trivial family i.e. if $\phi \colon X \to S$ is 
       isomorphic over $S$ to $S \times \mathbb{P}^1$ and 
       $D \subset X$ is a divisor that is finite and surjective over $S$,
        Hrushovski and Loeser construct 
       deformations of 
       $\widehat{X}$ that move along the fibres of the morphism $\widehat{\phi}$ and 
       whose image over a simple point $s \in S$ is the convex hull of the divisor 
       $D_s$ in $\widehat{\mathbb{P}^1} \times \{s\}$. 
        In Theorem \ref{horizontal divisor implies deformation}, we show that 
        the existence of a suitable horizontal divisor $D$
        on $X$ enables us to glue the standard homotopies 
          on each trivializing chart with respect to the restriction of the divisor 
          $D$ to that chart. 
          
 \subsection{Related work} 
 
          An interesting variation of Statement \ref{statement} can 
          be found in the tropical geometry literature. 
          In \cite{MU17}, Ulirsch constructs a functorial tropicalization morphism 
          for fine and saturated log schemes. 
          More precisely, 
          let $k$ be a trivally valued field. 
          Given a fine and saturated log scheme $X$ which is locally of finite type 
          over $k$, we can define a generalized cone complex $\Sigma_X$ and 
          construct a continuous tropicalization morphism 
          $\mathrm{trop}_X \colon X^{\beth} \to \overline{\Sigma}_X$ where 
          $\overline{\Sigma}_X$ is the canonical extension of $\Sigma_X$.
         Theorem 1.1 of loc.cit. proves an analogue of Statement \ref{statement} 
          in this context by showing that 
          if $f \colon X' \to X$ is a morphism of 
          fine and saturated log schemes locally of finite type 
          over $k$ then we have an induced morphism of generalized cone complexes 
          $\Sigma(f) \colon \Sigma_{X'} \to \Sigma_X$ such that 
          $\Sigma(f) \circ \mathrm{trop}_{X'} = \mathrm{trop}_X \circ f^{\beth}$. \\ 
            
    \noindent \textbf{Acknowledgements:} The work presented here 
    is inspired and motivated 
    by the theory of non-Archimedean geometry introduced in \cite{HL}. 
    We are greatly indebted to the tools and techniques developed in this paper. 
     We are grateful to Fran\c{c}ois Loeser for his patient explanations of 
     several difficult parts of loc.cit. and his advice regarding this project. 
    We must also thank Yimu yin and Andreas Gross for the many discussions.  
   We are grateful as well to Kavli, IPMU for the support during the writing of 
   the article.

 \section{Hrushovski-Loeser spaces}
  
 %Start by giving some rough idea about what model theory and things that one hopes to achieve
 %with Model theory. cite David Marker here. Mention languages, ....etc.. 
     
     Model theory seeks to understand mathematical structures, the 
     first order sentences that hold true in these structures and those sets which are defined by first order formulae. 
     Thus, model theory gives us tools applicable in a variety of settings alongside providing 
     us a framework which permits us to relate different mathematical structures. 
     The Hrushovski-Loeser space allows us to make use of these powerful tools 
     to study the topology of objects that occur in non-Archimedean geometry. 
     This approach which was first introduced in 2010 opens up a completely new perspective 
     when thinking of certain Berkovich spaces.
      In what follows, we provide a brief introduction of the Hrushovski-Loeser space.
      The details can be found in \cite{HL}.
      We assume that the reader is familiar with the fundamental notions of
      model theory such as \emph{languages, structures, formulae, theories} and \emph{types.}
      Chapters 1 - 4 of \cite{M02} explain these concepts perfectly.
      The following two sections 
      are more or less reproductions 
      of parts of the paper \cite{JW20}.

\subsection{The theory ACVF}
 %Talk about the language $L_{k\Gamma}$ and 
 %introduce definable subsets in this language. 
 %Give a couple of examples. Discuss the extended language 
 %$L_G$. 
 
 Hrushovski-Loeser spaces can be thought of as a model theoretic analogue of 
  Berkovich spaces, developed within the framework of ACVF - the theory of 
  \emph{algebraically closed non-trivially valued fields}. 
  Note that working within ACVF does not prevent us from 
  studying objects which defined over valued fields which are not algebraically closed. 
  Let us now introduce the language $\mathcal{L}_{k,\Gamma}$ to describe
  ACVF and then explain why we extend this language 
  to $\mathcal{L}_{\mathcal{G}}$. 
  
 \begin{defi} 
       {The multi-sorted language} $\mathfrak{L}_{k,\Gamma}$
                         \emph{is given by specifying the following set of data.}
         \begin{enumerate}  
           \item  \emph{A set $\mathfrak{S}$ of sorts  - consisting of a valued field sort $\mathrm{VF}$, a residue field sort $k$ and 
                    a value group sort $\Gamma$.} 
    \item \emph{A set of function symbols $\mathfrak{F} = \{+_{\mathrm{VF}},\times_{\mathrm{VF}},-_{\mathrm{VF}},+_k,\times_k,-_{k}, +_{\Gamma}, \mathrm{val}, \mathrm{Res} \}$
                   which are defined on appropriate sorts. For instance, 
                   $+_{\mathrm{VF}} \colon \mathrm{VF} \times \mathrm{VF} \to \mathrm{VF}$, 
                 $\mathrm{val} \colon \mathrm{VF}^* \to \Gamma$, $\mathrm{Res} \colon \mathrm{VF}^2 \to k$ and so on.} 
         \item \emph{A set of relation symbols 
                           $\mathfrak{R} = \{<\}$ defined on specific sorts. 
                          For instance, $< \subset \Gamma \times \Gamma$. }
   \item \emph{A set of constant symbols
              $\mathfrak{C} = \{0_{\mathrm{VF}}, 1_{\mathrm{VF}},0_{k}, 1_{k},0_{\Gamma}\} $ 
              which are sort specific.} 
                        \end{enumerate}                 
        \end{defi} 
        
        ACVF is that $\mathcal{L}_{k,\Gamma}$-theory whose models are algebraically closed non-trivially valued fields. 
      Formally, we provide the following definition. 
                
        \begin{defi}
       \emph{The $\mathcal{L}_{k,\Gamma}$-theory} $\mathrm{ACVF}$ \emph{consists of the set of sentences
       such that if $M$ is a model of $\mathrm{ACVF}$
        then $\mathrm{VF}(M)$ is 
    an algebraically closed valued field with valuation $\mathrm{val}$,
   value group $\Gamma(M)$ and residue field $k(M)$.  
    It follows that
     the value group $\Gamma(M)$ is a non-trivial dense, linear ordered abelian group,  
    the residue field $k(M)$ is an algebraically closed field, the map $\mathrm{val} \colon \mathrm{VF}(M)^* \to \Gamma(M)$ is 
    a surjective homomorphism that satisfies the strong triangle inequality i.e.
     if $x,y \in \mathrm{VF}(M)^*$ then
    $$\mathrm{val}(x + y) \geq \mathrm{min}\{\mathrm{val}(x),\mathrm{val}(y)\}.$$
     Lastly, the function $\mathrm{Res}$ maps $(x,y) \in \mathrm{VF}^2$ to 
    the residue in $k(M)$ of $xy^{-1}$ if $\mathrm{val}(xy^{-1}) \geq 0$ and $0$ otherwise, such that 
    $\mathrm{Res}$ is surjective, $\mathrm{Res}(\_ ,1)$ is a homomorphism etc.}
        \end{defi} 
        
       \noindent \emph{Notation} : In this section, let $K$ be a model of ACVF.
       We often abuse notation and use $K$ itself to denote the valued field $\mathrm{VF}(K)$ and 
       $k$ to denote the residue field $k(K)$.
 \\
 
  \begin{rem} 
  \begin{enumerate} 
  \item 
  \emph{A classical result of A. Robinson states that the completions of ACVF are the theories $\mathrm{ACVF}_{p,q}$
  where the residue field has characteristic $q$, the valued field has characteristic $p$ 
  and either $p = 0$ or $p \neq 0$ and $q = p$. Complete theories are defined in \cite[Definition 2.2.1]{M02}.}
   \item \emph{An important point to note is that ACVF admits elimination of quantifiers  
  in the language $\mathcal{L}_{k,\Gamma}$. Quantifier elimination is defined in \cite[Definition 3.1.1]{M02}.}
  \end{enumerate}
  \end{rem}
  Although the language $\mathcal{L}_{k,\Gamma}$ describes ACVF, it does not eliminate imaginaries.
  To rectify this problem we expand the language $\mathcal{L}_{k,\Gamma}$ by adding 
  certain sorts which we refer to as 
 \emph{geometric} sorts. We provide more details below. 
  %We will discuss this notion shortly after introducing 
  %definable sets. 
  
  \subsection{Definable sets}   
        % We fix a large saturated model $\mathbb{U}$ of ACVF as in \cite[\S 2.1]{HL} 
%          By this we mean the following. Let $\kappa$ be a large uncountable cardinal 
%               and $\mathbb{U}$ be a model of cardinality $\kappa$ such that if $A \subset \mathbb{U}$ is of cardinality strictly less than
%               $\kappa$ and $p \in S_x(A)$ then $p$ has a realization in 
%               $\mathbb{U}$. 
%             We assume that any model of interest will be an elementary sub structure of $\mathbb{U}$
%             and if $p$ is an $A$-type where $A \subset \mathbb{U}$ is of cardinality 
%             strictly smaller than the cardinality of $\mathbb{U}$ then  
%               $p$ has a realization in $\mathbb{U}$. 
%             of cardinality $< \kappa$ and 
%                and likewise, any set of parameters $A \subset \mathbb{U}$ will be a small subset of $\mathbb{U}$ i.e. a subset of cardinality $< \kappa$. 
                 
                              We introduce the notion of a definable set as in \cite[Section 2.1]{HL} which is 
                              of a more geometric flavour than that 
%                              slighlty different from that 
              presented in \cite{M02}. Let $\mathcal{L}$ be a multi-sorted language and $T$ be a complete $\mathcal{L}$-theory which admits quantifier elimination. 
              We fix a large saturated model $\mathbb{U}$ of $T$ and assume that any model of $T$ which is of any interest 
              to us will be contained in $\mathbb{U}$. Furthermore, we can assume that if $A \subset \mathbb{U}$ is 
              of cardinality strictly less than the cardinality of $\mathbb{U}$ then any type consisting of formulae defined 
              over $A$ has a realization in $\mathbb{U}$. This technical condition will be of use to us when we 
              discuss the notion of definable types in \S \ref{section : definable types}.
               By a set of parameters, we will mean a \emph{small} subset of $\mathbb{U}$.
               Recall that a subset is said to be small if it is of cardinality strictly smaller than the 
               degree of saturation of $\mathbb{U}$ which we assume to be $\mathrm{card}(\mathbb{U})$.
             
                Let $C \subset \mathbb{U}$ be a parameter set.
                We extend the language $\mathcal{L}$ to $\mathcal{L}_C$ by adding constant 
                symbols corresponding to the 
                 elements of $C$. We can expand the theory $T$ to $T_C$ whose models are 
                 those models of $T$ that contain the set $C$.          
                 An $\mathcal{L}_C$-formula $\phi$ can be used to define a
                 functor $Z_\phi$ from the category whose objects are
                 models of $T_C$ 
                 and morphisms
                      are elementary embeddings to the category of sets. 
                      Suppose that the formula $\phi$ involves the 
                      variables $x_1,\ldots,x_m$ where for every $i \in \{1,\ldots,m\}$, $x_i$ is specific to a sort $S_i$
                      respectively. 
                      Then
                        %$\{x_1,\ldots,x_{m}\}$ be the set of sort specific variables which occur in $\phi$.
%                      where 
%                      the variable $x_{S_i,j}$ takes values in the
%                     sort $S_i$.
                      given an $\mathfrak{L}_C$-model ${M}$ of T i.e. a model of T that contains $C$, we set 
                     \begin{align*}
                        Z_{\phi}({M}) = \{\bar{a} \in S_1(M) \times \ldots \times S_m(M) | {M} \models \phi(\bar{a}) \} .  
                     \end{align*}  
                           Clearly, $Z_{\phi}$ is well defined. 
                  
                \begin{defi}
                \begin{itemize}
                \item                  \emph{
                      A} $C$-definable set  $Z$
                     \emph{is a functor from the category whose objects are models of 
                     $T_C$ and morphisms are elementary embeddings to the category of sets such that there exists an $\mathfrak{L}_C$-formula 
                     $\phi$ and $Z = Z_{\phi}$. } 
                \item \emph{Let $X$ and $Y$ be $C$-definable sets. A morphism $f \colon X \to Y$ is said to be \emph{$C$-definable} if 
   its graph is a $C$-definable set. }
         \end{itemize}
                \end{defi}  
                The definable set $Z_{\phi}$ is fully determined by evaluating it on $\mathbb{U}$ i.e. by the set 
                $Z_\phi(\mathbb{U})$. 
                       \begin{es} 
      \emph{Consider the following examples of three different classes of definable sets in ACVF using 
      the language $\mathcal{L}_{k,\Gamma}$.
       Observe that these objects 
        appear naturally in the study of Berkovich spaces, tropical geometry and algebraic geometry. 
      \begin{enumerate}
      \item  Let $K$ be a model of ACVF and $A \subset K$ be a set of parameters.
       Let $n \in \mathbb{N}$. 
       A semi-algebraic subset of $K^n$ is a finite boolean combination of sets of the form 
       $$\{x \in K^n | \mathrm{val}(f(x)) \geq \mathrm{val}(g(x))\}$$
       where $f,g$ are polynomials with coefficients in $A$. One verifies that semi-algebraic sets 
       extend naturally to define $A$-definable sets in ACVF. 
      \item Let $G = \Gamma(K)$ where $K$ is as above. A $G$-rational polyhedron in $G^n$
      is a finite Boolean combination of subsets of the form 
      $$\{(a_1,\ldots,a_n) \in G^n | \sum_i z_ia_i \leq c \}$$
      where $z_i \in \mathbb{Z}$ and $c \in G$. Such objects extend
      naturally to define a $K$-definable subset of $\Gamma^n$ where $\Gamma$ is the value group sort. 
      \item  Any constructible subset of $k^n$ gives a definable set in a natural way. 
%       \item Let $C$ be a $K$-curve. Then the space of stably dominated types - $\widehat{C}$ is a $k$-definable set. 
%       This is unique to the one dimensional case i.e. the space 
%       $\widehat{V}$ where $V$ is a $K$-variety of dimension at least$2$ 
%       is \emph{strictly pro-definable}. 
       \end{enumerate} }
       \end{es}         
       
     \begin{rem}\label{varieties as definable sets}
      \emph{Let $F$ be a non-trivially valued field and let $V$ be an $F$-variety. 
     The variety $V$ defines in a natural way a functor $Z_V$ as follows. 
     Let $K$ be a model of ACVF that extends $F$. 
     We then set $$Z_V(K) := V \times_F K(K).$$
      We add appropriate sorts to ACVF so that any such functor associated to a variety is a definable set.       
%      Observe that the definable set $Z_V$ is not the functor of points associated to the variety 
%      $V$. 
%      In this respect, $Z_V$ presents a greater degree of flexibility since we can 
%      work in a natural way with points rational over large field extensions.
%      When there is no ambiguity, we write $V$ in place of $Z_V$. 
      }
          \end{rem}

%
%  Definable sets of a given theory are often the primary objects of study 
%              in certain branches of mathematics.
%               For instance in the theory of ACF, the definable sets are algebraic varieties.
% \begin{defi} 
%   \emph{Let $X$ and $Y$ be $C$-definable sets. A morphism $f : X \to Y$ is said to be \emph{$C$-definable} if 
%   its graph is a $C$-definable set. }
% \end{defi} 

 \subsection{The language $\mathcal{L}_{\mathcal{G}}$.}
  
  As mentioned earlier, the theory ACVF does not eliminate imaginaries in the language $\mathcal{L}_{k,\Gamma}$.
   In this section, we briefly introduce an extension $\mathcal{L}_{\mathcal{G}}$ of
   $\mathcal{L}_{k,\Gamma}$   
  within which ACVF eliminates imaginaries. 
  In fact, ACVF also admits quantifier elimination in the language $\mathcal{L}_{\mathcal{G}}$. 
    The Hrushovski-Loeser spaces are defined in the language
  $\mathcal{L}_{\mathcal{G}}$ and require elimination of imaginaries for some of their fundamental properties,
  for instance pro-definability (cf. \cite[Lemma 2.5.1, Theorem 3.1.1]{HL}). 
  
    \begin{defi}
  \emph{ A theory $T$ is said to} eliminate imaginaries \emph{if 
  for any model $M \models T$, any collection of sorts 
  $S_1,\ldots,S_m$ and any $\emptyset$-definable
  % \footnote{We say a set $X$ is $C$-definable if it can be
   equivalence relation 
  $E$ on $S_1(M) \times \ldots \times S_m(M)$ there exists a definable function $f$ on 
  $S_1(M) \times \ldots \times S_m(M)$ whose codomain is a product of sorts and is such that 
  $aEb$ if and only if $f(a) = f(b)$.}
  \end{defi}
  
 Suppose a complete theory $T$ in a language $\mathcal{L}$ does not eliminate imaginaries. We can then extend 
 $T$ to a complete theory $T^{eq}$ over a language $\mathcal{L}^{eq}$ so that $T^{eq}$ eliminates imaginaries. 
 Indeed, for every $\emptyset$-definable equivalence relation $E$ on a product of sorts 
 $S_1 \times \ldots \times S_m$, we add a 
 sort to $\mathcal{L}$
 corresponding to the quotient $(S_1 \times \ldots \times S_m)/E$ and a function symbol 
 representing the map 
 $\overline{a} \mapsto \overline{a}/E$.
 This gives the language $\mathcal{L}^{eq}$. 
 Every model $M$ of $T$ extends canonically to a model 
 $M^{eq}$ of $T^{eq}$. 
 The new sorts that were added to $\mathcal{L}$ are referred to as the imaginary sorts and 
 their elements are called imaginaries. 
 
 Given a definable set $X$ in $T$, we can associate to it an element in $\mathbb{U}^{eq}$ called its \emph{code} as follows.  
 Suppose $$X(\mathbb{U}) = \{{x} \in \mathbb{U}^n | \mathbb{U} \models \phi({x},a)\}$$ where $x$ and $a$ in the definition 
 are tuples. 
 We define an equivalence 
 relation by setting $y_1Ey_2$ if $\forall x (\phi(x,y_1) \leftrightarrow \phi(x,y_2))$. 
 The element $a/E$ belongs to $\mathbb{U}^{eq}$ and we refer to it as the code of $X$.
 
 The language $\mathcal{L}_{\mathcal{G}}$ is obtained from 
 $\mathcal{L}_{k,\Gamma}$ by adjoining to it the \emph{geometric sorts}
 $S_n$ and $T_n$ for $n \geq 1$.  
  The sort $S_n$ is the collection of codes for all free rank $R$-submodules of 
  $K^n$ where $R$ is the valuation ring given by 
  $\{x \in K |\mathrm{val}(x) \geq 0\}$. 
  Given $s \in S_n$ for some $n$, let $\Lambda(s)$ denote the corresponding free
  rank $n$ $R$-submodule of $K^n$. 
  If $\mathcal{M}$ denotes the maximal ideal of $R$ then
  let $T_n$ be the set of codes for the 
  elements in $\bigcup_{s \in S_n} \Lambda(s)/\mathcal{M}\Lambda(s)$.   

\subsection{Stably dominated types} 
       
       Let $F$ be a valued field and let $V$ be a $F$-variety.
        The Hrushovski-Loeser space $\widehat{V}$
       associated to $V$ is the space of stably dominated types that concentrate on $V$. 
%       We will 
%       explain and motivate this definition in the next section.
       We begin with a discussion of the notion of a 
       type followed by that of a 
       definable 
       type which is central to this story. We closely follow the treatment in 
       \S 2.3 of \cite{HL}. 
       
       \subsubsection{Types} \label{types}
       Let $\mathcal{L}$ be a language and $T$ be a complete $\mathcal{L}$-theory. 
       If $z$ is a set of variables, we use $\mathcal{F}_z$ to denote the set of $\mathcal{L}$-formulae 
       with variables in $z$ up to equivalence in the theory $T$. 
       
       \begin{defi}
          An $n$-type $p = p(x_1,\ldots,x_n)$ \emph{is a subset of $\mathcal{F}_{\{x_1,\ldots,x_n\}}$ such that
          $p(x_1,\ldots,x_n)$ is satisfiable in $T$ i.e.
           there exists a model $M$ of 
          $T$ and $(a_1,\ldots,a_n) \in M^n$ such that 
          for every $f \in p$, 
          $M \models f(a_1,\ldots,a_n)$. 
          Furthermore, we say that} the type $p$ is complete \emph{if 
          for every $\phi \in \mathcal{F}_{\{x_1,\ldots,x_n\}}$ either 
          $\phi \in p$ or $\neg \phi \in p$.}    
       \end{defi}  
    
    \begin{rem} \label{realizations} 
    \begin{enumerate}
    \item \emph{Let $p$ be a complete $n$-type. Let $M$ be a model of $T$ such that there exists $a := (a_1,\ldots,a_n)$ 
    that satisfies $p$. Since $p$ is complete, observe that} 
      $$p = \{f \in \mathcal{F}_{\{x_1,\ldots,x_n\}}|M \models f(a)\}.$$
    \emph{In this case, we say that $a$ \emph{realizes} the type $p$. In general, given an $n$-tuple $\alpha \in \mathbb{U}^n$, we 
    write $\mathrm{tp}(\alpha)$ to denote the type
    generated by $\alpha$ i.e.}
      $$\mathrm{tp}(\alpha) := \{f \in \mathcal{F}_{\{x_1,\ldots,x_n\}} | \mathbb{U} \models f(\alpha)\}.$$  
    \item \emph{The definition above concerns types consisting of formulae without parameters. 
   However, it is natural and necessary to work with types defined over a set of parameters. Suppose $A \subset \mathbb{U}$ is a small set of parameters
     then a complete $n$-type defined over $A$ will be a complete $n$-type in the language $\mathcal{L}_A$. 
     Using this formalism and the notion of realizations, one sees that types provide us a tool with which to probe the first order properties 
     of elements in elementary extensions.}
    \end{enumerate}
        \end{rem}
         
    \begin{es} \label{types in ACF}
      \emph{Let $\mathcal{L}_{r} := \{+,-,.,0,1\}$ denote the language of rings where 
      $+,-,.$ are the binary function symbols and $0,1$ are the constant symbols.
      Let $q$ be a prime number or zero. 
      Let $\mathrm{ACF}_q$ denote the theory of algebraically closed fields of characteristic $q$ i.e. the $\mathcal{L}_r$-theory
      whose models are algebraically closed fields of characteristic $q$.
      In this context, the complete $n$-types in $\mathrm{ACF}_q$ take on a recognizable form from algebraic geometry. 
      } 
      
      \emph{Let $L$ be a model of $\mathrm{ACF}_q$. 
      Let $L' \subset L$ be a subfield.  
      Let $p = p(x_1,\ldots,x_n)$ be a type defined over $L'$. 
      We can associate to $p$ an ideal $I_p \subset L'[x_1,\ldots,x_n]$ 
      by setting 
      $$I_p := \{f \in L'[x_1,\ldots,x_n] |  f(x_1,\ldots,x_n) = 0 \in p\}.$$
      It can be shown that the association 
      $p \mapsto I_p$ defines a bijection between the 
      set of complete $n$-types defined over $F$ and the prime ideals 
      of $L'[x_1,\ldots,x_n]$ (cf. \cite[Example 4.1.14]{M02}).
      Given an ideal $I \subset L'[x_1,\ldots,x_n]$, let
        $Z(I) \subset \mathrm{Spec}(L'[x_1,\ldots,x_n])$ denote its vanishing locus. 
        We say that $p$ is the}  {generic type} \emph{of the irreducible closed subvariety 
        $Z(I_p)$.}

    \end{es}  
    \subsubsection{Definable types}   \label{section : definable types}
    As in the previous section, we work in the complete $\mathcal{L}$-theory $T$.  
     Let $A \subset \mathbb{U}$ be a small set of parameters. Let $z$ be a set of variables and 
     $\mathcal{F}^A_z$ denote the set of $\mathcal{L}_A$ formulae with free variables in $z$ upto equivalence in $T_A$.  
     Let $p$ be a complete $n$-type defined over $A$. 
     Since for every $\phi \in \mathcal{F}^A_z$, either $\phi \in p$ or 
     $\phi \notin p$, one may think of  
     $p$ as a Boolean retraction
     from  $\mathcal{F}^A_z$ to the two element Boolean algebra.  
    Equivalently,
     if $U$ is an $A$-definable set whose formula belongs to $p$, then one can 
     see $p$ as a uniform decision to include or exclude 
    $A$-definable subsets $V$ of $U$ according to whether $a \in V(\mathbb{U})$ or $a \notin V(\mathbb{U})$ where 
    $a$ is a realization of $p$.
     Note however that these decisions are restricted to 
     only those subsets of $U$ that are $A$-definable. To broaden the scope of this definition 
     to express decisions on subsets which are not necessarily $A$-definable, we introduce the notion of a
     definable type. Before doing so, we discuss an example we hope will be illuminating. 
     
     \begin{es}
      \emph{As in Example \ref{types in ACF}, let $q$ be a prime number or zero. Let $\mathcal{L}_r$ be the language of rings and $\mathrm{ACF}_q$ denote the theory of 
      algebraically closed fields of characteristic $q$. 
      Let $L$ be a field of characteristic $q$. 
      In Example \ref{types in ACF}, we showed that there exists a bijection between 
      the complete $n$-types defined over $L$ and the irreducible sub-varieties 
      of $\mathbb{A}^n_L$.
      Let $p$ denote the complete $1$-type defined over $L$ that corresponds to $\mathbb{A}^1_L$. 
      By this we mean that $Z(I_p) = \mathbb{A}^1_L$ where $I_p \subset L[x]$ is 
      the ideal associated to $p$ and defined in Example \ref{types in ACF}.}
      
            \emph{Let $L'$ be an extension of $L$ and $p'$ denote the 
      complete $1$-type corresponding to $\mathbb{A}^1_{L'}$. 
      One sees that if one were to restrict $p'$ to $L$ i.e. 
      consider only those formulae in $p'$ with parameters in $L$ 
      then we get $p$. 
      In other words, 
      $$p'_{|L} = p.$$
      The geometric object $\mathbb{A}^1$ can be defined over any field and a type cannot fully express this flexibility. 
      We see that the correct notion is that of a definable type which 
      provides us with a compatible family of types, each element of which 
      is associated to a 
      model of ACF. 
      } 
      \end{es}
      
      \begin{defi} 
       \emph{Let $x = \{x_1,\ldots,x_m\}$. An} $\emptyset$-definable type \emph{is a function 
       $$d_p x \colon \mathcal{F}_{x,y_1,\ldots,} \to \mathcal{F}_{y_1,\ldots,}$$ 
       such that for any $y = \{y_1,\ldots,y_n\}$, 
       $d_p x$ restricts to a Boolean retraction 
       $\mathcal{F}_{x,y} \to \mathcal{F}_y$. 
       If $A \subset \mathbb{U}$ is a set of parameters
       then an $A$-definable type is a $\emptyset$-definable type in the theory 
       $T_A$.}       
      \end{defi} 
%      Let $\mathcal{L}_{rings}$ denote language of rings and consider the $\mathcal{L}_{rings}$ theory 
%      ACF of algebraically closed fields. The $\mathbb{Q}$-points of the affine line over $\mathbb{Q}$
%      $\mathbb{A}_{\mathbb{Q}}^1(\mathbb{Q})$ is a $\mathbb{Q}$-definable set and the generic point 
%      of $\mathbb{A}^1_{\mathbb{Q}}$ defines a $\mathbb{Q}$-type on $\mathbb{A}_{\mathbb{Q}}^1$. 
%      If $a \in \mathbb{R}$ is any transcendental number then 
%      the point $a$ is generic in $\mathbb{A}_{\mathbb{Q}}^1(\mathbb{Q})$ in the sense that it does not belong to 
%      any proper closed subvariety defined over $\mathbb{Q}$. However, $a$ is not a generic  

  \begin{rem} 
  \emph{Let $A \subset \mathbb{U}$ be a small subset and let $p$ be an $A$-definable type.}
 \begin{enumerate}
 \item    \emph{The definable type $p$ provides us with a compatible family of types. Indeed, if $M$ is a model 
 of $T$ that contains $A$, then $p_{|M}$ is
  the type defined over $M$ consisting of those formulae 
 $\phi(x,b_1,\ldots,b_r)$ such that $M \models d_p x(\phi)(b_1,\ldots,b_r)$.}
 \item \emph{Let $X$ be a definable set. We say that $p$ \emph{concentrates} on $X$ if all the realizations 
 of the type $p_{|\mathbb{U}}$ belong to $X$.} 
 \item \emph{Let $X$ and $Y$ be $A$-definable sets and $f \colon X \to Y$ be an $A$-definable map. 
 Let $S^A_{def,X}$ denote the set of $A$-definable types on $X$. 
 The map $f$ induces a map 
 $f_* \colon S^A_{def,X} \to S^A_{def,Y}$ such that $d_{f_*(p)}y (\phi(y,z)) = d_p x (\phi(f(x),z))$.} 
 \end{enumerate}
    \end{rem}    
    
 \subsubsection{Stably dominated types} 
 
     The definition of a stably dominated type for a general theory $T$ is slightly involved and 
     not necessary for our purposes. We may hence
        restrict our attention to working within the theory ACVF in the language $\mathcal{L}_{\mathcal{G}}$. 
     
  \begin{defi} \label{almost orthogonal definition} 
   \emph{Let $A \subset \mathbb{U}$ be a set of parameters. A type $p$ over $A$ is said to be} almost orthogonal to $\Gamma$ 
   \emph{to $\Gamma$ if for any realization $a$ of $p$, $\Gamma(A(a)) = \Gamma(A)$ where 
   $A(a)$ denotes the definable closure of the set $A \cup \{a\}$ and $\Gamma(A) := \Gamma(\mathbb{U}) \cap dcl(A)$.
    An $A$-definable type 
   $p$ is said to be orthogonal to $\Gamma$ if for every structure $B$ that contains $A$, 
   the type $p_{|B}$ is almost orthogonal to $\Gamma$.}
    \end{defi}    
     
     A stably dominated type is a definable type which in some sense
     does not enlarge the value group sort. More precisely, we mean the following.      
          
    \begin{prop}\cite[Proposition 2.9.1]{HL} 
     In $\mathrm{ACVF}$, an $A$-definable type $p$ is \emph{stably dominated} if and only if it is orthogonal to $\Gamma$. 
     \end{prop}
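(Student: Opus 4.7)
The plan is to establish both directions using the fundamental orthogonality in ACVF between the residue field sort $k$ and the value group sort $\Gamma$: the stable part of ACVF is generated by the $k$-internal sorts, and there is no non-constant definable map between a generic $k$-internal type and $\Gamma$. This orthogonality is the engine driving both implications, and in particular it means no definable ``bridge'' exists between a stable-part image and a new value group element.

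For the forward direction, suppose $p$ is an $A$-definable stably dominated type. I would pick $B \supseteq A$ and a realization $a$ of $p_{|B}$, and take an arbitrary $\gamma \in \Gamma(B(a))$, say $\gamma = g(a)$ for some $B$-definable $g$ into $\Gamma$. The stable-domination hypothesis produces an $A$-definable map $f$ into the stable ($k$-internal) part such that $\mathrm{tp}(a/B)$ is controlled by $\mathrm{tp}(f(a)/B)$. The value $\gamma = g(a)$ is then already computable from $f(a)$ via a $B$-definable partial function $h$ with $h(f(a)) = g(a)$. Since $\mathrm{tp}(f(a)/B)$ is a type in $k$-internal sorts, and such types are orthogonal to $\Gamma$, the map $h$ must be essentially constant on the relevant realizations, forcing $\gamma \in \Gamma(B)$. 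This is almost-orthogonality of $p_{|B}$ to $\Gamma$ for arbitrary $B$, which is exactly the definition of orthogonality to $\Gamma$.

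For the converse, given an $A$-definable $p$ orthogonal to $\Gamma$ and any realization $a$ of $p_{|B}$, the hypothesis yields $\Gamma(B(a)) = \Gamma(B)$. I would invoke the structural analysis of valued field extensions in ACVF due to Haskell--Hrushovski--Macpherson: after passing to a sufficiently large maximally complete base $B^{\sharp} \supseteq B$, an extension of valued fields with no growth in $\Gamma$ is controlled, up to $B^{\sharp}$-isomorphism, by its residue field data. Hence two realizations of $p_{|B}$ with the same $k$-internal type over $B$ are $B$-conjugate, which is precisely stable domination of $p$ by its $k$-internal image.

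The main obstacle is the converse direction: it rests on the non-trivial classification of valued field extensions in ACVF where $\Gamma$ does not grow, expressed in terms of their residue field content, and on the care required to make that classification definable and uniform in the parameter set $B$. In practice, the cleanest route is to verify that the orthogonality hypothesis stated here coincides with the hypothesis of the corresponding theorem of Haskell--Hrushovski--Macpherson, so that the proof reduces to a citation of their work.
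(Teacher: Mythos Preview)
The paper does not prove this proposition: it is stated with an explicit citation to \cite[Proposition 2.9.1]{HL} and no proof environment follows. Your sketch therefore goes well beyond what the paper does, and indeed your concluding remark --- that the cleanest route is to reduce to the Haskell--Hrushovski--Macpherson work --- is exactly the spirit in which the paper handles the statement: as a black-box import. The outline you give is sound in its architecture (orthogonality of $k$-internal types to $\Gamma$ for the forward direction; the HHM structural classification of valued field extensions with no value group growth for the converse), and it correctly identifies the converse as the substantive half requiring external input.
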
 
       
     \begin{rem} 
     \emph{Let $A \subset \mathbb{U}$ be a small set of parameters and
    let $X$ and $Y$ be $A$-definable sets.
    Let $g \colon X \to Y$ be an $A$-definable morphism. 
    Recall that we have a map 
    $g_* \colon S^A_{def,X} \to S^A_{def,Y}$. One checks that 
    the map $g_*$ restricts to a well defined function from the set of $A$-definable 
    stably dominated types that concentrate on $X$ to the set of $A$-definable
     stably dominated types that 
    concentrate on $Y$.} 
    
     \emph{Let $f \colon X \to \Gamma$ be an $A$-definable function. 
     Suppose $p$ is a stably dominated type that 
    concentrates on $X$ and is defined over $A$. 
    One verifies from the definition that $f_*(p)$ concentrates at a point i.e.
    if $M$ is a model of ACVF then $(f_*(p))_{|M}$ contains the formula 
    $x = a$ for some $a \in \Gamma(A)$.} 
     \end{rem}   
       
       Stably dominated types in ACVF are controlled by the residue field 
       which is the \emph{stable} part of the theory. A precise formulation of this can be found in 
       \cite[\S 2.2]{hashrushmac}. 
       For an introduction to stability theory see the chapter by M. Ziegler in 
       \cite{P}.
       
   \begin{es} \label{generic type of ball example} 
   \emph{Let $a \in \mathrm{VF}(\mathbb{U})$ and $\alpha \in \Gamma(\mathbb{U})$. 
      Let $B(a,\alpha)$ denote the definable set 
      such that if $M$ is a model of ACVF that contains $\{a,\alpha\}$
      then
           $$B(a,\alpha)(M) := \{x \in M | \mathrm{val}(x-a) \geq \alpha\}.$$ 
     In other words, $B(a,\alpha)$ is the closed ball around $a$ of radius $\alpha$.}   
    
     \emph{We can associate to $B(a,\alpha)$ a stably dominated type called 
     its generic type which we denote $p_{B(a,\alpha)}$.
     The definable type $p_{B(a,\alpha)}$ is determined by a definable type concentrated
      on a geometric object over the residue field sort in the following sense. 
     Let us suppose $a = 0$ and $\alpha = 0$ which corresponds to the closed unit ball.  
     Let $\mathrm{red} \colon B(0,0) \to k$ be the reduction map i.e. 
     $\mathrm{red}(x) := \mathrm{Res}(x,1)$. 
     Let $L$ be a model of ACVF.
     We then have that 
     $b$ is a realization of $(p_{B(a,\alpha)})_{|L}$ if and only if 
     $\mathrm{red}(b)$ is a realization of the generic type of 
     $\mathbb{A}^1_{k(L)}$.}
%     (Recall here that our convention is to abuse notation and write
%     $L$ in place of $\mathrm{VF}(L)$ and
%      $k$ in place of $k(K)$.)}
   \end{es}           
        
\subsection{The Hrushovski-Loeser space} \label{HL spaces} 
  
      We define the Hrushovski-Loeser space and provide a fundamental example of such an object. 
%     After adding appropriate sorts, we assume that any algebraic variety over a non-trivially valued field $F$
%     is a definable set in $\mathrm{ACVF}_F$. 

\begin{defi} (The space $\widehat{X}$) \label{hat space}
\emph{ Let $C \subset \mathbb{U}$ be a small set of parameters. Let $X$ be a $C$-definable set. We define $\widehat{X}$ to be the functor 
             from the category whose objects are models of $\mathrm{ACVF}$ that contain $C$ and morphisms are elementary
             embeddings
              to the category of sets such that if  
            $M$ is a model that contains $C$ then 
             $\widehat{X}(M)$ is the set of $M$-definable stably dominated types that concentrate on $X$.} 
\end{defi}

Let $F$ be a valued field. Given a quasi-projective $F$-variety $V$,  
we can endow the space $\widehat{V}$ with a topology in a natural fashion which is
analogous to the topology of the Berkovich space $V^{\mathrm{an}}$. 
We first define the topology on the space $\widehat{V}(\mathbb{U})$ and then restrict this 
definition to $\widehat{V}(M)$ where $M$ is any model of $\widehat{V}$. 

\begin{defi} 
\begin{enumerate} 
\item \emph{ A pre-basic open set of $\widehat{V}(\mathbb{U})$ is of the form 
$$\{p \in \widehat{O}|\mathrm{val}(f)_*(p) \in W\}$$ where $O \subset V(\mathbb{U})$ is a Zariski open subset of $V$
with parameters in $\mathbb{U}$,
 $f$ a regular function on $O$ defined over $\mathbb{U}$ and 
$W$ an open subset of $\Gamma_\infty(\mathbb{U})$. Here $\mathrm{val}(f)$ denotes the definable map given by
$O(\mathbb{U}) \to \mathrm{VF}(\mathbb{U}) \to \Gamma_\infty(\mathbb{U})$. (As $\mathrm{val}(f)_*(p)$ is a stably dominated type on 
$\Gamma_\infty$, it is constant.)}
\item \emph{ The set $\widehat{V}(\mathbb{U}) \times \Gamma_\infty^l(\mathbb{U})$ is given the product topology and
if $X(\mathbb{U}) \subset \widehat{V}(\mathbb{U}) \times \Gamma_\infty^l(\mathbb{U})$ then 
 we let $X(\mathbb{U})$ have the subspace topology.}
 \end{enumerate} 
 \end{defi}
 
 \begin{rem} \label{functoriality of HL spaces} 
\emph{Let $C \subset \mathbb{U}$ be a small set of parameters.
 Let $f \colon X \to Y$ be a $C$-definable map of $C$-definable sets. 
 The map $f_* \colon S^C_{def,X} \to S^C_{def,Y}$ restricts to a well defined map 
 $\widehat{f} \colon \widehat{X} \to \widehat{Y}$. Furthermore, 
 if $X(\mathbb{U}) \subset {V}(\mathbb{U})$ 
 for some variety $V$,
 $Y(\mathbb{U}) \subset {V'}(\mathbb{U})$ 
 for some variety $V'$ and $r \in \mathbb{N}$ and $f$ is the restriction of a regular map then 
 the map $\widehat{f}$ is continuous.}
\end{rem} 
  
  Let $X$ be an $F$-definable subset
 contained in $\mathrm{VF}^n \times \Gamma_\infty^l$ for some $n,l \in \mathbb{N}$.
 If $M$ is a structure that contains $F$ then we endow $\widehat{X}(M)$ with the topology whose open 
 sets are the $M$-definable open sets of $\widehat{X}$.
 Note that it does not make sense to provide $\widehat{X}(M)$  
 with the subspace topology via the inclusion 
 $\widehat{X}(M) \subset \widehat{X}(\mathbb{U})$ since the resulting space 
 would be discrete. 
 
 Note that if $V$ is an algebraic variety defined over $F$ then the space 
 $\widehat{V}(M)$ is Hausdorff for every model $M$ of ACVF that extends $F$.
% (cf.\cite[Proposition 3.7.8]{HL}). 

 \begin{es} \label{the affine hat space} 
 \emph{Observe that the affine line defines an $\emptyset$-definable set which 
 we denote $\mathbb{A}^1$ (cf. Remark \ref{varieties as definable sets}). 
% We describe the space $\widehat{\mathbb{A}^1}$. 
 As a set $\widehat{\mathbb{A}^1}(\mathbb{U})$ consists of the generic types of balls 
 $p_{B(a,\alpha)}$ (cf. Example \ref{generic type of ball example}) 
 where $a \in \mathbb{U}$ and $\alpha \in \Gamma(\mathbb{U})$ as well 
 as the \emph{simple} points $b \in \mathbb{U}$.}
 
 \emph{Let $M$ be a model of ACVF. The topology of the space 
 $\widehat{\mathbb{A}^1}(M)$ 
 can be described as follows. 
 Let $m \in M$. 
 By an open ball around $m$ of radius $\alpha$, we mean 
 the set $O(m,\alpha) := \{x \in \mathbb{U}|\mathrm{val}(x-m) > \alpha\}$. 
 A fundamental system of open neighbourhoods 
 in $\widehat{\mathbb{A}^1}(M)$ 
 of $m$ is given by the family $\{\widehat{O(m,\alpha)}(M)\}_{\alpha \in \Gamma(M)}$.
 Let $B := B(a,\alpha)$ be a closed ball with $\alpha \in \Gamma(M)$. 
 A fundamental system of open neighbourhoods of $p_B$ is given by sets of the form 
 $\widehat{O}$ where $O$ is a $M$-definable open ball from which finitely many 
 $M$-definable closed balls are removed and 
 $\widehat{O}$ contains the point $p_B$. This description can be deduced from Holly's theorem \cite{holly95} which effectively says 
 that a definable subset of $\mathrm{VF}$
 admits a swiss cheese decomposition i.e. it is the  
  disjoint union of balls from which finitely many sub-balls are removed. 
 By a ball in $\mathrm{VF}$, we mean a set of the form 
 $\{x \in \mathrm{VF} | \mathrm{val}(x - a) \mbox{ } \kappa \mbox{ } \alpha\}$ where $\kappa \in \{>,\geq\}$ and $\alpha \in \Gamma_\infty$.
 }
 
 \emph{After fixing coordinates, we have the following equality of sets
 $$\widehat{\mathbb{P}^1}(\mathbb{U}) = \widehat{\mathbb{A}^1}(\mathbb{U}) \cup \{ \infty\}$$ where 
 $\infty$ is an $\emptyset$-definable point.
 The space $\widehat{\mathbb{A}^1}(M)$ is an open subspace of $\widehat{\mathbb{P}^1}(M)$. 
 The open neighbourhoods of $\infty$ in $\widehat{\mathbb{P}^1}(M)$ consist of the complements in 
 $\widehat{\mathbb{P}^1}(M)$ of the spaces 
 $\widehat{B(0,\alpha)}(M)$ where $\alpha \in \Gamma(M)$.  
 }
  \end{es}

 \begin{rem}
 \emph{Let $C \subset \mathbb{U}$ be a set of parameters. Let $\mathrm{Def}_C$ be the category whose objects are those
 sets which are definable with parameters in $C$ and morphisms are $C$-definable maps.  
 By a $C$-pro-definable set, we mean a pro-object in $\mathrm{Def}_C$ indexed by a small partially ordered set. 
 Let $X$ be a $C$-definable set. By \cite[Theorem 3.1.1]{HL}, there exists a $C$-pro-definable set $E$ such that 
 for every model $M$ of ACVF that contains $C$, we have a canonical identification 
 $$\widehat{X}(M) = E(M).$$}
  \end{rem} 
 
    The main theorem  -  \cite[Theorem 11.1.1]{HL} proved by Hrushovski-Loeser in \cite{HL} implies in particular 
       that the homotopy type of the Hrushovski-Loeser space associated to a quasi-projective variety over a valued field
        is determined by a relatively simple object - a definable subset of $\Gamma_\infty^n$
       for some $n \in \mathbb{N}$.  
       The precise statement is as in Theorem \ref{main theorem}. 
       %        More precisely, we have that there exists an 
%       $F$-definable deformation retraction 
%       $h : I \times \widehat{V} \to \widehat{V}$ such that the image $\Upsilon$ of $h$ 
%       is an iso-definable subset (cf. \cite[Definition 2.2.2]{HL}) of $\widehat{V}$ definably homeomorphic 
%       to a definable subset of $\Gamma^n_\infty$.
%        It should be noted
%       that if $\mathbb{R}_{\infty} := \mathbb{R} \cup \{\infty\}$ then 
%       $\Upsilon(\mathbb{R}_{\infty})$ is homeomorphic to a finite simplicial complex.}
       
  \begin{rem} 
    \emph{Note that when using the machinery of Hrushovski-Loeser to construct \emph{pro-definable} deformation retractions
    $H \colon I \times \widehat{V} \to \widehat{V}$ 
     onto a $\Gamma$-internal set, where $V$ is a quasi-projective 
    variety of dimension greater than 1, we can no longer suppose that the interval 
   $I$
    is $[0,\infty]$. 
    It is usually a} generalized interval \emph{which consists of glueing 
    end to end copies of $[0,\infty]$. 
     A more in depth, explanation can be found in \cite[\S 3.9]{HL}.}  
  \end{rem}

\begin{thm} \label{main theorem} Let $V$ be a quasi-projective variety over a valued field $F$ and let $X$ be a definable subset of $V \times \Gamma^l_\infty$
 over some base set $A \subset \mathrm{VF} \cup \Gamma$, with $F = \mathrm{VF}(A)$.
  Then there exists an $A$-definable deformation retraction 
  $$H \colon I \times \widehat{X} \to \widehat{X}$$
   with image an iso-definable subset $\Upsilon$ which is definably homeomorphic to a definable subset of $\Gamma_\infty^w$, for some finite $A$-definable set $w$.
One can furthermore require the following additional properties for $H$ to hold simultaneously.
\begin{enumerate} 
\item Given finitely many $A$-definable functions $\xi_i \colon X \to \Gamma_\infty$ with canonical 
extension $\widehat{\xi_i} \colon \widehat{X} \to \Gamma_\infty$, one can choose $H$ to respect the $\widehat{\xi_i}$, 
i.e. to satisfy $\widehat{\xi_i}(H(t,x)) = \widehat{\xi_i}(x)$ for all $(t, x) \in I \times \widehat{X}$. In particular, finitely many definable subsets $U$ of $X$
 can be preserved, in the sense that $H$ restricts to a homotopy of $\widehat{U}$.
\item Assume given, in addition, a finite algebraic group $G$ acting on $V$ and leaving $X$ globally invariant. Then the retraction 
$H$ can be chosen to be equivariant with respect to the $G$-action.
\item Assume $l = 0$. The homotopy $H$ is Zariski generalizing, i.e. for any Zariski open subset $U$ of $V$, $\widehat{U} \cap X$ is invariant under $H$.
\item The homotopy $H$ is such that for every $x \in \widehat{X}$, $H(e_I,H(t, x)) = H(e_I,x)$ for every $t$ and $x$.
\item One has $H(e_I,X) = \Upsilon$, i.e. $\Upsilon$ is the image of the simple points.
\item Assume $l = 0$ and $X = V$. Given a
 finite number of closed irreducible subvarieties $W_i$ of V , one can demand that $\Upsilon \cap \widehat{W_i}$ has
  pure dimension $\mathrm{dim}(W_i)$.
  \end{enumerate} 
\end{thm}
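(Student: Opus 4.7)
The plan is to establish Theorem \ref{main theorem} by induction on $\dim(V)$, following the strategy of \cite[\S\S 10--11]{HL}. After enlarging the base and compactifying, I would reduce to the case where $X = V$ is projective and $l = 0$; the handling of the $\Gamma_\infty^l$ factor and of the level functions $\xi_i$ can be folded in at the end by treating them as constraints propagated through each step. The deformation retraction $H$ will then be constructed as a concatenation $H = H_\Gamma \circ H_{\mathrm{base}} \circ H_{\mathrm{curves}} \circ H_{\mathrm{inf}}$ of four pieces, glued along a generalized interval $I$, each serving a distinct purpose.

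The core geometric input is a generically finite projection obtained from the blow-up diagram in the sketch of Theorem~\ref{generic cdr}: an embedding $V \hookrightarrow \mathbb{P}^m$, the blow-up $b \colon E \to \mathbb{P}^m$ at a point, and the projection $p \colon E \to F = \mathbb{P}^{m-1}$. Pulling $V$ back to a cover $V_1 \to V$ turns $p$ into a fibration of curves over $F$. On $\widehat{\mathbb{P}^1}$ one has the standard valuative-tree retraction onto the convex hull of a finite $A$-definable divisor \cite[\S 7.5]{HL}; applied fibrewise over $F$, together with a well-chosen horizontal divisor $D$ as in Lemma \ref{existence of D}, this furnishes the \emph{curve homotopy} $H_{\mathrm{curves}}$ whose image is relatively $\Gamma$-internal over $F$. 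The \emph{inflation homotopy} $H_{\mathrm{inf}}$ of \cite[Lemma 10.3.2]{HL} plays a dual role: it escapes the exceptional divisor of $b$ so that the fibrewise move descends from the cover to $V$, and it uses a definable cut-off to extend a local fibrewise construction to a global pro-definable homotopy on $\widehat{V}$.

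By induction on dimension, the theorem applied to $F = \mathbb{P}^{m-1}$ gives a homotopy on $\widehat{F}$ with $\Gamma$-internal image, which I would lift through the relatively $\Gamma$-internal fibration to produce $H_{\mathrm{base}}$. The composite $H_{\mathrm{base}} \circ H_{\mathrm{curves}} \circ H_{\mathrm{inf}}$ sends $\widehat{V}$ into an iso-definable $\Gamma$-internal subset, but is not yet a retraction: the individual homotopies need not fix their final image. To rectify this, I would construct a \emph{tropical homotopy} $H_\Gamma$ as in \cite[\S 11.5]{HL}, running entirely inside the $\Gamma$-internal image via the homeomorphism with a definable subset of $\Gamma_\infty^w$, chosen so that the full concatenation becomes a deformation retraction. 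The additional properties (1)--(6) are then arranged step by step: equivariance under $G$ by averaging the divisor choices and cut-offs over the action; respect for the $\xi_i$ by including them among the data controlling each piece; the Zariski generalizing property from the fibrewise and tropical nature of the construction; idempotence (4) and the simple-point image statement (5) by the definition of $H_\Gamma$; and the purity assertion (6) by induction, using that each of the four pieces preserves fibrewise dimension.

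The hardest step, in my view, is the interface between $H_{\mathrm{inf}}$ and $H_{\mathrm{curves}}$. One must ensure that the fibrewise curve retraction on the cover $V_1 \to V$ truly descends through the ramification locus, and that the cut-off controlling the inflation is compatible with the horizontal divisor $D$ and with the $G$-action, all while remaining pro-definable and continuous on the whole of $\widehat{V}$ rather than on a dense open. This is what forces the delicate pseudo-Galois descent argument analogous to \cite[Theorem 6.4.4]{HL}, and it is the reason a single local construction cannot be patched naively: the compatibility of the three homotopies must be designed into their construction from the outset, not imposed afterwards.
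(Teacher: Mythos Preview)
The paper does not prove this theorem: it is stated as background, attributed to Hrushovski--Loeser, and cited as \cite[Theorem 11.1.1]{HL} without a proof environment. Your proposal is therefore not being compared against a proof in the paper but against the original argument in \cite{HL}.

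As a sketch of that original argument, your outline is accurate in its broad structure: the induction on $\dim V$, the blow-up and projection $V_1 \to E \to \mathbb{P}^{m-1}$ yielding a fibration in curves, the four-fold concatenation $H_\Gamma \circ H_{\mathrm{base}} \circ H_{\mathrm{curves}} \circ H_{\mathrm{inf}}$, and the role of the tropical homotopy in forcing the composite to fix its image are all correct. Two small points where your description drifts from \cite{HL}: the inductive step is applied not directly to $F = \mathbb{P}^{m-1}$ but to a pseudo-Galois cover $F'$ of $F$ carrying the definable data needed to lift the base homotopy back to the relatively $\Gamma$-internal set (this is precisely the content of \cite[Theorem 6.4.4]{HL}, which you mention only in passing at the end); and the reduction to $l=0$ and $X=V$ projective is carried out at the start via \cite[\S 11.2]{HL}, not folded in at the end. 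Your identification of the inflation--curves interface as the delicate point is apt, though the pseudo-Galois descent is used at the $H_{\mathrm{base}}$ stage rather than at that interface.
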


 Observe that the theorem not only describes precisely the homotopy type of the space $\widehat{X}$ but also allows us to construct 
 pro-definable deformation retractions with considerable flexibility. It is this fact that we will exploit in \S 5. 
 Note that all deformation retractions considered and constructed in this paper satisfy assertions (3), (4) and (5) above. 

%\subsubsection{The $v+g$-topology} 

\subsubsection{Canonical Extensions} \label{canonical extensions}
            
                    Let
                    $C \subset \mathbb{U}$ be a small set of parameters and 
                     let $X$ be a $C$-definable set. 
            Let $Y$ be a $C$-definable subset 
%            of $\mathbb{P}^m \times \Gamma_\infty^n$ for some $n \in \mathbb{N}$
             and 
            $f \colon X \to \widehat{Y}$ be a $C$-definable map. 
            In this situation, we can extend the map $f$ to a well defined 
            map $\widehat{f} \colon \widehat{X} \to \widehat{Y}$ which 
            we call the \emph{canonical extension of $f$}.
            We do so as follows.  
                    
                    Let $p \in \widehat{X}(M)$
                    where $M$ is a model of ACVF that contains $C$.
                    Let $c \models p_{|M}$ and $d \models f(c)_{|M(c)}$ (cf. Remark \ref{realizations}). 
              By \cite[Proposition 2.6.5]{HL}, the type $\mathrm{tp}(cd|M)$ is stably dominated. 
              It follows that $\mathrm{tp}(d|M)$ is stably dominated as well. We 
              set $\widehat{f}(p) := \mathrm{tp}(d|M)$. The map $\widehat{f}$ is well defined and
              can be shown to be 
              pro-$C$-definable.
              
                    When $X \subset \mathbb{P}^m \times \Gamma_\infty^n$,
                     it is natural to ask what hypothesis must be placed on the definable map $f$ to ensure that 
                     the induced map  
                     $\widehat{f}$ is continuous. To this end, we introduce the $v$ and $g$-topologies on $X$. 
                     The $v$-topology is
                     the topology induced by the valuation on the ground field and is hence a well defined topology.
                     The $g$-topology on the other hand is a Grothendieck topology and if $U \subset X$ is both $v$ and $g$
                     open then $\widehat{U}$ is open in $\widehat{X}$.

            \begin{defi} \label{v+g topology} ($v$ and $g$ - open sets)
             \emph{Let $V$ be an algebraic variety defined over a valued field $F$. A definable set $U \subset V$ is v-open 
                  if it is open for the valuation topology on $V$. A definable set $G$ is g-open if it 
                  is a positive Boolean combination of Zariski closed and open subsets and 
                  sets of the form 
                  $\{u : \mathrm{val}(f)(u) < \mathrm{val}(g)(u) \}$ where $f,g$ are regular functions on some Zariski open set.
                  More generally, if $U$ is a definable subset of the variety $V$ then a set $W \subset U$ is 
                  v-open (g-open) if it is of the form $U \cap O$ where $O$ is v-open (g-open). If 
                 $X \subset V \times \Gamma_{\infty}^n$ is a definable set then 
                 $X$ is v-open (g-open) if its pullback via $\mathrm{id} \times \mathrm{val}$
                  to $V \times \mathbb{A}^n$ is v-open (g-open). }  
                  \end{defi}         
      
      \begin{rem} 
          \emph{
          Observe that the $v$-topology on $\Gamma$ is discrete while the neighborhoods of $\infty$ are the same as those 
          defined by the order topology. The $g$-topology of $\Gamma_\infty$ when restricted to $\Gamma$ coincides with the order topology while
          the point $\infty$ is isolated. It follows that the $v + g$ topology which is the topology generated by the class of sets which are 
          both $v$ and $g$-open, induces the order topology on $\Gamma_{\infty}$.
         In general the $v,g$ and $v + g$-open sets are definable. 
          Observe that in the case of a variety $V$ over a valued field $F$, 
          the collection of $v$-open sets definable over $F$
         generate the valuation topology on $V$. The $g$-topology however does not necessarily generate a topology.}
         \end{rem}    
            
     \begin{defi} (v-continuity and g-continuity) \emph{ Let $V$ be an algebraic variety over 
     a valued field $F$ or a definable subset of such 
     a variety. A definable function $h \colon V \to \Gamma_\infty$ is called v-continuous (resp. g-continuous) if the pullback of 
     any v-open (resp. g-open) set is v-open (resp. g-open). A function $h \colon V \to W$ with $W$ an affine $F$-variety is 
     called v-continuous (resp. g-continuous) if, for any regular function $f \colon W \to \mathbb{A}^1$ , $\mathrm{val} \circ f \circ h$ is v-continuous (resp. g-continuous). }
          \end{defi}        
          
           Let $V$ be an algebraic variety and $W \subset \mathbb{P}^m \times \Gamma_\infty^n$ be a definable set. We endow 
  $\widehat{W}$ with the subspace topology from $\widehat{\mathbb{P}^m} \times \Gamma_\infty^n$. 
  Let $f \colon V \to \widehat{W}$ be a well defined pro-definable function such that for every open subset $O \subset \widehat{W}$,
    $f^{-1}(O)$ is $v+g$-open. In this case, we say that the map $f$ is $v+g$-continuous. 
          By \cite[Lemma 3.8.2]{HL}, $f$ induces a continuous pro-definable map 
       $\widehat{f} \colon \widehat{V} \to \widehat{W}$. 
%          
%       By \cite[Lemma 3.8.2]{HL}, a morphism $f : V \to \widehat{W}$ where 
%       $V$ is an algebraic variety and $W \subset  \mathbb{P}^m \times \Gamma_\infty^n$,
%       induces a continuous map 
%       $\widehat{f} : \widehat{V} \to \widehat{W}$ provided the preimage of an open subset of 
%       $\widehat{W}$ is $v+g$-open.   
          
      \subsubsection{Simple points}   Let $V$ be an $A$-definable set. For $x \in V$, the definable 
         type $\mathrm{tp}(x/\mathbb{U})$ which concentrates on the point $x$ is stably dominated. It follows that 
         $\mathrm{tp}(x/\mathbb{U})$ is an element of $\widehat{V}(\mathbb{U})$. We can thus view 
         $V$ as a subset of $\widehat{V}$. This subset of points in $\widehat{V}$ is called the set of \emph{simple points}.    
         
    \begin{lem} (\cite{HL}, Lemma 3.6.1) Let $X$ be a definable subset of $\mathrm{VF}^n$.
 \begin{enumerate}
    \item  The set of simple points of $\widehat{X}$ (which we identify with $X$) is an iso-definable \cite[Definition 2.2.2]{HL}
     and relatively definable \cite[\S 2.2]{HL} dense subset of $\widehat{X}$.
     If $M$ is a model of $\mathrm{ACVF}$ then $X(M)$ is dense in $\widehat{X}(M)$.
    \item    The induced topology on $X$ agrees with the valuation topology on $X$.
\end{enumerate}
\end{lem}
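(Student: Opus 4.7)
The plan is to establish iso-definability and relative definability by explicitly describing the inverse of the canonical injection $c \colon X \hookrightarrow \widehat{X}$ that sends $a$ to $\mathrm{tp}(a/\mathbb{U})$. This map is pro-definable, injective, and its image is precisely the set of simple points. For iso-definability, the inverse is constructed coordinate-wise: for $p \in c(X)$, the $i$-th coordinate of $c^{-1}(p)$ is the unique $b_i$ such that $\pi_i(x) = b_i$ belongs to $p$, where $\pi_i$ is the $i$-th coordinate projection on $X \subset \mathrm{VF}^n$. For relative definability inside the pro-definable structure on $\widehat{X}$ provided by \cite[Theorem 3.1.1]{HL}, we observe that a stably dominated type $p$ on $X$ is simple iff each pushforward $\widehat{\pi_i}(p) \in \widehat{\mathrm{VF}}$ is a singleton type, a condition expressible by a single formula in the geometric sorts.

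Next I would prove density. Let $U \subseteq \widehat{X}(M)$ be a non-empty $M$-definable open set for $M$ a model of $\mathrm{ACVF}$ containing the base parameters. We may assume $U$ is a pre-basic open of the form $\{q \in \widehat{O} : \mathrm{val}(f)_*(q) \in W\}$ with $O$, $f$ and $W$ all defined over $M$. Pick $p \in U$; since the pushforward $\mathrm{val}(f)_*(p)$ concentrates at a single point of $\Gamma_\infty$, the condition ``$x \in O$ and $\mathrm{val}(f(x)) \in W$'' is a first-order formula $\phi(x)$ over $M$ lying in $p$. This $\phi$ is satisfiable in any model in which $p$ is realized; by model completeness of $\mathrm{ACVF}$, $\phi$ is satisfiable in $M$ itself, producing $a \in X(M)$ with $c(a) \in U$. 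For $M = \mathbb{U}$ one avoids model completeness entirely by invoking saturation. The subtlety to watch for is that this argument implicitly uses that $M$ is algebraically closed and non-trivially valued; without these one could not appeal to model completeness to descend realizations.

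For the final assertion, the pullback along $c$ of the pre-basic open $\{q \in \widehat{O} : \mathrm{val}(f)_*(q) \in W\}$ is exactly $\{a \in O(M) : \mathrm{val}(f(a)) \in W\}$, using the tautological identity $\mathrm{val}(f)_*(c(a)) = \mathrm{val}(f(a))$ for simple types. This is a sub-basis for the valuation topology on $X(M)$, and conversely every such valuative sub-basic open arises as the trace on $X$ of a pre-basic open of $\widehat{X}$. The main obstacle throughout is really bookkeeping rather than substance: one must match the formula-based topology on $\widehat{X}$ against the concrete valuative topology on $X$, after which the agreement of the two sub-bases is automatic.
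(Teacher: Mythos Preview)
The paper does not supply a proof of this lemma at all: it is stated with a bare citation to \cite[Lemma 3.6.1]{HL} and nothing more. There is therefore no ``paper's own proof'' against which to compare your proposal.

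That said, your argument is sound and is essentially the standard one. The coordinate-wise recovery of $a$ from the simple type $c(a)$ gives a definable left inverse to $c$, hence iso-definability of the image; the criterion ``each $\widehat{\pi_i}(p)$ is a realised type in $\mathrm{VF}$'' is indeed a single definable condition on the pro-definable set $\widehat{X}$, giving relative definability. For density, your use of model completeness of ACVF to descend a realisation of the formula $\phi(x) \equiv (x \in O \wedge \mathrm{val}(f(x)) \in W)$ from an elementary extension to $M$ is exactly right, and your remark that saturation suffices when $M = \mathbb{U}$ is a correct simplification. The identification of the two sub-bases in part (2) is immediate, as you say. One small point of presentation: when you write that $W$ is ``defined over $M$'', you are implicitly using that the topology on $\widehat{X}(M)$ is generated by $M$-definable opens (rather than the subspace topology from $\widehat{X}(\mathbb{U})$, which would be discrete); the paper makes this explicit just above the lemma, so you are on safe ground.
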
 

\begin{rem} \label{unique extensions} 
  \emph{Let $V$ be an algebraic variety and $W \subset \mathbb{P}^m \times \Gamma_\infty^n$. We endow 
  $\widehat{W}$ with the subspace topology from $\widehat{\mathbb{P}^m} \times \Gamma_\infty^n$. 
  Let $f \colon V \to \widehat{W}$ be a $v+g$-continuous map. Since the simple points are dense in $V$, we see that there exists 
  exactly one morphism $\widehat{V} \to \widehat{W}$ that extends $f$.}
\end{rem} 

\section{The Berkovich space $B_{\mathbf{F}}(X)$} \label{model theoretic berkovich space} 
%The purpose of this chapter is to describe briefly how the results of Hrushovski and Loeser from  
%     Chapter 2 imply powerful tameness properties of certain Berkovich spaces. 
%     These results are amongst the goals of the paper \cite{HL} and 
%     form a part of the motivation behind defining the spaces $\widehat{V}$. 
%          
  
      We provide a model theoretic reinterpretation of the Berkovich space, one for which a connection 
      with the space of stably dominated types can be easily made. 
      Our presentation follows \cite[Chapter 14]{HL}.
   
          Let $F$ be a real valued field
        and let $\mathbb{R}_{\infty} := \mathbb{R} \cup \{\infty\}$. 
%        In this chapter, we adopt the convention of 
%        writing the group structure on the value group sort additively.   
           Let $\mathbf{F}$ denote the structure defined by the pair $(F,\mathbb{R}_{\infty})$. Let 
           $V$ be a quasi-projective $F$-variety. As a set, the Berkovich space $B_{\mathbf{F}}(V)$ is defined as follows. 
%          It can be given a topology similar to the spaces $\widehat{V}$ and $V^{\mathrm{an}}$.     

\begin{defi}
   \emph{Let $X$ be an $\mathbf{F}$-definable subset of $V \times \Gamma_\infty^l$ for some $l \in \mathbb{N}$. 
   Let} $B_{\mathbf{F}}(X)$ \emph{ be the set of almost orthogonal to $\Gamma$, $\mathbf{F}$-types which concentrate on $X$.
   The notion of an almost orthogonal to $\Gamma$ type was introduced in Definition \ref{almost orthogonal definition}.}
\end{defi} 
 
     Let $f : X \to \Gamma_{\infty}$ be an $\mathbf{F}$-definable function. 
     Observe that if 
     $p \in B_{\mathbf{F}}(X)$ is such that $a \models p$ then $f(a) \in \mathbb{R}_{\infty}$ depends only on 
     the type $p$ i.e. if $a_1 \models p$ and $a_2 \models p$ then $f(a_1) = f(a_2)$.  
    We set $f(p) := f(a)$. Thus we have a well defined function $f \colon B_{\mathbf{F}}(X) \to \mathbb{R}_{\infty}$. 
    
    \begin{defi} (Topology on $B_{\mathbf{F}}(X)$)
      \emph{Let $X$ be an $\mathbf{F}$-definable subset of the $F$-variety $V$. The set $B_{\mathbf{F}}(X)$ is endowed 
      with the topology generated by pre-basic open sets 
      of the form $\{q \in B_{\mathbf{F}}(X \cap U) | \mathrm{val}(f)_*(q) \in W \}$ where 
      $U \subset V$ is an open affine subspace, $f$ is a regular function on $U$ and 
      $W \subset \mathbb{R}_{\infty}$ is an open interval.}
     \end{defi}

 \begin{lem}
    The spaces $V^{\mathrm{an}}$ and $B_{\mathbf{F}}(V)$ are canonically homeomorphic.
   \end{lem}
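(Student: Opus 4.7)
The plan is to exhibit mutually inverse bijections $V^{\mathrm{an}} \rightleftarrows B_{\mathbf{F}}(V)$ and then observe that the two topologies are defined by matching pre-bases. Recall that a point $x \in V^{\mathrm{an}}$ is equivalent to an equivalence class of pairs $(L, a)$ with $L/F$ a valued field extension with value group in $\mathbb{R}$ and $a \in V(L)$, where $(L_1,a_1) \sim (L_2,a_2)$ when there is a common valued-field extension $L_3/F$ receiving $L_1$ and $L_2$ and sending $a_1, a_2$ to the same element of $V(L_3)$. On the other side, an element of $B_{\mathbf{F}}(V)$ is an $\mathbf{F}$-type on $V$ whose realizations $a$ satisfy $\Gamma(\mathbf{F}(a)) = \Gamma(\mathbf{F}) = \mathbb{R}_\infty$, so that any realization $a \in \mathbb{U}$ provides a valued extension $F \subseteq F(a)$ with value group in $\mathbb{R}_\infty$ together with a point $a \in V(F(a))$, i.e.\ exactly a representative of a Berkovich point.

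First I would define $\Psi : B_{\mathbf{F}}(V) \to V^{\mathrm{an}}$ by sending $p$ to the Berkovich point represented by any realization $a \models p$; its well-definedness reduces to showing that two realizations of the same type over $\mathbf{F}$ produce equivalent pairs $(L, a)$. This is a consequence of quantifier elimination for ACVF in $\mathcal{L}_{\mathcal{G}}$: the type of $a$ over $\mathbf{F}$ is determined by, and determines, the valued-field isomorphism class of $F(a)/F$ together with its distinguished point. Conversely, to build $\Phi : V^{\mathrm{an}} \to B_{\mathbf{F}}(V)$, embed the complete residue field $\mathcal{H}(x)$ of $x \in V^{\mathrm{an}}$ as a valued subfield of the monster model $\mathbb{U}$ in such a way that the induced inclusion $\Gamma(\mathcal{H}(x)) \hookrightarrow \Gamma(\mathbb{U})$ factors through $\mathbb{R}_\infty = \Gamma(\mathbf{F})$, and set $\Phi(x) := \mathrm{tp}(a_x/\mathbf{F})$ where $a_x \in V(\mathcal{H}(x))$ is the underlying scheme-theoretic point; almost-orthogonality to $\Gamma$ is automatic since $\Gamma(\mathbf{F}(a_x)) \subseteq \mathbb{R}_\infty = \Gamma(\mathbf{F})$. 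Mutual inverseness of $\Phi$ and $\Psi$ follows from the same quantifier-elimination statement.

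For continuity, the pre-basic open sets of $B_{\mathbf{F}}(V)$ are of the form $\{q \in B_{\mathbf{F}}(V \cap U) : \mathrm{val}(f)_*(q) \in W\}$ for $U$ an affine open of $V$, $f$ regular on $U$ and $W \subseteq \mathbb{R}_\infty$ open, while the standard sub-basis of $V^{\mathrm{an}}$ consists of the sets $\{x \in U^{\mathrm{an}} : -\log |f(x)| \in W\}$. If $a \models q$ corresponds to the Berkovich point $x$, then by construction $\mathrm{val}(f(a)) = -\log|f(x)|$, so these sub-bases are carried onto each other by $\Psi$ and $\Phi$, which are therefore both continuous. The hardest point, and the one I would spend the most care on, is the well-definedness of $\Psi$: one must use that $\mathbf{F}$ already contains the whole of $\mathbb{R}_\infty$ in its $\Gamma$-sort so that any element of $\Gamma(F(a))$ is named by a parameter of $\mathbf{F}$, and combine this with quantifier elimination in $\mathcal{L}_{\mathcal{G}}$ to conclude that the $\mathbf{F}$-type of $a$ captures exactly the valued-field isomorphism class of $(F(a), a)$, neither more nor less. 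Once this is in place, the rest of the argument is formal.
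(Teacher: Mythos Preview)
Your argument is correct and is essentially the standard one; the paper itself does not give a proof but simply cites \cite[\S 14.1]{HL}, where the construction you describe is carried out. One small sharpening: for the well-definedness of $\Psi$ you do not need the full strength of quantifier elimination. Since $\mathbb{R}_\infty \subset \mathrm{dcl}(\mathbf{F})$, the formula $\mathrm{val}(f(x)) = r$ is already over $\mathbf{F}$ for every $r \in \mathbb{R}_\infty$, so any two realizations of $p$ automatically yield the same seminorm. Quantifier elimination is genuinely needed only in the converse direction, to see that the seminorm (equivalently, the quantifier-free type over $\mathbf{F}$) already determines the complete $\mathbf{F}$-type, which is what makes $\Phi$ well-defined and inverse to $\Psi$.
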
 
   \begin{proof} 
    This is proved in \cite[14.1]{HL}. 
   \end{proof} 

    We relate the space $B_{\mathbf{F}}(V)$ to the space $\widehat{V}$. Let $L$ be an 
    algebraically closed, spherically complete
    valued field which contains $\mathbf{F}$ as a substructure and whose residue field is the algebraic closure of the residue field
    $k(F)$ of $F$ and 
    value group $\Gamma(L)$ is $\mathbb{R}$. Such a field is unique up to isomorphism over 
   the structure $\mathbf{F}$. We fix one such copy and call it $F^{max}$.   
   
   \begin{lem} \label{HL to Berkovich}
        There exists a surjective continuous function $$\pi_{F,V} \colon \widehat{V}(F^{max}) \to B_{\mathbf{F}}(V)$$ such that if 
    $X$ is an $\mathbf{F}$-definable subset of $V$ then $\pi_{F,V}^{-1}(B_\mathbf{F}(X)) = \widehat{X}(F^{max})$.     
   \end{lem}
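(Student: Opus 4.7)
The plan is to define $\pi_{F,V}$ by sending an $F^{max}$-definable stably dominated type to the $\mathbf{F}$-type of any of its realizations, and then verify in turn well-definedness, the preimage identity, continuity, and surjectivity. The last property is the main obstacle and rests on the spherical completeness of $F^{max}$.

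Given $p \in \widehat{V}(F^{max})$, pick any realization $a$ of $p|_{F^{max}}$ in $\mathbb{U}$ and set $\pi_{F,V}(p) := \mathrm{tp}(a/\mathbf{F})$. Since $p|_{F^{max}}$ is a complete type over $F^{max} \supseteq \mathbf{F}$, the $\mathbf{F}$-type $\mathrm{tp}(a/\mathbf{F})$ does not depend on the choice of $a$. By the proposition recalled in the excerpt, stably dominated types in ACVF are orthogonal to $\Gamma$, so $\Gamma(F^{max}(a)) = \Gamma(F^{max}) = \mathbb{R}$; in particular $\Gamma(\mathbf{F}(a)) \subseteq \mathbb{R}_{\infty} = \Gamma(\mathbf{F})$, hence $\pi_{F,V}(p) \in B_{\mathbf{F}}(V)$. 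For the preimage identity, if $X \subseteq V$ is $\mathbf{F}$-definable, then $p$ concentrates on $X$ if and only if $a \in X$ if and only if $\pi_{F,V}(p)$ concentrates on $X$, giving $\pi_{F,V}^{-1}(B_{\mathbf{F}}(X)) = \widehat{X}(F^{max})$.

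For continuity, a pre-basic open set of $B_{\mathbf{F}}(V)$ has the form $\Omega = \{q \in B_{\mathbf{F}}(X \cap U) \mid \mathrm{val}(f)_{*}(q) \in W\}$ with $U \subset V$ Zariski open, $f$ regular on $U$, and $W \subset \mathbb{R}_{\infty}$ open. By the orthogonality to $\Gamma$ of $p$, the stably dominated type $\mathrm{val}(f)_{*}(p)$ on $\Gamma_{\infty}$ concentrates at the single point $\mathrm{val}(f(a)) \in \mathbb{R}_{\infty} \subseteq \Gamma_{\infty}(F^{max})$. Hence the condition $\mathrm{val}(f)_{*}(\pi_{F,V}(p)) \in W$ is equivalent to $\mathrm{val}(f)_{*}(p) \in W$ viewed as an open subset of $\Gamma_{\infty}$, and therefore $\pi_{F,V}^{-1}(\Omega)$ is a pre-basic open subset of $\widehat{V}(F^{max})$.

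For surjectivity, the main difficulty, take $q \in B_{\mathbf{F}}(V)$ and a realization $a \in V(\mathbb{U})$ of $q$. The task is to extend $q = \mathrm{tp}(a/\mathbf{F})$ to a complete type $\widetilde{q}$ over $F^{max}$ whose value-group extension remains trivial, i.e. such that a realization $a'$ of $\widetilde{q}$ satisfies $\Gamma(F^{max}(a')) = \Gamma(F^{max}) = \mathbb{R}$. Such an extension can be arranged by choosing a realization of $q$ that is sufficiently generic over $F^{max}$, using that $q$ is almost orthogonal to $\Gamma$ over $\mathbf{F}$ together with the fact that $\Gamma(F^{max}) = \mathbb{R}$ already saturates $\Gamma(\mathbf{F})$. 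The crucial input is then the stable-domination lifting theorem in the Haskell--Hrushovski--Macpherson framework underlying \cite[\S 2.9, \S 14.1]{HL}: over a maximally complete, algebraically closed valued field, any complete type whose value group does not grow is definable and stably dominated. Applying this to $\widetilde{q}$ produces an element $p \in \widehat{V}(F^{max})$, and by construction $\pi_{F,V}(p) = q$. Thus spherical completeness of $F^{max}$ is precisely what enables the lift from $B_{\mathbf{F}}(V)$ to $\widehat{V}(F^{max})$.
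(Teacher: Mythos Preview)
Your proof is correct and follows essentially the same approach as the paper: define $\pi_{F,V}$ by restricting $p|_{F^{max}}$ to $\mathbf{F}$-formulae, verify orthogonality to $\Gamma$ gives well-definedness, and appeal to the Haskell--Hrushovski--Macpherson machinery for surjectivity. The only notable difference is in the surjectivity step: the paper simply takes any realization $a$ of $q$ and applies \cite[Theorem 12.18(ii)]{hashrushmac} directly to $\mathrm{tp}(a/F^{max})$ to obtain a stably dominated extension, whereas you interpose an intermediate step of first producing an almost-orthogonal-to-$\Gamma$ extension $\widetilde{q}$ over $F^{max}$ and then invoking the lifting theorem. That intermediate step (``choosing a realization sufficiently generic over $F^{max}$'') is not independently justified in your write-up and is in any case subsumed by the direct application of the HHM result, so the paper's route is the cleaner one; but the substance is the same.
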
 
   \begin{proof}
        The map is constructed in  \cite[\S 14.1]{HL}. The surjectivity of $\pi_{F,V}$ 
        and the assertion regarding $\pi_{F,V}^{-1}(B_\mathbf{F}(X))$ is proved in \cite[Lemma 14.1.1]{HL}
         and 
        \cite[Proposition 14.1.2]{HL}. Nonetheless, we repeat the construction here since 
        the map $\pi_{F,V}$ figures prominently in \S 6.

       Let $p$ be a stably dominated type defined over $F^{max}$ that concentrates on $V$. Then 
       $p_{|F^{max}}$ is an $F^{max}$-type. Let $\pi_{F,V}(p)$ denote the $\mathbf{F}$-type defined 
       by those formulae with parameters in $\mathbf{F}$ that are contained in $p_{|F^{max}}$.
      Let $a \in V$ be such that $\pi_{F,V}(p) = \mathrm{tp}(a/\mathbf{F})$. 
      We must have that $\Gamma(\mathbf{F}(a)) \subseteq \Gamma(F^{max}(a)) = \Gamma(F^{max}) = \Gamma(\mathbf{F})$.
     It follows that $\pi_{F,V}$ is a well defined function. 
        It can be checked that it is continuous as well.  
        
        We show that $\pi_{F,V}$ is surjective. Let $p \in B_{\mathbf{F}}(V)$ and 
        $a$ be a realization of $p$. 
%        We choose an embedding $F \hookrightarrow F(a)$ which 
%        induces an embedding $F^{max} \hookrightarrow F(a)^{max}$.
         By \cite[Theorem 12.18 (ii)]{hashrushmac}, the type 
        $\mathrm{tp}(a|F^{max})$ extends to an $F^{max}$-stably dominated type 
        which concentrates on $V$, thus defining an element of $\widehat{V}(F^{max})$. 
%        The equality $\widehat{X}(F^{max}) = \pi_{F,V}^{-1}(B_{\mathbf{F}}(X))$ follows from the construction of the 
%       function $\pi_{F,V}$.  
   \end{proof} 

\begin{rem}
   \emph{The purpose of this section was to emphasize the extent to which the Berkovich space and the Hrushovski-Loeser 
   space are closely related.  
   In fact, when working over certain models of ACVF, the two spaces coincide.
   Indeed, using the notation from Lemma \ref{HL to Berkovich},
   we see that if $F = F^{max}$ then by \cite[Lemma 14.1.1]{HL} $\pi$ is a homeomorphism.}
\end{rem}

\section{Required tools}  
     Our goal in this section is to develop the tools required to prove the principal results of Sections 
     \S \ref{finite obstruction} and \S \ref{base is a curve}.

 \subsection{Preliminary simplifications}

          We use the following lemmas to show that 
          in most situations under consideration,
          we may suppose that 
          we have a morphism between projective varieties whose fibres are pure over some open 
           dense
            subset of the base. 
            
            \begin{lem} \label{simplifications}
            Let $V$ be a 
                         quasi-projective $K$-variety. 
   Let $\phi \colon V' \to V$ be a morphism between quasi-projective $K$-varieties whose image is dense. Let $G$ be a finite algebraic group acting on $V'$
   that restricts to a well defined action along the fibres of the morphism $\phi$
   and 
   $\xi_i \colon V'\to \Gamma_{\infty}$ be a finite collection of definable functions.
  There exists projective $K$-varieties $\overline{V}$, $\overline{V'}$ and a finite type surjective 
  morphism 
  $\overline{\phi} \colon \overline{V'} \to \overline{V}$ with the following properties.  
  \begin{enumerate} 
  \item The varieties $\overline{V'}$ and $\overline{V}$ are pure of dimension $\mathrm{dim}(V')$ and $\mathrm{dim}(V)$ respectively. 
  \begin{itemize}
  \item   In the event that $V$ is a smooth connected $K$-curve, we can suppose $\overline{V}$ is the unique smooth projective $K$-curve
  that contains $V$ as a Zariski open dense subset. 
  \item If $V$ is integral and normal then we can take $\overline{V}$ to be integral and normal as well. 
  \end{itemize}
    \item There exists embeddings 
  $i \colon V \hookrightarrow \overline{V}$ and $i' \colon V' \hookrightarrow \overline{V'}$ whose images are locally closed subspaces. 
  \item The morphism $\overline{\phi}$ extends the induced morphism $\phi \colon i'(V') \to i(V)$.
  \item The fibres of the morphism $\overline{\phi}$ are pure and projective over some open dense subset of 
  $\overline{V}$. When $V$ is a smooth connected curve and $\phi$ is flat, the morphism $\overline{\phi}$ is flat over $\overline{V}$ and 
  all fibres of $\overline{\phi}$ are pure. 
  \item The variety $\overline{V'}$ admits an action of the group $G$ 
   that extends the action of $G$ on $V'$. 
   %The variety $\overline{V}$ admits an action of the group $G$ 
   %that extends the action of $G$ on $V$. 
   The action of the group $G$ restricts to a well defined morphism on the fibres of 
   $\overline{\phi}$. 
   %Furthemore, the morphism $\overline{\phi}$ is $G$-equivariant i.e. if $x \in \overline{V'}$ and 
   %$g \in G$ then $\overline{\phi}(gx) = g\overline{\phi}(x)$.   
  \item The functions $\xi_i$ extend to definable functions $\xi'_i \colon \overline{V'} \to \Gamma_{\infty}$.   
  \end{enumerate} 
\end{lem}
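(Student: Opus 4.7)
The plan is to build $\overline{V}$ and $\overline{V'}$ by repeated compactification, graph closure, and adjustments for purity, then verify each of the enumerated properties separately. The main substantive issue will be achieving pure dimension of the fixed dimension \emph{together} with the locally closed embedding, a flatness arrangement in the curve case, and a $G$-equivariant structure compatible with the extension of $\phi$.

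First, I would use quasi-projectivity to embed $V \hookrightarrow \mathbb{P}^M_K$ and $V' \hookrightarrow \mathbb{P}^N_K$, and let $W$ and $W'$ be their scheme-theoretic closures; these are projective. The morphism $\phi$ induces a rational map $W' \dashrightarrow W$, and replacing $W'$ by (a projective resolution of) the closure of the graph of $\phi$ inside $W' \times W$ gives a projective variety $W''$ with a morphism $\overline{\phi}_0 \colon W'' \to W$ extending $\phi$, together with a $V'$-preserving modification $W'' \to W'$. To make things pure of the correct dimension, I would decompose both into irreducible components $W = \bigcup W_i$ (and similarly for $W''$, compatibly with $\overline{\phi}_0$), let $d = \dim V$, $d' = \dim V'$, and replace each $W_i$ of dimension $d_i < d$ by $W_i \times \mathbb{P}^{d - d_i}$, producing projective pure-dimensional varieties; the original $V \subset W_i$ embeds as a locally closed subscheme via a fixed $K$-point of $\mathbb{P}^{d-d_i}$, and the same construction is carried out over $V'$ compatibly with $\overline{\phi}_0$. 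In the special cases, if $V$ is already a smooth connected curve I would take $\overline{V}$ to be the unique smooth projective completion, and if $V$ is integral normal I would normalize $W$ inside its function field; in both cases $V$ is open in $\overline{V}$, hence locally closed.

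Next, for generic purity of fibres (property (4)), I would invoke generic flatness applied to $\overline{\phi}$: over some dense open $U \subset \overline{V}$ the morphism is flat with equidimensional fibres of dimension $d' - d$, and since $\overline{V'}$ is projective over $\overline{V}$ the fibres are projective. When $V$ is a smooth connected curve and $\phi$ is flat, any dominant morphism from a variety to a smooth curve whose source has all components dominating the base is automatically flat; hence after the compactification step $\overline{\phi}$ remains flat on all of $\overline{V}$ since every component of $\overline{V'}$ dominates $\overline{V}$ by construction, and flatness over a Dedekind base is the usual criterion. For property (5), I would extend the $G$-action via $G$-equivariant completion: since $G$ is finite, the quasi-projective $G$-action on $V'$ admits a $G$-equivariant projective completion (a well-known consequence of Sumihiro-type results for finite groups, or directly by taking the $G$-orbit sum of an ample line bundle); we then incorporate this into the graph-closure and pure-dimensionalization steps in a $G$-equivariant way by performing each construction $G$-equivariantly (e.g., choosing $G$-invariant ample bundles and padding by trivially-acted $\mathbb{P}^{d-d_i}$ factors), which automatically makes the $G$-action restrict to the fibres of $\overline{\phi}$ since the original action did.

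Finally, for (6), each $\xi_i \colon V' \to \Gamma_\infty$ is $K$-definable, and I would extend by setting $\xi'_i$ equal to $\xi_i$ on the locally closed image $i'(V') \subset \overline{V'}$ and equal to $\infty$ on its complement; the resulting function on $\overline{V'}$ is still $K$-definable since $i'(V')$ is a constructible subset of $\overline{V'}$ and $\xi_i$ pulls back through the definable isomorphism with $V'$. The main obstacle among these steps is to execute the pure-dimensionalization \emph{simultaneously} with preservation of the $G$-action and of $\overline{\phi}$: one must choose the padding $\mathbb{P}^{d - d_i}$ factors in a way that the products $W_i \times \mathbb{P}^{d-d_i}$ on the source and target side are compatible under $\overline{\phi}$ and $G$-equivariant, which I would handle by first making the base $\overline{V}$ pure dimensional, then pulling back the padding factors to $\overline{V'}$ via $\overline{\phi}_0$ and combining with the fibre-direction padding as needed; the $G$-action is trivial on these padding factors and hence causes no obstruction.
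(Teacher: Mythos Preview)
Your outline hits all six properties and is broadly correct, but the paper's execution is rather different from yours in two places, and one of your steps has a small gap.

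For $G$-equivariance and the extension of $\phi$, the paper does both in one stroke: with $a'\colon V'\hookrightarrow\mathbb{P}^m_K$ the given immersion, it embeds $V'\hookrightarrow(\mathbb{P}^m_K)^G\times\overline{V}$ by $v\mapsto\bigl((a'(hv))_{h\in G},\,a(\phi(v))\bigr)$, where $G$ acts on $(\mathbb{P}^m_K)^G$ by permuting the factors. Then $\overline{\phi}$ is just the second projection, and the $G$-action is automatically fibrewise and extends to any $G$-invariant closure in this ambient product. This avoids Sumihiro entirely and removes the need to reconcile a separate graph-closure step with the $G$-action. For pure-dimensionalization of $\overline{V'}$, the paper works fibrewise over a finite set $S\subset\overline{V}$ of generic points (those of $\overline{V}$ and the images of the generic points of $V'$): for each $\eta\in S$ it picks, inside the fibre $A'_\eta$ of the ambient product, a $G$-equivariant pure projective variety $X(\eta)$ of the right dimension containing $V'\cap A'_\eta$, and sets $\overline{V'}:=\bigcup_{\eta\in S}\overline{X(\eta)}$. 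In the smooth-curve flat case $S$ is just the generic point, so every component of $\overline{V'}$ dominates $\overline{V}$ and flatness is immediate.

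Your padding step, ``replace each $W_i$ by $W_i\times\mathbb{P}^{d-d_i}$ and embed $V$ via a fixed $K$-point,'' does not quite work as written when distinct components of $V$ (or of $V'$) meet: the padded pieces sit in different ambient spaces, and assembling them into a single projective variety into which $V$ embeds as one locally closed subscheme is not automatic. The standard fix is to stay in a common $\mathbb{P}^N$ and enlarge each low-dimensional component by adding, say, a linear subspace of the correct dimension through it (this is essentially the content of \cite[\S 11.2]{HL}, which the paper cites for $\overline{V}$); you should say this rather than the product. With that correction your route is a legitimate alternative, trading the paper's explicit ambient-product construction for an assembly of standard results (equivariant completion, graph closure, generic flatness). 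The paper's approach has the advantage that compatibility of $\overline{\phi}$, the $G$-action, and purity are visible simultaneously in the ambient space, whereas yours requires checking at the end that the separate steps have not interfered with one another.
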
 
\begin{proof} 
As the varieties $V$ and $V'$ are quasi-projective, there exists $n,m \in \mathbb{N}$ such that
we can identify $V$ and $V'$ with locally closed subspaces of $\mathbb{P}^n_K$ and 
$\mathbb{P}^m_K$ respectively. 
Let $a \colon V \hookrightarrow \mathbb{P}^n_K$ and $a' \colon V' \hookrightarrow \mathbb{P}^m_K$ denote the 
respective immersions. 
Let $\overline{V}$ denote a projective variety contained in $\mathbb{P}^n_K$ that is pure, of dimension
$\mathrm{dim}(V)$ and contains 
$V$ (cf. \cite[\S 11.2]{HL}).
In the event that $V$ is a smooth connected $K$-curve, let $\overline{V}$ be the unique smooth projective $K$-curve
  that contains $V$ as a Zariski open dense subset. 
 If $V$ is integral and normal, then we can replace $\overline{V}$ with its normalization. 
 Note that this normalization is a projective variety and contains $V$ as an open dense subset. 
 After increasing $n$ if necesssary, we abuse
  notation and assume that 
  we have a closed immersion
  $\overline{V} \hookrightarrow \mathbb{P}^n_K$. 

  Let $A' := (\mathbb{P}_K^m)^G$. % and $A := (\mathbb{P}_k^n)^G$.
The group $G$ acts on the projective variety $A'$ %and $A$ 
in the following fashion. 
Let $\mathbf{x} := (x_h)_{h \in G} \in A'$ and $g \in G$ then we set $g(\mathbf{x}) := (x_{hg})_{h \in G}$.
% Likewise, if 
%$\mathbf{x} := (x_h)_{h \in G} \in A$ and $g \in G$ then $g(\mathbf{x}) := (x_{hg})_{h \in G}$. 
 Let $b' \colon  V' \hookrightarrow A'$ denote the immersion 
defined by $v \mapsto (a'(hv))_{h \in G}$.
% and $b \colon V \hookrightarrow A$ denote the immersion defined by 
 %$v \mapsto (a(hv))_{h \in G}$.
  Observe that $b'$
   %and $b'$ are
   is a $G$-equivariant embedding.
  
%We identify $V$ with its image in $\mathbb{P}^n_k$ via the embedding $b$. 
% and is $G$-invariant. 

  We have an embedding $c \colon V' \to A' \times \overline{V}$ given by $c(v) := (b'(v),a(\phi(v)))$.
  Let $p_2 \colon A' \times \overline{V} \to \overline{V}$ denote the projection morphism onto the second coordinate.
  Observe that $p_2$ is a $G$-equivariant morphism where $\overline{V}$ is endowed with the trivial action.
   We identify $V'$ with its image 
  in $A' \times \overline{V}$ via the embedding $c$. Likewise, we identify the morphism $\phi$ with the restriction of $p_2$ to $c(V')$.  

Let $S \subset \overline{V}$ denote the set of images of the generic points of $V'$ for the morphism 
$\phi$ and the generic points of $\overline{V}$. 
%Note that the set $S$ is $G$-equivariant.
Given a point $x \in \overline{V}$, %we set $V'_x := {p_2}_{|V'}^{-1}(x)$. 
let $A'_x$ denote the fibre over $x$ for the morphism $p_2$. 
Recall from \cite[Exercise II.3.10]{hart} that $A'_x$ is homeomorphic to 
the subspace $p_2^{-1}(x) \subset A' \times \overline{V}$.
For every point $\eta \in S$, we choose a $G$-equivariant pure 
projective variety $X(\eta) \subset A'_\eta$ that contains $V' \cap A'_{\eta}$ and whose closure 
 in $A' \times \overline{V}$ is of dimension $\mathrm{dim}(V')$ (cf. \cite[\S 11.2]{HL}).
%and if 
%$g \in G$ then $g(\overline{V'_\eta}) = \overline{V'}_{g(\eta)}$.
 It follows that 
$\bigcup_{\eta \in S} X(\eta)$ is a $G$-invariant subset of 
$A' \times \overline{V}$.  
Let $\mathrm{cl}({X(\eta)})$ denote the Zariski closure of $X(\eta)$ in $A' \times \overline{V}$ 
and 
we define $\overline{V'} := \bigcup_{\eta \in S} \mathrm{cl}({X(\eta)})$ endowed with the reduced induced closed subscheme structure. 
Since $G$ stabilizes $\bigcup_{\eta \in S} \mathrm{cl}({X(\eta)})$, we see that 
$G$ must stabilize $\overline{V'}$. 
Hence, $\overline{V'}$ is a projective 
variety which is pure of $\mathrm{dim}(V')$ that contains $V'$ and extends the action of the group $G$. 

 Observe that if $V$ is a smooth connected curve and $\phi$ is flat, then $S$ as defined above consists of 
 exactly the generic point of $V$. In this situation, our construction implies in addition that every irreducible component
 of $\overline{V'}$ dominates $\overline{V}$. It follows that the
  morphism 
 $\overline{V'} \to \overline{V}$ is flat. 
The purity of the fibres follows from \cite[Corollary III 9.6]{hart}.

Let us now return to the general situation.
%Every irreducible component of $\overline{V'}$ can be seen as the Zariski closure of its generic point. 
%Since $\overline{V'} = \mathrm{cl}(\bigcup_{\eta \in S} \overline{V'}_\eta) = \bigcup_{\eta \in S} \mathrm{cl}(\overline{V'_\eta})$, 
%we see that t
%Note that the generic points of $\overline{V'}$
%must belong to the fibres over the points of $S$. 
%Let $\eta'$ be one such generic point. 
%By construction, $\mathrm{tr}(k(\eta')/K) = \mathrm{dim}(V')$. We have thus shown that 
%$\overline{V'}$ is pure of dimension $\mathrm{dim}(V')$. 
Using \cite[Corollary III.9.6]{hart} and \cite[Tag 052B]{stacks-project}, we deduce that
the fibres of the restriction ${p_2}_{|\overline{V'}} \colon \overline{V'} \to \overline{V}$
are pure and equidimensional over an open dense subset of $\overline{V}$. 
Observe that
the morphism ${p_2}_{|\overline{V'}}$ is $G$-equivariant. 
 We define $\overline{\phi} := {p_2}_{|\overline{V'}}$.  
The functions $\xi_i \colon V' \to \Gamma_{\infty}$ extend to definable functions on 
$\overline{V'}$ by setting $\xi'_i(v) := \xi_i(v)$ for $v \in V'$ and $\infty$ otherwise. 
\end{proof}

     \begin{lem} \label{prove the simpler case}
       Suppose that Theorem \ref{generic cdr} and Proposition \ref{generic relative cdr} are true 
      in the following situation. 
      \begin{enumerate}
      \item Let $G$ be a finite algebraic group acting on projective $K$-varieties $V'$ and $V$
      where the action on $V$ is trivial. 
      \item The variety $V$ is integral and normal. 
      \item We are given a $G$-equivariant morphism $\phi \colon V' \to V$ whose fibres are pure over some 
      dense open subset $U \subset V$. 
      \item We are given a finite family $\{\xi_i \colon V' \to \Gamma_\infty\}_{i \in I}$ of $K$-definable functions.  
      \end{enumerate}
%       any finite algebraic group $G$ and
%        $G$-equivariant morphisms 
%       $\phi \colon V' \to V$ between projective $K$-varieties whose fibres are 
%       pure over some dense open subset $U \subset V$ with $V$ integral
%       and $\{\xi_i \colon V' \to \Gamma_\infty\}_{i \in I}$ a finite family of $K$-definable functions.  
%       (Here the action of $G$ on the base $V$ is assumed to be trivial.) 
        Then Theorem \ref{generic cdr} and Proposition \ref{generic relative cdr} are true in general. 
     \end{lem}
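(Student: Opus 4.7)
\emph{Proof plan.} The plan is to argue by a sequence of reductions transforming the general setting of the statement into the simpler one addressed by the hypothesis. First, we would decompose $V = V_1 \cup \cdots \cup V_r$ into irreducible components and pass to the open dense subset $V^{\circ} := \bigsqcup_i (V_i \setminus \bigcup_{j \neq i} V_j)$, reducing to the case $V$ irreducible, since every construction involved in Theorem~\ref{generic cdr} and Proposition~\ref{generic relative cdr} is compatible with disjoint unions in the base. Second, the normalization $\nu \colon \widetilde V \to V$ is finite and an isomorphism over a Zariski open dense $V_0 \subseteq V$, so we base-change $\phi$ to $\tilde\phi \colon \widetilde{V'} := V' \times_V \widetilde V \to \widetilde V$, pulling back the $G$-action trivially on $\widetilde V$ and the functions $\xi_i$; homotopies produced for $\tilde\phi$ then descend to $V$ via the isomorphism $\nu|_{\nu^{-1}(V_0)}$. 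This reduces us to the case where $V$ is integral and normal.

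With $V$ integral and normal, we would apply Lemma~\ref{simplifications} to produce projective varieties $\overline V$ (integral and normal, of the same dimension as $V$, so that the locally closed inclusion $V \hookrightarrow \overline V$ is in fact Zariski open and dense) and $\overline{V'}$ (pure of dimension $\dim V'$, containing $V'$ as a locally closed subspace), together with a $G$-equivariant extension $\overline\phi \colon \overline{V'} \to \overline V$ of $\phi$ (with $G$ trivial on $\overline V$), extensions $\xi'_i$ of the $\xi_i$, and the property that the fibres of $\overline\phi$ are pure and projective over a Zariski open dense subset of $\overline V$. The hypothesis of the present lemma then applies to $\overline\phi$ and produces a generalized interval $I$, an open dense $U_{0} \subseteq \overline V$, and deformation retractions $\overline H$ on $\widehat{U_{0}}$ and $\overline{H'}$ on $\widehat{\overline\phi^{-1}(U_{0})}$ satisfying properties~(1)--(6); the analogous invocation for Proposition~\ref{generic relative cdr} yields the required relative data.

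The final step is to restrict these homotopies back to $V$ and $V'$. Setting $U := U_{0} \cap V$, which is Zariski open in $\overline V$ and dense in $V$, the Zariski-generalizing property restricts $\overline H$ to $\widehat U$ and $\overline{H'}$ to $\widehat{\overline\phi^{-1}(U)}$. The main obstacle is to descend further from $\widehat{\overline\phi^{-1}(U)}$ to $\widehat{\phi^{-1}(U)} = \widehat{V' \cap \overline\phi^{-1}(U)}$: since $V'$ is only locally closed in $\overline{V'}$, the Zariski-generalizing property does not directly apply, and one must shrink $U$ further so that $\overline\phi^{-1}(U) \subseteq V'$. This amounts to arranging that the constructible set $\overline\phi(\overline{V'} \setminus V')$ is not dense in $\overline V$, which we achieve by preliminary reductions bringing $V'$ into a standard form: every component of $V'$ dominates $V$ (discarding the $G$-stable union of non-dominating components, whose image is a proper closed subset of $V$), and $V$ is shrunk so that the generic fibre of $\phi$ is pure (which forces $V'$ itself to be pure of dimension $\dim V'$). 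Under this arrangement $\overline{V'}$ in Lemma~\ref{simplifications} may be chosen to be the Zariski closure of $V'$ in its projective ambient space, so that $V'$ is Zariski open and dense in $\overline{V'}$; all remaining restrictions then proceed cleanly, and the resulting $H$ on $\widehat U$ and $H'$ on $\widehat{\phi^{-1}(U)}$ inherit properties~(1)--(6) from $\overline H$ and $\overline{H'}$.
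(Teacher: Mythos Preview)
Your overall architecture --- reduce to $V$ integral and normal, invoke Lemma~\ref{simplifications}, apply the hypothesis to the projective data, then restrict back --- matches the paper. The gap is in the last step, and it is a genuine one. Your preliminary reduction ``shrink $V$ so that the generic fibre of $\phi$ is pure'' does nothing: shrinking $V$ away from its generic point leaves the generic fibre unchanged, so if $V'$ has dominating components of different dimensions you cannot force $V'$ to be pure this way. Consequently you cannot in general take $\overline{V'}$ to be the Zariski closure of $V'$ and still satisfy condition~(3) of the hypothesis (pure fibres over a dense open). There is also a separate confusion: you assert that one ``must shrink $U$ further so that $\overline{\phi}^{-1}(U) \subseteq V'$'', but this is both unnecessary and in general impossible. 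It is unnecessary because if $V'$ were Zariski open in $\overline{V'}$ then the Zariski-generalizing property already restricts $\overline{H'}$ to $\widehat{V'}$ directly; and it is impossible because even when $V'$ is open and dense in $\overline{V'}$, the boundary can dominate $\overline{V}$ (take $V = \mathbb{P}^1$, $V' = \mathbb{A}^1 \times \mathbb{P}^1$ with $\phi$ the second projection: then $\overline{V'}\smallsetminus V' = \{\infty\}\times\mathbb{P}^1$ surjects onto $\overline{V}$).

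The paper's remedy is to keep the $\overline{V'}$ produced by Lemma~\ref{simplifications} (which guarantees pure fibres, at the price that $V'$ need not be dense in $\overline{V'}$), set $V'_1 := \overline{V'}\text{-closure of }V'$, and \emph{enlarge the family of definable functions} by adjoining characteristic functions $\epsilon_j$ of the closed subvariety $V'_1 \subset \overline{V'}$, i.e.\ functions with $\bigcap_j \epsilon_j^{-1}(\infty) = V'_1$. Applying the hypothesis to $\overline\phi$ together with $\{\xi'_i\}\cup\{\epsilon_j\}$ yields a homotopy $\overline{H'}$ that respects the $\epsilon_j$ and therefore preserves $\widehat{W'\cap V'_1}$. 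Since $V'$ is locally closed it is Zariski open in $V'_1$, and the Zariski-generalizing property on $V'_1$ (every Zariski open of $W'\cap V'_1$ is the trace of a Zariski open of $W'$) then restricts $\overline{H'}$ to $\widehat{W'\cap V'}$. After a final shrinking of $W$ to make $W'\cap V' \to W\cap V$ surjective, one obtains the compatible pair on $\widehat{V'_U}$ and $\widehat{U}$.
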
        
         \begin{proof}
          We first consider the case of Theorem \ref{generic cdr}. 
          Let the data be as given in Theorem \ref{generic cdr}. That is to say, 
          let $V$ be a pure quasi-projective $K$-variety and 
          let $\phi \colon V' \to V$ be a morphism between quasi-projective varieties with dense image. 
          Let $G$ be a finite algebraic group acting on $V'$
   which restricts to a well defined action along the fibres of the morphism $\phi$
   and 
   $\xi_i \colon V'\to \Gamma_{\infty}$ be a finite collection of $K$-definable functions.
   
             We deduce without difficulty that we can
              assume $V$ is integral and normal.          
            % For ease of notation, we assume that $G$ acts trivially on $V$. 
          By Lemma \ref{simplifications}, 
          there exists projective 
          $K$-varieties $\overline{V}$, $\overline{V'}$ and a finite type surjective 
  morphism 
  $\overline{\phi} \colon \overline{V'} \to \overline{V}$ satisfying conditions (1)-(6) of \ref{simplifications}. 
     Observe from our construction in the proof of Lemma \ref{simplifications}
      that the variety $V'$ is not necessarily dense in $\overline{V'}$. Let $V'_1$ denote the closure 
     of $V'$ in $\overline{V'}$. 
     In our situation, since $V'$ is locally closed, it is open in $V'_1$. 
     % See https://math.stanford.edu/~vakil/0708-216/216class13.pdf (Exercise 2D) - The notes have been saved to disk. 
     To the family of definable functions $\{\xi'_i \colon \overline{V'} \to \Gamma_{\infty}\}_i$ which extend the functions $\xi_i$, we add 
     the valuations of the characteristic functions of the closed subvariety $V'_1 \subset \overline{V'}$ which we denote 
     $\epsilon_j$ i.e. the family $\{\epsilon_j\}_j$ is such that $\bigcap_j \epsilon_j^{-1}(\infty) = V'_1$. 
     The hypothesis of the lemma 
     implies the existence of a 
      Zariski dense open subset $W \subset \overline{V}$ such that if  
     $W' := \phi^{-1}(W)$ then there exists 
     compatible deformation pairs $(H',\Upsilon')$ on $\widehat{W'}$ and 
     $(H,\Upsilon)$ on $\widehat{W}$
     satisfying assertions (1)-(6) of Theorem \ref{generic cdr}. 
      %such that $H'$ and $H$ are 
      %$G$-equivariant homotopies that respect the levels of the functions belonging to the family
      %$\{\xi'_i\} \cup \{\epsilon_i\}$.
       This implies in particular that the deformation $H'$ restricts to a  
      deformation  of 
      $\widehat{W' \cap V'_1}$.
      The set $W' \cap V'$ is open in $V'_1$. 
       Theorem \ref{generic cdr} asserts that the deformation  $H'$ is Zariski generalizing.
       Every Zariski open subset of $W' \cap V'_1$ is of the form $O \cap W' \cap V'_1$ where 
      $O$ is a Zariski open subset of $W'$. It follows that the restriction of the deformation  
      $H'$ to $\widehat{W' \cap V'_1}$ is Zariski generalizing as well. This
       implies that $H'$ restricts to a well defined deformation  
      of $\widehat{W' \cap V'}$.
      Recall that the map $V' \to V$ is a well defined morphism of quasi-projective varieties whose image is dense.
      We can hence shrink $W$ so that $W' \cap V' \to W \cap V$ is surjective. 
       It follows
       that $H$ restricts to a  well defined deformation of $\widehat{W \cap V}$. 
      The restrictions of the deformations $H'$ and $H$ to $W' \cap V'$ and 
      $W \cap V$ must be compatible since $H'$ and $H$ are compatible. 
%      When $V$ is an integral projective variety i.e. $V = \overline{V}$, 
%      observe that the argument above shows that we can take $H$ to satisfy condition (5)
%      of Theorem \ref{generic cdr}. 
      The proof in the case of Proposition \ref{generic relative cdr} is an easy adaptation of the arguments above.           
         \end{proof}

   \begin{lem} \label{Local factorization}
    Let $f \colon V' \to V$ be a projective morphism
    of $K$-varieties 
     such that the fibres of $f$ are pure of dimension $m$. 
    Let $G$ be a finite algebraic group acting on $V'$ such that the 
    morphism $f$ is $G$-equivariant when $V$ is endowed with the trivial action. 
    For every $v \in V(K)$, there exists a Zariski open neighbourhood $U \subset V$ of $v$ such that 
    the morphism $f \colon f^{-1}(U) \to U$ factors through a finite surjective
    $G$-equivariant morphism
    $p \colon f^{-1}(U) \to \mathbb{P}^m \times U$ over $U$  where 
    the $G$-action on $\mathbb{P}^m \times U$ is taken to be trivial. 
%    If $u \in U$ is such that $V'_u$ is generically reduced 
%    and the group $G$ acts faithfully on the fibre $V'_u$
%    then the morphism 
%    $p_u \colon V'_u \to \mathbb{P}_u^m$ is generically étale. 
    
         Suppose we are given a horizontal divisor
         \footnote{We say that $D$ is a horizontal divisor if it does not contain any irreducible component of any fibre of the morphism $f$.}
          $D \subset V'_U$. 
         There then exists a 
          $K$-point $z \in \mathbb{P}^m$ 
          such that after shrinking $U$ if necessary but maintaining that it contains $v$,
           we have the following commutative diagram
          
          $$
\begin{tikzcd}[row sep = large, column sep = large]
V'_{1U} \arrow[r, "{p'}"] \arrow[d,"b'"] & E \times U \arrow[d, "b \times id"] \arrow[r,"q"] & \mathbb{P}^{m-1} \times U \\
V'_U  \arrow[d,"f"] \arrow[r,"p"]  & \mathbb{P}^m \times U \\
U 
\end{tikzcd}
$$ 
    where $b \colon E \to \mathbb{P}^m$ is the blow up at the point $z$, the restriction 
    of $(q \circ p')$ to $D$ is finite surjective onto $\mathbb{P}^{m-1} \times U$ and the 
    square in the diagram is cartesian. 
         
    \end{lem}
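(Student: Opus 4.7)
The plan is to construct the $G$-equivariant finite surjection $p$ by quotienting by $G$ and then applying a generic linear projection, and then to choose the blow-up centre $z$ by a general-position argument applied to the image of the horizontal divisor $D$. For the first part, I would form the quotient $\pi \colon V' \to V'/G$ over $V$, which exists as a projective $V$-scheme since $G$ is finite and acts trivially on $V$, and whose fibres still have pure dimension $m$. Fixing a closed embedding $V'/G \hookrightarrow \mathbb{P}^N \times V$ over $V$, the fibre $(V'/G)_v$ is a pure $m$-dimensional closed subscheme of $\mathbb{P}^N$. A sufficiently general linear subspace $L \subset \mathbb{P}^N$ of codimension $m+1$ is disjoint from $(V'/G)_v$, so linear projection from $L$ restricts to a finite morphism $(V'/G)_v \to \mathbb{P}^m$, surjective since its image is closed of pure dimension $m$ inside the irreducible $\mathbb{P}^m$. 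The locus in $V$ where $L$ remains disjoint from the fibre is open, so shrinking $V$ to a Zariski neighbourhood $U$ of $v$ gives a finite surjective $U$-morphism $(V'/G)_U \to \mathbb{P}^m \times U$; precomposing with $\pi$ produces the desired $G$-invariant, finite surjective morphism $p$.

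For the choice of $z$, I would analyze the image $p(D) \subset \mathbb{P}^m \times U$. Finiteness of $p$ makes $p(D)$ closed, and horizontality of $D$ forces $p(D)_v = p(D_v)$ to have dimension at most $m-1$ in $\mathbb{P}^m$. I then impose on $z \in \mathbb{P}^m(K)$ the three fibrewise open conditions: (i) $z \notin p(D)_v$, so that projection from $z$ is regular on $p(D)_v$; (ii) no line through $z$ lies entirely in $p(D)_v$, so that projection from $z$ has zero-dimensional fibres on $p(D)_v$ and is therefore finite; (iii) $p(D)_v$ is not contained in any hyperplane through $z$, so that projection from $z$ is dominant and hence surjective. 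Each condition excludes a proper Zariski closed subset of $\mathbb{P}^m$, so by algebraic closedness of $K$ a generic $K$-point $z$ satisfies all three simultaneously.

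With $z$ fixed, the cartesian square in the diagram is formed by pulling back $b \times \mathrm{id} \colon E \times U \to \mathbb{P}^m \times U$, where $b$ is the blow-up at $z$ and $q \colon E \to \mathbb{P}^{m-1}$ is the resolved projection from $z$. Condition (i) ensures, after a further shrinking of $U$, that $D$ is disjoint from $p^{-1}(\{z\} \times U)$, so its strict transform in $V'_{1U}$ is isomorphic to $D$ via $b'$ and the restriction of $q \circ p'$ to $D$ coincides with the composition of $p|_D$ with the projection from $z$; conditions (ii) and (iii) then make this restriction finite surjective onto $\mathbb{P}^{m-1} \times U$. The main obstacle is propagating the fibrewise conditions (i)--(iii) from $v$ to an honest Zariski open neighbourhood in $V$; this I expect to handle via upper semicontinuity of fibre dimension together with the properness of $p(D) \to V$, which makes each of the non-occurrence statements above open in the base.
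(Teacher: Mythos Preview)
Your approach matches the paper's: pass to the quotient $V'/G$ and use a generic linear projection for the first part (the paper iterates single-point projections, reducing the ambient $\mathbb{P}^n$ one dimension at a time by contradiction on minimality of $n$, rather than projecting from a codimension-$(m+1)$ subspace in one go, but this is cosmetic), and for the second part choose $z$ avoiding the fibre $p(D)_v$ and shrink $U$ accordingly. The paper in fact imposes only your condition (i) and leaves the remaining checks to the reader.

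One correction: your conditions (ii) and (iii) are both redundant, and (iii) is not the right justification for surjectivity. Since $p(D)_v$ is closed in $\mathbb{P}^m$ and misses $z$, it cannot contain a line through $z$, so (ii) follows from (i) and finiteness of the projection on $p(D)_v$ is automatic (projective plus quasi-finite). Your (iii) does not imply surjectivity in general: a nondegenerate curve in $\mathbb{P}^m$ with $m\geq 3$ satisfies (iii) for every $z$ yet never projects onto $\mathbb{P}^{m-1}$, and conversely if $p(D)_v$ spans only a subspace of dimension $\leq m-2$ then (iii) fails for \emph{every} $z$. What you need instead is $\dim D_v = m-1$: horizontality gives $\leq m-1$, and since $D$ is a genuine divisor each of its components has dimension $\dim V'_U -1$, hence surjects onto $U$ by the fibre-dimension inequality, and upper semicontinuity then forces $\dim D_v \geq m-1$. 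With this in hand, (i) alone suffices: the projection from $z$ is finite on $p(D)_v$, so its image is closed of dimension $m-1$, hence all of $\mathbb{P}^{m-1}$.
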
      
     \begin{proof} 
     This is a relative version of \cite[Lemma 11.2.1]{HL}. 
     We may replace $V'$ with $V'/G$ and assume 
     that the action of $G$ is trivial. 
     Let $v \in V$.
        Let 
      $n \in \mathbb{N}$ be the smallest natural number greater than or equal to $m$ such that there exists a 
      Zariski open neighbourhood $U$ of $v$ and the 
       morphism 
      $f$ factors through
       a finite morphism
      $i \colon V'_U \rightarrow \mathbb{P}^n \times U$.
      The fact that there exists such an $n$ is because 
      $f$ is projective. 
       Let $V'_v$ denote the fibre over $v$. If 
      $m = n$ then we have nothing to prove. Suppose $n > m$. Let 
      $(z,v) \in (\mathbb{P}^n \times U)(K)$ be a point that is not contained in $i(V'_v)$.
      Let $C := i(V'_U) \cap (\{z\} \times U) \subset \mathbb{P}^n \times U$.
      Observe that $C$ is a closed subset of $\mathbb{P}^n \times U$ and hence $p_2(C)$ 
      is a closed subset of $U$ 
      where 
      $p_2$ is the projection $\mathbb{P}^n \times U \to U$.
       Furthermore, our choice of $z \in \mathbb{P}^n$ implies that $v \notin p_2(C)$. We abuse notation
       and call the complement of $C$ in $U$, $U$ as well. 
       By construction, for every $u \in U$, $(z,u) \notin i(V'_u)$.
       Let $p \colon \mathbb{P}^n \smallsetminus \{z\} \to \mathbb{P}^{n-1}$
       denote the projection through the point $z$. It follows that the map 
       $p \times \mathrm{id} \colon (\mathbb{P}^n \smallsetminus \{z\}) \times U \to \mathbb{P}^{n-1} \times U$
       restricts to a finite morphism $i_1 \colon V'_U \to \mathbb{P}^{n-1} \times U$. 
       This contradicts our assumption that $n$ was minimally chosen. 
        
       We now prove the second part of the Lemma. 
       Let $D \subset V'_U$ be a horizontal divisor. 
       Let $z \in \mathbb{P}^m(K) \times \{v\}$ be a point which 
       is not contained in the image $p(D)$. We now argue as before. 
        Let $C' := p(D) \cap (\{z\} \times U) \subset \mathbb{P}^m \times U$. 
       We have that $C'$ is closed and hence $p_2(C') \subset U$ is 
       a closed subspace that does not contain $v$. 
       We shrink $U$ so that it does not intersect $p_2(C')$. 
      Let $b \colon E \to \mathbb{P}^m$ be the blow up at the point $z$. 
      Let $V'_{1U} := V'_U \times_{(\mathbb{P}^m \times U)} (E \times U)$. 
     We thus have the diagram above and the remaining assertions can be checked 
      from the construction. 
        \end{proof}      
        
     \begin{rem} 
   \emph{Observe in Lemma \ref{Local factorization} that 
      $V'_{1U}$ comes equipped with a natural action 
      by the group $G$ and the morphism 
      $b' \colon V'_{1U} \to V'_U$ is $G$-equivariant.}     
     \end{rem}

  \begin{lem} \label{no need for etale}
   Let $F$ be a valued field. 
    Let $f \colon V' \to V$ be a finite surjective morphism between pure $K$-varieties where $V$ is assumed to 
    be normal. 
    Assume there exists a proper closed subset $D \subset V$ which satisfies the following property.

    \begin{itemize} 
    \item If $n$ denotes the supremum of the values $\mathbf{card}\{f^{-1}(v)\}$ 
    as $v$ varies along $V$ then 
    for every $u \in U := V \smallsetminus D$, 
    $n = \mathbf{card}\{f^{-1}(u)\}$. 
   \end{itemize} 
   We then have that for every $v \in U$
   and every point $v' \in f^{-1}(v)$, there exists
   definable  
   sets
   $W \subset {V}$ containing $v$ and 
   $W' \subset V'$ containing $v'$ such that 
   $\widehat{W}$ and $\widehat{W'}$ are open in 
   $\widehat{V}$ and $\widehat{V'}$ respectively and 
   $\widehat{f}$ restricts to a homeomorphism 
   from $\widehat{W'}$ onto $\widehat{W}$. 
     \end{lem}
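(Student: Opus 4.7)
The plan is to separate the points of $f^{-1}(v)$ using a regular function, cut out the sheet through $v'$ by a valuation inequality, and invoke the $v+g$-continuity machinery of Section \ref{canonical extensions} to promote this to a homeomorphism of Hrushovski-Loeser spaces.

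Write $f^{-1}(v) = \{v' = v'_1, v'_2, \ldots, v'_n\}$. Since $V'$ is a quasi-projective variety, there is an affine open $V'_0 \subset V'$ containing all $v'_i$ together with a regular function $g \in \mathcal{O}(V'_0)$ for which $\alpha_i := g(v'_i) \in K$ are pairwise distinct; set $\delta := \min_{i \neq j} \mathrm{val}(\alpha_i - \alpha_j)$. As $f$ is finite and therefore proper, $f(V' \smallsetminus V'_0)$ is Zariski closed in $V$ and avoids $v$, so $V_0 := (V \smallsetminus f(V' \smallsetminus V'_0)) \cap U$ is a Zariski open neighbourhood of $v$ inside $U$ with $f^{-1}(V_0) \subset V'_0$. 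Define the definable set
$$W' := \{x \in f^{-1}(V_0) : \mathrm{val}(g(x) - \alpha_1) > \delta\},$$
which is a $v$-open, and in fact $v+g$-open, neighbourhood of $v'$.

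The crux is to show that after replacing $V_0$ by a smaller $v+g$-open definable neighbourhood $V_1$ of $v$ and restricting $W'$ accordingly, $f$ induces a bijection $W' \to W := f(W')$. Since $A' := \mathcal{O}(V'_0)$ is a finite $A := \mathcal{O}(V_0)$-algebra, $g$ satisfies a monic integral relation $g^N + a_{N-1}g^{N-1} + \cdots + a_0 = 0$ over $A$. For each $y \in V_0$ the values $g(x)$ for $x \in f^{-1}(y)$ lie among the roots in $K$ of the specialisation $P_y(T)$, whose coefficients are regular in $y$. At $y = v$ these roots include the pairwise distinct $\alpha_1, \ldots, \alpha_n$, and a Krasner-type continuity argument, combined with the hypothesis that $|f^{-1}(y)| = n$ for all $y \in U$, shows that for $y$ in a small enough $v+g$-open definable neighbourhood of $v$ the $n$ points of $f^{-1}(y)$ can be labelled $x_1(y), \ldots, x_n(y)$ with $\mathrm{val}(g(x_i(y)) - \alpha_i) > \delta$ for every $i$. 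The defining inequality of $W'$ then isolates the sheet indexed by $i = 1$.

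With the bijection established, the inverse $W \to W'$ assigning $y$ to its unique preimage in $W'$ is itself defined by polynomial equations and valuation inequalities, so both $f|_{W'}$ and its inverse are definable and $v+g$-continuous. By Remark \ref{unique extensions} and \cite[Lemma 3.8.2]{HL}, the canonical extensions yield continuous mutually inverse maps between $\widehat{W'}$ and $\widehat{W}$, which are open in $\widehat{V'}$ and $\widehat{V}$ respectively because $W'$ and $W$ are $v+g$-open. The main difficulty is the Krasner-type argument: the normality of $V$ together with the constancy of the fibre cardinality on $U$ are both essential to rule out nearby fibres merging sheets, producing extra preimages, or splitting the chosen sheet, any of which would prevent a single valuation inequality from isolating one sheet.
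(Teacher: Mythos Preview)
Your strategy---separate the sheets, establish a bijection on simple points, then promote to the hat-level---is the right shape, but two steps are not adequately justified, and the paper's proof takes a cleaner route that avoids both.

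First, the Krasner argument does not by itself yield the claimed labelling. Continuity of the roots of $P_y$ tells you only that each root of $P_y$ lies near some root of $P_v$; it does not tell you that the $n$ values $g(x)$ for $x \in f^{-1}(y)$ distribute one near each $\alpha_i$. Two preimages of $y$ could have $g$-values clustering near the same $\alpha_i$, while another $\alpha_j$ is approached only by a root of $P_y$ that does not arise as $g(x)$ for any actual preimage (recall $N$ may exceed $n$). What is really needed is that for $y$ near $v$ each disjoint neighbourhood of some $v'_i$ contains at least one preimage of $y$; combined with $|f^{-1}(y)|=n$ and disjointness, this forces exactly one in each. That ``at least one'' requires the image of each neighbourhood under $f$ to be open---i.e.\ openness of the map---which is precisely where normality enters, via \cite[Corollary 9.7.4]{HL}. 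Your concluding sentence gestures at normality but does not deploy it.

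Second, the assertion that the inverse $W \to W'$ is $v+g$-continuous because it is ``defined by polynomial equations and valuation inequalities'' conflates definability with continuity. The paper bypasses this entirely: once disjoint $v+g$-open neighbourhoods $W''_{v'}$ of the preimages are chosen, it sets $W := \bigcap_{v'} f(W''_{v'})$, which is open because $\widehat{f}$ is open (normality, \cite[Corollary 9.7.4]{HL}); the bijection $W'_{v'} \to W$ follows by a pigeonhole count; the bijection at the hat-level is \cite[Lemma 4.2.6]{HL}; and the inverse is continuous because $\widehat{f}$ is definably closed (properness, \cite[Lemma 4.2.26]{HL}), so a continuous bijection restricted to a clopen piece of a full preimage is automatically a homeomorphism. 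No integral relation, no Krasner, and no continuity of an explicit inverse are needed.
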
    
   \begin{proof}
   Let $F$ be a model of ACVF such that $v \in U(F)$. 
    Let $U' := f^{-1}(U)$. 
    For every $v' \in U'$ such that $v' \mapsto v$ we choose 
    a suitably small $F$-definable $v+g$-open subset $W''_{v'} \subset {U'}$ such that 
    if $v'_1 \neq v'_2$ are distinct preimages of $v$ then
    $W''_{v'_1} \cap W''_{v'_2} = \emptyset$. 
    Let $W$ be the $F$-definable set $\bigcap_{v' \in f^{-1}(v)} {f}(W''_{v'})$.
     By \cite[Corollary 9.7.4]{HL}, 
    the morphism $\widehat{f}$ is open. 
    Furthermore, since $f$ is proper, \cite[Lemma 4.2.26]{HL} implies that
    $\widehat{f}$ is a definably closed map.  
    It follows that $\widehat{W}$ is an pro-$F$-definable open neighbourhood of $v$.
     For any $v' \mapsto v$, let $W'_{v'} := {f}^{-1}(W) \cap W''_{v'}$. 
     
    We claim that 
    ${f}^{-1}(W) = \bigcup_{v' \in f^{-1}(v)} W'_{v'}$. 
    By definition of $W'_{v'}$, we see that 
    $\bigcup_{v' \in f^{-1}(v)} W'_{v'} \subseteq {f}^{-1}(W)$. 
    Hence, we are left to show that 
    if $w' \in {U'}$ is such that ${f}(w') \in W$ then 
    for some $v' \in f^{-1}(v)$, $w' \in W'_{v'}$. 
    Let $w := {f}(w')$. 
    Since $w \in U$, 
    $\mathbf{card}({f}^{-1}(w)) = n$ and by construction
    for every $v' \in f^{-1}(v)$, there exists $w'_{v'} \in W'_{v'}$ 
    such that ${f}(w'_{v'}) = w$.  
         Since the $W'_{v'}$ are mutually disjoint, the points
     $w'_{v'}$ must account for all the preimages of 
     $w$. This implies that for some $v'$, 
     $w'_{v'}  = w$. 
     We have thus verified the claim. 
     
     Observe that
     for every $v' \in f^{-1}(v)$,
      $f$ restricts to a bijection from 
      $W'_{v'}$ to $W$. 
    By \cite[Lemma 4.2.6]{HL}, this implies that $\widehat{f}$ restricts to a bijection 
    $\widehat{W'_{v'}} \to \widehat{W}$. 
    The morphism 
    $\widehat{f}$ is clopen when restricted to 
    $\bigcup_{v' \in f^{-1}(v')} \widehat{W'_{v'}}$. 
    By construction, for every $v'$, $\widehat{W'_{v'}}$ is open. 
    It follows that $\widehat{f}$ restricts to a homeomorphism from each 
    $\widehat{W'_{v'}}$ onto $\widehat{W}$.  
   \end{proof} 
   
   In the following lemma, when we write $v \in V$ where $V$ is a 
   quasi-projective variety defined over a field $F$, we mean 
   $v \in V(\mathbb{U}')$ where $\mathbb{U}'$
   where $\mathbb{U}'$ 
   is chosen to be a suitable universal domain for the theory ACF. 
      
   \begin{lem} \label{existence of D}
   Let $F$ be a field and
    let $\phi \colon V' \to V$ be a morphism of quasi-projective $F$-varieties whose fibres 
    are pure of dimension $m$ for some $m \in \mathbb{N}$.   
    We assume that the morphism $\phi$ factors through a finite surjective morphism 
   $f \colon V' \to P \times V$ via the
    projection $p \colon P \times V \to V$ where $P$ is an irreducible $F$-variety.
    There then exists a 
    Zariski open dense subset $U \subseteq V$ and a
    Zariski closed subset $T \subset P \times U$ satisfying the following properties.
    \begin{enumerate} 
   \item The restriction $p_{|T} \colon T \to U$ is flat.
   \item  Given $u \in U$, let $n(u)$ denote the supremum of the set 
   $\{\mathbf{card}\{f^{-1}(x,u)\}| x \in P\}$. 
   We then have that 
   $$\{(x,u) \in P \times U | \mathbf{card}\{f^{-1}(x,u)\} < n(u)\} \subseteq T.$$
  \item When $V$ is a smooth curve and the morphism $\phi$ is 
  flat, we can take $U = V$.    
   \end{enumerate}
    %  Let $X$ be a pure
%             quasi-projective $K$-variety and $S$ be a smooth connected $K$-curve.  
%   Let $\phi \colon X \to S$ be a projective morphism between quasi-projective varieties such that 
%   every irreducible component of $X$ dominates $S$ and the fibres of $\phi$ are pure. 
%%   Let $G$ be a finite algebraic group acting on $X$
%%   that restricts to a well defined action along the fibres of the morphism $\phi$.
%   We assume that the morphism $\phi$ factors through a finite surjective morphism 
%   $f \colon X \to P \times S$ via the projection $p \colon P \times S \to S$. 
%   There exists a divisor $T \subset \mathbb{P}^m \times S$ which satisfies the following properties. 
%   
%   \begin{enumerate} 
%   \item The restriction $p_{|T} \colon T \to S$ is flat. 
%   \item Given $s \in S$, let $n(s)$ denote the supremum of the set 
%   $\{f^{-1}(x,s)| x \in \mathbb{P}^m\}$. 
%   We then have that 
%   $$\{(x,s) \in \mathbb{P}^m(K) \times S(K) | \mathbf{card}\{f^{-1}(x,s)\} < n(s)\} \subset T.$$
%%    If $(x,s) \in (\mathbb{P}^m(K) \times S(K)) \smallsetminus T(k)$
%%   and $(y,s) \in \mathbb{P}^m(K) \times S(K)$ then $\mathbf{card}\{f^{-1}(x,s)\} \leq \mathbf{card}\{f^{-1}(y,s)\}$.
%   \end{enumerate}    
   \end{lem}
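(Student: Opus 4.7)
The plan is to define $T$ as (the restriction to a suitable open $U \subseteq V$ of) the \emph{branch locus} of the finite surjective morphism $f$: the closed subset of $P \times V$ consisting of points whose fibre under $f$ has fewer than the generic number of preimages. After reducing to the case where $V$ is irreducible (by handling irreducible components separately and intersecting the resulting opens), $P \times V$ is irreducible and $f$ has a well-defined generic fibre cardinality $n$. Setting
\[
  B := \{ y \in P \times V : |f^{-1}(y)| < n \},
\]
lower semi-continuity of the fibre cardinality for finite morphisms (equivalently, openness of the \'etale locus for a generically \'etale finite morphism) makes $B$ Zariski closed in $P \times V$.

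For (1) and (2) I would apply Grothendieck's generic flatness theorem twice. First, applied to the composition $B \hookrightarrow P \times V \xrightarrow{p} V$, it yields an open dense $U_0 \subseteq V$ such that $B \cap p^{-1}(U_0) \to U_0$ is flat. Second, applied to $\phi \colon V' \to V$, it gives an open dense $U_1 \subseteq V$ on which $\phi$ is flat; shrinking $U_1$ further if needed, the generic fibre cardinality $n(u)$ of $f_u \colon V'_u \to P$ is constant equal to $n$ on $U_1$ (the rank of $f_*\mathcal{O}_{V'}$ is preserved under base change over the flat locus). Setting $U := U_0 \cap U_1$ and $T := B \cap p^{-1}(U)$, condition (1) is immediate, and (2) follows because for $u \in U$ the locus where $|f^{-1}(x,u)| < n(u) = n$ is exactly $B \cap (P \times \{u\}) \subseteq T$.

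Part (3) is the main obstacle. When $V$ is a smooth connected curve and $\phi$ is flat, I claim $T = B$ itself works on the whole of $V$. Flatness over the regular one-dimensional base $V$ is equivalent to torsion-freeness, so it suffices to rule out irreducible components of $B$ contained in a single fibre $P \times \{v\}$. Flatness of $\phi$ together with the purity of its fibres forces $n(v) = n$ for every $v \in V$, so $B_v := B \cap (P \times \{v\})$ is a \emph{proper} closed subset of $P$ for every $v$. Combined with the purity of the branch locus, which makes $B$ of pure codimension one in $P \times V$ and hence of dimension $m$, any vertical component of $B$ inside $P \times \{v\}$ would have dimension $m = \dim P$ and would therefore equal all of $P \times \{v\}$, contradicting $B_v \subsetneq P$. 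Consequently every irreducible component of $B$ dominates $V$, $B \to V$ is flat, and one may take $U = V$. The delicate step in this last paragraph is the preservation of the generic fibre cardinality $n(v) = n$ for all closed $v$, which is exactly where the flatness of $\phi$ combined with purity of its fibres is used, and without which one can easily produce counterexamples by letting a whole slice $P \times \{v_0\}$ collapse into the branch locus.
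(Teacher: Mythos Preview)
Your overall strategy is reasonable, but there are two genuine gaps.

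The first concerns condition (2). The lemma defines $n(u)$ as the \emph{supremum} $\sup_{x\in P}|f^{-1}(x,u)|$, whereas you take $n(u)$ to be the \emph{generic} fibre cardinality of $f_u$. These need not coincide: for a finite surjective morphism to an irreducible target the number of preimages can strictly exceed its generic value at special points (the normalisation of a nodal curve has generic fibre of one point but two over the node). Since your $B$ is cut out by the global generic value $n$, even after shrinking $U$ you only obtain $\{(x,u):|f^{-1}(x,u)|<n\}\subseteq T$, which can be strictly smaller than the set in (2) whenever the fibrewise supremum exceeds $n$. Your justification via the rank of $f_*\mathcal{O}_{V'}$ tracks the scheme length of fibres rather than the number of geometric points, and flatness of $\phi$ does not imply flatness of $f$. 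Relatedly, the semi-continuity you invoke to make $B$ closed does not hold in this generality; the fibre cardinality of a finite morphism is only constructible. The paper avoids all of this by defining the bad locus fibrewise from the start, $E:=\{(x,u):|f^{-1}(x,u)|<n(u)\}$, taking its Zariski closure $\overline{E}$, and then shrinking $U$ so that $\overline{E}$ contains no full fibre $P\times\{v\}$ and is flat over $U$. Condition (2) then holds by construction.

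The second gap is in part (3). Purity of the branch locus (Zariski--Nagata) requires both source and target to be regular, which is not assumed here, and in any case your $B$ is not the branch locus in the usual sense once $f$ fails to be flat. Without purity you cannot rule out vertical components of the closure of dimension strictly less than $m$, and these obstruct flatness over $V$. The paper's argument is more elementary: since $\dim E_v<m$ for every $v$ and $\dim V=1$, one has $\dim\overline{E}\le m$, so no component of $\overline{E}$ can be an entire fibre $P\times\{v\}$; any surviving vertical component $Y\subset P\times\{v\}$ (necessarily of dimension $<m$) is then replaced by the horizontal set $Y\times V$, and the union of these with the horizontal components of $\overline{E}$ gives a closed $T$ that is flat over all of $V$ while still satisfying (2).
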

     \begin{proof} 
     We can assume at the outset that $V$ is integral. 
     We begin by defining an ACF-definable set $E \subset P \times V$ as follows. 
     % Since ACF is complete, it suffices to specify $E(K)$. 
      %Let $E$ be the complement of the set $E'$ which is defined as follows. 
%     The set $E'(K)$ is the collection of pairs $(x,s) \in \mathbb{P}^m(K) \times S(K)$ such that 
%     for any $y \in \mathbb{P}^m(K)$, 
%     $\mathbf{card}\{f^{-1}(x,s)\} \leq \mathbf{card}\{f^{-1}(y,s)\}$.
     
     Let $M$ denote the supremum of the set $\{\mathbf{card}\{f^{-1}(x,u)\} | (x,u) \in P \times V\}$.
%     Using that for any $(a,b) \in \mathbb{P}^m \times S$, the cardinality 
%     of $\{f^{-1}(a,b)\}$ is bounded by the separable degree of $f$,
%     one deduces that $E'$ is ACF-definable.
%     Indeed, 
     There exists an ACF-definable partition $S_1,\ldots,S_M$ of $V$ such that 
     if $s \in S_i$ then 
     $$\mathrm{sup} \{\mathbf{card}\{f^{-1}(x,s)\} | x \in P\} = i.$$ 
     Let $E' = \bigcup_{1 \leq i \leq M} E'_i$ where $E'_i$ is 
     the ACF-definable subset of $P \times S_i$ consisting 
     of the set of pairs $(x,s)$ such that 
     $$\mathbf{card}\{f^{-1}(x,s)\} = i.$$
      
     Observe that $E'$ is $F$-definable. 
      Let $E$ be the complement of $E'$ in $P \times V$. 
      Hence $E$ is a constructible subset of $P \times V$ and 
      for every $v \in V$, $E_v$ is of dimension strictly smaller than $m$.
      Hence 
      $\mathrm{dim}(E) < \mathrm{dim}(V) + m$. 
      Let $\overline{E}$ denote the Zariski closure of $E$ in 
      $P \times V$.
      
      We now show how to choose $U$ as required by the lemma. Let $Z \subset V$ be 
      the set of points $v \in V$ such that 
      $\overline{E}$ contains $P \times \{v\}$. 
      The set $Z$ is ACF-definable. 
      Since the dimension of $E$ is strictly smaller than 
      $\mathrm{dim}(V) + m$, there exists a Zariski open subset $U$ of $V$ that is disjoint from 
      $Z$. Let $T := \overline{E} \cap (P \times U)$.
      We now shrink $U$ further if necessary so that the restriction ${p}_{|T} \colon T \to U$ is flat. 
      This verifies assertions (1) and (2) of the lemma. 
      
       Suppose that $V$ is a smooth curve and the morphism $\phi$ is flat. 
%       By construction, none of the irreducible components of 
%       $E$ is dense in a fibre of the form $\mathbb{P}^m \times \{v\}$ 
%       for some $v \in V$. 
       Since $\mathrm{dim}(E) \leq m$,
       we see that $\overline{E}$ cannot contain any fibre of the form 
       $P \times \{v\}$ 
       for some $v \in V$. 
       Indeed, if $\overline{E}$ contains $P \times \{v\}$ 
       for some $v \in V$ then 
       $P \times \{v\}$ must be an irreducible component of $\overline{E}$ and 
       $E \cap P \times \{v\}$ must be dense in $P \times \{v\}$.
       This is not possible because $\mathrm{dim}(E_v) < m$. 
       
       If $Y \subset \overline{E}$ is an irreducible component 
       whose image via $p$ is a closed point $v \in V(F)$ then 
       we can identify $Y$ with a Zariski closed subset of 
       $P_v$ and write $T(Y) := Y \times V$. 
       If on the other hand $Y$ is an irreducible component of 
       $\overline{E}$ which surjects onto $V$ via $p$ then 
       we set $T(Y) := Y$. 
       If $Y_1,\ldots,Y_m$ are the irreducible components of $\overline{E}$ then
       let $T := \bigcup_i T(Y_i)$. 
       By construction $T$ satisfies assertions (1) and (2) when $U = V$. 
             
%      Observe that for every $s \in S$, the fibre $E_s \subset \mathbb{P}^m \times \{s\}$
%      is constructible of dimension strictly less than $m$. 
%%      This is a consequence of the general fact 
%%      that if $g : V' \to V$ is a finite morphism between $K$-varieties then there exists an open 
%%      dense subset $U$ of $V$ such that if $v \in U$ and $w \in V$ then 
%%      $\mathbf{card}\{g^{-1}(w)\} \leq \mathbf{card}\{g^{-1}(v\}$.
%%      It follows that $E$ cannot have any irreducible components of the form 
%%      $\mathbb{P}^m \times \{s\}$ for some $s \in S(K)$ since other 
%        Hence we see that
%        $E$ is the union of varieties each of which has dimension at most $m$. 
%        It follows that  the Zariski closure $\overline{E}$ must have dimension at most $m$. 
%      We deduce that the Zariski closure $\overline{E}$ cannot contain 
%       any subspace of the form $\mathbb{P}^m \times \{s\}$ for some $s$. 
%       Indeed, if $\overline{E}$ contained $\mathbb{P}^m \times \{s\}$ for some $s$ then 
%         $\mathbb{P}^m \times \{s\}$ would be an irreducible component of $\overline{E}$ 
%         and $E \cap (\mathbb{P}^m \times \{s\})$ would be dense in $\mathbb{P}^m \times \{s\}$ which is not possible. 
%     Thus we see that the restriction of the projection $\overline{E} \to S$ is flat. 
%     We set $T := \overline{E}$ to complete the proof. 
               \end{proof}

  \subsection{The inflation homotopy for families}  \label{inflation for families}
           
  \begin{lem} \label{lem : distance to closed sets}
  Let $V$ be a quasi-projective variety over a valued field $F$. 
  Let $m \colon V \times V \to \Gamma_\infty$ be a definable metric as constructed in the proof 
  of \cite[Lemma 3.10.1]{HL}. Let $D \subset V$ be a $v$-closed subset
  and $x \in V \smallsetminus D$. 
  Then the set $\{m(x,d) | d \in D\} \subset \Gamma$ has a supremum in $\Gamma$.   
  \end{lem}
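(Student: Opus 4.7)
The plan is to reduce the statement to showing that the definable subset $S := \{m(x,d) \mid d \in D\}$ of $\Gamma_\infty$ is in fact a bounded subset of $\Gamma$, and then to invoke the o-minimality of divisible ordered abelian groups.

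First, I would extract from the construction of $m$ in \cite[Lemma 3.10.1]{HL} the key property that for every $x \in V$ the superlevel sets $B_\gamma(x) := \{y \in V \mid m(x,y) \geq \gamma\}$, as $\gamma$ ranges over $\Gamma$, form a fundamental system of $v$-open neighborhoods of $x$, with $m(x,y) = \infty$ if and only if $y = x$. This is visible from the explicit definition of $m$ on $\mathbb{A}^n$ (where $B_\gamma(x)$ is a polydisc of radius $\gamma$ around $x$, which is both $v$-open and $v$-closed by the ultrametric inequality), and is preserved by the patching used for general quasi-projective $V$ in \emph{loc.\ cit.}

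Given this property, since $D$ is $v$-closed and $x \notin D$, the complement $V \setminus D$ is a $v$-open neighborhood of $x$, so there exists $\gamma_0 \in \Gamma$ with $B_{\gamma_0}(x) \cap D = \emptyset$. Equivalently, $m(x,d) < \gamma_0$ for every $d \in D$, which shows that $\infty \notin S$ and that $S \subseteq \Gamma$ is bounded above by $\gamma_0$. On the other hand $S$ is the image of the definable set $D$ under the definable function $m(x, \cdot)$, so $S$ is a definable subset of $\Gamma$. Since the theory of the value group $\Gamma$ (divisible ordered abelian groups) is o-minimal, every nonempty definable bounded-above subset of $\Gamma$ admits a supremum in $\Gamma$; applied to $S$, this produces $\sup S \in \Gamma$ as required.

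The only nontrivial step is the verification of the fundamental system property for the HL metric, which is implicit in the construction recalled in \cite[\S 3.10]{HL}; once this is in hand the remainder is a routine application of o-minimality. The degenerate case $D = \emptyset$ is handled by the convention $\sup \emptyset = -\infty$, or can simply be excluded from the framing of the statement since $x \in V \smallsetminus D$ already presupposes a meaningful $D$.
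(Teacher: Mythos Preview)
Your proposal is correct and follows essentially the same route as the paper: both use that the balls $B_\gamma(x)$ form a fundamental system of $v$-open neighbourhoods of $x$ to conclude that $\{m(x,d)\mid d\in D\}$ is bounded above in $\Gamma$, and then deduce the existence of a supremum. The paper phrases the last step as a contrapositive (``if there is no supremum in $\Gamma$ then $x$ is a $v$-limit point of $D$'') and leaves the o-minimality of $\Gamma$ implicit, whereas you make that ingredient explicit; your version is in fact the more transparent of the two.
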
         
  \begin{proof}
   Observe that the definable metric as constructed in the proof of \cite[Lemma 3.10.1]{HL}
   is such that for any $z \in V$ and $\gamma \in \Gamma$,
    if $B(z,\gamma) := \{y \in V | m(y,z) \geq \gamma\}$ then 
    the family $\{B(z,\gamma)\}_\gamma$ is a fundamental system of $v$-open neighbourhoods 
    of $y$ in $V$.
    Hence, if $\{m(x,d) | d \in D\}$ does not have a supremum in $\Gamma$ then 
    we see that $x$ must be a limit point of $D$ for the $v$-topology. However, this is not 
    possible since $D$ is $v$-closed. 
      \end{proof}

   \begin{lem} \label{inflation for families}
 Let $F$ be a valued field and
    let $\phi \colon V' \to V$ be a morphism of quasi-projective $F$-varieties
    which satisfies the following properties. 
    \begin{enumerate}
   \item The fibres of the morphism $\phi$ 
    are pure of dimension $m$ for some $m \in \mathbb{N}$.   
%  \item  The morphism $\phi$ factors through a finite surjective morphism 
%   $f \colon V' \to P \times V$ via the
%    projection $p \colon P \times V \to V$ where $P$ is an irreducible $F$-variety.
 \item  There exists a proper closed subset
    $D' \subset V'$ such that we have a map 
    $f \colon V' \smallsetminus D' \to \mathbb{A}^m \times V$. 
 Furthermore, for every 
    $v \in V$ and $x \in \mathbb{A}^m \times \{v\}$, 
    if $x' \in f^{-1}(x)$ then there exists
   definable  
   sets
   $W \subseteq {\mathbb{A}^m \times \{v\}}$ containing $x$ and 
   $W' \subseteq V'_v$ containing $x'$ such that 
   $\widehat{W}$ and $\widehat{W'}$ are open in 
   $\widehat{\mathbb{A}^m \times \{v\}}$
    and $\widehat{V'_v}$ respectively and 
   $\widehat{f}_v$ restricts to a homeomorphism 
   from $\widehat{W'}$ onto $\widehat{W}$. 
 \item Let $\{\xi_i \colon V' \smallsetminus D' \to \Gamma_\infty\}_i$ be a finite family of $v$-continuous 
 functions which are definable over $F$. Furthermore, for every $i$ and every 
 $v \in V$, $\xi_i^{-1}(\infty) \cap V'_v$ is either empty or the union of irreducible components of 
 $V'_v$. 
 \item Let $G$ be a finite group acting on $V'$ such that
 $D'$ is $G$-invariant and
  the morphisms 
 $\phi$ and $f$ are $G$-equivariant 
 when we endow $V$ and $\mathbb{A}^m \times V$ with the trivial action.  
 \end{enumerate}  
% Let $R \in \mathbb{N}$ and $\{y_j : V'_{1U} \to \Gamma_\infty\}_{1 \leq j \leq R}$ be a finite family 
% of $v+g$-continuous definable functions such that $y_j^{-1}(\infty) \subset D'_{11U}$.  
    Then there exists a $G$-equivariant homotopy 
    $H'_{inff} \colon  [0,\infty] \times \widehat{V'}  \to \widehat{V'} $ with the following properties. 
    \begin{enumerate}[(a)]
 \item The homotopy $H'_{inff}$ restricts to well defined homotopies 
 along the fibres of the morphism $\widehat{\phi}$. 
% i.e.  
% for every $v \in \widehat{V'_{1U}}$ and 
% $t \in I_1$ we have that 
%  \begin{align*}
%       (\widehat{\phi'_{U}} \circ H'_{inff})(t,v) = \widehat{\phi'_{U}}(v).  
%  \end{align*}   
 \item Let $v \in V$ and $X \subset {\phi}^{-1}(v)$ be 
 a Zariski closed subset of dimension strictly smaller than $\mathrm{dim}({\phi}^{-1}(v))$. 
    Then $\widehat{X} \cap H'_{inff}(0,\widehat{V'}) \subset \widehat{D'}$. 
    \item The homotopy $H'_{inff}$ can be taken to be $G$-invariant
     and respects the levels of the functions $\xi_i$. 
  \end{enumerate}
 \end{lem}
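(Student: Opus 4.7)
The plan is to mimic the construction of the inflation homotopy \cite[Lemma 10.3.2]{HL} but done fibrewise over $V$, using the map $f$ to transport the standard polydisc inflation on $\mathbb{A}^m\times V$ back to $V'\smallsetminus D'$, and then to glue in the identity on $\widehat{D'}$ by means of a definable cut-off. First I would build the model homotopy on $\widehat{\mathbb{A}^m\times V}$: for $(y_1,\dots,y_m,v)\in\mathbb{A}^m\times V$ and $t\in[0,\infty]$, send the pair to the generic type of the closed polydisc of valuative radius $t$ around $(y_1,\dots,y_m)$ in the fibre over $v$. This is a pro-definable map $H_{\mathbb{A}^m}\colon[0,\infty]\times(\mathbb{A}^m\times V)\to\widehat{\mathbb{A}^m\times V}$ whose canonical extension yields a homotopy of $\widehat{\mathbb{A}^m\times V}$ over $\widehat{V}$ in the sense of hypothesis~(a). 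By incorporating the functions $\xi_i\circ f^{-1}$ (defined where meaningful) as additional level functions in the polydisc construction, exactly as in \cite[\S 10]{HL}, the resulting model homotopy respects their levels.

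Next I would transport $H_{\mathbb{A}^m}$ back to $V'\smallsetminus D'$ via $f$. The key input here is hypothesis~(2), which together with Lemma~\ref{no need for etale}-type reasoning says that $\widehat{f}_v$ is a local homeomorphism onto its image near every simple point. This allows a well-defined pro-definable lift $\widetilde{H}\colon [0,\infty]\times(V'\smallsetminus D')\to\widehat{V'\smallsetminus D'}$ at the level of simple points, and then its canonical extension by the recipe of \S\ref{canonical extensions}. The obstruction to extending $\widetilde{H}$ over $\widehat{D'}$ is resolved by a cut-off: using the definable metric $m$ from \cite[Lemma 3.10.1]{HL} (averaged over $G$ so it is $G$-invariant), set $\chi(x):=\sup\{m(x,d):d\in D'\}^{-1}$, which exists on $V'\smallsetminus D'$ by Lemma~\ref{lem : distance to closed sets}, and replace the time parameter $t$ in $\widetilde{H}$ by $\min(t,\chi(x))$. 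The cut-off homotopy now tends to the identity as $x\to D'$, which lets us extend it by the identity on $\widehat{D'}$ and obtain a continuous, pro-$F$-definable $H'_{\mathrm{inff}}\colon[0,\infty]\times\widehat{V'}\to\widehat{V'}$. Since $f$ is $G$-equivariant for the trivial action on the target, the lifted homotopy is automatically $G$-equivariant; if not, replacing by the $G$-average (well-defined because $G$ is finite and permutes the local sheets of $f$) secures this.

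Property~(a) is immediate from the construction: both the model homotopy and the cut-off are trivial in the $V$-direction, so $\widehat{\phi}\circ H'_{\mathrm{inff}}=\widehat{\phi}\circ\mathrm{pr}_{V'}$. For property~(b), observe that at time $t=0$ the homotopy sends a simple point of $V'\smallsetminus D'$ over $v$ to the canonical extension at that point of the generic type of a full-dimensional polydisc in $\mathbb{A}^m\times\{v\}$ (up to the cut-off factor). If $X\subset\phi^{-1}(v)$ is Zariski closed of dimension strictly less than $m$, then $f(X\cap(V'\smallsetminus D'))$ has dimension $<m$ in $\mathbb{A}^m\times\{v\}$, so no stably dominated type concentrated on a full-dimensional polydisc can concentrate on $X$; consequently any point of $\widehat{X}\cap H'_{\mathrm{inff}}(0,\widehat{V'})$ must already lie in $\widehat{D'}$, where the cut-off forces $H'_{\mathrm{inff}}(0,\cdot)$ to be the identity. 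Property~(c) follows because the functions $\xi_i$ were built into the polydisc directions (the inflation stays on the level sets by construction on $V'\smallsetminus D'$, and on $\widehat{D'}\supseteq\bigcap_i\xi_i^{-1}(\infty)$ the homotopy is the identity), and $G$-invariance was arranged above.

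The main obstacle is the continuous passage from $\widehat{V'\smallsetminus D'}$ across $\widehat{D'}$. One must verify that the composed map $\min(t,\chi(\cdot))$ interacts correctly with the canonical extension procedure so that the resulting lift is $v+g$-continuous in a neighbourhood of the divisor, allowing the extension by the identity on $\widehat{D'}$ to glue continuously. This is where hypothesis~(2) is indispensable: it guarantees that near each preimage of a simple point in $\mathbb{A}^m\times\{v\}$ the map $\widehat{f}$ locally identifies neighbourhoods, so the model homotopy pulls back unambiguously, and the cut-off by the definable distance then makes the resulting lift extend by the identity in a unique and continuous way, exactly as in \cite[Lemma 10.3.2]{HL}.
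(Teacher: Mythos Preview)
Your plan is correct and is essentially the paper's own approach: build the standard polydisc homotopy on $\mathbb{A}^m\times V$ fibrewise over $V$, lift it through $f$ using the local-homeomorphism hypothesis~(2), cut off by the definable distance to $D'$ (this is where Lemma~\ref{lem : distance to closed sets} substitutes for \cite[Lemma~4.2.29]{HL} since $V'$ need not be projective), and glue in the identity on $\widehat{D'}$. Two small corrections: the functions $\xi_i\circ f^{-1}$ are not well defined since $f$ is many-to-one---the paper instead invokes \cite[Lemma~7.3.4]{HL} together with hypothesis~(3) to guarantee the lifted paths preserve the $\xi_i$---and your cut-off should read $\max(t,\rho(x))$ with $\rho(x)=\sup\{m(x,d):d\in D'\}$ (no inverse), so that $\rho(x)\to\infty$ as $x\to D'$ forces the homotopy toward the identity there.
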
           
 Our notation \emph{inff} is a concatenation of the short forms 
$inf$ for inflation and $f$ for families. 
\begin{proof}  
 We adapt the proof of \cite[Lemma 10.3.2]{HL} to prove the lemma. 
   Let $h_0 \colon [0,\infty] \times\mathbb{A}^m \to \widehat{\mathbb{A}^m}$ 
     be the standard homotopy which sends 
     $(t,x)$ to the generic type of the closed polydisk around $x$ of 
     valuative radius $(t,\ldots,t)$. 
  We abuse notation and use $h_0$ to denote the homotopy 
  $[0,\infty] \times (\mathbb{A}^m \times V) \to \widehat{(\mathbb{A}^m \times V)/V}$ 
 defined by $(t,(a,v)) \mapsto (h_0(t,a) \otimes v)$. 
 The fact that this is a well defined \emph{homotopy} follows from \cite[Lemma 9.8.3]{HL}.

 By 
   condition (3) of the lemma and \cite[Lemma 7.3.4]{HL}, for each 
  $u := (x,v) \in \mathbb{A}^m \times V$, 
   there exists a $\gamma_0(u) \in \Gamma$
   such that
   for any $v' \in f^{-1}(v)$,
    the path 
   $t \mapsto h_0(t,u)$ 
   for $t \in [\gamma_0(u),\infty]$ 
   lifts uniquely to a definable path
   $\widehat{V'_v}$ starting from $v'$.
  Furthermore, observe that for every $t \in [0,\infty]$,
  $\widehat{p}(h_0(t,u)) = v$. 
  It follows that if $v' \in V'$ 
  is such that $f(v') = u$ then any lift of the 
  path $t \mapsto h_0(t,u)$ starting from 
  $v'$ must belong to 
  $\widehat{V'_v}$. 
  Thus, for any $v' \in f^{-1}(v)$, the path   
  $t \mapsto h_0(t,u)$ 
   for $t \in [\gamma_0(u),\infty]$ 
   lifts uniquely to a path in 
   $\widehat{V'}$ starting from $v'$. 
   The remainder of the proof
   can be carried out using more or less the same arguments 
      as in the proof of \cite[Lemma 10.3.2]{HL}.
     Note that $V'$ is not a projective variety.      
   Hence instead of using \cite[Lemma 4.2.29]{HL} as 
   in the proof of \cite[Lemma 10.3.2]{HL}, we use 
   Lemma \ref{lem : distance to closed sets}.  
\end{proof}

    \section{Generic deformations} \label{finite obstruction}

  \subsection{The initial set-up} \label{section : initial set-up}
      The goal of this section is to summarize the constructions of Lemmas
      \ref{Local factorization} and \ref{existence of D} and introduce notation that will be used
      in the proofs of 
      Proposition \ref{generic relative cdr} and 
      Theorem \ref{generic cdr}.
      
      Let $\phi \colon V' \to V$ be a projective morphism 
      of 
      $K$-varieties such that the fibres of $\phi$ are pure of dimension $m$. 
      We suppose that $V$ is integral. 
      As before, let $G$ be a finite algebraic group acting on $V'$ such that the morphism 
      $\phi$ is $G$-equivariant when $V$ is endowed with the trivial action. 
      Let $\{\xi_i \colon V' \to \Gamma_\infty\}_i$ be a finite family of $v+g$-continuous $K$-definable functions. 
      
      By Lemma \ref{Local factorization}, there exists a Zariski open neighbourhood $U \subset V$ such that 
    the morphism $\phi \colon \phi^{-1}(U) \to U$ factors through a finite surjective
    $G$-equivariant morphism
    $g_U \colon \phi^{-1}(U) \to \mathbb{P}^m \times U$ over $U$  where 
    the $G$-action on $\mathbb{P}^m \times U$ is taken to be trivial. 
    Let $V'_U := \phi^{-1}(U)$. 
    We may shrink $U$ further and assume that all generic points of $V'_U$ belong to the
    fibre over the generic point of $U$. 
    We have the following commutative diagram. 
   $$
\begin{tikzcd}[row sep = large, column sep = large]
%V'_{1U} \arrow[r, "{g'_U}"] \arrow[d,"b_U'"] & E \times U \arrow[d, "b_U \times id"] \arrow[r,"p_U"] & \mathbb{P}^{m-1} \times U \\
V'_U  \arrow[d,"\phi_U"] \arrow[r,"g_U"]  & \mathbb{P}^m \times U \arrow[dl]\\
U 
\end{tikzcd}
$$ 

We apply the following steps to choose a horizontal divisor in $V'_U$. 
\begin{enumerate}
\item  We apply Lemma \ref{existence of D} to the diagram above to obtain 
  a divisor $D_U \subset \mathbb{P}^m \times U$ which satisfies the conditions of 
  \ref{existence of D}
  and $(\mathbb{P}^m \times U) \smallsetminus D'_U \subseteq \mathbb{A}^m \times U$
  for some copy of $\mathbb{A}^m$ in $\mathbb{P}^m$. 
 Let $D'_U := g_U^{-1}(D_U)$. 
\item  For every $i$, let $Y_i := \xi_i^{-1}(\infty)$. 
  Since $\xi_i$ is $v+g$-continuous, $Y_i$ is a Zariski closed subset of $V'_U$. 
  We shrink $U$ further if necessary and suppose that for every $i$, the generic points 
  of $Y_i$ lie on the fibre over the generic point of $U$. 
  Furthermore, for every $i$, 
  and every irreducible component $Y_{ij}$ of $Y_i$, we can shrink $U$
   so that the map 
  $Y_{ij} \to U$ is flat. It follows from \cite[Corollary 9.6]{hart} that the 
  fibres of $\phi_{|Y_{ij}}$ are pure and equi-dimensional. 
  
  For every $i$, let $\mathcal{J}_{i}$ denote those indices $j$ such that 
  the irreducible component 
  $Y_{ij}$ of $Y_i$ is of dimension strictly less than $\mathrm{dim}(V'_U)$. 
  We enlarge $D'_U$ so that for every $i$ and $j \in \mathcal{J}_i$, 
  $Y_{ij} \subset D'_U$. 
\item We shrink $U$ further and enlarge $D'_U$ so that 
$D'_U$ remains a horizontal divisor and 
$V'_U \smallsetminus D'_U$ is the disjoint union of irreducible varieties 
whose fibres over $U$ are pure and equidimensional. This is possible by first choosing 
a closed subset $A$ of the generic fibre $V'_\eta$ such that $V'_\eta \smallsetminus A$ is the disjoint union 
of irreducible $k(\eta)$-varieties. Note that $\mathrm{dim}(A) < \mathrm{dim}(V'_\eta)$. 
We can shrink $U$ so that the Zariski closure $A'$ of $A$ in $V'_U$ will satisfy the required property.
 We then replace $D'_U$ with the union $D'_U \cup A'$. 
 \item We have $D'_U = g_U^{-1}(g_U(D'_U))$. Hence, $D'_U$ is $G$-invariant. 
% \item Let $d_v :  V \to [0,\infty]$ denote the schematic distance 
%    to $V_{bord}$ where $V_{bord} := V \smallsetminus U$. 
\end{enumerate}

 By Lemma \ref{Local factorization},
there exists a 
$K$-point $z \in \mathbb{P}^m$ 
such that after shrinking $U$ if necessary
we can extend the diagram above to get the following commutative diagram.
          
   $$
\begin{tikzcd}[row sep = large, column sep = large]
V'_{1U} \arrow[r, "{g'_U}"] \arrow[d,"b_U'"] & E \times U \arrow[d, "b \times id"] \arrow[r,"p_U"] & \mathbb{P}^{m-1} \times U \\
V'_U  \arrow[d,"\phi_U"] \arrow[r,"g_U"]  & \mathbb{P}^m \times U \arrow[dl]\\
U 
\end{tikzcd}
$$ 
    where $b \colon E \to \mathbb{P}^m$ is the blow up at the point $z$, the restriction 
    of $(p_U \circ g'_U)$ to $D'_U$ is finite surjective onto $\mathbb{P}^{m-1} \times U$ and the 
    square in the diagram is cartesian. 
    
 Let $Z'_U \subset V'_U$ be the preimage for the morphism 
 $g_U$ of the closed subset $\{z\} \times U$. 
 Let 
 $\phi'_U := \phi_U \circ b'_U$, 
 $Z'_{1U} := {b'_U}^{-1}(Z'_U)$, 
 $D'_{1U} := {b'_U}^{-1}(D'_U)$ 
 and $D'_{11U} := D'_{1U} \cup Z'_{1U}$.    
 For every $i$, let $\xi_{1i} := \xi_i \circ b'_{U}$.

  In \S \ref{relative curves for families}, we adapt
 the construction in \cite[\S 11.3]{HL} 
       of the relative curve homotopy
     to the 
        fibration 
       $p'_{U} \colon V'_{1U} \to F \times U$
        where $F := \mathbb{P}^{m-1}$
        and $p'_{U} := p_{U} \circ g'_{U}$.
     
    \subsection{The relative curve homotopy for families} \label{relative curves for families} 
   
       Let the notation be as in
       \S \ref{section : initial set-up}.
               There exists an 
       open subset $W \subset F \times U$ such that 
       $p_{U}^{-1}(W) \subset E \times U$ is isomorphic to $\mathbb{P}^1 \times W$.
       Indeed, if $p \colon E \to F$ denotes the projection map then
       there exists a Zariski open subset $F_0 \subset F$ such that $p^{-1}(F_0) = F_0 \times \mathbb{P}^1$. 
       By definition, $p_{U} = p \times \mathrm{id} \colon E \times U \to F \times U$. Let $W := F_0 \times U$.
       By construction, $p'_{U}$ restricts to a finite morphism from
       $D'_{11U}$ onto $F \times U$. 
       
       Let $A := {p'_{U}}^{-1}(W) \subset V'_{1U}$ and 
       $B := p_{U}^{-1}(W) \subset E \times U$.  
                Furthermore, we can shrink $W$ if necessary so that
       the map $g'_{U} \colon A \to B$ factors through $A \to A' \to B$ where $A \to A'$ is radicial and 
       for every $w \in W$, $A'_w \to B_w = \mathbb{P}^1$ is generically étale. 
       This is possible because the above property is true over the generic point of $W$. 
       Using that the morphism $F \times U \to U$ is flat and hence open, we can
        shrink $U$ so that the 
        projection $W \to U$ is surjective.
     \\       
     
\noindent \emph{
      The homotopy $H'_{curvesf}$.}
\\ 

      We fix three points - $0,1$ and $\infty$ on $\mathbb{P}^1$. This is to 
      make sure that the notions of standard homotopy and 
      closed ball are well defined. Given a divisor $X$ on $B$, \cite[\S 10.2]{HL} implies 
      that
       there exists a well defined definable map
      $\psi_X \colon [0,\infty] \times B \to \widehat{B/W}$ 
      which fixes $\widehat{X}$ and if $w \in W$ then 
      $\psi_X$ restricts to a well defined homotopy on $\widehat{B_w}$.
      Furthermore, if $X \cap B_w$ is finite, then the image of the homotopy 
      restricted to the fibre $B_w$ is a $\Gamma$-internal subset of $\widehat{B_w}$.  
      We emphasize that this map is a priori not continuous unless for instance we 
      add additional hypothesis on $X$. 
      By definition, for $w \in W$, the fibres of the 
      map $\widehat{B/W} \to W$ are copies of $\widehat{\mathbb{P}^1}$. 
      The definable map $\psi_X$ is constructed using the 
      \emph{standard homotopy}
      (cf. \cite[\S 7.5, p.105]{HL})
       on $\mathbb{P}^1$ and then defining cut offs of this homotopy via the 
      divisor $X$.
      
%       By \cite[Lemma 11.3.2]{HL},
% there exists a 
% divisor $X$ on $B$ such that for any 
% divisor $X'$ containing $X$, $\psi_{X'}$ lifts uniquely to a definable map
% $h : [0,\infty] \times A \to \widehat{A/W}$ which is fibrewise a homotopy. 
%Furthermore, by Lemma 10.2.1 in loc.cit., when the divisor $X'$ is finite 
%over $W$, the definable map $\psi_{X'}$ is in fact a homotopy. 
%    In this case, Lemma 10.1.1 in loc.cit. implies that the 
%    unique lift $h$ provided is a homotopy.
%    
\begin{lem} \label{curves for families}
   After shrinking the open set $U$ if necessary, 
   there exists a constructible set $C \subset V'_{1U}$ with the following properties. 
   \begin{enumerate} 
   \item The set $C$ maps surjectively onto $F \times U$ via $p'_{U}$.
   \item There exists an open subset $W' \subset F \times U$ such that 
   $V'_{1U}$ contains ${p'_U}^{-1}(W')$. 
  \item Let $H'_{inff}$ be the homotopy obtained from Lemma \ref{inflation for families} associated 
  to the divisor $D'_{11U}$ and the morphism $g'_U \colon V'_{1U} \smallsetminus D'_{11U} \to E \times U$. 
  (By construction, the image of $g'_U$ is contained in a copy of $\mathbb{A}^m \times U$).  
  The image $H'_{inff}(0,\widehat{V'_{1U}})$ is contained in $\widehat{C}$.
     \item  There exists a   
   deformation retraction $h'_{curvesf} \colon [0,\infty] \times C \to \widehat{C/F \times U}$. 
   The image $\Upsilon'_2 := h'_{curvesf}(0,C) \subset \widehat{V'_{1U}/F \times U}$ is 
   iso-definable and relatively $\Gamma$-internal over $F \times U$. 
   Furthermore, $h'_{curvesf}$ respects the levels of the functions $\xi_{1i}$ for every $i$ and 
   is $G$-equivariant. 
   \end{enumerate}    
   \end{lem}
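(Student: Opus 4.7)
The plan is to adapt the construction of the curve homotopy in \cite[\S 11.3]{HL} to the relative setting over $F \times U$, exploiting that $p'_U$ is a fibration of curves once one restricts to the open $W$ where $p_U^{-1}(W) \cong \mathbb{P}^1 \times W$. First I would shrink $U$ and $W$ further so that there is an open $W' \subseteq W$ satisfying: (i) $A' \to B$ is finite \'etale over $W'$, which is possible because $A' \to B$ is generically \'etale; (ii) $W' \to U$ is surjective; and (iii) for every $u \in U$, the fibre $W' \cap (F \times \{u\})$ is dense in $F \times \{u\}$. Set $C := {p'_U}^{-1}(W') \cup D'_{11U}$, a constructible subset of $V'_{1U}$. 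Condition (2) of the lemma is tautological, and condition (1) follows at once from the fact that $p'_U|_{D'_{11U}}$ is finite surjective onto $F \times U$ by the construction of \S\ref{section : initial set-up}.

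For condition (3), I would argue by contradiction. If $q \in H'_{inff}(0, \widehat{V'_{1U}})$ were not in $\widehat{C}$, then $q$ would concentrate on the locally closed set $(V'_{1U,u} \setminus D'_{11U}) \cap {p'_U}^{-1}((F \times U) \setminus W')$ in the $\phi'_U$-fibre $V'_{1U,u}$ over some $u \in U$, whose Zariski closure in $V'_{1U,u}$ has dimension at most $m - 1$ by choice of $W'$ (as the complement of $W'$ in $F \times \{u\}$ has codimension $1$, and $p'_U$ restricted to $V'_{1U,u}$ has relative dimension $1$). Property (b) of Lemma \ref{inflation for families} then forces $q \in \widehat{D'_{11U}} \subseteq \widehat{C}$, contradicting the assumption that $q$ concentrates on $V'_{1U} \setminus C$.

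For condition (4), let $X := g'_U(D'_{11U}) \cap B$, a divisor on $B \cong \mathbb{P}^1 \times W$ whose restriction to each $W$-fibre is finite. The relative standard homotopy $\psi_X \colon [0,\infty] \times B \to \widehat{B/W}$ discussed before the lemma fixes $\widehat{X}$ pointwise and has fibrewise image iso-definable and $\Gamma$-internal. Over $W'$, since the factorisation $A \to A' \to B$ consists of a radicial map followed by a finite \'etale cover, $\psi_X$ lifts uniquely to a continuous relative homotopy on $\widehat{{p'_U}^{-1}(W')/W'}$, using that $\widehat{A} \to \widehat{A'}$ is a homeomorphism together with the unique path-lifting available for finite \'etale covers (cf.~\cite[Lemma 7.3.4]{HL}). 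I would then extend this to all of $\widehat{C}$ by declaring the homotopy to be the identity on $\widehat{D'_{11U}}$; the two definitions agree on the overlap because $\psi_X$ fixes $\widehat{X}$ pointwise. The image $\Upsilon'_2$ inherits iso-definability and relative $\Gamma$-internality from $\psi_X(0,\cdot)$, while $G$-equivariance follows from the $G$-invariance of $D'_{11U}$ and the trivial $G$-action on $B$. The levels of the $\xi_{1i}$ are respected because their lower-dimensional infinite-level components lie inside $D'_{11U}$ by the arrangement made in \S\ref{section : initial set-up}, and one can further enlarge $X$ and shrink $U$ so as to absorb the finite-level information into the cut-off divisor, exactly as in \cite[\S 11.3]{HL}.

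The main obstacle I expect is verifying continuity of $h'_{curvesf}$ across the boundary between $\widehat{{p'_U}^{-1}(W')}$ and $\widehat{D'_{11U}}$, uniformly in $u \in U$. This is the relative analogue of the gluing in \cite[\S 11.3]{HL}: the formalism there (the radicial quotient, the cut-off by $X$, and $v+g$-continuity of the lifted homotopy) carries over once the geometric setup of \S\ref{section : initial set-up} is in place, but uniformity in the base parameter $u$ may require additional shrinking of $U$ at several points of the argument.
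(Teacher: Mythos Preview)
Your overall structure matches the paper's: the set $C$ is indeed ${p'_U}^{-1}(W') \cup D'_{11U}$ for a suitable open $W' \subseteq W$ surjecting onto $U$, and your verification of (1)--(3) is essentially the paper's argument. The gap is in (4), in how you produce the lift of $\psi_X$.

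First, the condition ``$A' \to B$ is finite \'etale over $W'$'' is not the right one. Recall $W' \subseteq W \subseteq F \times U$, so this would say that for each $w \in W'$ the fibre map $A'_w \to B_w = \mathbb{P}^1$ is finite \'etale, which (in characteristic zero, say) forces it to be a disjoint union of isomorphisms. The generically \'etale property you have from \S\ref{section : initial set-up} holds over a dense open of $B$, not over an open of the base $W$; shrinking $W$ does not make the fibrewise maps \'etale. Second, and more substantively, even where $A'_w \to \mathbb{P}^1$ is \'etale over a dense open, the standard homotopy $\psi_X$ moves points along paths in $\widehat{\mathbb{P}^1}$ that may well leave that open, so \cite[Lemma 7.3.4]{HL} (local path-lifting near an \'etale point) does not by itself give a global lift of the homotopy. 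What is actually needed is control of the \emph{forward branching locus}: one must choose the cut-off divisor $X$ large enough so that $\psi_X$ stops before any branching occurs along the lifted path. This is precisely the content of \cite[Lemma 11.3.2]{HL}, which produces a divisor $X$ on $B$ such that $\psi_{X'}$ lifts uniquely for every $X' \supseteq X$. The paper invokes this lemma, enlarges $X$ to contain $g'_U(D'_{11U} \cap A)$ and $\{\infty\} \times W$, and then defines $W'$ as the locus over which $X$ is finite (not the \'etale locus). Your $X = g'_U(D'_{11U}) \cap B$ has no reason to dominate the forward branching points.

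Your argument that $h'_{curvesf}$ respects the levels of the $\xi_{1i}$ is also incomplete: having the $\infty$-locus inside $D'_{11U}$ controls only the level $\infty$, not arbitrary levels. The paper handles this by invoking \cite[Lemma 10.2.2]{HL} to further enlarge $X$. Finally, for the continuity across the boundary with $\widehat{D'_{11U}}$, the paper appeals directly to \cite[Lemma 11.3.3]{HL}; your anticipated obstacle is real but is already dealt with there and requires no additional shrinking of $U$.
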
 
\begin{proof} 
    By \cite[Lemma 11.3.2]{HL},
 there exists a 
 divisor $X$ on $B$ such that for any 
 divisor $X'$ containing $X$, $\psi_{X'} \colon [0,\infty] \times B \to \widehat{B/W}$ lifts uniquely to a definable map
 $h \colon [0,\infty] \times A \to \widehat{A/W}$ which is fibrewise a homotopy. 
%     Let $X \subset B$ be as in 11.3.2 in \cite{HL}.
      We enlarge $X$ so that it contains the image 
      $g'_{U}(D'_{11U} \cap A)$ and the divisor $\infty \times W$. 
%     It follows that $\psi_X$ lifts to a definable map 
     We use 
     $$h'_{curvesf} \colon [0,\infty] \times A \to \widehat{A/W}$$ to denote the unique 
     lift of $\psi_X$. 
     By 
     Lemma 10.2.2 in loc.cit, we can enlarge $X$ so that 
     the lift $h'_{curvesf}$ preserves the levels of the 
     restrictions of the functions $\xi_{1i}$. As the lift is unique, it is $G$-invariant. 
     
     Note from the construction 
     in \cite[Lemma 11.3.2]{HL} that
      $\mathrm{dim}(X) = \mathrm{dim}(W)$.        
       Observe that $W$ is 
          an irreducible $K$-variety.
          It follows that the Zariski closure of the set $\{w \in W | \mathbb{P}^1 \times \{w\} \subset X\}$
          is of dimension strictly smaller than $\mathrm{dim}(W)$. 
%          
%               Let $X = X_v \cup X_h$ where $X_v$ and $X_h$ 
%               are constructible sets contained in $B$ such that $X_h$ is finite over $W$ and 
%          $X_v$ maps to some closed subset of $W$.
%          Observe that $\mathrm{dim}(X_v) = \mathrm{dim}(W)$ and  It follows that 
%           We 
                 Let $W' \subset W$ be a Zariski open subset over which $X$ is finite.
                 We replace $A$ with ${p'_{U}}^{-1}(W')$ and 
                 $B$ with $p_{U}^{-1}(W')$. 
                  It then follows 
          that $\psi_X \colon [0,\infty] \times B \to \widehat{B/W'}$ is
           a well defined \emph{homotopy} and its lift is a well defined homotopy 
          $h'_{curvesf} \colon [0,\infty] \times A \to \widehat{A/W'}$ which is
           $G$-invariant and in addition preserves the levels of the functions $\xi_{1i}$.
         Using that the morphism $F \times U \to U$ is flat and hence open, we can
        shrink $U$ so that the 
        projection $W' \to U$ is surjective.

                We extend $h'_{curvesf}$ to a definable map 
                $[0,\infty] \times A \cup D'_{11U} \to \widehat{A \cup D'_{11U}}$ 
          by setting $h'_{curvesf}(t,x) = x$ for every $x \in D'_{11U}$. 
          By Lemma 11.3.3 in loc.cit., the map $h'_{curvesf}$ is 
          a well defined homotopy with canonical extension 
           $H'_{curvesf} \colon [0,\infty] \times \widehat{A \cup D'_{11U}}
            \to \widehat{A \cup D'_{11U}}$. \emph{We set $C := A \cup D'_{11U}$}.
           
           It remains to verify the inequality 
           $H'_{inff}(e,\widehat{V'_{1U}}) \subset \widehat{C}$. Observe that 
           by construction the projection $W' \to U$ is surjective and hence
           the complement $S$ of $W'$ in $F \times U$ 
            cannot contain any subset of the form $F \times u$ where $u \in U$. As a result 
           for any $u \in U$, 
           ${p'}_{U}^{-1}(S) \cap {\phi'}_{U}^{-1}(u)$ is 
           a closed subset whose dimension is strictly smaller than that of 
           ${\phi'}_{U}^{-1}(u)$. The inflation property (cf.
            Lemma \ref{inflation for families} (2)) of the homotopy $H'_{inff}$ implies the result. 
           \end{proof} 
                      
%          \begin{rem} \label{use wbc}
%          \emph{Suppose the variety $U$ is of dimension $1$. Then in the statement of 
%          Lemma \ref{curves for families} we do not need to} shrink 
%          \emph{$U$. This can be accomplished by looking at the proof of 
%          \cite[Lemma 11.3.2]{HL} where the divisor $X$ referenced in the proof above is constructed. 
%          We see that there exists a constructible set $X'' \subset B$ which is finite over $W$ and
%          whose Zariski closure is the divisor $X$. 
%          In the proof of Lemma \ref{curves for families}, 
%          we then replaced $W$ with the Zariski open subset $W'$ over which $X$ is finite. 
%          Observe in the proof above that the only instance 
%          where we shrunk $U$ was to ensure that $W' \to U$ be surjective. We now claim that 
%          if $U$ is of dimension $1$ then $W' \to U$ is surjective without shrinking $U$.
%          Indeed, if $W'$ does not surject onto $U$ then 
%          we deduce that there exists $v \in U$ such that 
%          $E \times \{v\} \subset X$. Since $X$ is of dimension $m$, we 
%          see that if $E \times \{v\}$ is contained in $X$ then it must be an irreducible component 
%          which implies that $X'' \cap (E \times \{v\})$ must be dense in 
%          $E \times \{v\}$ which contradicts our assumption on the finiteness of $X''$ over $W$. } 
%          \end{rem}  

We now prove the following proposition which
is related to \cite[Proposition 11.7.1]{HL}
in that we treat a family of quasi-projective varieties parametrized 
by a quasi-projective $K$-variety. While loc.cit. shows 
that there exists a family of deformation retractions uniform over the base, we show 
that there exists a global homotopy that restricts to a homotopy on each of the fibres but compromise by 
treating only a suitable open subset of the family.

\begin{prop} \label{generic relative cdr}
     Let $V$ be a pure quasi-projective $K$-variety. 
   Let $\phi \colon V' \to V$ be a morphism between quasi-projective varieties whose image is dense. Let $G$ be a finite algebraic group acting on $V'$
   which restricts to a well defined action along the fibres of the morphism $\phi$
   and 
   $\{\xi_i \colon V'\to \Gamma_{\infty}\}_i$ be a finite collection of $K$-definable functions. 
   There exists an open dense subset $U \subset V$ such that if $V'_U := \phi^{-1}(U)$ then 
    there exists a generalized interval $I$ and a homotopy $h'_{rel} \colon I \times V'_U \to \widehat{V'_U/U}$    
     which satisfies the following properties. 
      \begin{enumerate} 
      \item  The image of $h'_{rel}$ is a relatively $\Gamma$-internal subset of $\widehat{V'_U/U}$.
      \item The homotopy $h'_{rel}$ is invariant for the action of the group $G$ and
    respects the levels of the functions $\xi_i$.
        \item The homotopy $h'_{rel}$ is Zariski generalizing. 
     \end{enumerate} 
     \end{prop}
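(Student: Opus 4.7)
The plan is to prove Proposition \ref{generic relative cdr} by induction on the relative fibre dimension $m$ of $\phi$, assembling the inflation and relative curve homotopies furnished by Lemmas \ref{inflation for families} and \ref{curves for families}, and then handling the projection $\mathbb{P}^{m-1}\times U \to U$ via the inductive hypothesis. The tropical step $H'_{\Gamma f}$ from the sketch of Theorem \ref{generic cdr} is not required here, since the proposition demands only relative $\Gamma$-internality, not an iso-definable $\Gamma$-internal image fixed by a deformation retraction.

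First apply Lemma \ref{prove the simpler case} to reduce to the situation where $V$ is integral, normal and projective, $V'$ is projective, the $G$-action on $V$ is trivial, and $\phi$ has pure fibres of common dimension $m$ over a dense open $U\subseteq V$. Then invoke the constructions of \S\ref{section : initial set-up} to obtain, after shrinking $U$, the $G$-equivariant cartesian diagram with $b'_U\colon V'_{1U}\to V'_U$ the pullback of the blow-up $b\colon E\to \mathbb{P}^m$ at a point $z$, the $G$-equivariant finite surjection $g'_U\colon V'_{1U}\to E\times U$, the projection $p'_U\colon V'_{1U}\to F\times U$ with $F=\mathbb{P}^{m-1}$, and the horizontal divisor $D'_{11U}\subset V'_{1U}$ containing the exceptional locus $Z'_{1U}$ and the codimension-$\ge 1$ components of the $\xi_i^{-1}(\infty)$.

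For $m=0$ the morphism $\phi_U$ is finite, so the constant homotopy suffices. For $m\ge 1$, apply Lemma \ref{inflation for families} to $\phi'_U = \phi_U\circ b'_U$ together with the finite restriction of $g'_U$ (whose image lies in a copy of $\mathbb{A}^m\times U\subset E\times U$), the divisor $D'_{11U}$, the group $G$, and the pulled-back functions $\xi_{1i}=\xi_i\circ b'_U$. This yields a $G$-equivariant relative inflation homotopy $H'_{inff}$ on $\widehat{V'_{1U}}$ respecting the $\xi_{1i}$, whose endpoint image is contained in $\widehat{C}$ for $C$ as in Lemma \ref{curves for families}. Composing with the relative curve homotopy $h'_{curvesf}$ of that lemma produces a $G$-equivariant relative homotopy whose image is relatively $\Gamma$-internal \emph{over} $F\times U$ and still respects the $\xi_{1i}$. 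Now apply the inductive hypothesis to the projection $F\times U\to U$ (trivial $G$-action, no extra functions) to obtain a relative homotopy on $\widehat{F\times U/U}$ with image relatively $\Gamma$-internal over $U$, and lift it canonically and $G$-equivariantly to the previously constructed image, using that each fibre of the latter over a point of $\widehat{F\times U}$ is $\Gamma$-internal and therefore supports a unique definable path extending a given base path.

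To descend the composite from $\widehat{V'_{1U}/U}$ back to $\widehat{V'_U/U}$ through the blow-up $b'_U$, precompose with a primary inflation homotopy $H'_{inff-primary}$ on $\widehat{V'_U}$ designed to push all points off $\widehat{Z'_U}$: outside the exceptional divisor, $b'_U$ restricts to an isomorphism, so the remaining composite then descends unambiguously. Concatenating everything along a suitable generalized interval $I$ defines $h'_{rel}$, and the $G$-invariance, level-preservation for the $\xi_i$, and the Zariski-generalizing property each transfer step by step from the constituent homotopies. The main obstacle will be coordinating the inductive lift with the descent across the blow-up: one must check that the lift of the inductive base homotopy remains $G$-equivariant and compatible with the $\xi_{1i}$, and that $H'_{inff-primary}$ can be arranged to clear $\widehat{Z'_U}$ simultaneously with respecting all constraints, so that the entire concatenation glues into a single pro-definable homotopy on $\widehat{V'_U/U}$.
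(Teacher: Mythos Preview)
Your overall architecture is right, but the inductive step contains a genuine gap. You apply the inductive hypothesis to $F\times U\to U$ with trivial $G$-action and no extra functions, and then propose to lift the resulting base homotopy ``canonically'' to $\widehat{\Upsilon'_2}$ on the grounds that its fibres are $\Gamma$-internal. This lifting is neither canonical nor a consequence of fibrewise $\Gamma$-internality: a path in $\widehat{F\times U}$ need not lift uniquely (or at all, continuously) to a relatively $\Gamma$-internal set sitting over it, and even when some lift exists there is no reason it should be $G$-equivariant or respect the $\xi_{1i}$.

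The paper handles this via the mechanism of \cite[Theorem 6.4.4]{HL}. One first produces (Lemma~\ref{getting around compactness}) a pseudo-Galois cover $f\colon F'\to F\times U$ together with an injection $\kappa\colon \Upsilon'_{2F'}\hookrightarrow F'\times\Gamma_\infty^M$ whose hat is a homeomorphism onto its image. From this one extracts finitely many definable functions $\mu_j\colon F'\to\Gamma_\infty$ such that any homotopy of $\widehat{F\times O}$ (for $O\subseteq U$ Zariski open) which lifts to $\widehat{f^{-1}(F\times O)}$ and preserves the levels of the $\mu_j$ automatically induces a $G$-equivariant, $\xi_i$-respecting homotopy of $\widehat{\Upsilon'_2}\cap\widehat{{\phi'_U}^{-1}(O)}$. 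The induction hypothesis is then applied not to $F\times U\to U$ but to $p_2\circ f\colon F'\to U$, with the group $G':=\mathrm{Aut}(F'/F\times U)$ and the functions $\mu_j$. It is precisely to accommodate such auxiliary groups and functions that the proposition is stated with a $G$ and with the $\xi_i$ in the first place.

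A smaller point: your use of $H'_{inff-primary}$ to descend across the blow-up is unnecessary here, and the paper explicitly notes this (Remark~\ref{rem : primary inflation remark}). Because all the homotopies in question are \emph{relative} over $U$ and preserve $D'_{11U}\supseteq Z'_{1U}$, they restrict to homotopies on each connected component of $Z'_{1u}$; since each such component is collapsed by $b'_U$ to a single closed point of $V'_u$, the relative homotopy descends to $\widehat{V'_U/U}$ automatically via \cite[Lemma~3.9.4]{HL}. The primary inflation is needed only in the proof of Theorem~\ref{generic cdr}, where the base must also be moved.
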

     \begin{proof}
     Lemma \ref{prove the simpler case}
     allows us to reduce to the case of a morphism 
     $\phi \colon V' \to V$ satisfying assertions 
     (1)-(4) of \ref{prove the simpler case}. 
%     
%     may suppose that the fibres of the morphism $\phi$ are 
%     pure over an open dense subset of $V$. 
 %    Henceforth, we make no more use of the notation $V$. 
     We make use of the notation and constructions
  introduced in \S \ref{section : initial set-up}.
     The simplifications in \S \ref{section : initial set-up} show that 
    it suffices to 
     prove the theorem for the 
     morphism $\phi'_{U} \colon V'_{1U} \to U$.
Indeed, any relative homotopy on $V'_{1U}$ that 
restricts to a well defined homotopy on $Z'_{1U}$ must descend to a homotopy on $V'_{U}$.
This is because for every $v \in U$, 
the image of $Z'_{1Uv}$ for the morphism $b'_{U}$
is $Z'_{Uv}$ which is the disjoint union of Zariski closed points and 
a relative homotopy that restricts to a well defined homotopy on $Z'_{1U}$ will also 
restrict to a well defined homotopy on each connected component of $Z'_{1Uv}$. 
By \cite[Lemma 3.9.4]{HL}, for every $v$ the restriction of the homotopy to $V'_{1Uv}$ descends to a homotopy
on $V'_{Uv}$, and likewise the homotopy on $V'_{1U}$ descends to a homotopy on $V'_{U}$.  
 
  We proceed to prove the proposition by induction on the dimension of the fibres 
  of the morphism 
  $\phi'_{U}$.
  Note that when the dimesion of the generic fibre is $0$, there is nothing to prove.    
  Let $H'_{inff}$ be the homotopy obtained from Lemma \ref{inflation for families} associated 
  to the divisor $D'_{11U}$ and the morphism $g'_U \colon V'_{1U} \smallsetminus D'_{11U} \to E \times U$. 
   By \ref{curves for families}, there exists homotopies 
   $H'_{inff} \colon I_1 \times \widehat{V'_{1U}} \to \widehat{V'_{1U}}$ and 
  $h'_{curvesf} \colon I_2 \times \widehat{C/F \times U} \to \widehat{C/F \times U}$ 
  where $C$ is a constructible subset of $V'_{1U}$ where $I_1 = I_2 = [0,\infty]$.
  Recall from the construction of the homotopy 
  $h'_{curvesf}$ that there exists a Zariski open subset 
  $W' \subset F \times U$ such that the image of $h'_{curvesf}$ when 
  restricted to those fibres over points in $W'$ is of dimension $1$. 
  Let $H'_{curvesf} \colon I_2 \times \widehat{C} \to \widehat{C}$ denote the 
  canonical extension of the homotopy $h'_{curvesf}$. 
    By construction, 
  the composition 
  $H'_{curvesf} \circ H'_{inff} \colon (I_2 + I_1) \times \widehat{V'_{1U}} \to \widehat{V'_{1U}}$ 
  is a well defined homotopy.  
  Let $\Upsilon'_2$ denote the image 
  $h'_{curvesf}(0,C) \subset \widehat{V'_{1U}/F \times U}$. 
  By construction, $\Upsilon'_2$ is iso-definable and relatively 
  $\Gamma$-internal over $F \times U$. 
  By Lemma 6.4.1 in \cite{HL}, $\widehat{\Upsilon'_2}$ can 
  be identifed with a pro-definable subset of $\widehat{V'_{1U}}$
such that over a point $u \in F \times U$, $\widehat{\Upsilon'_2}_u = \Upsilon'_{2u}$. 
The image of 
$H'_{curvesf} \circ H'_{inff}$ is contained in $\widehat{\Upsilon'_2}$. 
By construction, $H'_{curvesf} \circ H'_{inff}$ is compatible with the homotopy   
 on $\widehat{U}$ that fixes every point.
  
  \begin{lem} \label{getting around compactness} 
    There exists a 
    pseudo-Galois cover (cf. \cite[\S 2.12]{HL})
     $f \colon F' \to F \times U$
    and a morphism
    $\kappa \colon \Upsilon'_2 \times_{(F \times U)} F' \to F' \times \Gamma_\infty^M$ for 
    some $M \in \mathbb{N}$ such that the restriction
    $\widehat{\kappa}_{|\widehat{\Upsilon'_{2F'}}} \colon \widehat{\Upsilon'_{2F'}} \to \widehat{F'} \times \Gamma^M_\infty$ is a homeomorphism onto its image
    where $\Upsilon'_{2F'} :=  \Upsilon'_2 \times_{(F \times U)} F'$.
    \end{lem} 
    \begin{proof}
    Recall that $V$ is a projective variety. 
 %   Let $\overline{V}$ be the Zariski closure of $V$ after embedding it in some projective space.
 %  We abuse notation and write $V := \overline{V}$.
    Let $d_v \colon  V \to [0,\infty]$ denote the schematic distance 
    to $V_{bord}$ where $V_{bord} := V \smallsetminus U$. 
    By construction, the morphism $g'_{U} \colon V'_{1U} \to E \times U$ is finite and hence projective. 
  %By \cite[Tag 01WG, Lemma 28.14.15]{stacks-project}, finite morphisms are projective.
%   we deduce that 
%  $\phi'_{U} : V'_{1U} \to U$ is projective as well.
   It follows that the morphism 
  $V'_{1U} \xrightarrow{g'_{U}} E \times U$ factors through an embedding 
  $j' \colon V'_{1U} \to \mathbb{P}_K^N \times E \times U$ for some $N \in \mathbb{N}$. 
  Let $V'_1$ be the closure in $\mathbb{P}_K^N \times E \times V$ of $V'_{1U}$.
  Let $p' \colon V'_1 \to V$ be the composition of the morphisms 
  $V'_1 \to E \times V$ which is the restriction of a projection and $E \times V \to V$. % which is induced by the morphism $E \to F$. 
  Let $d'_v := d_v \circ p'$.
   
  Similarly,  let $\rho \colon V'_{1} \to [0,\infty]$ denote the schematic distance from the closure of 
        $D'_{11U}$ in $V'_1$ and $\eta \colon V'_{1} \to [0,\infty]$ denote the schematic distance 
        to the preimage of $(F \times V) \smallsetminus W'$ where $W' \subset F \times U$ is as in the statement of 
        Lemma \ref{curves for families}. 
            
%   Let $S := F_0
%   Let $S_{\gamma} := \{x \in \overline{S} | d(x) \leq \gamma\}$. 
%      Observe that 
%      $\widehat{S_\gamma}$ is 
%      definably compact since 
%      it is a closed subset of the definably compact space $\widehat{\overline{S}}$. Furthermore, 
%      $S_{\gamma} \subset S$.       

   For $\gamma \in [0,\infty)$, let 
   $V_{\gamma} := \{x \in \overline{V} | d_v(x) \leq \gamma\}$ and let
   $V'_{1V_\gamma}$ denote its preimage in $V'_1$ i.e. the set $\{x \in \overline{V'_1} | d'_v(x) \leq \gamma\}$. 
      Observe that $\widehat{V_{\gamma}}$ 
      is 
      definably compact since 
      it is a closed subset of the definably compact space $\widehat{V}$. Furthermore, 
      $V_{\gamma} \subset U$.
      For similar reasons,        
      $\widehat{V'_{1V_\gamma}}$ is definably compact and also $V'_{1V_\gamma} \subset V'_{1U}$. 
      Observe that $\widehat{\Upsilon'_{2 \gamma}}$ is $\sigma$-compact with respect to the 
      restrictions of $\rho$ and $\eta$. 
             
%       The morphism $\phi : X \to S$ extends to a morphism $\overline{\phi} : \overline{X} \to \overline{S}$ where 
%       $\overline{X}$ is a projective variety that contains $X$ as an open dense subscheme.
%       Indeed, the morphism $\phi : X \to S$ is projective and hence factors through 
%       a closed immersion immersion $X \hookrightarrow S \times \mathbb{P}_k^N$ for some $k$. 
%       We can embed $S \times \mathbb{P}^N$ in $\overline{S} \times \mathbb{P}^N$ as an open dense 
%       subscheme. Let $\overline{X}$ be the closure of $X$ in $\overline{S} \times \mathbb{P}^N$ and 
%       $\overline{\phi}$ be the restriction  to $\overline{X}$ of the projection 
%       $\overline{S} \times \mathbb{P}^N \to \overline{S}$. 
%       Observe that if $X_\gamma := \overline{\phi}^{-1}(S_\gamma)$ then 
%       $\widehat{X_{\gamma}}$ is definably compact because it is a 
%       closed subset of $\widehat{\overline{X}}$. 
%       
%      
%      
%       By construction, the homotopy 
%       $\psi : [0,\infty] \times X \to \widehat{X/S}$ restricts to a 
%       well defined homotopy 
%       $[0,\infty] \times X_\gamma \to \widehat{X_\gamma/S_\gamma}$. 
%       Let $\Upsilon_\gamma := \widehat{X_\gamma} \cap \Upsilon$.  

        For ease of notation, let $S := F \times U$ and for $\gamma \in \Gamma$, $S_\gamma := F \times V_{\gamma}$. 
        By \cite[Lemma 6.4.2]{HL}, there exists a 
    pseudo-Galois cover $f \colon F' \to S$
    and a morphism
    $\kappa \colon \Upsilon'_2 \times_{S} F' \to F' \times \Gamma_\infty^M$ over $F'$ for 
    some $M \in \mathbb{N}$ such that 
    $\widehat{\kappa}_{|\widehat{\Upsilon'_{2F'}}} \colon \widehat{\Upsilon'_{2F'}} \to \widehat{F'} \times \Gamma^M_\infty$ is a continuous injection
    where $\Upsilon'_{2F'} := \Upsilon'_2 \times_S F'$. 
    
%    Let $\rho : V'_{1U} \to [0,\infty]$ denote the schematic distance from 
%        $Z'_{1V_0}$ and $\eta : V'_{1V_0} \to [0,\infty]$ denote the schematic distance 
%        to $(F \smallsetminus F_0) \times V_\gamma$. 

         We fix $\gamma \in \Gamma$. We claim that 
        $\widehat{\kappa}$ restricts
         to a homeomorphism on the closed subspace $\widehat{\Upsilon'_{2F'_\gamma}}$ where 
        $F'_\gamma$ is the preimage of $S_\gamma$ via $f$ and 
        $\Upsilon'_{2F'_\gamma} := \Upsilon'_{2} \times_{S_\gamma} F'_\gamma$. 
                We apply \cite[Lemma 6.4.3]{HL}
        to $\Upsilon'_2 \subset \widehat{V'_{1V_\gamma}/S_{\gamma}}$
         to verify the claim. Note that
        strictly speaking, loc.cit. requires that $S_\gamma$ be a quasi-projective variety.
        However, one checks that the steps of the proof can be carried out in
        our situation. 
             
%    Let $\Upsilon'_\gamma := \Upsilon_\gamma \times_S S'$. 
%    By Lemma \ref{definable compactness and finite morphisms}, 
%    the space $\widehat{\Upsilon'_\gamma}$ is definably compact. 
%    Hence, we see that the restriction
%    $\kappa_{|\Upsilon'_\gamma}$ is a homeomorphism. 
%    This follows from the fact that every closed subset of 
%               $\widehat{\Upsilon'_{\gamma}}$ is definably compact \cite[Theorem 4.2.20]{HL} and the image 
%               of a definably compact set for a continuous pro-definable map is closed 
%               \cite[Proposition 4.2.9]{HL}. 

          %We now proceed as in Lemma \ref{getting around compactness}. 
       Let $V_{o\gamma}:= \{x \in {V} | d_v(x) < \gamma\}$,
       $S_{o\gamma} := F \times V_{o\gamma}$ and 
       $F'_{o\gamma} := f^{-1}(S_{o\gamma})$. 
       We have shown that $\widehat{\kappa}$ restricts 
       to a homeomorphism on $\widehat{\Upsilon'_{2F'_{o\gamma}}}$. 
       To conclude that $\widehat{\kappa}$ 
       is a homeomorphism on $\widehat{\Upsilon'_{2F'}}$, 
       it suffices to verify that the image of $\widehat{\Upsilon'_{2F'_{o\gamma}}}$
       via $\widehat{\kappa}$ is open in 
       $\widehat{\kappa}(\widehat{\Upsilon'_{2F'}})$. 
       This follows from the fact that by
       construction, 
        $$\widehat{\kappa}(\widehat{\Upsilon'_{2F'_{o\gamma}}}) = \widehat{\kappa}(\widehat{\Upsilon'_{2F'}}) \cap (\widehat{F'_{o\gamma}} \times \Gamma^M_\infty)$$
        since $\kappa$ is over $F'$ and hence also over $S$ and $U$. 
        Note that $\widehat{F'_{o\gamma}} \times \Gamma^M_\infty$ is open in 
        $\widehat{F'} \times \Gamma^M_\infty$.
       
%       
%        Let $\Upsilon'_{2o\gamma}$
%       $\Upsilon_{o\gamma}$ be the preimage of $S_{o\gamma}$ for the projection 
%       $\Upsilon \to S$.
%       Since $S_{o\gamma}$ is open, $\Upsilon_{o\gamma}$ is open as well. 
%        Similarly, 
%       let $\Upsilon'_{o\gamma} := \Upsilon_{o\gamma} \times_S S'$ and let $S'_{o\gamma}$ be the 
%       preimage of $S_{o\gamma}$ via the map $f$. 
%       Observe that $\Upsilon'_{o\gamma}$ is open in $\widehat{X \times_S S'}$ since 
%       it is the pullback of $\Upsilon_{o\gamma}$ via the projection $\widehat{X \times_S S'} \to \widehat{X}$. 
%       Note that we have shown that
%               the restriction of
%               $\widehat{\kappa}$ to $\widehat{\Upsilon'_{o\gamma}}$ is a homeomorphism onto 
%               its image in $\widehat{S' \times \Gamma^M_{\infty}}$.
%
%       
%       To conclude a proof of the lemma, it suffices 
%       to show that $\kappa(\widehat{\Upsilon'_{o\gamma}})$ is open in 
%       $\kappa(\widehat{\Upsilon'})$. This follows from the fact that 
%       $\kappa(\widehat{\Upsilon'_{o\gamma}}) = \widehat{W} \cap \kappa(\widehat{\Upsilon'})$ 
%       where $W = S'_{o\gamma} \times \Gamma^M_\infty$. 
%  
%  
%    
    \end{proof} 
    
   By Lemma \ref{getting around compactness}, there exists a 
    pseudo-Galois cover $f \colon F' \to F \times U$
    and a morphism
    $\kappa \colon \Upsilon'_2 \times_{(F \times U)} F' \to F' \times \Gamma_\infty^M$ for 
    some $M \in \mathbb{N}$ such that the restriction
    $\widehat{\kappa}_{|\widehat{\Upsilon'_{2F'}}} \colon \widehat{\Upsilon'_{2F'}} \to \widehat{F'} \times \Gamma^M_\infty$ is a homeomorphism onto its image
    where $\Upsilon'_{2F'} :=  \Upsilon'_2 \times_{(F \times U)} F'$.
%    Note that we must use Lemma \ref{getting around compactness} since $\widehat{\Upsilon'_2}$ is not necessarily definably compact and 
%    hence \cite[6.4.4]{HL} cannot be applied directly. 
    
    We shrink $U$ if necessary and assume that it is normal. 
    Using the arguments in \cite[6.4.4]{HL},
  there exists
%  pseudo-Galois cover $f : F' \to F \times U$ and 
  a finite collection of $K$-definable 
  functions $\mu_j \colon  F' \to \Gamma_{\infty}$ such that,
  if $O \subset U$ is a Zariski open set, 
  for $I$ a generalised interval, 
  a homotopy $a'_f \colon I \times F \times O \to \widehat{F \times O}$
  which lifts to a homotopy $a' \colon I \times f^{-1}(F \times O) \to \widehat{f^{-1}(F \times O)}$ that preserves the levels 
  of the functions $\mu_j$ also induces a homotopy 
  $a'' \colon I \times \widehat{\Upsilon'_2} \cap \widehat{{\phi'_{U}}^{-1}(O)} \to \widehat{\Upsilon'_2} \cap \widehat{{\phi'_{U}}^{-1}(O)}$
   that is $G$-invariant and 
  respects the levels of the functions $\xi_i$.
  
    Observe that the fibres of the morphism 
    $p_2 \circ f$ are pure over $U$. 
    Let $G' := \mathrm{Aut}(F'/F \times U)$.
     Let
    $p_2 \colon F \times U \to U$ denote
    the projection map onto the second coordinate. 
    Observe that the group $G'$ acts on $F'$ 
    along the fibres of the composition $p_2 \circ f$. 
    We apply the induction hypothesis 
    to the morphism $p_2 \circ f \colon F' \to U$
    along with the definable functions $\mu_j$ and the group $G'$. 
    Note that the morphism $p_2 \circ f$ is 
    projective.
         It follows that
         after shrinking $U$ if necessary, 
      we have a well defined homotopy 
      $$H'_{bf} \colon I_3 \times \widehat{\Upsilon'_2} \cap \widehat{{\phi'_{U}}^{-1}(U)} \to \widehat{\Upsilon'_2} \cap \widehat{{\phi'_{U}}^{-1}(U)}.$$
      Let $\Upsilon'_{2U} := \Upsilon'_2 \cap {\phi'_{U}}^{-1}(U)$.
      Observe that by construction, $H'_{bf}$
      restricts to a well defined homotopy 
       $I \times {\Upsilon'_{2U}} \to \widehat{\Upsilon'_{2U}/U}$ 
       whose image is relatively $\Gamma$-internal over $U$. 
       Furthermore, the composition, 
      $H'_{bf} \circ H'_{curvesf} \circ H'_{inff}$ restricts 
       to a well defined homotopy
       $(I_3 + I_2 + I_1) \times V'_{1U} \to \widehat{V'_{1U}/U}$ which fulfils 
       the assertions of the proposition. 
%       Note that this composition descends to a homotopy
%        on $V'_U$. In what follows, we will abuse notation and use $H'_{inff}$, $H'_{curvesf}$ and 
%        $H'_{bf}$ to denote the induced homotopies on $V'_U$. 
     \end{proof}

\subsection{Proof of Theorem \ref{generic cdr}} \label{proof of generic cdr}

We begin by verifying Theorem \ref{generic cdr} when the dimension of the generic fibre is 
zero.   
  
\begin{lem} \label{lem : generic cdr dim 0 case}
 Let $\phi \colon V' \to V$
 be a morphism
  satisfying assertions 
     (1)-(4) of \ref{prove the simpler case}.
 We assume in addition that the dimension of the generic fibre is $0$. 
 Then, the conclusion of Theorem \ref{generic cdr} is true. 
% Furthermore, if $U \subset V$ is the open dense subset 
% referred to in Theorem \ref{generic cdr} then 
% the deformation retraction $H \colon I \times \widehat{U} \to \widehat{U}$ 
% extends to a deformation retraction $I \times \widehat{V} \to \widehat{V}$ 
% which restricts to a well defined deformation on $\widehat{V \smallsetminus U}$.  
\end{lem}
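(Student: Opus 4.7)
The plan is to reduce the zero-dimensional case to a direct invocation of Theorem \ref{main theorem} applied on a suitable pseudo-Galois cover, and then descend. Since the generic fibre of $\phi$ has dimension zero and $\phi$ is projective with fibres pure over some dense open $U$, after shrinking $U$ I may assume $\phi_U \colon V'_U \to U$ is finite and surjective. Lemma \ref{existence of D}, applied with $P$ a point, furnishes after a further shrinking of $U$ a proper Zariski closed subset $T \subset U$ outside of which every fibre of $\phi_U$ has maximal cardinality $n := \deg(\phi_U)$, so by Lemma \ref{no need for etale} the map $\widehat{\phi_U}$ is a local homeomorphism at every simple point lying over $U \smallsetminus T$.

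Next, I would pass to a pseudo-Galois closure of $V' \to V$ in the sense of \cite[\S 2.12]{HL}: a finite surjective map $W \to V$ with $W$ projective, integral, and normal, Galois group $G_0$ over $V$, and such that $V'$ is (generically) the quotient of $W$ by some subgroup $H \subseteq G_0$. By enlarging $G_0$ if necessary (replacing $W$ by a further Galois cover), I may also arrange the $G$-action on $V'$ to lift to a $G_0$-commuting action on $W$, so that $\widetilde{G} := G \times G_0$ acts on $W$. I then apply Theorem \ref{main theorem} to $W$, taking as additional data the $\widetilde{G}$-action, the pullbacks of the functions $\xi_i$, the characteristic functions of $W \smallsetminus W_U$ and of the preimage of $T$, plus the schematic distance $d_{bord}$ pulled back to $W$. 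This produces a deformation retraction $H_W \colon I \times \widehat{W} \to \widehat{W}$ onto an iso-definable $\Gamma$-internal image, which is $\widetilde{G}$-equivariant, Zariski generalizing, and respects all the chosen levels.

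By $H$-equivariance, $H_W$ descends along $W \to V' = W/H$, via \cite[Lemma 3.9.4]{HL}, to a homotopy $H'$ on $\widehat{V'_U}$; by $G_0$-equivariance it descends along $W \to V$ to a homotopy $H$ on $\widehat{U}$. Compatibility $\widehat{\phi} \circ H'_t = H_t \circ \widehat{\phi}$ is automatic, because both are induced by the same homotopy on $\widehat{W}$ and the square $W \to V' \to V \ =\  W \to V$ commutes. Properties (1)--(3) of Theorem \ref{generic cdr} descend directly from the corresponding properties of $H_W$; property (4) is trivial in this setting, since the fibres of $\widehat{\phi}$ are finite and hence each $\Upsilon'_z$ is a finite set of dimension zero, while the assertion about $\Upsilon \cap Y$ reduces to the analogous conclusion of Theorem \ref{main theorem}(6) applied to $W$; property (5) follows from Zariski generalization together with the preservation of the ideal sheaf of $V' \smallsetminus V'_U$; and property (6) is provided by the preservation of $d_{bord}$.

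The main obstacle is arranging, in sufficient generality, a pseudo-Galois cover $W \to V$ which simultaneously dominates $V' \to V$ and carries a compatible lift of the $G$-action commuting with the Galois action $G_0$. Once $W$ and $\widetilde{G}$ are in place, the remainder of the argument is essentially a black-box application of Theorem \ref{main theorem} followed by equivariant descent; the zero-dimensional hypothesis removes any need for curve or tropical homotopies, which is why the proof collapses to this single step.
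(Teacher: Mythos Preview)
Your overall strategy---pass to a pseudo-Galois cover, apply \cite[Theorem 11.1.1]{HL} equivariantly, then descend---is exactly what the paper does. But two of your steps, including the one you flag as ``the main obstacle,'' do not go through as written.

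\textbf{The $\widetilde{G}$-action on $W$.} You want a pseudo-Galois $W \to V$ dominating $V'$ with a lift of $G$ commuting with $G_0$. Even when $V'$ is irreducible this need not exist: the $G$-action on $V'$ induces only an outer action on $G_0 = \mathrm{Gal}(K(W)/K(V))$, and when $V'$ is reducible (which is not excluded by (1)--(4) of Lemma~\ref{prove the simpler case}) $V'$ is not even a single quotient $W/H$. The paper avoids this entirely by invoking \cite[Theorem 6.4.4]{HL}: since $\widehat{V'_U/U}$ is relatively $\Gamma$-internal (the map being finite), that theorem supplies a pseudo-Galois cover $U' \to U$ and functions $\xi'_j$ such that \emph{any} deformation retraction of $\widehat{U}$ lifting to $\widehat{U'}$ and respecting the $\xi'_j$ already lifts $G$-equivariantly to $\widehat{V'_U}$ and respects the $\xi_i$. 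This reduces one to the pseudo-Galois map $U' \to U$ with its own Galois group, and no product $G \times G_0$ is needed.

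\textbf{Property (5).} Your sentence ``Zariski generalization together with the preservation of the ideal sheaf of $V' \smallsetminus V'_U$'' does not give what is required. Property~(5) asks that $\widehat{Z} \cap \Upsilon = \emptyset$ for \emph{every} proper Zariski closed $Z \subset V$, not just those fixed in advance; Zariski generalization only says the homotopy preserves $\widehat{Z}$, not that the image avoids it. The paper obtains (5) by opening up the construction in \cite[Theorem 11.1.1]{HL}: after replacing $V'$ by the normalization of $V$ in $K(U')$, it arranges that the divisor $D' := \phi^{-1}(V \smallsetminus U)$ serves as the divisor for the \emph{first inflation homotopy} $H'_{\mathrm{inf}}$. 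The inflation property then gives $\widehat{Z} \cap \mathrm{Im}(H'_{\mathrm{inf}}) \subset \widehat{D'}$ for every proper closed $Z \subset V'$, and since the final image sits inside $\mathrm{Im}(H'_{\mathrm{inf}})$ while $D' \cap V'_U = \emptyset$, one concludes $\widehat{Z} \cap \Upsilon' = \emptyset$, hence $\widehat{\phi^{-1}(Z)} \cap \Upsilon' = \emptyset$ and thus $\widehat{Z} \cap \Upsilon = \emptyset$. A black-box application of Theorem~\ref{main theorem} cannot produce this; you must control which divisor is used for inflation.
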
   
\begin{proof} 
  There exists a Zariski open subset $U \subseteq V$ such that 
 $\phi_U \colon V'_U \to U$ is finite. It follows that $\widehat{V'_U/U}$ is 
 relatively $\Gamma$-internal and iso-definable.  
 By our assumption in \ref{prove the simpler case}, 
 $V$ and hence $U$ is normal. 
 Let $d \colon V \to \Gamma_\infty$ be the schematic distance 
 to the closed subset $V \smallsetminus U$. 
 Observe that $\widehat{V'_U}$ is $\sigma$-compact with respect to 
 the function $\widehat{d_{|U}} \circ \widehat{\phi_U}$.
  By \cite[Theorem 6.4.4]{HL},
 there exists a finite pseudo-Galois cover $\pi \colon U' \to U$ and 
 finitely many definable functions 
 $\{\xi'_j\}_j$ such that any deformation retraction of $\widehat{U}$ that lifts to a deformation retraction on $\widehat{U'}$ that respects the definable functions $\xi'_j$ will lift
 to a deformation retraction of $\widehat{V'_U}$ that is equivariant for the action of the group 
 $G$ and respects the definable functions $\xi_i$. 
  We may hence assume $V'_U \to U$ is a pseudo-Galois cover and 
  $G = \mathrm{Gal}(K(V'_U)/K(U))$ where $K(U)$ is the function field of $U$. 
  
  We shrink $U$ and assume $V'_U$ is normal. We replace $V'$ with the normalization 
  of $V$ in $K(V'_U)$. We can extend the functions 
  $\xi'_j$ to definable functions on $V'$. 
  Let $D'$ denote the preimage of $V \smallsetminus U$ 
  for the morphism $V' \to V$. 
  Recall that in the proof of \cite[Theorem 11.1.1]{HL} applied to the variety
  $V'$, the group $G$ and the functions $\xi'_j$,
   we
  choose a divisor of $V'$ with suitable properties and the first 
  homotopy on $\widehat{V'}$ that we construct is an inflation homotopy 
  with respect to this divisor. 
  We shrink $U$ if necessary and hence enlarge $D'$ so that 
  we can take it to be the divisor in the proof of loc.cit. for $V'$.
  We add the schematic distance to $D'$ to the definable functions $\xi'_j$. 
    
  We apply loc.cit. to the variety $V'$, the group $G$ and 
  functions $\xi'_j$ to get a deformation retraction $H' \colon I \times \widehat{V'} \to \widehat{V'}$ 
  on $\widehat{V'}$ which is $G$-equivariant.
 We suppose in addition that the 
   image of $H'$ is pure of dimension $\mathrm{dim}(V')$.  
  The deformation retraction
  $H'$ descends to a deformation retraction on $\widehat{V}$ and 
  the
  restrictions of $H'$ and $H$ to 
  $\widehat{V'_U}$ and $\widehat{U}$ respectively,
   clearly satisfy (1) - (4) and (6) of
  Theorem \ref{generic cdr}.   
  To verify property (5) for $H$, it suffices to verify
  that if $Z \subset V'$ is a proper Zariski closed subset then 
  $\widehat{Z} \cap H'(e,\widehat{V'_U})$ is empty
  where $e$ is the end point of the generalized interval $I$.   
  Note by construction, $H'$ is the composition of homotopies 
  $H'_{\Gamma} \circ H'_b \circ H'_{curves} \circ H'_{inf}$ where 
  $H'_{inf}$ is an inflation homotopy with respect to $D'$.
  Furthermore, the image of $H'$ is contained in the image of $H'_{inf}$. 
  It follows that if $\widehat{Z}$
   intersects the image of $H'$ non-trivially then it must intersect the 
  image of $H'_{inf}$ non-trivially.
  By construction of $H'_{inf}$, this implies that
   $\widehat{Z} \cap H'(e,\widehat{V'}) \subseteq \widehat{D'}$.
  It follows that 
  $\widehat{Z} \cap H'(e,\widehat{V'_U})$ is empty.
   \end{proof}

 \begin{proof} (of Theorem \ref{generic cdr}) 
  Let the data be as given in Theorem \ref{generic cdr}. 
   It suffices to treat the case of a morphism 
     $\phi \colon V' \to V$ satisfying assertions 
     (1)-(4) of \ref{prove the simpler case}. 
    %  By Lemma \ref{prove the simpler case}, it suffices to prove the result when assuming in addition
%   that the fibres of the morphism 
%  are pure over a dense open subset of $V$ and that $V$ is integral. 
  We proceed by induction on the dimension of the generic fibre. 
 When the generic fibre has dimension $0$, Theorem \ref{generic cdr} is true
 by Lemma \ref{lem : generic cdr dim 0 case}.

%   Suppose we prove Theorem \ref{generic cdr} for 
%   the morphism $Q \to U$ with the functions 
%   $Q \to V'_U \xrightarrow{\xi_i} \Gamma_\infty$.
%   We claim that this implies the result for the morphism 
%   $V'_U \to U$ along with the definable functions 
%   $\xi_i$ and the action of the group $G$.  
%   Indeed, 
%   any homotopy of $Q$ that descends to $U$ must also descend to $V'_U$. 
%   Furthermore, by Lemma \ref{lem : G-equivariance and descent}
%   and the fact that $\phi'_U$ factors through the quotient $V'_U \to V'_U/G$, 
%   we deduce that the induced homotopy on $V'_U$ is $G$-equivariant. 
%   Clearly, this homotopy also respect the functions $\xi_i$, is Zariski generalizing and 
%   satisfies (4) of Theorem \ref{generic cdr}. 
%  We make use of the notation and constructions
%  introduced in \S \ref{HL construction on the generic fibre} and \S \ref{the HL construction for families}.
     %The simplifications in \S \ref{HL construction on the generic fibre} and \S \ref{the HL construction for families} show that 
    %it suffices to 
%    We prove the theorem for the 
%     morphism $\phi'_{U} : V'_{1U} \to U$ and use this to deduce the theorem for 
%     the morphism $\phi_{U}$.
  
    We proceed to the general case. 
    Suppose the dimension of the generic fibre is greater than $0$.
          We make use of the notation and construction
  introduced in \S \ref{section : initial set-up}.
    By %Lemmas \ref{inflation for families} and 
   Lemma \ref{curves for families}, there exists a homotopy
   %$H'_{inff} : I_1 \times \widehat{V'_{1U}} \to \widehat{V'_{1U}}$ and 
  $h'_{curvesf} \colon [0,\infty] \times \widehat{C/F \times U} \to \widehat{C/F \times U}$ 
  where $C$ is a certain constructible subset of $V'_{1U}$.
  Let $H'_{curvesf} \colon [0,\infty] \times \widehat{C} \to \widehat{C}$ denote the 
  canonical extension of the homotopy $h'_{curvesf}$. 
%    By construction, 
%  the composition 
%  $H'_{curvesf} \circ H'_{inff} : (I_2 + I_1) \times \widehat{V'_{1U}} \to \widehat{V'_{1U}}$ 
%  is a well defined homotopy.  
  Let $\Upsilon'_2$ denote the image 
  $h'_{curvesf}(0,C) \subset \widehat{V'_{1U}/F \times U}$. 
  By construction, $\Upsilon'_2$ is iso-definable and relatively 
  $\Gamma$-internal over $F \times U$. 
  By Lemma 6.4.1 in \cite{HL}, $\widehat{\Upsilon'_2}$ can 
  be identifed with a pro-definable subset of $\widehat{V'_{1U}}$
such that over a point $u \in F \times U$, $\widehat{\Upsilon'_2}_u = \Upsilon'_{2u}$. 
%The image of 
%$H'_{curvesf} \circ H'_{inff}$ is contained in $\widehat{\Upsilon'_2}$. 
By construction, $H'_{curvesf}$
 %\circ H'_{inff}$ 
 is compatible with the homotopy   
  that acts trivially on $\widehat{U}$.
  
   By Lemma \ref{getting around compactness}, there exists a 
    pseudo-Galois cover $f \colon F' \to F \times U$
    and a morphism
    $\kappa \colon \Upsilon'_2 \times_{(F \times U)} F' \to F' \times \Gamma_\infty^M$ for 
    some $M \in \mathbb{N}$ such that the restriction
    $\widehat{\kappa}_{|\widehat{\Upsilon'_{2F'}}} \colon \widehat{\Upsilon'_{2F'}} \to \widehat{F'} \times \Gamma^M_\infty$ is a homeomorphism onto its image
    where $\Upsilon'_{2F'} :=  \Upsilon'_2 \times_{(F \times U)} F'$.
%    Note that we must use Lemma \ref{getting around compactness} since $\widehat{\Upsilon'_2}$ is not necessarily definably compact and 
%    hence \cite[6.4.4]{HL} cannot be applied directly. 
    
    Using the arguments in \cite[6.4.4]{HL},
  there exists
%  pseudo-Galois cover $f : F' \to F \times U$ and 
  a finite collection of $K$-definable 
  functions $\mu_j \colon  F' \to \Gamma_{\infty}$ such that,
  if $O \subset U$ is a Zariski open set, 
  for $I$ a generalised interval, 
  a homotopy $a'_f \colon I \times F \times O \to \widehat{F \times O}$
  which lifts to a homotopy $a' \colon I \times f^{-1}(F \times O) \to \widehat{f^{-1}(F \times O)}$ that preserves the levels 
  of the functions $\mu_j$ also induces a homotopy 
  $a'' \colon I \times \widehat{\Upsilon'_2} \cap \widehat{{\phi'_{U}}^{-1}(O)} \to \widehat{\Upsilon'_2} \cap \widehat{{\phi'_{U}}^{-1}(O)}$
   that is $G$-invariant and 
  respects the levels of the functions $\xi_i$.
  
  Let
    $p_2 \colon F \times U \to U$ denote
    the projection map onto the second coordinate. 
    Observe that the fibres of the morphism 
    $p_2 \circ f$ are pure over $U$. 
    Let $G' := \mathrm{Aut}(F'/F \times U)$.
    Observe that the group $G'$ acts on $F'$ 
    along the fibres of the composition $p_2 \circ f$. 
    We apply the induction hypothesis 
    to the morphism $p_2 \circ f \colon F' \to U$
    along with the definable functions $\mu_j$ and the group $G'$. 
         It follows that 
         we can shrink $U$ 
      so that
      if $\Upsilon'_{2,U} := \Upsilon'_2 \cap {\phi'_{U}}^{-1}(U)$ then
       we have a well defined homotopy 
     $$H'_{bf} \colon I_{bf} \times \widehat{\Upsilon'_{2,U}} \to \widehat{\Upsilon'_{2,U}}$$ 
     whose image is an iso-definable $\Gamma$-internal subset of $\widehat{V'_{1U}}$.  
     By construction, the composition 
     $H'_{bf} \circ H'_{curvesf}$ % \circ H'_{inff}$
      is a well defined homotopy on $\widehat{C}$.

\begin{rem} \label{rem : primary inflation remark}
\emph{Observe that in the proof of Proposition \ref{generic relative cdr}, the homotopy we constructed on 
$\widehat{V'_{1U}/U}$ automatically descends to a homotopy on $\widehat{V'_U/U}$. As explained before, this is 
because for every $u \in U$, $V'_{1u} \to V'_u$ is isomorphic outside the finite set of closed points
$Z'_u \subset V'_u$. Hence, as long as the induced homotopy on $\widehat{V'_{1u}}$ 
preserves $\widehat{Z'_{1u}}$ where $Z'_{1u}$ is the preimage of $Z'_u$ for the morphism 
$V'_{1u} \to V'_u$.}

     \emph{Note that such an argument does not hold in the case of Theorem \ref{generic cdr}
     since we must construct a homotopy on the base $U$ as well. 
     To resolve this issue, we construct an inflation homotopy 
     $H'_{inff-primary}$ 
     on $V'_U$ whose image does not contain $Z'_U$ and is hence contained in 
     $\widehat{V'_{1U}}$. 
     We then proceed to construct suitable homotopies on $\widehat{V'_{1U}}$.}
\end{rem}

\subsection{The relative tropical homotopy} \label{relative tropical homotopy section}
 
 Let $\Upsilon'_{bcf}$ denote the image of the 
 composition of homotopies $$ H'_{bf} \circ H'_{curvesf}.$$
% The argument in \cite[Lemma 11.4.1]{HL} shows that 
% for every $u \in U$, the fibre $\Upsilon'_{bcf,u}$
% is such that for every irreducible component $P$ of $V'_{1u}$,
% $\widehat{P} \cap \Upsilon'_{bcfu}$ is pure of dimension $P$. 
 
  Our goal in this section runs parallel to \cite[\S 11.5]{HL}.
 We construct a homotopy
 $H'_{\Gamma f}$ on a subset of $\Upsilon'_{bcf}$
and
 homotopies $H'_{inff}$ and $H'_{inff-primary}$
by applying Lemma \ref{inflation for families}
 such that the composition
 $$H'_{\Gamma f} \circ H'_{bf} \circ H'_{curvesf} \circ H'_{inff} \circ H'_{inff-primary}$$
 is a well 
 defined \emph{deformation retraction} i.e. it fixes its image. 
 
% Our strategy is to shrink the base $U$ suitably so that we can adapt the 
% proofs in \cite[\S 11.5]{HL} to our relative setting.
An important fact to note is that we will no longer shrink the base $U$
to adapt the proofs in \cite[\S 11.5]{HL} to our relative setting.
We construct the homotopy $H'_{\Gamma f}$ such that its image outside of $\widehat{D'_{1U}}$
will be controlled completely by definable functions in $\Gamma$.
This enables us to choose the inflation homotopies $H'_{inff}$ and $H'_{inff-primary}$ so that 
they fix the image of $H'_{\Gamma f}$ as well.  
Furthermore, $H'_{\Gamma f}$ restricts to well defined homotopies along the 
 fibres of $\widehat{\phi_{U}}_{|\widehat{\Upsilon'_{bcf}}}$. It is hence
  compatible with $\widehat{\phi_{U}}_{|\widehat{\Upsilon'_{bcf}}}$.

 \subsubsection{Preliminaries}
 
    By our induction hypothesis, $H'_{bf}$ descends to a deformation retraction
    $H_{b} \colon I_{bf} \times \widehat{U} \to \widehat{U}$ whose image is a $\Gamma$-internal iso-definable set which we shall denote 
    $\Upsilon_b$. Furthermore, 
    $H_b$ satisfies properties (5) and (6) of 
    Theorem \ref{generic cdr}.
    For ease of notation,
    until the end of the proof of Lemma \ref{tropical homotopy part 1}, 
     we write $\Upsilon'$ in place of $\Upsilon'_{bcf}$ and 
    $\Upsilon$ in place of $\Upsilon_b$.

     We now choose continuous injective maps on $\Upsilon'$ 
     and $\Upsilon$ into the value group sort and construct a homotopy
     $H'^{\mathrm{trop}}_{\Gamma f}$ on the image of $\Upsilon'$ for this map. 
     However, since we are making use of two inflation homotopies (cf. Remark \ref{rem : primary inflation remark}),
      where 
     $H'_{inff-primary}$ acts on $V'_{U}$ and not $V'_{1U}$, we must take 
     this into consideration when choosing coordinates in the value group sort for
     $\Upsilon'$. 
     
     Recall the closed subsets 
     $Z'_{1U} \subset V'_{1U}$, $Z'_U \subset V'_U$ such that for every fibre over $u \in U$, 
    $Z'_{1u}$ is the exceptional divisor of the map $V'_{1u} \to V'_u$ and
    $V'_{1U} \smallsetminus Z'_{1U}$ is isomorphic to $V'_U \smallsetminus Z'_U$.
    We identify $V'_U \smallsetminus Z'_U$ with the subset $V'_{1U} \smallsetminus Z'_{1U} \subset V'_{1U}$.
    If 
    $\Upsilon'_1 := \Upsilon' \smallsetminus \widehat{Z'_{1U}}$ and
     $\Upsilon'_2 := \Upsilon' \cap \widehat{Z'_{1U}}$
    then 
    we have a decomposition,
    $\Upsilon' = \Upsilon'_1 \sqcup \Upsilon'_2$. 
    Observe that $\Upsilon'_1 \subset \widehat{V'_U}$. 
    By \cite[Theorem 6.2.8]{HL}, 
    there exists a 
    $K$-definable map 
    $\alpha_0 \colon V'_U \to \Gamma_\infty^{N_1}$ such that 
    $\widehat{\alpha_0}$ is continuous and its restriction to $\Upsilon'_1$ is injective.
%    
%    
%    continuous pro-definable map 
%    $\alpha_0 : \widehat{V'_U} \to \Gamma_\infty^{N_1}$ which restricts to a
%    continuous injective map on 
%    $\Upsilon'_1$. 
 Similarly, there exists a 
    $K$-definable map 
    $\alpha_1 \colon V'_{1U} \to \Gamma_\infty^{N_2}$ such that 
    $\widehat{\alpha_1}$ is continuous and its restriction to $\Upsilon'_2$ is injective.
    We abuse notation and write $\alpha_0$ and $\alpha_1$ in place of $\widehat{\alpha_0}$ and 
    $\widehat{\alpha_1}$ respectively. 
%    Likewise, there exists a 
%    continuous pro-definable map
%    $\alpha_1 : \widehat{V'_{1U}} \to \Gamma_\infty^{N_2}$
%    which restricts to a continuous injective map
%    $\alpha_1 : \Upsilon'_2 \hookrightarrow \Gamma_\infty^{N_2}$.
    Let $\alpha'_0$ denote the 
    composition $\widehat{V'_{1U}} \xrightarrow{\widehat{b_U}} \widehat{V'_U} \xrightarrow{{\alpha_0}} \Gamma_\infty^{N_1}$.

      %Since $\Upsilon'$ is an iso-definable, $\Gamma$-internal subset of $\widehat{V'_{1U}}$, 
      Once again, by 
      \cite[Theorem 6.2.8]{HL}, there exists a
      $K$-definable map $\alpha \colon U \to \Gamma_\infty^M$ such that the induced map 
      $\widehat{\alpha} \colon \widehat{U} \to \Gamma_\infty^M$ is continuous and restricts to an 
      injective map on $\Upsilon$. 
      We abuse notation and use $\alpha$ itself in place of $\widehat{\alpha}$. 
      %$\alpha_1 : \Upsilon' \hookrightarrow \Gamma_\infty^N$ and 
%      $\alpha : \Upsilon \hookrightarrow \Gamma_\infty^M$.
%      The morphism $\alpha$ as constructed in loc.cit., extends 
%      to a well defined continuous pro-definable map 
%      $\widehat{U} \to \Gamma_\infty^M$ which we abuse notation and 
%      refer to as $\alpha$ as well.  
 The proof of \cite[Theorem 6.2.8]{HL} shows that
       if $x \colon \Gamma_\infty^{M} \to \Gamma_\infty$ is 
      a coordinate then 
      $(x \circ \alpha)^{-1}(\{z \in \Gamma_\infty^{M} | x(z) = \infty \})$
      is of the form $\widehat{Z}$ where $Z$ is some Zariski closed subset of $U$. 
      Since $U$ is irreducible, we can assume that for every coordinate
      $x$, $x \circ \alpha$ is not identically $\infty$ on $\widehat{U}$. 
       Hence the locus of points $u \in U$ such that 
       $x \circ \alpha (u) < \infty$ for every coordinate $x$ on $\Gamma_\infty^M$
       is a Zariski open dense subset of $U$.        
%      Note that since $H_b$ is Zariski generalizing, we get that 
%      $\Upsilon$ is Zariski dense (cf. \cite[3.11]{HL}) in $\widehat{U}$.  
%      By \cite[Lemma 11.5.3]{HL},  
%      we can modify the embedding $\alpha$ by increasing $M$ if necessary 
%      so that $\alpha^{-1}(\Gamma^M) \cap \Upsilon$ 
%      is of the form $\widehat{U_0} \cap \Upsilon$ for some Zariski open dense subset of $U$. 
       By property (5), we deduce
%       we are interested in constructing deformations generically over the base, we 
%      replace 
%      $U$ with $U_0$.
%      may shrink $U$ and
%      Hence we may 
%      assume
       that we have a continuous
      injective definable map $\alpha \colon \Upsilon \to \Gamma^M$. 
                 
      As in \cite[\S 11.5]{HL}, we may assume that $G$ acts on the 
      coordinates of $\Gamma_\infty^{N_1}$ and on the coordinates of 
      $\Gamma_\infty^{N_2}$
       such that the map $x \mapsto (\alpha'_0(x),\alpha_1(x))$ is $G$-equivariant. 
      We simplify notation and write $f := \widehat{\phi'_{U}}_{|\Upsilon'}$
       and $N := N_1 + N_2$. 
      We define $\alpha' \colon \Upsilon' \hookrightarrow \Gamma_\infty^{N+M}$
      by $\alpha'(x) := (\alpha'_0(x),\alpha_1(x),\alpha(f(x)))$. 
      Observe that $\alpha'$ is a well defined, injective, continuous $K$-definable map such that
            $$f' \circ \alpha' = \alpha \circ f$$
      where $f'$ is the projection $\Gamma_{\infty}^{N+M} \to \Gamma_\infty^M$. 
      %We abuse notation and write $\Gamma_{\infty}^N$ in place of $\Gamma_{\infty}^{N+M}$.
           We abuse notation and write $\xi_i$ for the functions $\xi_i \circ \alpha'^{-1}$. 
           
                 Let $W'$ and $W$ denote the images of $\Upsilon'$ and $\Upsilon$ via the maps $\alpha'$ and $\alpha$ respectively. 
%           The proof of \cite[Theorem 6.2.8]{HL} shows that
%%                 the morphism $\alpha'$ extends to a map $\widehat{V'_{1U}} \to \Gamma_{\infty}^{N+M}$ and                 
%                  if $x : \Gamma_\infty^{N+M} \to \Gamma_\infty$ is 
%      a coordinate then 
%      $(x \circ \alpha)^{-1}(\{z \in \Gamma_\infty^{N+M} | x(z) = \infty \})$
%      is of the form $\widehat{Z}$ where $Z$ is some Zariski closed subset. 
%      We abuse notation and 
%      use $\alpha'$ to denote the map $\widehat{V'_{1U}} \to \Gamma_{\infty}^{N+M}$.
       Note that the action of $G$ on $W'$ restricts to a well defined action on the fibres of 
      $f'_{|W'}$. 
     Let $\rho_0 \colon V' \to [0,\infty]$ denote the schematic distance from
      the closure of $D'_{U}$ in $V'$.
        % in $V'_1$.
 Let 
 %$\rho_0 : V'_{1U} \to [0,\infty]$ and 
      $\eta$ and $\rho$
      be as in the proof of Lemma \ref{getting around compactness}.
           
%       denote the schematic distances from 
%      $Z'_{1U}$ and the pull back of 
%      $(F \smallsetminus F_0) \times U$.
%     Recall also that we defined $d'_v : V'_{1U} \to [0,\infty)$ as the schematic distance to 
%     the pullback of the boundary $V_{bord}$ of the compactification of $U$.
     Let $d_{bord} \colon V \to \Gamma_{\infty}$ denote the schematic distance to the 
     closed subset $V \smallsetminus U$. 
      After modifying the injection $\alpha'$, we can assume that 
      there exists coordinates 
      $x_h$,$x_{hh}$,$x_v$ on $\Gamma_{\infty}^N$ 
      and $x_c$ on $\Gamma_{\infty}^M$
      such that $x_{hh} \circ \alpha'$,$x_h \circ \alpha'$, $x_v \circ \alpha'$ and $x_c \circ \alpha$ correspond  
      to $\rho$, $\rho_0 \circ b'_U$, $\eta$ and 
      $d_{bord}$ respectively where $b'_U$ is the map $V'_{1U} \to V_U$.     
      
     \begin{lem} \label{W is closed} 
      The space $W' \cap \Gamma^{N+M}$ is closed in $\Gamma^{N+M}$.       
      \end{lem}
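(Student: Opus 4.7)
The plan is to realize $W' \cap \Gamma^{N+M}$ as a locally finite union of definably compact pieces indexed by $\gamma \in \Gamma$, and then invoke the fact that definably compact subsets of $\Gamma^{N+M}$ are closed. For $\gamma \in \Gamma$ set
$$W'_\gamma := W' \cap [-\gamma,\gamma]^{N+M}, \qquad K_\gamma := (\alpha')^{-1}\bigl([-\gamma,\gamma]^{N+M}\bigr) \subset \Upsilon'.$$
Since $\alpha'$ is injective, $\alpha'(K_\gamma) = W'_\gamma$. Every point of $\Gamma^{N+M}$ lies in the interior of some such box, so to conclude that $W' \cap \Gamma^{N+M}$ is closed in $\Gamma^{N+M}$ it suffices to show that each $W'_\gamma$ is closed, for which in turn it is enough to show that $K_\gamma$ is definably compact.

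The key observation is that the distinguished coordinate $x_c$ of $\alpha$ is $d_{bord}$, so the condition $\alpha'(x) \in [-\gamma,\gamma]^{N+M}$ forces $d_{bord}(f(x)) \leq \gamma$, hence $f(x) \in V_\gamma := \{v \in \overline{V} : d_{bord}(v) \leq \gamma\}$. As $d_{bord}$ is continuous and equal to $\infty$ exactly on $\overline{V} \smallsetminus U$, the set $V_\gamma$ is a closed subset of the projective variety $\overline{V}$ contained in $U$. Let $V'_{1V_\gamma}$ denote its preimage in the projective closure $V'_1$ used in Lemma \ref{getting around compactness}; it is closed in $V'_1$, so $\widehat{V'_{1V_\gamma}}$ is definably compact, and it contains $K_\gamma$.

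The remaining defining inequalities of $K_\gamma$ are finitely many bounds of the form $h \leq \gamma$, where $h$ ranges over the coordinates of $\alpha'_0 \circ b'_U$, $\alpha_1$, $\alpha \circ f$, together with the schematic distances $\rho$, $\rho_0 \circ b'_U$, $\eta$. All of these are continuous definable functions on $\widehat{V'_{1V_\gamma}}$, so $K_\gamma$ is a definably closed subset of the definably compact space $\widehat{V'_{1V_\gamma}}$, and is therefore itself definably compact. By continuity of $\alpha'$, its image $W'_\gamma = \alpha'(K_\gamma)$ is definably compact in $\Gamma^{N+M}$ and hence closed.

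The main technical point to verify is precisely that the coordinate bound on $d_{bord} \circ f$ suffices to pin $K_\gamma$ inside the definably compact $\widehat{V'_{1V_\gamma}}$, whereas the other bounds alone could leave $f(x)$ approaching the boundary of $U$. This is exactly why the coordinate $x_c$ corresponding to $d_{bord}$ was included in the construction of $\alpha$, and why the schematic distances $\rho$, $\rho_0$, $\eta$ were singled out as the other distinguished coordinates: together they ensure that the continuous injection $\alpha'$ restricted to any bounded box lands in a region where definable compactness can be read off from the projective closure.
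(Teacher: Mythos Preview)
Your overall strategy is the same as the paper's: use the coordinate $x_c$ (i.e.\ $d_{bord}\circ f$) to confine the problem to $\widehat{V'_{1V_\gamma}}$, which is definably compact, and then deduce compactness of the relevant slice of $W'$. The paper carries this out by asserting that $W'\cap[x_c\leq\gamma]$ is $\sigma$-compact with respect to $x_{hh}$ and $x_v$ (the coordinates corresponding to $\rho$ and $\eta$), so that once a limit point $x\in\Gamma^{N+M}$ is fixed, a nearby piece of $W'$ is definably compact and hence contains $x$.

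There is, however, a genuine gap in your execution. You claim that $K_\gamma$ is a definably closed subset of $\widehat{V'_{1V_\gamma}}$ because it is cut out by finitely many inequalities $h\leq\gamma$ with $h$ continuous. But $K_\gamma$ is by definition contained in $\Upsilon'=\Upsilon'_{bcf}$, and the component functions $\alpha'_0,\alpha_1,\alpha\circ f$ extend continuously to all of $\widehat{V'_{1V_\gamma}}$; the preimage of the box under these extended functions is a much larger closed set than $K_\gamma$. What you have not argued is why $\Upsilon'_{bcf}\cap\widehat{V'_{1V_\gamma}}$, or its intersection with your box, is itself closed in $\widehat{V'_{1V_\gamma}}$. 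This is precisely the content of the $\sigma$-compactness statement the paper invokes: it is not the generic coordinate bounds that matter, but the bounds on the three distinguished coordinates $x_c,x_{hh},x_v$. Bounding $x_{hh}$ and $x_v$ (after $x_c$) is what forces the slice of $\Upsilon'_{bcf}$ to be definably compact, because $\rho$ and $\eta$ control the escape routes of the constructible set $C=A\cup D'_{11U}$ on which the curve homotopy was built. Your final paragraph gestures at this, but the argument as written treats all coordinates on an equal footing and thereby misses the point where the actual work is done.
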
 
     \begin{proof}
     Let $x \in \Gamma^{N+M}$ be a limit point of the set $W' \cap \Gamma^{N+M}$
     and $\gamma := x_c(x)$.
     We show that $x \in W'$. 
     This is a consequence of the fact that $W' \cap [x_c \leq \gamma]$
     is $\sigma$-compact with respect to $x_{hh}$ and $x_v$. 
%      By construction,
%     $W \cap [x_c \leq \gamma]$ is the image of the set 
%     $\Upsilon'_{\gamma} := \Upsilon' \cap [d'_v \leq \gamma]$.
%%     where the functions 
%%     $\rho_\gamma$ and $\eta_\gamma$ were defined in Lemma \ref{getting around compactness}. 
%     Let $\epsilon := x_v(x)$. 
%          Note that $\Upsilon'_\gamma$ is $\sigma$-compact with respect 
%     to $\rho$ and $\eta$. 
%     It follows that $\Upsilon'_\gamma \cap [\eta \leq \epsilon]$ is compact and hence
%     $W \cap [x_c \leq \gamma] \cap [x_v \leq \epsilon]$ is compact. 
%     This must mean $x \in W \cap [x_c \leq \gamma] \cap [x_v \leq \epsilon]$.
     \end{proof} 
      
   \begin{lem} \label{tropical homotopy part 1} 
   Let $$W'_1 := (W' \cap \Gamma^{N+M}) \bigcup [x_h = \infty].$$
   There exists a $K$-definable deformation retraction 
   $$H'^{\mathrm{trop}}_{\Gamma f} \colon [0,\infty] \times W'_1 \to W'_1$$
    which satisfies the following properties. 
    \begin{enumerate}
    \item The deformation $H'^{\mathrm{trop}}_{\Gamma f}$ leaves the
    functions
     $\xi_i$ invariant, is $G$-equivariant and 
    preserves the fibres of the morphism $f'$. 
    \item Let $W'_0$ denote the image of the deformation retraction $H'^{\mathrm{trop}}_{\Gamma f}$. There exists a
    $K$-definable open subset $W'_o$ of $W'$ 
    that contains $W'_0 \smallsetminus [x_h = \infty]$ and $m \in \mathbb{N}$ and $c \in \Gamma(K)$ 
    such that $x_i \leq (m+1)x_h + c$ for every $1 \leq i \leq N$.
    \item For every $x \in W$, if $W'_x$ is pure of dimension $n$ then 
    for every $x$, $W'_{0x}$ is pure of dimension $n$.
            \end{enumerate}
   \end{lem}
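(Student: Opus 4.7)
The plan is to adapt the construction of the tropical homotopy from \cite[\S 11.5]{HL} to our relative setting, exploiting that the coordinate $x_c$ is inert (it comes from the base $\Upsilon$) and that the action of $G$ and the functions $\xi_i$ can be built into the coordinate system before constructing the homotopy. Concretely, I would first modify $\alpha'$ once more so that (i) every $\xi_i$ is a coordinate, and (ii) the $G$-action permutes blocks of coordinates within the $N$ coordinates covering $\Upsilon'$ only; this is possible by \cite[Theorem 6.2.8]{HL} combined with the $G$-equivariance already arranged for $\alpha'$.

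The first serious step is to establish the linear bound of assertion (2). Working inside $W' \cap \Gamma^{N+M}$, which by Lemma \ref{W is closed} is closed in $\Gamma^{N+M}$, I would argue as in the proof of \cite[Lemma 11.5.1]{HL}: using model-theoretic compactness in the $o$-minimal structure $\Gamma$, together with the $\sigma$-compactness of $W' \cap [x_c \leq \gamma]$ with respect to $x_{hh}$ and $x_v$, one produces $m \in \mathbb{N}$ and $c \in \Gamma(K)$ and a $K$-definable open subset $W'_o \subseteq W'$ such that $x_i \leq (m+1)x_h + c$ holds throughout $W'_o$ for every $1 \leq i \leq N$. The hypotheses from \S\ref{section : initial set-up} — that $D'_{11U}$ contains both the exceptional divisor and the $\xi_i^{-1}(\infty)$ components — ensure that the relevant coordinates $x_i$ really are controlled by $x_h$ near the divisor.

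With the bound in hand, I would define $H'^{\mathrm{trop}}_{\Gamma f}$ by a fiberwise max-plus formula on $W'_1$, leaving $x_c$ (and the $\xi_i$-coordinates) fixed and sending, for $t \in [0,\infty]$, each of the other $N$ coordinates $y_i$ to $\max(y_i, (m+1)x_h(y) + c - t)$; on $[x_h = \infty]$ this is the identity, matching the two pieces of $W'_1$. The explicit formula makes the map continuous and $K$-definable in $\Gamma_\infty$; the fact that it lands in $W'_1$ at every time $t$ will follow from the bound of step one, which says precisely that the target points lie on piecewise linear trajectories in $W'_o \cup [x_h = \infty]$. Fiber preservation over $W$ is built in because $x_c$ is untouched; $G$-equivariance follows because the $\max$ is applied coordinate-wise and the $G$-action permutes these coordinates; and invariance of the $\xi_i$ is immediate since the $\xi_i$-coordinates are held fixed. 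Assertion (3) on purity reduces to the observation that, fiberwise over $W$, the map $y \mapsto H'^{\mathrm{trop}}_{\Gamma f}(0, y)$ is a piecewise linear retraction onto its image, and such retractions in $\Gamma^N$ preserve fiberwise dimension.

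The main obstacle is step one — obtaining the uniform linear constant $c \in \Gamma(K)$ that works across all fibers over $W$ simultaneously. In the absolute setting of \cite[\S 11.5]{HL} this is a compactness argument over a single $\Gamma$-internal set, but here we need uniformity along a family parametrized by the iso-definable $\Gamma$-internal set $\Upsilon$. I expect this to go through because $\Upsilon$ itself is $\Gamma$-internal and the bound varies definably in $x_c$, so after composing the existing $\alpha$ on the base with a suitable definable function one can absorb the variation of $c$ into a modification of $\alpha'$, reducing to the absolute case. A secondary subtlety is that the formula must genuinely extend continuously across the boundary $[x_h = \infty]$; this is built into the $\max$ formulation, since as $x_h(y) \to \infty$ the target of every coordinate is forced to $\infty$ as well, matching the identity behavior imposed there.
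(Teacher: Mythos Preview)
Your proposal has a genuine gap in the construction of the homotopy itself. The explicit formula $y_i \mapsto \max(y_i,(m+1)x_h(y)+c-t)$ does not do what you need. First, its direction is wrong: at $t=0$ it forces $y_i \geq (m+1)x_h+c$, which is the opposite of the bound $x_i \leq (m+1)x_h+c$ required in assertion (2). More importantly, even if you repair the direction, a coordinate-wise formula of this kind has no reason to take values in $W'_1$. The set $W'\cap\Gamma^{N+M}$ is a specific closed definable subset of $\Gamma^{N+M}$, and nothing in your argument shows that the intermediate points of the trajectory remain inside it; the linear bound on an open subset $W'_o$ tells you only that certain points \emph{of} $W'$ satisfy the inequality, not that points satisfying the inequality lie in $W'$. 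Your logic is also reversed: in the actual argument the bound in (2) is an \emph{output} of the construction (the image lands in the $h$-bounded region), not an input used to define the flow.

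The paper's proof, following \cite[\S 11.5]{HL}, avoids exactly this problem by first choosing a $G$-invariant $K$-definable cell decomposition $\mathcal{D}$ of $\Gamma^{N+M}$ that respects $W'\cap\Gamma^{N+M}$ and whose pushforward under $f'$ is a cell decomposition of $\Gamma^M$. On each cell $C'$ one flows by $x\mapsto x-te_{C'}$, where $e_{C'}$ is the barycentre of $\beta C'\cap[x_h=0]\cap\bigcap_{i\in\mathfrak{M}}[x_i=0]\cap[\sum x_i=1]$; the last $M$ coordinates of $e_{C'}$ vanish by construction, which is what guarantees fibre preservation over $W$. The flow preserves cell closures, and since the decomposition respects the closed set $W'\cap\Gamma^{N+M}$ (Lemma \ref{W is closed}), the trajectory never leaves $W'_1$. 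The image then lies in the subcollection $\mathcal{D}_0$ of $h$-bounded cells, from which assertion (2) follows; purity in (3) is read off via a hyperplane section as in loc.\ cit. Your max-plus shortcut bypasses the cell decomposition, and with it the only mechanism that keeps the homotopy inside $W'_1$.
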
 
  \begin{proof}
   We adapt the proof of \cite[11.5.1]{HL} to our setting. Before doing so, we give a rough outline of
   this proof. 
   The specific details can be found in loc.cit.
    This is the case when $\Upsilon$ is a point and $M = 0$. 
   In this situation, we choose a $G$-invariant $K$-definable cell decomposition 
   $\mathcal{D}$ 
   of $\Gamma^{N}$
   that respects the definable set
   $W' \cap \Gamma^{N}$ as well as all sets of the form $[x_a = x_b]$ or $[x_a = 0]$
   for all coordinates $x_a,x_b$. 
   We define the restriction of the deformation retraction $H'^{\mathrm{trop}}_{\Gamma f}$ to $\Gamma^{N}$ 
   by specifying its behaviour on each cell of $\mathcal{D}$. 
   Let $\mathcal{D}_0$ denote the sub collection of cells $C \in \mathcal{D}$ 
   such that every coordinate $x_i$ is \emph{$h$-bounded
    on $C$}. By this we mean that there exists $m \in \mathbb{N}$, $c \in \Gamma(K)$ and 
   $x_i(z) \leq mx_h(z) + c$ for every $z \in C$.   
   Furthermore, when every coordinate is $h$-bounded on $C$, we shall say that the 
   cell itself is $h$-bounded.
    
   The homotopy $H'^{\mathrm{trop}}_{\Gamma f}$ will fix every point in every element of $\mathcal{D}_0$.
   Let $C \notin \mathcal{D}_0$. 
   We construct an element $e_C \in \mathbb{Q}_{+}^N$ such that if a coordinate $x_i$
    is $h$-bounded on $C$ then $x_i(e_C) = 0$. 
    For $x \in C$ and $t \in [0,\infty)$, let 
    $H'^{\mathrm{trop}}_{\Gamma f}(t,x) := x - te_C$. 
   We verify that the definable function $H_\Gamma$ is continuous on $[0,\infty] \times C$ 
   by induction on the dimension of the cell $C$.
   An important observation which makes this possible is the following. 
    Since $x_i(te_C) \geq 0$ for $t \in [0,\infty)$ and any coordinate $x_i$, we must have that 
 $x - te_C \notin C$ for some $t$. Let $\tau(x)$ be the smallest 
    such $t$. Observe that $H'^{\mathrm{trop}}_{\Gamma f}(\tau(x),x)$ must lie in a lower dimensional cell. 
    It follows that the path $t \mapsto H'^{\mathrm{trop}}_{\Gamma f}(t,x)$ 
    begins at $x$ when $t = 0$ and traverses finitely many cells in decreasing dimensions till
    it finally ends up in $\mathcal{D}_0$. 
    
     The proof in the relative case follows the same steps as in the sketch above. 
     We highlight only those points which require more than just the obvious adaptation. 
     \\ 
     
    \noindent \textbf{Step 1.} \emph{Preliminaries.} 
    \\
    
      As in loc.cit., let $\mathbf{A}$ denote the convex subgroup of $\Gamma(\mathbb{U})$ which is generated
      by $\Gamma(K)$
       and define $B := \Gamma(\mathbb{U})/\mathbf{A}$. Given
        a definable subset $X \subset \Gamma^t$ for
      some $t \in \mathbb{N}$, we define $\beta X$ to be the image of $X$ in $B^t$. 
    We choose a cell decomposition $\mathcal{D}$
    of $\Gamma^{N+M}$ which respects the definable set $W' \cap \Gamma^{N+M}$, 
    all sets of the form $[x_a = x_b]$ and $[x_a = 0]$ for all coordinates $x_a,x_b$ and 
    is such that
    its push forward $f'(\mathcal{D})$ is a cell decomposition of $\Gamma^M$ which 
    respects $W \cap \Gamma^M$. By definition, for every $C' \in \mathcal{D}$, its image in
    $\Gamma^M$ for the projection $f'$ is a cell $C$. 
     By \cite[Proposition 3.3.5]{LVD}, if $a \in C$ then the fibre $C'_a$ is a cell in 
     $\Gamma^N \times \{a\}$. 
     We can suppose that the decomposition $\mathcal{D}$ is such that for every 
     $C \in \mathcal{D}$, $C$ is $h$-bounded iff 
     for every $a \in f'(C)$, $C_a$ is $h$-bounded which in turn will be 
     equivalent to saying that there exists $a \in f'(C)$ such that $C_a$ 
     is $h$-bounded.
     Indeed, we modify the existing decomposition $\mathcal{D}$
     so that it has the above property as follows.
     Let $C' \in \mathcal{D}$ and $C := f'(C')$. 
     The set $C_{h-bdd}$ of those points $x \in C$ such that 
     $C'_x$ is $h$-bounded is definable. 
     This is a consequence of the 
     cell decomposition. 
%     equivalent definition of $h$-boundedness in the proof of
%     \cite[Lemma 11.5.1]{HL} using the construction of $\beta C'_x$ i.e.
%     $C'_x$ is $h$-bounded if and only if there exists no 
%     $e \in \beta C'_x$ such that $x_h(e) = 0$ but $x_i(e) \neq 0$ 
%     for a coordinate $x_i$.  
     We can now refine the decomposition $\mathcal{D}$ so that it respects the preimages of the 
     definable sets $C_{h-bdd}$ and $C \smallsetminus C_{h-bdd}$ for every $C$.
     
     For a cell $C' \in \mathcal{D}$, we now define
      the point $e_{C'}$. 
      Let $\mathfrak{M}$ denote the last $M$ coordinates of $\Gamma^{N+M}$ and 
      $\mathfrak{N}$ denote the first $N$ coordinates. 
      Let $$\beta'C' := \beta C' \bigcap [x_h = 0] \bigcap [x_i = 0]_{i \in \mathfrak{M}}$$
      and let $e_{C'}$ denote the barycentre of $\beta' C' \cap [\sum_i x_i = 1]$.
       By construction, if $x_i \in \mathfrak{M}$ or 
        if $x_i$ is $h$-bounded on $C'$ then $x_i(e_{C'}) = 0$. 
         It follows that if $C' \in \mathcal{D}_0$ then 
        $e_{C'} = 0$
        Also, if $x_i$ is not $h$-bounded on $C'$ then 
        $x_i(e_{C'}) > 0$. 
%        Indeed, suppose for some $i$, $x_i$ is not $h$-bounded on $C'$. 
%        Let $Z \subset C := f'(C')$ be the set of points $z \in C$
%        such that $x_i$ is not $h$-bounded on $C'_z$.
%        As before, since $\beta C'_z$ is uniform in $z$,
%        we see that the set $Z$ is $K$-definable. It follows that 
%        there exists a point $z \in Z(K)$ such that 
%        $x_i$ is not $h$-bounded on $C'_z$.
%        By construction, 
%        $$\beta' C'_z  = \beta' C'$$
%        and hence $x_i(e_{C'}) > 0$.
        
       For $x \in C'$, we define $H'^{\mathrm{trop}}_{\Gamma f}(x) := x - te_{C'}$. 
       Clearly, $H'^{\mathrm{trop}}_{\Gamma f}$ is well defined. Furthermore, 
       if $C = f'(C')$ then 
       by construction, we see that 
       for every $x \in C$, the homotopy $H'^{\mathrm{trop}}_{\Gamma f}$ restricts 
      to a well defined homotopy along the fibre over $x$. 
        \\ 
     
    \noindent \textbf{Step 2.} \emph{Continuity and end of the proof.} 
    \\
    
    The continuity of $H'^{\mathrm{trop}}_{\Gamma f}$
     and assertion (2) of the Lemma 
     can be shown by following the arguments in \cite[Lemma 11.5.1]{HL}  with little change.
     Note that the homotopy $H'^{\mathrm{trop}}_{\Gamma f}$ 
     preserves the closures of the cells. By 
     Lemma \ref{W is closed}, $H'^{\mathrm{trop}}_{\Gamma f}$
     preserves $W' \cap \Gamma^{N+M}$.
    Assertion (3) can be verified using the argument in loc.cit., 
    and considering the hyperplane $L \colon \sum_{i \in \mathfrak{N}} x_i = Mx_h + K$ where 
    $M = N(m + 1)$ and $K = Nc$.

%     This gives a decomposition with the specified property. 
%     Observe that by \cite[Proposition 3.3.5]{LVD}, for every $a \in \Gamma^M$, 
%     the family $\mathcal{D}_a := \{\mathcal{C}_a | C \in \mathcal{D}\}$ is a cell decomposition 
%     of $\Gamma^N \times \{a\}$ as in \cite[Lemma 11.5.1]{HL}. 
%     By repeating the construction in loc.cit., we get for every cell $C \in \mathcal{D}$ 
%     and $a \in \Gamma^M$, the element $e_{C_a} \in \mathbb{Q}_+^{N +M}$.
%     If $p_1 : \Gamma^{N+M} \to \Gamma_N$ denotes the projection onto $\Gamma^N$, 
%     let $e_{0C_a} := (p_1(e_{C_a}),0,\ldots,0) \in \Gamma^{N+M}$. 
%      
     
    % NOTE : To show that the $\xi_i$ are preserved, the argument in HL imply this is true 
   % for atleast fibres over $k$-points. Now use ACVF is complete to finish the statement for
   % all points 
  \end{proof} 
 
 It remains to treat the case when for some $x \in W$, $W'_x \cap (\Gamma^{N} \times \{x\})$ is empty. 
 We attempt a relative version of \cite[Lemma 11.5.2]{HL}.

% \begin{lem} \label{tropical homotopy part 2}
% After shrinking the open set $U$, 
%  there exists a $z$-open definable set $W^\circ \subset W'$ such that 
%  for every $x \in W$, $W^\circ \cap f'^{-1}(x)$ is $z$-dense and if 
%  $W'' := (W^\circ \cap [x_v = \infty]) \cup [x_h = \infty]$ then 
% there exists a deformation 
%  retraction
%  $$H_{\Gamma f} : [0,\infty] \times W'' \to W''$$
%  which satisfies the following properties.  
%      \begin{enumerate}
%  \item If $W'_0$ denotes the image of $H_{\Gamma f}$ then for every $x \in W$, $W'_{0x}$ is definably compact. 
% \item The deformation $H'^{\mathrm{trop}}_{\Gamma f}$ leaves the $\xi_i$ invariant, is $G$-equivariant and 
%    preserves the fibres of the morphism $f'$. 
%    \item There exists a
%    $k$-definable open subset $W'_o$ of $W'$ 
%    that contains $W'_0 \smallsetminus [x_h = \infty]$, $m \in \mathbb{N}$ and $c \in \Gamma(k)$ 
%    such that on $W'_o$, $x_i \leq (m+1)x_h + c$ for every $1 \leq i \leq N$.
%  \end{enumerate} 
% \end{lem} 
% 
% \begin{rem} 
% \emph{The assertions of
%  Lemma \ref{tropical homotopy part 2} are conditional on shrinking the base $U$. 
%   By this we mean that it might be necessary to choose $U' \subset U$,
%     replace $W$ with the image of $\Upsilon \cap \widehat{U'}$ and replace
%   $W'$ with the preimage of this new $W$.}
% \end{rem}
 
%\begin{proof} 
 Let $\eta$ denote the generic point of $U$ and $V'_{1\eta}$ be the 
 generic fibre of the morphism $\phi'_U$. 
 Recall that we assumed that every 
 generic point of every irreducible component of $V'_{1U}$ 
 is contained in the fibre over $\eta$. 
 Let $V'_{10\eta},\ldots,V'_{1r\eta}$ denote the irreducible components of 
 $V'_{1\eta}$ and for every $j$, 
 we set $V'_{1j}$ to be the Zariski closure of $V'_{1j\eta}$ in $V'_{1U}$. 
 By construction, we have that the $V'_{1j\eta}$ are equidimensional and 
 there exists a Zariski open subset $U_0 \subseteq U$ such that
  the maps
 $V'_{1j} \to U_0$ are flat. By \cite[Corollary 9.6]{hart}, we get that the fibres of
 $V'_{1j}$ are pure of dimension $\mathrm{dim}(V'_{1j\eta})$. 
 Recall by construction that $D'_{1U}$ contains the intersections 
 $V'_{1j} \cap V'_{1j'}$ for every $j,j'$ such that $j \neq j'$. 
 For every $j$, 
 let $W'_j$ be the image of $\Upsilon'_{bcf} \cap [\widehat{V'_{1j}} \smallsetminus \bigcup_{j' \neq j}  \widehat{V'_{1j'}}]$
for the map $\alpha'$. 
 Observe that $W'_j$ is open and 
  $$W' = \bigsqcup_j W'_j \bigsqcup ([x_h = \infty] \cap W').$$
  For every $x \in W$, $W'_{jx}$ is pure of dimension 
  $\mathrm{dim}(V'_{1jx})$.
  
  Let $x_1,\ldots,x_N$ denote the first $N$ coordinates of $\Gamma_\infty^{N+M}$.
  Recall that the preimage of the locus $[x_i = \infty]$ via the function 
  $x_i \circ \alpha'$ is of the form $\widehat{Z_i}$ where $Z_i$ is a Zariski closed subset 
  of $V'_{1U}$. 
  
  Let $j \in \{1,\ldots,r\}$. 
  Let $\mathfrak{P}_j \subset \{1,\ldots,N\}$ be the subset of indices such that 
  for $p \in \mathfrak{P}_j$, 
  $Z_p \cap V'_{1j\eta}$ is of dimension strictly less than $\mathrm{dim}(V'_{1j\eta})$.
  We shrink $U_0$ further to get that for every such $p \in \{1,\ldots,N\}$,
  $Z_p \cap V'_{1j}$ is flat over $U_0$ or empty. It follows that for every 
  $x \in U_0$ and $p \in \mathfrak{P}_j$, 
  $Z_p \cap V'_{1jx}$ is of dimension strictly less than $\mathrm{dim}(V'_{1j\eta})$. 
  
   Since for every $x \in W$, $W'_{jx}$ is pure of dimension 
  $\mathrm{dim}(V'_{1jx})$, we deduce that for every $p \in \mathfrak{P}_j$, 
  $x_p$ is not identically $\infty$ on $W'_{jx}$. 
  Indeed, our choice of $p$ implies that 
  $(V'_{1j} \smallsetminus (\bigcup_{j \neq j'} V'_{1j'}  \bigcup Z_p)) \bigcap V'_{1U_0}$
  is a non-empty Zariski open subset of $V'_{1U}$. 
  By construction, if 
  $A := (V'_{1j} \smallsetminus (\bigcup_{j \neq j'} V'_{1j'}  \bigcup Z_p))  \bigcap V'_{1U_0}$ then
  $\phi'_U(A) = U_0$ and hence $\widehat{\phi'_U}(\widehat{A}) = \widehat{U_0}$. 
  Note that if $x' \in \Upsilon_b$ is such that $x' \mapsto x$
  then 
  the homotopies $H'_{bf}$ and $H'_{curves f}$ restrict 
  to well defined homotopies on $\widehat{\phi'_U}^{-1}(x')$. 
  As $H_b$ satisfies property (5) of Theorem \ref{generic cdr}, 
  we see that $x' \in \widehat{U_0}$.
  Since $H'_{bf}$ and $H'_{curves f}$ are also Zariski generalizing, we must have that 
  $\Upsilon_{bcf} \cap (\widehat{\phi'_U})^{-1}(x')$ intersects $V'_{1j} \smallsetminus (\bigcup_{j \neq j'} V'_{1j'}  \bigcup Z_p)$
  non-trivially.
   It follows that $x_p$ is not identically $\infty$ on 
  $W'_{jx}$.
  
       Let $n_j$ be the cardinality of the set $\mathfrak{P}_j$.     
       Consider the embedding $\alpha'_j \colon W'_{j} \to \Gamma_\infty^{n_j + M}$
       given by 
       $z \mapsto ((x_i(z))_{i \in \mathfrak{P}_j},(x_i(z))_{i \in \mathfrak{M}})$.
       Note that ${\alpha'_j}$ is a homeomorphism onto its image. 
       Let $W'^\circ_{j}$ be the preimage of the open set
       $\alpha'_j(W'_{j}) \cap \Gamma^{n_j + M}$. Observe that 
       $W'^\circ_{j}$ is $z$-open and $z$-dense in $W'_{j}$. 
       Let $H'^{\mathrm{trop}}_{\Gamma j}$
       be the deformation retraction induced on $W'^\circ_{j}$ from
        Lemma \ref{tropical homotopy part 1}. 
        Let 
      $W^\circ := \bigcup_{j} W'^\circ_j$
      and $W'' := (W^\circ \smallsetminus [x_v = \infty]) \cup [x_h = \infty]$.
%      We abuse notation and use $W''$ to denote the preimage of $W''$ to 
%      $\Upsilon_{bcf}$.
      
       Let $$H'^{\mathrm{trop}}_{\Gamma f} \colon [0,\infty] \times W'' \to W''$$ denote the deformation retraction 
       whose restriction to $W'^\circ_j$ for every $j$ is induced by
       $H'^{\mathrm{trop}}_{\Gamma j}$. 
       Let $W'_0$ denote the image of $H'^{\mathrm{trop}}_{\Gamma f}$.
 Observe that $H'^{\mathrm{trop}}_{\Gamma f}$
 satisfies the following properties.  
      \begin{enumerate}
 \item The deformation $H'^{\mathrm{trop}}_{\Gamma f}$ leaves the $\xi_i$ invariant, is $G$-equivariant and 
    preserves the fibres of the morphism $f'$. 
    \item There exists a
    $K$-definable open subset $W'_o$ of $W'$ 
    that contains $W'_0 \smallsetminus [x_h = \infty]$, $m \in \mathbb{N}$ and $c \in \Gamma(K)$ 
    such that    
     on $W'_o \cap W'_{j}$, $x_i \leq (m+1)x_h + c$ for every $i \in \mathfrak{P}_j$.
  \end{enumerate} 
 
\subsection{Completing the proof of Theorem \ref{generic cdr}} \label{completing the proof}

        In order to complete the proof of Theorem \ref{generic cdr}, 
        we 
        choose inflation homotopies $H'_{inff}$
        and $H'_{inff-primary}$ 
        using 
        Lemma \ref{inflation for families} 
        such that the image of the composition
        $H'_{bf} \circ H'_{curvesf} \circ H'_{inff} \circ H'_{inff-primary}$
        is contained in a subspace $P$ of $\Upsilon'_{bcf}$ such that 
        $\alpha'_{|P}$ is a homeomorphism. 
        We can then define a homotopy $H'_{\Gamma f}$
        on $P$ via the tropical homotopy $H'^{\mathrm{trop}}_{\Gamma f}$
        so that the composition
        $$H'_{\Gamma f} \circ H'_{bf} \circ H'_{curvesf} \circ H'_{inff} \circ H'_{inff-primary}$$
        is well defined and fixes its image. 
        Here we abuse notation and write $H'_{\Gamma f}$ for the homotopy on $P$.
        
        For every 
        $i \in \mathfrak{N}$, 
        let $y_i := \mathrm{min}\{x_i, (m+1)x_h + c\}$.  
        Observe that for every $i$, 
        $(y_i \circ \alpha')^{-1}(\infty)$ is contained in $\widehat{D'_{1U}}$ and hence also in $\widehat{D'_{11U}}$. 
        When there is no ambiguity, we simplify notation and write 
        $y_i$ in place of the composition $y_i \circ \alpha'$. 
        
        We now construct the homotopy 
        $H'_{inff}$ using Lemma \ref{inflation for families}. 
%        Recall from \S \ref{section : initial set-up}, that we chose
%        $D'_{11U}$ such that $V'_{1U}$ and $D'_{11U}$ satisfy condition 
%        (2) of Lemma \ref{inflation for families}. Furthermore, 
%        condition (3) of loc.cit. with respect to $D'_{11U}$ and the family 
%        $\{\xi_i\}_i \cup \{y_j\}_j$ is satisfied by (2) 
%        of \S \ref{section : initial set-up}. 
        We deduce from (1) of \S \ref{section : initial set-up} that for some copy of $\mathbb{A}^m \subset E$, the 
        morphism $g'_U$
        restricts to a map 
        $V'_{1U} \smallsetminus D'_{11U} \to \mathbb{A}^m \times U$. 
        By our choice of $D'_{11U}$ in \S \ref{section : initial set-up} and 
        Lemma \ref{no need for etale}, this map satisfies requirement (2) of 
        Lemma \ref{inflation for families}. 
        Let $$H'_{inff} \colon [0,\infty] \times \widehat{V'_{1U}} \to \widehat{V'_{1U}}$$ be the deformation retraction constructed in 
        Lemma \ref{inflation for families} with respect to the morphisms $\phi'_{U}$ and 
        ${g'_U}_{|V'_{1U} \smallsetminus D'_{11U}}$, the divisor $D'_{11U}$, the family 
        $\{\xi_i\}_i \cup \{y_j \circ \alpha'\}_j$ and the group $G$. 
               %      Let $H'_{inff}$ be the homotopy 
%      provided by Lemma \ref{inflation for families} for this choice of the family $\{y_i\}_i$.
      Note that $H'_{inff}$
%       restricts to a well defined 
%      homotopy on $\widehat{V'_{1U}}$ which
       is compatible with the 
      morphism $\phi'_U \colon V'_{1U} \to U$.
      %By construction of $H'_{inff}$, 
      Furthermore, $H'_{bf} \circ H'_{curvesf} \circ H'_{inff}$ is a well defined homotopy
      which respects the levels of the functions $\xi_i$ and is equivariant for the action of the group $G$.
%      Furthermore, since $H'_{inff}$ 
%      moves along the fibres of the morphism $\phi'_U : V'_{1U} \to U$,
%      $H'_{bf} \circ H'_{curvesf} \circ H'_{inff}$ is compatible with the 
%      deformation retraction $H_b$ on 
%      $\widehat{U}$. 
      Note that at this stage we cannot say that 
      the image of 
      $H'_{bf} \circ H'_{curvesf} \circ H'_{inff}$ will
      be contained in $W''$ - the domain of the homotopy 
      $H'_{\Gamma f}$.
%       because the composition  
%      $H'_{bf} \circ H'_{curvesf} \circ H'_{inff}$ restricts to $D'_{11U}$ 
%      but we cannot say that the image of this restriction will be contained in 
%      $x_h^{-1}(\infty)$. 
            
      Let $Z'_U \subset V'_U$ be as introduced in \S \ref{section : initial set-up}. 
      Recall that by definition, 
      $\rho_0 \colon V'_U \to \Gamma_\infty$ was defined to be the schematic distance to $D'_U$
      and $x_h \circ \alpha' = \rho_0 \circ b_U$.  
      By construction $V'_U \smallsetminus Z'_U \subset V'_{1U} \smallsetminus Z'_{1U}$.  
      Let $v_i \colon V'_U \to \Gamma_\infty$ be 
      the coordinates of the map 
      $\alpha_0 \colon \widehat{V'_U} \to \Gamma_\infty^{N_1}$ i.e.
      $v_i$ is the composition of $\alpha_0$ and the $i$-th projection 
        $\Gamma_\infty^{N_1} \to \Gamma_\infty$.    
        For every $i$, let $w_i := \mathrm{min}\{v_i, (m+1)\rho_0 + c\}$.
        Observe that $\rho_0^{-1}(\infty) = D'_U$. 
%      such that ${y'_i}_{| V'_U \smallsetminus Z'_U} = {y_i}_{|V'_{1U} \smallsetminus Z'_{1U}}$ 
%      and 
%      for every $z \in Z'_U$, $y'_i(z) := 0$. 
      Let $H'_{inff-primary}$ be the homotopy 
      as provided by Lemma \ref{inflation for families}
      for the morphism $V'_U \to U$, 
      the morphism $g_U \colon V'_U \to \mathbb{P}^m \times U$, the divisor 
      $D'_U$, the functions $\{w_j\}_j \cup \{\xi_i\}_i$ and the group $G$. 
      
      Observe that the composition 
      $H'_{bf} \circ H'_{curvesf} \circ H'_{inff} \circ H'_{inff-primary}$ is well defined. 
      Let $\beta \colon \widehat{V'_U} \to \Upsilon'$ be the retraction associated to 
      this composition and $B := \beta(\widehat{V'_{U}})$. 
      
      \begin{lem} \label{lem : B is homeomorphic under alpha'}
      The morphism $\alpha' \colon \Upsilon'_{bcf} \to W'$ restricts to a homeomorphism 
      from $\beta(\widehat{V'_U})$ onto its image.
      \end{lem}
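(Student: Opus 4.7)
The plan is to establish the homeomorphism by exhausting $B$ by definably compact pieces on which the classical principle that a continuous $K$-definable injection from a definably compact space into a Hausdorff space is a closed embedding applies. Continuity and injectivity of $\alpha'|_B$ are already in hand: $\alpha'$ is continuous and $K$-definable on $\Upsilon'_{bcf}$ by construction, and injectivity follows because $\alpha_0$ separates points of $\Upsilon'_1$, $\alpha_1$ separates points of $\Upsilon'_2$, and these two pieces partition $\Upsilon'_{bcf}$. The content of the lemma is therefore the continuity of the inverse.

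For each $\gamma \in \Gamma(K)$, set $V_\gamma := \{v \in \overline{V} \mid d_v(v) \leq \gamma\}$, where $d_v$ is the schematic distance to $V \smallsetminus U$ as in the proof of Lemma \ref{getting around compactness}, and put $B_\gamma := B \cap (\widehat{\phi'_U})^{-1}(\widehat{V_\gamma})$. Each $B_\gamma$ will turn out to be definably compact. On one hand, the projective compactification $V'_1$ of $V'_{1U}$ introduced in Lemma \ref{getting around compactness} maps properly to $\overline{V}$, so $\widehat{V'_{1V_\gamma}}$ is a closed subspace of the definably compact space $\widehat{V'_1}$ and is itself definably compact. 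On the other hand, $B$ is closed in $\Upsilon'_{bcf}$, since it coincides with the fixed-point locus of the composite retraction $\beta$. Hence $B_\gamma$, being the intersection of a closed subset of $\Upsilon'_{bcf}$ with a definably compact space, is definably compact, and $\alpha'|_{B_\gamma}$ is a continuous $K$-definable injection into the Hausdorff space $\Gamma_\infty^{N+M}$, hence a closed embedding by \cite[Lemma 4.2.26]{HL}.

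To patch the local homeomorphisms into a global one, recall that by construction the coordinate $x_c$ of $\alpha'$ agrees with $d_{bord} \circ \widehat{\phi_U}$, so $\alpha'(B_\gamma) = \alpha'(B) \cap [x_c \leq \gamma]$, and the open subsets $\alpha'(B) \cap [x_c < \gamma]$ cover $\alpha'(B)$ as $\gamma$ varies over $\Gamma(K)$ (since $B \subseteq (\widehat{\phi'_U})^{-1}(\widehat{U})$ and $d_{bord}$ is finite-valued on $U$). Continuity of $(\alpha'|_B)^{-1}$ is a local property, so its validity on each open piece suffices for the global statement. The main potential obstacle is the careful verification that $B$ is closed in $\Upsilon'_{bcf}$ and that $\widehat{V'_{1V_\gamma}}$ is definably compact; both reduce to the properness arguments already used in Lemma \ref{getting around compactness}, together with continuity of the pro-definable retraction $\beta$ for the closedness of $B$.
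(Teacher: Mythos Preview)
Your overall strategy---exhaust $B$ by definably compact slices $B_\gamma$ cut out by the coordinate $x_c = d_{bord}\circ\widehat{\phi'_U}$, observe that a continuous definable injection on each slice is a closed embedding, and then patch---is exactly the paper's approach. The paper phrases it as showing $\alpha'|_B$ is a closed map by checking that the preimage of any limit point of $\alpha'(Z)$ already lies in $Z$, but the engine is the same: definable compactness of $B_\lambda := B \cap \widehat{V'_{1U_\lambda}}$.

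There is, however, a genuine gap in your justification that $B_\gamma$ is definably compact. You write that ``$B$ is closed in $\Upsilon'_{bcf}$, since it coincides with the fixed-point locus of the composite retraction $\beta$.'' This is false: the entire point of \S\ref{relative tropical homotopy section} is that the composition $H'_{bf}\circ H'_{curvesf}\circ H'_{inff}\circ H'_{inff-primary}$ does \emph{not} fix its image---that is precisely why the tropical homotopy $H'_{\Gamma f}$ is needed on top. So $B$ is not known to be a fixed-point locus, and your argument for its closedness collapses. Continuity of $\beta$ alone does not rescue this, since $\widehat{V'_U}$ is not definably compact.

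The correct route to definable compactness of $B_\lambda$, which the paper uses, is to exploit property~(6) of Theorem~\ref{generic cdr}: the base homotopy $H_b$ preserves $d_{bord}$, and the remaining homotopies in the composite live over $U$ (they preserve the fibres of $\widehat{\phi'_U}$). Hence $\beta$ preserves the level sets of $d_{bord}\circ\widehat{\phi'_U}$, which gives the equality $B_\lambda = \beta(\widehat{V'_{1U_\lambda}})$. Now $B_\lambda$ is the continuous image of the definably compact space $\widehat{V'_{1U_\lambda}}$, and you can proceed exactly as you intended.
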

      \begin{proof} 
          The morphism $\alpha'$ is injective and continuous. It suffices to show that 
       the restriction $\alpha'_{|B}$ is a closed map from 
       $B$ to $\alpha'(B)$. 
       Note that $\alpha'(B)$ is an iso-definable subset of $\Upsilon'_{bcf}$.
       This is a consequence of the fact that $\beta(V'_U) \subset \Upsilon'_{bcf}$ is 
       iso-definable and $\Gamma$-internal and hence $\beta(V'_U) = \beta(\widehat{V'_U})$.
       % Think Canonical extensions. 
       % The verification for this comes from HL, pp 93 where they say that 
       % if H : I \times \widehat{U} \to \widehat{U}$ is a homotopy such that $H(e_I,\widehat{U}) = H(e_I,U)$
       % and H(e_I,U) is iso-definable and $\Gamma$-internal then H(e_I,\widehat{U}) must be $\Gamma$-internal and iso-definable.
       Let $Z \subset B$ be an iso-definable closed subset. 
       We claim that $\alpha'(Z)$ is closed in $\alpha'(B)$. 
       Let $a$ belong to the closure of $\alpha'(Z)$ in $\alpha'(B)$. 
       By \cite[Proposition 4.2.13]{HL}, there exists a definable type $q$ that concentrates on 
       $\alpha'(Z)$ with limit point $a$. 
       Let $a' \in B$ be the preimage of $a$
        and $q'$ be the preimage of the definable type $q$.
      We show that $a'$ is a limit point of $q'$. 
      Recall the coordinate $x_c$ on $\Gamma_\infty^{M}$ which corresponds 
      to the schematic distance $d_{bord} \colon V \to \Gamma_\infty$ 
      to $V \smallsetminus U$. 
      Let $\lambda \in \Gamma$ be such that 
      $x_c \circ f'(a) < \lambda$ where $f'$ is the projection 
      $\Gamma_\infty^{N+M} \to \Gamma_{\infty}^M$. 
      Note that 
      if $U_\lambda := \{x \in V | d_{bord}(x) \leq \lambda\}$ and  
      $V'_{1U_\lambda}  := ({\phi'_U})^{-1}(U_\lambda)$
      then $\widehat{V'_{1U_\lambda}}$ 
      is definably compact. 
      It follows that 
      $\beta(\widehat{V'_{1U_\lambda}}) \subset B$ is definably compact and 
      hence 
      $\alpha'$ restricts to a homeomorphism 
      from $\beta(\widehat{V'_{1U_\lambda}})$ 
      onto its image.
      Since $H_b$ preserves $d_{bord}$, 
      we deduce that 
      $B_\lambda := B \cap \widehat{V'_{1U_\lambda}} = \beta(\widehat{V'_{1U_\lambda}})$.
      By construction, 
      we have that $f' \circ \alpha' = \alpha \circ f$ where 
      $f = \widehat{\phi'_U}_{|\Upsilon_{bcf}}$. 
      Hence, $\alpha'$ restricts to a homeomorphism 
      from $B_\lambda$ onto $[x_c \leq \lambda] \cap \alpha'(B)$. 
      Note that $B_\lambda$ and 
      $[x_c \leq \lambda] \cap \alpha'(B)$ are closed
      neighbourhoods of $a'$ and $a$ respectively. 
      It follows that $a'$ is a limit point of $q'$. 
      Since $Z$ is closed, $a' \in Z$. 
      It follows that $a \in \alpha'(Z)$.
      This concludes the proof. 
%      
%            %$d'_v \colon V'_{1U} \to 
%      
%      
%      Recall from Lemma \ref{lem : generic cdr dim 0 case}, the deformation 
%      retraction $H_b$ on $\widehat{U}$ extends to a deformation retraction 
%      on $V$ which preserves $\widehat{V \smallsetminus U}$. 
%      Let $V_{bord} := V \smallsetminus U$. 
             
      \end{proof} 
      
      We define 
      $H'_{\Gamma f}$ as follows. 
      Observe firstly that 
      $\alpha'(B) \subset W''$. 
      This is a consequence
      of the inflation properties of 
      $H'_{inff}$ and $H'_{inff-primary}$. Firstly, 
      note that the image of $H'_{inff-primary}$ is 
      contained in $\widehat{V'_{1U}} \smallsetminus \widehat{Z'_{1U}}$ which in turn 
      implies that the image of $H_{inff} \circ H'_{inff-primary}$ 
      is contained $\widehat{V'_{1U}} \smallsetminus \widehat{Z'_{1U}}$. 
      Furthermore,  
      for any $u \in U$, 
      if $Z \subset V'_{u}$ is a closed sub variety of 
      dimension strictly smaller than $\mathrm{dim}(V'_{u})$
      then the intersection of $\widehat{Z}$
      with the image of $H'_{inff} \circ H'_{inff-primary}$
      will be contained in 
      $\widehat{D'_{1u}}$. 
      Let $x \in \beta(\widehat{V'_{U}})$ 
      and $t \in [0,\infty]$. 
      We set $H'_{\Gamma f}(t,x) := {\alpha'}^{-1}(H'^{\mathrm{trop}}(t,\alpha'(x))$. 
      By Lemma \ref{lem : B is homeomorphic under alpha'}, 
      $H'_{\Gamma f}$ is a well defined homotopy. 
       We have thus shown that 
       $H'_{\Gamma f} \circ H'_{bf} \circ H'_{curvesf} \circ H'_{inff} \circ H'_{inff-primary}$
      is a well defined homotopy.
      
      We now show that 
      $H'_{\Gamma f} \circ H'_{bf} \circ H'_{curvesf} \circ H'_{inff} \circ H'_{inff-primary}$
      fixes its image. 
      From the construction, we see that it suffices 
      to verify 
      that 
      $H'_{inff}$ and 
      $H'_{inff-primary}$ fix this image.
%      We first show that 
%      $H'_{inff}$ fixes the image of $H'_{\Gamma f}$.  
      Recall that we used $W'_0$ to denote the 
      image of $H'^{\mathrm{trop}}_{\Gamma f}$.
      Let $\Upsilon'_0$ denote the image of 
      $H'_{\Gamma f}$. 
      Let $w \in \Upsilon_b \subset \widehat{U}$. 
      We check that $H'_{inff}$ and $H'_{inff-primary}$ 
      both fix $W'_{0w}$. 
      To do so we borrow the notation from the previous section. 
      Recall that we have a decomposition 
       $$W' = \bigsqcup_j W'_j \bigsqcup ([x_h = \infty] \cap W').$$
       On $W'_j \cap W'_0$, 
       for every $i \in \mathfrak{P}_j$, we must have that 
       $y_i = x_i$.
       Furthermore, if $i \notin \mathfrak{P}_j$
       then $x_i$ is identically $\infty$ on 
       $V'_{1j}$.
       Since $H'_{inff}$ respects the levels of the functions $y_i$ for all $i$,
       by \cite[Lemma 8.3.1(2)]{HL}, 
       it fixes $W'_j \cap W'_0$.  
     
       We now show that $H'_{inff-primary}$ fixes the image of the composition 
       $H'_{\Gamma f} \circ H'_{bf} \circ H'_{curvesf} \circ H'_{inff} \circ H'_{inff-primary}$.
       The argument is identical to the one made above but makes use of the fact 
       that the image of the composition must lie in the complement of 
       $Z'_{1U}$ and is hence 
       controlled completely by the coordinates $v_i$.
       
       One observes that each of the homotopies in the composition
       respects the levels of the functions $\xi_i$ and the action of the group $G$. 
       Furthermore, the induction hypothesis that gives rise 
       to $H'_{bf}$ and (3) of Lemma \ref{tropical homotopy part 1} guarantee 
       that for every $z \in \Upsilon_b$, the fibre over $z$ is pure of dimension $n$ 
       where $n$ is the dimension of the generic fibre for the map $V'_{1U} \to U$. 
        We now show how to ensure that the composition is Zariski generalizing. 
        The argument is similar to 
       the one that appears in \cite[\S 11.6]{HL}.
       By Lemma \ref{lem : B is homeomorphic under alpha'}, 
       $\alpha'$ restricts to a homeomorphism from
       $\beta(\widehat{V'_U})$ onto its image. Hence, 
       by \cite[Corollary 10.4.6]{HL}, it suffices to verify 
       that $H'^{\mathrm{trop}}_\Gamma$ is Zariski generalizing. 
       This can be done as in \S 11.6 of loc.cit.
       Lastly, our construction of $H_b$ ensures that it satisfies 
       properties (5) and (6) of Theorem \ref{generic cdr}. 
      This concludes the proof.         
%       
%        After shrinking $U$ if necessary, we can suppose that 
%        both $\phi_U \colon V'_U \to U$ and $\phi'_U \colon V'_{1U} \to U$ 
%        are flat. 
%        Let $Y' \subset V'_U$ be a Zariski open subset. 
%        Then, $Y := \phi'_U(W)$ is Zariski open as well.
%        Observe that, by construction, 
%        $H'_{curvesf} \circ H'_{inff} \circ H'_{inff-primary}$
%        restricts to a well defined homotopy on $\widehat{W'}$. 
%        
%       
%       
%       
%      
      
\end{proof}

 \section{When the base is a curve} \label{base is a curve}
 
         Theorem \ref{generic cdr} asserts the existence of 
        compatible deformations generically over the base. 
        In the proof of this theorem, at several stages, 
        we shrunk the base so as to obtain that the family behaved in a tame 
        manner. 
        When the base is a curve, we do not need to 
        shrink the base constantly to obtain tame properties of the family. 
       This allows us to prove the following theorem.
       
  \begin{thm} \label{weak cdr for curves} 
    Let $S$ be a smooth connected $K$-curve and $X$ be a quasi-projective $K$-variety. 
    Let $\phi \colon X \to S$ be a surjective morphism such that every irreducible component of 
    $X$ dominates $S$. 
 Let $\{\xi_i \colon X \to \Gamma_{\infty}\}$ be a finite collection of 
   $K$-definable functions.
  %(in the model theoretic sense for the theory $\mathrm{ACVF}$). 
  Recall that the functions $\xi_i$ extend to functions $\xi_i \colon \widehat{X} \to \Gamma_\infty$.
   Let $G$ be a finite algebraic group acting on $X$ such that the action of $G$ respects the fibres 
   of the morphism $\phi$. 
   Let $s \in S(K)$. There exists a Zariski open subset $U \subset S$ containing $s$ and
    compatible homotopies $(H,\Upsilon)$ of $\widehat{U}$ and $(H',\Upsilon')$ of $\widehat{X_U}$ 
    \footnote{$X_U := X \times_S U$}
    such that the following hold. 
    \begin{enumerate}
    \item The homotopy $H$ is in fact a deformation retraction. 
    \item The images $\Upsilon \subset \widehat{U}$ and $\Upsilon' \subset \widehat{X_U}$ are $\Gamma$-internal. 
    \item The homotopy $H'$ respects the functions $\xi_i$ i.e. $\xi_i(H'(t,p)) = \xi_i(p)$ for every $p \in \widehat{X}$ and 
     $t \in I$. 
    \item  The action of the group $G$ on $X$ extends to an action on $\widehat{X}$.  
      The deformation $H'$ can be taken to be $G$-invariant i.e. for every $g \in G'$, $H'(t,g(p)) = g(H'(t,p))$. 
      \item The homotopy $H'$ is Zariski generalizing i.e. if $W \subset X_U$ is a Zariski open subset 
      then $H'$ restricts to a well defined homotopy on $\widehat{W}$. 
    \end{enumerate} 
   \end{thm}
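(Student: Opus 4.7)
The plan is to adapt the proof of Theorem \ref{generic cdr} to the current setting, exploiting two features of the one-dimensional base. First, every step in the proof of Theorem \ref{generic cdr} that ``shrinks the base to a Zariski open dense subset'' removes only finitely many closed points when the base is a curve, and these finite exclusions can be collected and arranged to avoid $s$. Second, as noted in the discussion preceding the statement, any Zariski closed subset of $X$ generically pure of codimension $1$ in the fibres of $\phi$, whose Zariski closure equals the closure of its generic fibre, is automatically pure of codimension $1$ in every fibre of $\phi$; this eliminates much of the shrinking needed to obtain flat horizontal divisors.

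First I would carry out the preliminary reductions. Applying Lemma \ref{simplifications}, I replace $S$ by its smooth projective completion $\overline{S}$ (which still contains $s$) and $X$ by a projective variety $\overline{X}$ carrying the extended $G$-action and the extended functions $\xi_i$, obtaining a $G$-equivariant surjective morphism $\overline{\phi}\colon \overline{X} \to \overline{S}$ whose fibres are pure over some dense open subset of $\overline{S}$. A variant of the reduction in Lemma \ref{prove the simpler case} then lets me replace the original data by this projective data, since any Zariski-generalizing homotopy on the compactified setting restricts to the desired homotopy on the original $X$.

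Next I would run the construction of Proposition \ref{generic relative cdr} and Theorem \ref{generic cdr} step by step, tracking the finite set of closed points of $\overline{S}$ excluded at each stage. The factorization of Lemma \ref{Local factorization} and the choice of horizontal divisor via Lemma \ref{existence of D} require no base shrinking in the flat case (part (3) of Lemma \ref{existence of D}), and otherwise exclude only finitely many points. The curve homotopy $h'_{curvesf}$ of Lemma \ref{curves for families} requires the projection $W' \to U$ to be surjective, which fails on at most a finite set on a curve. The tropical homotopy $H'_{\Gamma f}$ and the inflation homotopies $H'_{inff}$, $H'_{inff-primary}$ in \S\ref{relative tropical homotopy section} and \S\ref{completing the proof} do not require further shrinking of the base. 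Taking $U$ to be the complement in $\overline{S}$ of the union of all these finite sets, arranged to exclude $s$, the composite homotopy yields the required deformation $H'$ on $\widehat{X_U}$, while Theorem \ref{main theorem} applied to the curve $U$ produces the compatible deformation retraction $H$ on $\widehat{U}$.

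The main obstacle I anticipate is the bookkeeping during the induction on fibre dimension in the proof of Theorem \ref{generic cdr}. At each recursive application the theorem is invoked on a morphism $p_2 \circ f \colon F' \to U$ whose base is an open subset of $\overline{S}$, and I must verify both that the recursive call produces only finitely many additional excluded points and that $U$ remains a curve containing $s$ so that the induction hypothesis (in the one-dimensional-base version under proof) continues to apply. Since $U \subseteq \overline{S}$ is open in a smooth curve it is itself a smooth curve with $s \in U$ by construction, and the recursive structure is preserved. Properties (1)--(5) of the statement follow directly from the corresponding conclusions in Theorem \ref{generic cdr}, using that the final composition of fibrewise homotopies with the base retraction is compatible by the strategy outlined in the proof sketch of Theorem \ref{generic cdr}.
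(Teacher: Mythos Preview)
Your overall plan---rerun the machinery behind Theorem~\ref{generic cdr} while tracking the finitely many base points removed---is the same as the paper's, but there is a genuine gap at the relative curve-homotopy step. You assert that the projection $W'\to U$ ``fails on at most a finite set on a curve'' and that this finite set can be ``arranged to exclude $s$,'' yet you give no mechanism for the arrangement. The divisor $X$ used in Lemma~\ref{curves for families} (built from \cite[Lemma~11.3.2]{HL} and then enlarged to respect the $\xi_i$) is not freely chosen: the points of $U$ over which $X$ acquires a vertical component are forced by the data, and nothing you have said prevents $s$ from being one of them. The paper fills this gap with a new lemma specific to the curve base (Lemma~\ref{the divisor in the lift}): it reconstructs the divisor $P$ by hand, keeping only the \emph{horizontal} components of the zero loci needed to control the $\xi_i$, and then proves by a dimension count that the Zariski closure of such a set in $B$ cannot contain an entire fibre $E\times\{u\}$. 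Hence $W'\to U$ is in fact surjective and no base point need be removed at this stage. This argument is the substantive new content of \S\ref{base is a curve} and is not a bookkeeping consequence of the generic proof.

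A second point: you include the tropical homotopy $H'_{\Gamma f}$, but Theorem~\ref{weak cdr for curves} only asks that $H'$ be a homotopy, not that it fix its image. The paper's proof explicitly omits the entire tropical step for this reason. Your inclusion is not harmless extra work: the construction in \S\ref{relative tropical homotopy section} uses that the base deformation $H_b$ coming from the induction satisfies property~(5) of Theorem~\ref{generic cdr}, and that property is not part of the statement you are proving by induction on fibre dimension. Carrying the tropical step through the curve-base recursion would therefore require additional justification you have not supplied.
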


 \subsection{Initial reductions}

\begin{rem} \label{prove the simpler case wbc}  
 \begin{enumerate} 
 \item \emph{By arguments similar to those that appear in the proof 
  of Lemma \ref{prove the simpler case} and using Lemma \ref{simplifications}, one shows that if
    Theorem
    % \ref{cdr for curves} and
      \ref{weak cdr for curves} is true for $G$-equivariant morphisms \footnote{The action of $G$ on $S$ is assumed to be trivial.} 
       $\phi \colon X \to S$ between projective $K$-varieties whose fibres are 
       pure and where $S$ is a smooth projective connected $K$-curve 
        then the theorem is true in general. Henceforth, unless otherwise stated, we will assume $\phi \colon X \to S$ is a 
        flat morphism between projective $K$-varieties 
        such that the fibres of $\phi$ are pure and $S$ is a smooth, connected curve.  
        }  
        \item \emph{The functions $\xi_i \colon V' \to \Gamma_{\infty}$ 
can be taken to be $v + g$-continuous \cite[\S 11.2]{HL}. 
It follows by Lemma 10.4.3 in loc.cit. that for every $i$, $\xi_i^{-1}(\infty)$ 
is a sub-variety of $V'$.  For every $i$, let $Z_i := \xi_i^{-1}(\infty)$.
Since $S$ is a curve, we can 
  realize $Z_i$ as the union of a horizontal divisor $Z_{ih}$ and 
  a vertical divisor $Z_{iv}$. Indeed, $Z_{ih}$ is the closure of those irreducible components 
  of $Z_i$
  that dominate $S$ while $Z_{iv}$ is the union of those irreducible components that map to 
  a $K$-point in $S$.}
\end{enumerate}
   \end{rem}

\subsection{The inflation homotopy}

\begin{rem} \label{putting stuff together}
 % \emph{
%  Recall that we assumed that the definable functions $\xi_i : X \to \Gamma_\infty$ are $v+g$-continuous and 
%  hence $\xi_i^{-1}(\infty)$ is a closed sub-variety of $X$. 
% Since $S$ is a curve, we can 
%  realize $Z_i$ as the union of a horizontal divisor $Z_{ih}$ and 
%  a vertical divisor $Z_{iv}$. Indeed, $Z_{ih}$ is the closure of those irreducible components 
%  of $Z_i$
%  that dominate $S$ while $Z_{iv}$ is the union of those irreducible components that map to 
%  a $k$-point in $S$.}
  
  \emph{Let $s \in S(K)$ be as in the statement of Theorem \ref{weak cdr for curves}. 
  We apply the first part of Lemma \ref{Local factorization} to obtain a 
  Zariski open affine neighbourhood $U$ of $s$ such that the
  morphism $\phi$ factors through 
  a finite $G$-equivariant morphism $f \colon X_U \to \mathbb{P}^m \times U$ 
  where $G$ acts trivially on $\mathbb{P}^m \times U$ and $m = \mathrm{dim}(X_s)$.
  Let $T \subset \mathbb{P}^m \times U$ be as given by 
  Lemma \ref{existence of D}.
  Let $H := \mathbb{P}^m \smallsetminus \mathbb{A}^m$. 
  We enlarge $T$ so that it contains $H \times U$. 
  Let $D := f^{-1}(T)$. 
  We enlarge $T$ so that $D$ contains the closed sub-varieties $Z_{ih}$ introduced above and remains $G$-invariant.}
  
       \emph{We apply the second part of Lemma \ref{Local factorization} to obtain 
       a variety $X'_U$ that fits into a commutative diagram}
  
                  $$
\begin{tikzcd}[row sep = large, column sep = large]
X'_{U} \arrow[r, "{f'}"] \arrow[d,"b'"] & E \times U \arrow[d, "b \times id"] \arrow[r,"p"] & \mathbb{P}^{m-1} \times U \\
X_U  \arrow[d,"\phi"] \arrow[r,"f"]  & \mathbb{P}^m \times U \\
U 
\end{tikzcd}
$$ 
\emph{such that 
 $b \colon E \to \mathbb{P}^m$ is the blow up at a $K$-point $z$, the restriction 
    of $(p \circ f')$ to $b'^{-1}(D)$
     is finite surjective onto $\mathbb{P}^{m-1} \times U$ and the 
    square in the diagram is cartesian.
   We define $D' \subset X'_{U}$ to be the union of 
   $b'^{-1}(D)$ and the preimage via 
   $(b \times id) \circ f'$ of $\{z\} \times \mathbb{P}^m$.  
   Let $\phi' := \phi \circ b'$. 
  Lastly, for every $i$, we set 
  $\xi'_i := \xi_i \circ b'$. 
    }
\end{rem}

\subsection{Relative curves homotopy} 

       Our goal in this section is to construct a homotopy on a 
       family of curves similar to the construction 
       in \S \ref{relative curves for families}. The difference between the results 
       presented here and those before is that we no longer have the freedom to shrink the base $U$ arbitrarily. 
       As a result, we proceed a little differently and make use crucially of the fact that 
       the base is of dimension $1$. 
       
   %We make use of the notation introduced in Lemma \ref{HL for families}.
       We adapt the construction in \cite[\S 11.3]{HL} 
       of the relative curve homotopy
         to the 
        fibration 
       $p' := p \circ f' :  X'_U  \to F \times U$ where $F = \mathbb{P}^{m-1}$.  
        As in \S \ref{relative curves for families}, there exists an affine 
       open subset $F_0 \subset F$ such that 
       $p^{-1}(F_0 \times U) \subset E \times U$ is isomorphic to $\mathbb{P}^1 \times (F_0 \times U)$.
%       Indeed, if $pr \colon E \to F$ denotes the projection map then
%       there exists a Zariski open affine subset $F_0 \subset F$ such that $pr^{-1}(F_0) = F_0 \times \mathbb{P}^1$. 
%       By definition, $p = pr \times \mathrm{id}$.
      Let $W := F_0 \times U$, $A := {p'}^{-1}(W) \subset X'_U$ and 
       $B := p^{-1}(W) \subset E \times U$.  
%                Furthermore, we can shrink $W$ if necessary so that
%       the map $f' : A \to B$ factors through $A \to A' \to B$ where $A \to A'$ is radicial and 
%       for every $w \in W$, $A'_w \to B_w = \mathbb{P}^1$ is generically étale. 
%       This is possible because the above property is true over the generic point of $W$. 
%       Using that the morphism $F \times U \to U$ is flat and hence open, we can
%        shrink $U$ so that the 
%        projection $W \to U$ is surjective.
     \\       
     
\noindent \emph{
      The homotopy $H'_{curvesf}$.}
\\ 

      We fix three points - $0,1$ and $\infty$ on $\mathbb{P}^1$. 
%      This is to 
%      make sure that the notions of standard homotopy and 
%      closed ball are well defined. 
      Recall that 
      given a divisor $P$ on $B$, \cite[\S 10.2]{HL} implies 
      that
       there exists a well defined definable map
      $\psi_P \colon [0,\infty] \times B \to \widehat{B/W}$ 
      which fixes $\widehat{P}$.
      These homotopies played a crucial role in \S \ref{relative curves for families}.  
%      We emphasize that this map is a priori not continuous unless for instance we 
%      add additional hypothesis on $P$. 
%      By definition, for $w \in W$, the fibres of the 
%      map $\widehat{B/W} \to W$ are copies of $\widehat{\mathbb{P}^1}$. 
%      The definable map $\psi_P$ is defined using the 
%      standard homotopy on $\mathbb{P}^1$ and then defining cut offs of this homotopy via the 
%      divisor $P$.
      
\begin{lem} \label{the divisor in the lift}
  There exists a divisor $P \subset B$ which satisfies the following properties. 
  \begin{enumerate} 
  \item For any divisor $P' \subset B$ that contains $P$, 
  the definable map 
  $\psi_{P'} \colon  [0,\infty] \times B \to \widehat{B/W}$ lifts uniquely to a  
  definable map $h \colon [0,\infty] \times A \to \widehat{A/W}$ such that :
  \begin{enumerate} 
  \item For every $i$, the function $h$ preserves the levels of the functions $\xi_i'$. 
  \item The function $h$ is $G$-invariant.  
  \end{enumerate} 
  \item Recall the divisor $D' \subset X'_U$ from Remark \ref{putting stuff together} which is 
  finite over $F \times U$. 
      We have that $f'(D' \cap A) \subseteq P$. 
  \item Let $W' \subset W$ be the open subspace over which the 
  divisor $P$ is finite. The restriction $W' \to U$ is surjective. 
  \end{enumerate}
\end{lem}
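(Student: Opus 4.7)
The strategy is to carry out the construction of \cite[Lemma 11.3.2]{HL}, augmented by \cite[Lemma 10.2.2]{HL}, in our relative setting, taking care that every irreducible component of the resulting divisor $P$ dominates $U$. From this componentwise dominance, condition~(3) will fall out almost formally. First, I would apply \cite[Lemma 11.3.2]{HL} to the finite morphism $f' \colon A \to B$ over $W$ to produce an initial divisor $P_0 \subseteq B$ such that $\psi_{P'}$ admits a unique definable lift to $\widehat{A/W}$ for every divisor $P' \supseteq P_0$. The key observation is that the divisor demanded by loc.\ cit.\ is built from data on the generic fibre $B_\eta \cong \mathbb{P}^1_{K(\eta)} \times F$ (essentially the ramification locus of $A_\eta \to B_\eta$ together with finitely many chosen points), so $P_0$ can be chosen as the Zariski closure of a divisor on $B_\eta$, forcing every irreducible component of $P_0$ to dominate $U$.

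Next I would enlarge $P_0$ in stages. For~(2), I adjoin $f'(D' \cap A)$; since $D'$ was built in Remark~\ref{putting stuff together} out of the subvariety $T$ of Lemma~\ref{existence of D}, every irreducible component of $D'$ dominates $U$, and the same holds for its finite image $f'(D' \cap A) \subseteq B$. For~(1a), I apply \cite[Lemma 10.2.2]{HL} to adjoin finitely many additional divisors controlling the functions $\xi'_i$; these can again be taken as Zariski closures of generic-fibre divisors. For~(1b), I replace the current divisor by the union of its finitely many $G$-translates (this preserves dominance over $U$ because $G$ acts trivially on $U$). Call the resulting divisor $P$. The uniqueness of the lift $h$ granted by~(1) then automatically makes $h$ both level-preserving for the $\xi'_i$ and $G$-invariant.

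Finally, to verify~(3), let $Z(P) := \{ w \in W : \mathbb{P}^1 \times \{w\} \subseteq P \}$, so that $W' = W \smallsetminus Z(P)$ is the open locus over which $P$ is finite. Each irreducible component $P_i$ of $P$ is a divisor of $B$, hence has dimension $\dim B - 1 = m$, which coincides with $\dim (\mathbb{P}^1 \times F \times \{u_0\})$ for any $u_0 \in U$. If $P_i$ contained such a fibre it would equal it by dimension and irreducibility, contradicting the fact that $P_i$ dominates $U$. Therefore no fibre $F \times \{u\}$ is contained in $Z(P)$, and the projection $W' \to U$ is surjective. The main obstacle in this plan is the first step: one must check that \cite[Lemmas 11.3.2 and 10.2.2]{HL} can be performed in a form that yields a divisor with every irreducible component dominating $U$. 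This amounts to reinspecting those proofs in the relative setting and observing that the essential data defining the divisor lives on the generic fibre $B_\eta$, so Zariski-closing produces the desired horizontal divisor.
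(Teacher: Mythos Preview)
Your overall plan coincides with the paper's: assemble $P$ from a lifting divisor in the spirit of \cite[Lemma~11.3.2]{HL}, adjoin $f'(D'\cap A)$, adjoin divisors controlling the $\xi'_i$, and read off $G$-invariance of $h$ from uniqueness of the lift. The divergence is in how property~(3) is obtained.

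You want to force every component of $P$ to dominate $U$ by taking each piece as the Zariski closure of a divisor on the generic fibre $B_\eta$, $\eta$ the generic point of $U$. The paper does \emph{not} argue this way. It builds a constructible $P_1\subset B$ that is \emph{fibrewise finite over $W$}: for each $w\in W$, $P_{1,w}$ contains the cardinality-drop locus of $f'_w$ together with enough points so that its convex hull catches the forward branching points of $f'_w$ (this is the content of \cite[Lemma~11.3.2]{HL} re-run uniformly, using Lemma~\ref{no need for etale} in place of the \'etaleness input). Then $\dim\overline{P_1}=m=\dim(E\times\{u\})$, so if $\overline{P_1}\supseteq E\times\{u\}$ the set $P_1\cap(E\times\{u\})$ would be dense there, contradicting fibrewise finiteness; hence $W_1\to U$ is onto. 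For the $\xi'_i$ the paper passes to $\mathrm{val}(g_t)$ for regular $g_t$ on the affine piece $B_0$, takes zero loci $Z_t$, and then \emph{discards} the components of $Z_t$ not generically finite over $W$, keeping only the horizontal part $P_3$; the same dimension count shows surjectivity onto $U$ survives.

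The step you flag as ``the main obstacle'' is a genuine gap as written. Your assertion that the divisor of \cite[Lemma~11.3.2]{HL} ``is built from data on the generic fibre $B_\eta$'' is not what that lemma gives: applied over $W$ it produces a divisor controlling the lift at every $w$, with no claim about dominance over $U$; applied instead to $A_\eta\to B_\eta$ and Zariski-closed, it controls the lift only over $\eta$. At a closed $u_0\in U$ the forward branching locus of $f'_{w_0}$ need not lie in the specialisation of your $P_0$, so $\psi_{P_0}$ need not lift there. This is exactly why the paper re-runs the construction fibrewise here, whereas in the generic situation (Lemma~\ref{curves for families}) it can cite \cite[Lemma~11.3.2]{HL} directly and then shrink $U$. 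One minor point: taking $G$-translates of $P$ is unnecessary, since $G$ acts trivially on $\mathbb{P}^m\times U$ and hence on $B$; the paper obtains $G$-invariance of $h$ purely from uniqueness of the lift.
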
 
\begin{proof} 
    We use arguments as in the proof of Lemma \ref{existence of D} to show that 
    there exists a constructible set $P_1 \subset B$ such that 
    for every $w \in W$, $P_{1w} := P_1 \cap B_w$ is a finite set
    of those points 
    $c \in B_w$ such that 
    $\mathbf{card}\{f'^{-1}(c)\} < \mathrm{sup}_{b \in B_w} \{\mathbf{card}\{f'^{-1}(b)\}$.
%    
%     and 
%    if $b \in B_w \smallsetminus P_{1w}$ and 
%    $c \in B_w$ then $\mathbf{card}\{f'^{-1}(c)\} \leq \mathbf{card}\{f'^{-1}(b)\}$. 
    We can further enlarge $P_1$ so that for every $w \in W$, 
    if $FB_w \subset B_w$ denotes the forward branching points 
    \cite[Definition 7.4.2]{HL} of the morphism $f'_w := f'_{|A_w}$ then
    the convex hull of $P_{1w}$ contains $FB_w$. 
   One sees from the proof of \cite[Lemma 11.3.1]{HL} that 
   $P_1$ can be chosen so that it continues to be constructible and finite over $W$ i.e. for every 
   $w \in W$, $P_{1w}$ is finite. 
   
     Let $\overline{P_1} \subset B$ 
    denote the Zariski closure of $P_1$ in $B$. We claim that 
    $\psi_{\overline{P_1}}$ lifts uniquely to a definable map 
    $h_{\overline{P_1}} \colon [0,\infty] \times A \to \widehat{A/W}$. 
    It suffices to show that if given $w \in W$, 
    then the restriction 
    ${\psi_{\overline{P_1}}}_{|[0,\infty] \times B_w}$ 
    lifts to a definable map
    $h_{\overline{P_1},w} \colon [0,\infty] \times A_w \to \widehat{A_w}$. 
    This can be accomplished by the arguments in 
    \cite[Proposition 7.4.6]{HL} and Lemma \ref{no need for etale}. 
    Note that 
    in \cite[Proposition 7.4.6]{HL}, one constructs a path in $\widehat{A_w}$ (which is a lift of a path in $\widehat{B_w}$)
    starting from a Zariski closed point  which is distinct from a point of ramification upto a forward branching point. 
    However, the only reason to have the hypothesis that 
     the Zariski closed point be distinct from the ramification locus
     is to use 
    Lemma 7.3.1 in loc.cit. which we accomplish by Lemma \ref{no need for etale}. 
    Let $W_1 \subset W$ be the open subspace over which 
    $\overline{P_1} \to W$ is finite. We claim that the projection $W_1 \to U$
    is surjective. If $W_1$ was not surjective then we deduce that 
    $\overline{P_1}$ must contain a subspace of the form 
    $E \times \{u\}$ for some $u \in U$ using the notation from 
    Remark \ref{putting stuff together}. 
    Observe that 
    $\mathrm{dim}(\overline{P_1}) = \mathrm{dim}(E) = m$. Hence we must have
    that $E \times \{u\}$ is an irreducible component of $\overline{P_1}$ which implies that 
    $P_1 \cap (E \times \{u\})$ is dense in $E \times \{u\}$. By construction of $P_1$, this is not possible. 
    We have thus verified the claim.       
   
     We now enlarge $\overline{P_1}$ to a divisor $P$ so that the lift $h$ 
     of $\psi_P$ respects the levels of the functions $\xi'_i$ .
     We proceed as follows.
     We enlarge $\overline{P_1}$ so that it contains the divisor $\{\infty\} \times W \subset B$.
      Observe that $B_0 := B \smallsetminus (\{\infty\} \times W) = \mathbb{A}^1 \times W$ is affine.
      It follows that $A_0 := f'^{-1}(B_0)$ is affine as well. 
      By the proof of \cite[Lemma 10.2.3]{HL}, we see that 
     for every $i$,
     there exists a finite family $\{\epsilon_{ji} \colon B \to \Gamma_\infty\}$
     of definable functions such that if a definable function 
     $[0,\infty] \times B \to \widehat{B/W}$ preserves 
     $\epsilon_{ji}$ for every $j$ then 
     any lift $[0,\infty] \times A \to \widehat{A/W}$ must preserve 
     $\xi'_i$. 
      
      Since $B_0$ is affine, 
     for every $i,j$, 
     the restriction $(\epsilon_{ij})_{|B_0}$ factorizes through functions of the form $\mathrm{val}(g)$ 
     where $g$ is a regular function on $B_0$.  
     Hence 
     there exists finitely many regular functions $g_1,\ldots,g_r$ on $B_0$ such that if 
     $Z_1,\ldots,Z_r$ denotes the zeroes of the functions and
     $P_2 = \bigcup_t Z_t$ then 
      the homotopy $\psi_{P_2}$ 
      respects the levels of the functions $\epsilon_{ij}$ for every $i,j$. 
      We modify $P_2$ slightly as follows. 
     For every $t$, 
     let $Z_{ht}$ be the union of those components of $Z_t$ which are generically finite over $W$. 
      If $W_2 \subset W$ denotes the locus over which the Zariski closure
      $\overline{Z_{ht}}$ in $B$ is finite then 
      we claim that $W_2 \to U$ is surjective.   
     Indeed, the Zariski closure $\overline{Z_{ht}}$
     is such that it cannot contain a subset of the form 
     $E \times \{u\}$ for dimension reasons. 
     Let
     $P_3 := \bigcup_t \overline{Z_{ht}}$. 
%     We see that $\psi_{P_3}$ 
%     respects the levels of the functions $\epsilon_{ij}$ for every $i,j$.
     
     Let $P = \overline{P_1} \bigcup P_3 \bigcup f'(D' \cap A)$. 
     By construction, $\psi_P$ lifts to a homotopy on $A$ and 
     respects the levels of the functions $\xi'_i$. 
     To conclude a proof of the lemma, we must 
     show that the lift $h$ is $G$-invariant. This follows from the uniqueness of the lift $h$. 
 
\end{proof}
 
  Using Lemma \ref{no need for etale}, 
  we deduce that our choice of $D' \subset X'_U$ implies that 
 we can apply Lemma \ref{inflation for families} 
 to the morphism $X'_U \to U$, the map
 $X'_U \smallsetminus D' \to \mathbb{A}^m \times U$,
 the functions $\xi_i \colon X'_U \smallsetminus D' \to \Gamma_\infty$ and the 
 group $G$.

%
%       We make use of the notation introduced in Lemma \ref{HL for families}.
%       The construction in \cite[\S 11.3]{HL} 
%       of the relative curve homotopy
%        can be adapted to the 
%        fibration 
%       $p_{U} \circ g'_{U} : V'_{1U} \to F \times U$.

%       By \cite[Lemma 11.3.2]{HL},
% there exists a 
% divisor $X$ on $B$ such that for any 
% divisor $X'$ containing $X$, $\psi_{X'}$ lifts uniquely to a definable map
% $h : [0,\infty] \times A \to \widehat{A/W}$ which is fibrewise a homotopy. 
%Furthermore, by Lemma 10.2.1 in loc.cit., when the divisor $X'$ is finite 
%over $W$, the definable map $\psi_{X'}$ is in fact a homotopy. 
%    In this case, Lemma 10.1.1 in loc.cit. implies that the 
%    unique lift $h$ provided is a homotopy.
%    
\begin{lem} \label{curves for families wbc}
   There exists a constructible set $C \subset X'_{U}$ with the following properties. 
   \begin{enumerate} 
   \item The set $C$ maps surjectively onto $\mathbb{P}^{m-1} \times U$ via $p'$
   \item  Let $H'_{inff}$ be the homotopy obtained by applying
    Lemma \ref{inflation for families} 
 to the morphism $X'_U \to U$, the map
 $X'_U \smallsetminus D' \to \mathbb{A}^m \times U$,
 the functions $\xi_i \colon X'_U \smallsetminus D' \to \Gamma_\infty$ and the 
 group $G$.
   The image $H'_{inff}(e,\widehat{X'_{U}})$ is contained in $\widehat{C}$.  
   \item  Let $I_2 := [0,\infty]$. 
    There exists a $v+$g-continuous
   homotopy $h'_{curvesf} \colon I_2 \times C \to \widehat{C/\mathbb{P}^m \times U}$. 
   The image $\Upsilon'_2 := h'_{curvesf}(0,C) \subset \widehat{X'_{U}/\mathbb{P}^{m-1} \times U}$ is 
   iso-definable and relatively $\Gamma$-internal over $\mathbb{P}^{m-1} \times U$. 
   \end{enumerate}    
   \end{lem}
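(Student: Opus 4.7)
The strategy mirrors that of Lemma \ref{curves for families}, but since we can no longer shrink the base $U$, the role of shrinking is replaced by the surjectivity statement (3) in Lemma \ref{the divisor in the lift}. More precisely, I would apply Lemma \ref{the divisor in the lift} to obtain a divisor $P \subset B$ such that $\psi_P \colon [0,\infty] \times B \to \widehat{B/W}$ lifts uniquely to a definable map $h \colon [0,\infty] \times A \to \widehat{A/W}$ which respects the levels of the $\xi'_i$ and is $G$-invariant, and moreover such that the open subspace $W' \subset W$ over which $P$ is finite surjects onto $U$. After replacing $A$ with ${p'}^{-1}(W')$ and $B$ with $p^{-1}(W')$, the map $\psi_P$ is a genuine fibrewise homotopy (since the divisor on each fibre is finite), and so is its lift $h$.

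Next, I would extend $h$ to the constructible set $C := A \cup D'$ by declaring $h$ to be the identity on $D'$. Since by construction $f'(D' \cap A) \subseteq P$ and $P$ is fixed by $\psi_P$, the gluing is consistent on the overlap, and by \cite[Lemma 11.3.3]{HL} the result is a well-defined relative homotopy $h'_{curvesf} \colon [0,\infty] \times C \to \widehat{C/\mathbb{P}^m \times U}$ with a canonical extension $H'_{curvesf}$ on $\widehat{C}$. The image $\Upsilon'_2 = h'_{curvesf}(0,C)$ is iso-definable and relatively $\Gamma$-internal over $F \times U$: over each point of $W'$ the homotopy is the lift through the finite map $A \to B$ of the standard $\mathbb{P}^1$-homotopy cut off along the finite divisor $P$, whose image is $\Gamma$-internal by \cite[\S 10.2]{HL}, and over $D'$ the image is the finite/horizontal divisor $D'$ itself which is already relatively $\Gamma$-internal over $\mathbb{P}^{m-1} \times U$ since $(p \circ f')_{|D'}$ is finite.

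For assertion (1), surjectivity of $p'(C)$ onto $\mathbb{P}^{m-1} \times U$ follows immediately because $D' \subset C$ and $(p \circ f')_{|D'}$ is already finite surjective onto $\mathbb{P}^{m-1} \times U$ (see Remark \ref{putting stuff together}). For assertion (2), the inflation condition, I would argue exactly as at the end of the proof of Lemma \ref{curves for families}: let $S := (F \times U) \smallsetminus W'$. Since $W' \to U$ is surjective and $W'$ is open in the irreducible $F \times U$, $S$ cannot contain a fibre of the form $F \times \{u\}$, so for every $u \in U$ the set $S_u \subset F$ has dimension $\leq m - 2$, whence ${p'}^{-1}(S) \cap {\phi'}^{-1}(u)$ has dimension strictly smaller than $\mathrm{dim}(X'_{U,u}) = m$. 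Because $X'_U \smallsetminus C \subseteq {p'}^{-1}(S) \cup (\text{a subset of } D')$ and the complement of $D'$ in this union is closed in the fibre over $u$ of strictly smaller dimension, the inflation property Lemma \ref{inflation for families}(b) applied to our chosen $H'_{inff}$ with exceptional locus $D'$ forces $H'_{inff}(0, \widehat{X'_U}) \subseteq \widehat{C}$.

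The only slightly delicate point is the fibrewise application in the last paragraph, where one must verify that the fibrewise dimension estimate really does allow Lemma \ref{inflation for families}(b) to collapse everything outside $C$ into $\widehat{D'} \subseteq \widehat{C}$; this is the same argument that appeared in Lemma \ref{curves for families} and should carry over verbatim once one has Lemma \ref{the divisor in the lift}(3) in hand, which is why the core work is really packaged into Lemma \ref{the divisor in the lift}.
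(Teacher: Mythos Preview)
Your proposal is correct and follows essentially the same route as the paper's proof, which simply says to repeat the argument of Lemma~\ref{curves for families} with Lemma~\ref{the divisor in the lift} replacing \cite[Lemma 11.3.2]{HL}; you have accurately unpacked what that entails, including setting $C := A \cup D'$, extending by the identity on $D'$ via \cite[Lemma 11.3.3]{HL}, and using the surjectivity of $W' \to U$ from Lemma~\ref{the divisor in the lift}(3) in place of shrinking $U$ for the inflation step. One cosmetic remark: in your inflation paragraph the clause ``$\cup\ (\text{a subset of } D')$'' is superfluous since $D' \subset C$, so $X'_U \smallsetminus C \subseteq {p'}^{-1}(S)$ outright.
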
 
   \begin{proof} 
  We verify that $C := A \cup D'$ satisfies the assertions of the lemma by  
  identical constructions and arguments as in
    Lemma \ref{curves for families} using 
    Lemma \ref{the divisor in the lift} in place of \cite[Lemma 11.3.2]{HL}
       \end{proof} 

 \subsection{Theorem \ref{weak cdr for curves} and consequences} 
 
 \begin{proof} (Theorem \ref{weak cdr for curves})
 The proof is identical to that given in \S \ref{proof of generic cdr} of Theorem \ref{generic cdr}. 
 Note that we do not claim that the homotopy $H'$ on $\widehat{X_U}$ fixes its image i.e. that 
 it is a deformation retraction. Hence, we do not require the relative tropical homotopy
 from \S \ref{relative tropical homotopy section}. 
 
   \end{proof} 
   
    In the case that the morphism $\phi \colon X \to S$ in 
    Remark \ref{prove the simpler case wbc}
     is of relative dimension $1$ 
    i.e. for every $s \in S(K)$, $X_s$ is a $K$-curve, we can verify that 
    there exists deformation retractions of $\widehat{X}$ and $\widehat{S}$  
    which are compatible with $\widehat{\phi}$ and whose images 
    are $\Gamma$-internal.

       The method of proof is to first show the result locally around an arbitrary point.
      The following lemma then allows us to glue the 
  various relative homotopies to obtain a relative homotopy of the total family whose image is 
  relatively $\Gamma$-internal. 
  
         We employ the following notation in the lemma below. 
         Given a projective variety $V$, 
         recall from  \cite[\S 3.10]{HL} the notion of a definable metric 
         $m \colon V \times V \to \Gamma_\infty$. 
        Let $D \subset V$ be a closed sub-variety
        of $V$. 
         As in the proof of Lemma 10.3.2 in loc.cit., we define $\rho_D \colon V \to \Gamma_\infty$ as follows. 
         For $x \in V$, set $\rho_D(x) := \mathrm{sup}\{m(x,d) | d \in D\}$. 
         When there is no ambiguity about the divisor $D$ chosen, we simplify notation 
         and write $\rho_D$ in place of $\rho$.         
  
  \begin{lem} \label{foreshortening}
  Let $f \colon V' \to V$ be a morphism of projective $K$-varieties. 
  Let $D \subset V$ be a closed sub-variety and $D' := f^{-1}(D)$.
  Let $U := V \smallsetminus D$ and $U' := V' \smallsetminus D'$.  
   Let 
  $h \colon [0,\infty] \times U' \to \widehat{U'/U}$ be a homotopy. 
%  
%   We assume the homotopy $h$ satisfies the following technical assumption. 
%   Let $m'$ be a definable metric on $U'$. 
%   Let $\gamma : U \to [0,\infty]$ be defined as
%     $\gamma(u) := \mathrm{sup}\{\gamma'(u') | u' \in f^{-1}(u)\}$ 
%     where $\gamma'(u')$ is the largest element  
%     in $[0,\infty]$
%     such that 
%     $H(\gamma'(u'),u')$ belongs to the ball 
%     $$B(u',m',\rho_D(f(u'))) := \{x \in \widehat{U'} | m'(u',x) \geq \rho_D(f(u')) \}.$$
%     \begin{itemize}
%     \item
%     Suppose that 
%     $\gamma$ is locally bounded on $U$ in the sense of 
%     \cite[\S 10.1]{HL}.  
%          \end{itemize}
    Let $\epsilon \in [0,\infty)$ be $K$-definable. 
  There exists a $K$-definable 
   homotopy $g_{h,\epsilon} \colon [0,\infty] \times V' \to \widehat{V'/V}$ such that 
  if $x \in V$ and $\rho_D(x) \leq \epsilon$ then 
  $g_{h,\epsilon}(0,V'_{x}) = h(0,V'_x)$. 
    \end{lem}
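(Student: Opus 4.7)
The plan is to construct $g_{h,\epsilon}$ as a cut-off of $h$, reparametrized in time by a $K$-definable function on $V'$ that detects proximity to $D'$, and then extended across $D'$ by the identity. This is the natural relative adaptation of the cut-off construction used in the proof of the inflation homotopy \cite[Lemma 10.3.2]{HL}.

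First I would introduce the auxiliary function $\rho := \rho_D \circ f \colon V' \to \Gamma_\infty$, so that $\rho^{-1}(\infty) = D'$ and $\rho$ is finite on $U'$. By Lemma \ref{lem : distance to closed sets} together with the construction of the definable metric in \cite[\S 3.10]{HL}, $\rho$ is $v$-continuous and $K$-definable, and satisfies $\rho(x') \to \infty$ as $x' \to D'$. I then define the cut-off
\[
\sigma(x') := \max(0, \rho(x') - \epsilon),
\]
which is a $K$-definable $v$-continuous function $V' \to \Gamma_\infty$ with $\sigma^{-1}(0) = f^{-1}(\{x \in V : \rho_D(x) \leq \epsilon\})$ and $\sigma^{-1}(\infty) = D'$.

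Next I would set
\[
g_{h,\epsilon}(t, x') := \begin{cases} h(\max(t, \sigma(x')), x') & \text{if } x' \in U', \\ x' & \text{if } x' \in D'. \end{cases}
\]
On $U'$ this is well-defined since $\sigma(x') < \infty$. It is $K$-definable as a composition of $K$-definable maps, and factors through $\widehat{V'/V}$ since $h$ factors through $\widehat{U'/U}$ and the extension to $D'$ is the identity. The required property at $t = 0$ is then immediate: if $\rho_D(x) \leq \epsilon$ then $\sigma \equiv 0$ on $V'_x = U'_x$, and hence $g_{h,\epsilon}(0, x') = h(0, x')$ for every $x' \in V'_x$. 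Under the convention $h(\infty, \cdot) = \id$ on $U'$ one has $g_{h,\epsilon}(\infty, \cdot) = \id_{V'}$, so $g_{h,\epsilon}$ is a homotopy.

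The main obstacle is establishing continuity across the boundary $D'$. Continuity on $[0,\infty] \times U'$ is clear from the continuity of $h$, $\sigma$, and $\max$. At a point $(t_0, y') \in [0,\infty] \times D'$ the argument goes as follows: any approach $(s, x') \to (t_0, y')$ with $x' \in U'$ satisfies $\sigma(x') \to \infty$, so eventually $\max(s, \sigma(x')) = \sigma(x')$, and one must show that the type $h(\sigma(x'), x')$ converges to $y'$ in $\widehat{V'}$. Since $h(\infty, x') = x'$, this reduces to showing that $h(s, x') \in \widehat{W}$ for any $v+g$-open neighbourhood $W$ of $y'$, provided $s$ is large enough relative to the size of $W$ as measured by the definable metric. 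This estimate is precisely the one that underlies the cut-off argument in the proof of \cite[Lemma 10.3.2]{HL}, and can be executed using the $v$-continuity of $h$ together with the convergence properties of $\rho_D$ supplied by Lemma \ref{lem : distance to closed sets}.
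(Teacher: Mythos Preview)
Your overall architecture is correct and matches the paper's: extend $h$ across $D'$ by the identity after replacing $h$ by a cut-off $h[\tau]$ for a suitable $v{+}g$-continuous function $\tau\colon U'\to\Gamma_\infty$ that vanishes on $f^{-1}(\{\rho_D\leq\epsilon\})$ and tends to $\infty$ near $D'$. The gap is in your choice of cut-off. Your $\sigma(x')=\max(0,\rho_D(f(x'))-\epsilon)$ depends only on the geometry of $D$ and not on $h$, and there is no reason that $h(\sigma(x'),x')$ stays close to $x'$ as $x'\to D'$. Continuity at a point $(t_0,y')$ with $y'\in\widehat{D'}$ requires precisely this: for a given neighbourhood $W$ of $y'$, one needs $h(\max(t,\sigma(x')),x')\in\widehat W$ once $x'$ is close to $y'$. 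But the time $s$ needed for $h(s,x')$ to lie in a ball of prescribed $m'$-radius around $x'$ is a function of both $x'$ and that radius, and can in general grow faster in $\rho_D(f(x'))$ than your linear $\sigma$. The appeal to \cite[Lemma 10.3.2]{HL} does not close this: there the homotopy being cut off is the explicit standard inflation on $\mathbb{A}^m$, for which the estimate is immediate, whereas here $h$ is arbitrary.

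The paper repairs exactly this point. It first defines, for each $u'\in U'$, the minimal time $\gamma'(u')$ at which $h(\cdot,u')$ lies in the $m'$-ball of radius $\rho_D(f(u'))$ around $u'$, then sets $\gamma(u)=\sup_{u'\in f^{-1}(u)}\gamma'(u')$ and $\gamma_1(\delta)=\sup_{\rho_D(u)=\delta}\gamma(u)$. Using projectivity of $f$ and local boundedness (\cite[Lemma 10.1.7]{HL}) one gets $\gamma_1(\delta)\leq m\delta+c_0$ for some $m\in\mathbb N$, $c_0\in\Gamma(K)$. The cut-off $\gamma_3$ is then built from this affine bound, interpolated to be $0$ on $(-\infty,\epsilon]$. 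With this choice, whenever $\rho_D(f(x'))$ is large one has $\gamma_3(x')\geq\gamma'(x')$, hence $h(\gamma_3(x'),x')\in B(x',m',\rho_D(f(x')))$, and the continuity argument at $\widehat{D'}$ goes through. In short: keep your construction, but replace $\sigma$ by a cut-off that dominates the $h$-dependent function $\gamma'$ near $D'$; the piecewise-linear bound on $\gamma_1$ is what makes such a $K$-definable choice possible.
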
      
    
  \begin{proof} 
   We begin by verifying a certain technical condition that is necessary for the proof. 
    Let $m'$ be a definable metric on $V'$. 
    Let $m$ be a definable metric on 
  $V$.
   Let $\gamma \colon U \to [0,\infty]$ be defined as
     $\gamma(u) := \mathrm{sup}\{\gamma'(u') | u' \in f^{-1}(u)\}$ 
     where $\gamma'(u')$ is the smallest element  
     in $[0,\infty]$
     such that 
     $h(\gamma'(u'),u')$ belongs to the ball 
     $$B(u',m',\rho_D(f(u'))) := \{x \in {U'} | m'(u',x) \geq \rho_D(f(u')) \}.$$
We claim that 
     $\gamma$ is locally bounded on $U$ in the sense of 
     \cite[\S 10.1]{HL}.  
    
     Let $u' \in U'$ and $t_0 \in \Gamma$. 
     By continuity, 
     $h^{-1}(\widehat{B(u',m',\rho_D(f(u'))})$ is a $v+g$-closed neighbourhood of the point $(\infty,u') \in [0,\infty] \times U'$. 
     The $v+g$-topology of $[0,\infty] \times U'$ is the product of the $v+g$-topology on $[0,\infty]$ and the $v+g$-topology
     on $U'$.
      Hence
     we can assume that the closed neighbourhood of $(\infty,u')$ above contains 
     an open neighbourhood $O_1 \times O_2$ where $O_1$ is of the form $(s,\infty]$ and 
     $O_2$ is a $v$-open \footnote{It is $g$-open as well but we will use only that it is $v$-open.}
      neighbourhood of $u'$.
     We can further shrink $O_2$ so that for every 
     $o \in O_2$, $\rho_{D}(f(o))  = \rho_D(f(u'))$. 
     This is a consequence of the fact that $\rho_D$ is a $v+g$-continuous function 
     and hence $\rho_D^{-1}(\rho_D(f(u')))$ is a $v$-open neighbourhood of $f(u')$.
     We can hence replace $O_2$ with $O_2 \cap f^{-1}(\rho_D^{-1}(\rho_D(f(u'))))$.    
     It follows that for every $o \in O_2$, 
     $\gamma'(o) \leq s$. 
     We have thus shown that $\gamma' \colon U' \to \Gamma$ is locally bounded. 
     We now deduce that as a consequence $\gamma$ is locally bounded. 
     Indeed, let $u \in U$. 
     There exists $\beta \in \Gamma$ large enough so that 
     $B(u,m,\beta) \subset {U}$. 
     Since $f$ is a projective morphism, 
     we see that 
     $f^{-1}(B(u,m,\epsilon))$ is bounded and definable. 
     By \cite[Lemma 10.1.7]{HL}, we get that 
     $\gamma'_{|f^{-1}(B(u,m,\epsilon))}$ is bounded.

%      Let $\gamma_0 : U \to \Gamma$ be a locally bounded
%       $K$-definable $v+g$-continuous function 
%   such that 
%   for every $u \in U$, $u' \in f^{-1}(u)$ and $t \in [\gamma_0(u),\infty]$,
%   $m'(h(t,u'),u') \geq \rho_D(u)$. 
%    This is possible by \cite[Lemma 10.1.8]{HL} and the locally boundedness 
%    on the function $\gamma$.
     
    For any $\delta$, 
   let $U_\delta := \{u \in U | \rho_D(u)  = \delta\}$
   and $\gamma_1(\delta) := \mathrm{sup}\{\gamma(u) | u \in U_\delta\}$.
   Since $U_\delta$ is bounded, by \cite[Lemma 10.1.7]{HL} and the locally boundedness 
   on the function $\gamma$, $\gamma_1(\delta) \in \Gamma$. 
   Observe that since $\gamma_1$ is piece-wise affine, there exists 
   $m \in \mathbb{N}$ and $c_0 \in \Gamma$ such that if $\delta \geq 0$ then 
   $\gamma_1(\delta) \leq m\delta + c_0$. 
   Let $\gamma_{10}(\delta) := m\delta + c_0$.
   Let $\epsilon' \in \Gamma(K)$ be such that $\epsilon' > \epsilon$. 
   Let $\gamma_2$ be a continuous function $\Gamma \to \Gamma$ which 
   is defined as follows. For every $x \leq \epsilon$, 
   $\gamma_2(x) = 0$. 
   For $x \in [\epsilon,\epsilon']$, 
  $$\gamma_2(x) := (\gamma_{10}(\epsilon')/(\epsilon'-\epsilon))x - (\gamma_{10}(\epsilon')/(\epsilon'-\epsilon))\epsilon$$   
  and 
  for $x \geq  \epsilon'$, $\gamma_2(x) := \gamma_{10}(x)$. 
  By construction, $\gamma_2$ is continuous. 
  Let $\gamma_3 \colon U' \to \Gamma$ be continuous function 
  $x \mapsto  \gamma_2(\rho_D(f(x)))$.

      Let $g_{h,\epsilon} \colon [0,\infty] \times V' \to \widehat{V'/V}$ be defined 
    as follows. 
    We set $$(g_{h,\epsilon})_{|[0,\infty] \times U'} := h'[\gamma_3]$$
     and for every $t \in [0,\infty],
     d \in D'$, $$g_{h,\epsilon}(t,d) := d.$$
       
    We now show that the canonical extension $G_{h,\epsilon}$ is continuous.
    We proceed as in \cite[Lemma 10.3.2]{HL}. 
    It suffices to verify that $G_{h,\epsilon}$ is 
    continuous at a point $(t,d)$ with $t \in [0,\infty]$ and $d \in \widehat{D'}$. 
    By definition, $G_{h,\epsilon}(t,d) = d$.  
    Let $\mathbf{O} \subset \widehat{V'}$ be a neighbourhood of $d$.  
    We must show that there exists an open neighbourhood 
    $W$ of $(t,d)$ in $[0,\infty] \times \widehat{V'}$ such that $G_{h,\epsilon}$ maps the simple points 
    of $W$ to $\mathbf{O}$.    
    
%    Consider the map 
%    $\psi : V' \to \widehat{V' \times V'}$ given by 
%    $v' \mapsto  d \otimes s_{v'}$ where $s_{v'}$ is the simple point 
%    corresponding to $v'$ and $d \otimes s_{v'}$ is the unique definable type $q(x,y)$ extending the definable types
%    $s_{v'}(y)$ and $d(x)$. 
%    The map $\psi$ is $v+g$-continuous by \cite[Lemma 9.8.3]{HL} 
%    and hence the map 
%    $m' \circ \psi : V' \to \Gamma_\infty$ is continuous. 
%     There exists $\alpha \in \Gamma$ such that 
%     $B(d,m',\alpha) := \{x \in V' | m'(d,x) \geq \alpha\} \subset \mathbf{O}$.
    
      Let $M$ be a model of ACVF such that $\mathbf{O}$ is pro-definable over $M$ and 
      $d$ is an $M$-definable type. 
      Let $z$ be a realization of $d_{|M}$. 
      Let $\alpha \in [0,\infty)$ be the smallest value such that 
      $B(z,m',\alpha)^- := \{y \in V' | m'(y,z) > \epsilon \} \subset \mathbf{O}$. 
      Observe that $\alpha \in  \Gamma(M(z))$.
      Since $d$ is stably dominated, we get that $\alpha \in \Gamma(M)$.
      Let $W_0$ be the set of all $x \in V'$ such that 
      $B(x,m',\alpha)^-$ is contained in $\mathbf{O}$. 
       The set $W_0$ is $v+g$-open and definable with parameters in $M$. 
      Since $z \in W_0$, we see that $d \in \widehat{W_0}$. 
%      As $\widehat{W_0}$ is an open neighbourhood of $d$, we get that 
%       and 
%      hence $d \in \widehat{B(z,m',\alpha)^-}$. 
      
    Suppose the open set $W$ does not exist. 
    As the simple points are dense, 
   there exists sequences $t_i \mapsto t$, 
   $v_i \mapsto d$ but 
    $G_{h,\epsilon}(t_i,v_i) \notin \mathbf{O}$. 
    Since $v_i \mapsto d$ 
    and $\widehat{W_0}$ is an open neighbourhood of $d$, we get that 
       there exists $i_0$ 
    such that if $i \geq i_0$ then 
    $v_i \in W_0$.
    %$m'(v_i,z) > \alpha$. 
    It follows that
     $B(v_i,m',\alpha)^- \subset \mathbf{O}$. 
%      and $r \in [0,\infty]$ such that for every $i \geq i_0$,
%     the ball 
%     $$B(v_i,m',r)^- := \{x \in \widehat{V} | m'(v_i,x) > r\}.$$  
%    is contained in $\mathbf{O}$. 
%    This follows from the fact that the family
%    $\{B(v_i,m,r)^-\}_r$ is a fundamental system of open neighbourhoods in
%    $\widehat{V'}$ - a fact which suffices to be verified when $V' = \mathbb{P}^n$ for some $n \in \mathbb{N}$.       
     The assumption that $v_i \mapsto d$ implies that $f(v_i) \mapsto f(d)$ which in turn 
     implies 
      that 
    $\rho_D(f(v_i)) \mapsto \infty$. Hence after increasing $i_0$ suitably,
    we get that if $i \geq i_0$ then
    $G_{h,\epsilon}(t_i,v_i) \in \widehat{B(v_i,m',\alpha)^-}$. 
   This implies that
   $G_{h,\epsilon}(t_i,v_i) \in \mathbf{O}$ which gives a contradiction.   
   Clearly, from the construction, 
   $g_{h,\epsilon}(0,V'_x) = h(0,V'_x)$ for 
   every $x \in U_\epsilon$. 
                \end{proof} 
                
 \subsubsection{Deformation retractions for curves}
 
 As stated above, our goal is to show that when $\phi \colon X \to S$ is of relative dimension $1$, there exist
 deformation retractions of $\widehat{X}$ and $\widehat{S}$ which are compatible with the morphism 
 $\widehat{\phi}$. 
 We require the following lemmas to ensure that the homotopies we construct do indeed fix their image.

% \begin{lem}
%   Let $F$ be a valued field and let $C$ be an $F$-curve. 
%   Let $\Upsilon \subset \widehat{C}$ be a $\Gamma$-internal set.
% Assume that there exists a finite morphism $f : C \to \mathbb{P}^1_F$ and 
%   a divisor $D \subset \mathbb{P}^1_F$ such that 
%  the deformation retraction $\psi_D : [0,\infty] \times \mathbb{P}^1 \to \widehat{\mathbb{P}^1}$
%  lifts to a deformation retraction 
%  $h : [0,\infty] \times C \to \widehat{C}$. 
%  Let $x \in C$ and $t_1,t_2 \in [0,\infty]$ with $t_2 < t_1$. 
%  Suppose $h(t_2,x) \notin \Upsilon$. 
%  Then for any injective path $\lambda : [b,a] \to \widehat{C}$ 
% such that $\lambda(a) = h(t_2,x)$ and 
% $\lambda(b) = h(t_1,x)$, we have that 
% $\{\lambda(t) | t \in [a,b]\} = \{h(t,x) | t \in [t_2,t_2]\}$.
% \end{lem}

We introduce the following notation to simplify the statement and proof of
 Lemma \ref{lem : to ensure image is fixed}. 
 Let $F$ be a valued field and $C$ be an $F$-curve. Let $f \colon C \to \mathbb{P}^1_F$
 be a finite morphism. Let $D \subset \mathbb{P}^1$ be a divisor. We say that
 the pair $(f,D)$ is \emph{homotopy liftable} if the standard homotopy 
 with stopping divisor $\psi_{D} \colon [0,\infty] \times \mathbb{P}^1 \to \widehat{\mathbb{P}^1}$ 
 lifts to a homotopy $h \colon [0,\infty] \times C \to \widehat{C}$ via the morphism $\widehat{f}$.
 Given a $v+g$-continuous function $\gamma \colon \mathbb{P}^1 \to [0,\infty]$, 
 recall the cut-off homotopy $h[\gamma \circ f]$ from 
 \cite[Lemma 10.4.6]{HL}.

\begin{lem} \label{lem : to ensure image is fixed}
 Let $F$ be an algebraically closed valued field and let $C$ be an $F$-curve. 
 Let $f_1 \colon C \to \mathbb{P}^1_F$ and $f_2 \colon C \to \mathbb{P}^1_F$ be finite morphisms. 
 Let $D_1,D_2,D_{11},D_{22} \subset \mathbb{P}^1$ be finite $F$-definable sets with the following properties.   
 \begin{enumerate} 
 \item The pairs $(f_1,D_1)$, $(f_1,D_{11})$, $(f_2,D_2)$ and $(f_2,D_{22})$ are homotopy liftable. 
 \item We have the inclusions $D_{11} \subseteq D_1$ and $D_{22} \subseteq D_2$. 
 \item Let $D'_1 := f_1^{-1}(D_1)$, $D'_{11} := f_1^{-1}(D_{11})$,
 $D'_{2} = f_2^{-1}(D_2)$ and $D'_{22} := f_2^{-1}(D_{22})$.  
 Assume there exists a finite $F$-definable set $D' \subseteq D'_1 \cap D'_2$ 
 containing $D'_{11}$ and $D'_{22}$ such that $f_1(D') \cap f_2(D')$ contains $\{0,\infty\}$. 
 \end{enumerate} 
 Let $h_1$ and $h_{11}$ ($h_2$ and $h_{22}$)
 lift the standard homotopies $\psi_{D_1}$ and $\psi_{D_{11}}$ 
 ($\psi_{D_1}$ and $\psi_{D_{11}}$) respectively via $\widehat{f_1}$ ($\widehat{f_2}$). 
 Let $\gamma_1,\gamma_2 \colon \mathbb{P}^1 \to [0,\infty]$ 
 be $v+g$-continuous functions. 
% Recall the cut-off homotopies $h_1[\gamma_1 \circ f_1]$ and $h_2[\gamma_2 \circ f_2]$
% from Definition \ref{cut-off homotopy}.
 We then have that 
 the image of the composition 
 $h_2[\gamma_2 \circ f_2] \circ h_1[\gamma_1 \circ f_1]$ is the intersection of the images of the homotopies
 $h_2[\gamma_2 \circ f_2]$ and $h_1[\gamma_1 \circ f_1]$. 
 \end{lem}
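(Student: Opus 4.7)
Denote by $\Upsilon_1$ and $\Upsilon_2$ the images of the cut-off homotopies $h_1[\gamma_1 \circ f_1]$ and $h_2[\gamma_2 \circ f_2]$ respectively, and by $\Upsilon$ the image of the composition $h_2[\gamma_2 \circ f_2] \circ h_1[\gamma_1 \circ f_1]$. The plan is to prove $\Upsilon_1 \cap \Upsilon_2 \subseteq \Upsilon$ and the reverse inclusion separately.

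First I would establish $\Upsilon_1 \cap \Upsilon_2 \subseteq \Upsilon$. The cut-off standard homotopies $\psi_{D_i}[\gamma_i]$ are deformation retractions onto their images, by the standard theory recalled in \cite[\S 10.2]{HL}. Since each $h_i[\gamma_i \circ f_i]$ is the unique lift of $\psi_{D_i}[\gamma_i]$ via $\widehat{f_i}$ (uniqueness coming from the homotopy liftability of $(f_i, D_i)$ together with \cite[Lemma 10.4.6]{HL}), and uniqueness together with deformation-retract behaviour on the base transfers to the total space, each $h_i[\gamma_i \circ f_i]$ is a deformation retraction onto $\Upsilon_i$. Hence both homotopies fix any point of $\Upsilon_1 \cap \Upsilon_2$ at the endpoint, and such a point is in the image of the composition.

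For the reverse inclusion, the containment $\Upsilon \subseteq \Upsilon_2$ is immediate from the order of composition, so the crux reduces to showing that $h_2[\gamma_2 \circ f_2]$ sends $\Upsilon_1$ into $\Upsilon_1$. The strategy is to describe $\Upsilon_1$ as a fibration over the convex hull $\Sigma_1 \subseteq \widehat{\mathbb{P}^1}$ of $\psi_{D_1}[\gamma_1]$'s image, obtained by lifting through $\widehat{f_1}$ away from the branch locus and by appending $\widehat{D'_1}$. Given a point $p \in \Upsilon_1$ lying over $y \in \Sigma_1$, the image of $p$ under $h_2[\gamma_2 \circ f_2]$ travels along the lift of the standard path of $\psi_{D_2}$ starting from $\widehat{f_2}(p)$ and stopped by $\gamma_2$. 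The common subdivisor $D' \subseteq D'_1 \cap D'_2$ ensures that points of $\widehat{D'}$ are fixed by both homotopies, while the normalization $\{0,\infty\} \subseteq f_1(D') \cap f_2(D')$ aligns the coordinate systems on the two copies of $\mathbb{P}^1$ so that the convex hulls $\Sigma_1$ and $\Sigma_2$ share the anchoring extremes $0$ and $\infty$. These facts together with the auxiliary lifts $h_{11}$ and $h_{22}$, whose images lie in $\Upsilon_1$ and $\Upsilon_2$ respectively and which witness that the restrictions of $\Upsilon_i$ over $\{0,\infty\}$ agree with a common skeleton sitting over $f_1(D') \cap f_2(D')$, imply that the $h_2$-motion of $p$ never leaves the $\widehat{f_1}$-preimage of $\Sigma_1$, hence stays in $\Upsilon_1$.

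The main obstacle is this last preservation statement, which is delicate because it requires comparing two a priori unrelated lifted homotopies on $\widehat{C}$. I would approach it by reduction to simple points and a contradiction argument: a generic simple $p \in \Upsilon_1$ whose $h_2$-image leaves $\Upsilon_1$ would produce, via the projection $\widehat{f_1}$ and the compatibility of the stopping functions along fibers above $D'$, a simple-point deviation in $\widehat{\mathbb{P}^1}$ that either contradicts the stopping condition $\gamma_1 \circ f_1$ of $h_1[\gamma_1\circ f_1]$ on the source side, or contradicts the $\psi_{D_2}[\gamma_2]$-invariance of $\Sigma_1 \cap \Sigma_2$ which contains $\{0,\infty\}$ on the target side. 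Passing back to general points by density of simple points in $\widehat{C}$ and using the continuity of both homotopies completes the argument.
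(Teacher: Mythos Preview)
Your easy inclusion $\Upsilon_1 \cap \Upsilon_2 \subseteq \Upsilon$ is fine and matches the paper's observation that $h_1[\gamma_1\circ f_1]$ and $h_2[\gamma_2\circ f_2]$ are deformation retractions.

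The gap is in the hard direction. You correctly identify that the crux is showing $h_2[\gamma_2\circ f_2](0,\Upsilon_1)\subseteq\Upsilon_1$, but your argument for this is only a sketch of intentions, not a proof. The description of $\Upsilon_1$ as a ``fibration over $\Sigma_1$'', the appeal to ``aligned coordinate systems'' via $\{0,\infty\}$, and the final contradiction-and-density paragraph do not actually produce a mechanism that forces the $h_2$-flow of a point of $\Upsilon_1$ to remain in $\Upsilon_1$. Indeed, the two lifted homotopies come from \emph{different} finite maps $f_1,f_2$, and there is no global reason for $h_2$ to respect a skeleton defined via $f_1$; a genuine pointwise comparison is needed.

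The paper supplies exactly this missing mechanism, and it is \emph{pointwise} rather than global. Fix a simple point $x\in C$ and let $\Upsilon'$ be the convex hull in $\widehat C$ of $D'\cup\Upsilon'_{11}\cup\Upsilon'_{22}$. The key sub-lemma is that the first time the path $t\mapsto h_{11}(t,x)$ meets $\Upsilon'$, and the first time $t\mapsto h_{22}(t,x)$ meets $\Upsilon'$, are the \emph{same} point $p$; consequently the two paths from $x$ to $p$ have the same image. This is proved using the tree structure: if $p_{11}\neq p_{22}$ one would obtain a path in $\widehat C\smallsetminus\Upsilon'$ between two distinct points of $\Upsilon'$, contradicting convexity; and then pushing down via $\widehat{f_1}$ to $\widehat{\mathbb P^1}$, uniqueness of injective paths there together with unique lifting forces the two paths in $\widehat C$ to coincide. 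Once both homotopies are known to travel along the \emph{same} path from $x$ (just with different stopping times $t'_1,t'_2$), a two-case comparison of $t'_1$ and $t'_2$ immediately shows that the composite lands in $\Upsilon_1\cap\Upsilon_2$. Your proposal never isolates this ``same path'' phenomenon, and without it the contradiction argument you outline has nothing concrete to contradict.
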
 
 % Let $D_1$ and $D_2$ be finite $F$-definable subsets of $\mathbb{P}^1_F$. 
% Let $D'_1 := f_1^{-1}(D_1)$ and $D'_2 := f_2^{-1}(D_2)$. 
% We suppose that there exists a finite non-empty $F$-definable set 
% $D \subset D'_1 \cap D'_2$ and $\{0,\infty\} \subseteq f_1(D) \cap f_2(D)$.  
% Suppose the standard homotopies with respect to $D_1$ and $D_2$, i.e. 
% $\psi_{D_1} : [0,\infty] \times \mathbb{P}^1 \to \widehat{\mathbb{P}^1}$
% and $\psi_{D_2} : [0,\infty] \times \mathbb{P}^1 \to \widehat{\mathbb{P}^1}$
% lift to a homotopy $h_1 : [0,\infty] \times C \to \widehat{C}$ and 
% $h_2 : [0,\infty] \times C \to \widehat{C}$ via $f_1$ and $f_2$ respectively. 
  \begin{proof}
  We simplify notation and write $h_1[\gamma_1]$ in place of $h_1[\gamma_1 \circ f_1]$. 
  Let $\Upsilon'_1$, $\Upsilon'_2$, $\Upsilon'_{11}$ and $\Upsilon'_{22}$ denote the images of the homotopies
  $h_1$,$h_2$, $h_{11}$, and $h_{22}$ respectively.
  Let $\Upsilon'$ denote the convex hull in $\widehat{C}$ of the union of the finite set $D'$ and 
  $\Upsilon'_{11} \cup \Upsilon'_{22}$.
  Furthermore, let $\Upsilon'_1[\gamma_1]$ and $\Upsilon'_2[\gamma_2]$ denote the 
  images of the homotopies $h_1[\gamma_1]$ and $h_2[\gamma_2]$.
  Since $h_1$ and $h_2$ are deformation retractions, we see that it suffices to check that if
  $x \in \widehat{C}$ lies in the 
  image of the
  composition
  $h_2[\gamma_2] \circ h_1[\gamma_1]$
  then $x$ belongs to  
  $\Upsilon'' := \Upsilon'_1[\gamma_1] \cap \Upsilon'_2[\gamma_2]$.
  
     By construction of the standard homotopies and their lifts via finite morphisms
     in \cite[\S 7.5]{HL}, we see that it suffices to show that if 
     $x \in C$ then $h_2[\gamma_2](0,h_1[\gamma_1](0,x))$ belongs to 
      $\Upsilon''$.
      We prove the following lemma. 
      \begin{lem} \label{lem : same retraction point}
     Observe that $h_{11}(0,x) \in \Upsilon'_{11} \subset \Upsilon'$.
      Let $t$ be the largest element in $[0,\infty]$ such that 
      $h_{11}(t,x) \in \Upsilon'$ and $p_{11} := h_{11}(t,x)$. 
      Likewise, let $t'$ be the largest element in $[0,\infty]$ such that 
      $h_{22}(t',x) \in \Upsilon'$ and $p_{22} := h_{22}(t',x)$. 
      Then $p_{22} = p_{11}$. 
       \end{lem} 
       \begin{proof} 
        By construction,
        $\Upsilon'$ is such that 
        for every $a,b \in \Upsilon'$
       there exists no path in $\widehat{C}$ from $a$ to $b$ that does not intersect
       $\Upsilon'$ outside of the points $a$ and $b$.
              If $p_{11} \neq p_{22}$ then we deduce that there exists a path from 
       $p_{11}$ to $p_{22}$ which does not lie in $\Upsilon'$ outside of $\{p_{11},p_{22}\}$. 
       This is not possible. Hence $p_{11} = p_{22}$.       
       \end{proof} 
       
            Let $p := p_{11} = p_{22}$. 
       Let $P_1$ be the path from $x$ to $p$ given by
      $r \mapsto h_{11}(r,x)$ for $r \in [t,\infty]$.
%      By Lemma \ref{lem : same retraction point}, we 
%      see that $h_{22}(t',x) = p$.
      Let $P_2$ denote the path 
      $r \mapsto h_{22}(r,x)$ from 
      $x$ to $p$ for $r \in [t',\infty]$. 
       Observe that 
      the morphism $f'_1$ maps the paths
      $P'_1$ and $P'_2$ to paths from
      $f_1(x)$ to $f_1(p)$. 
      Since there is exactly one injective path up to re-parametrization
      from $f_1(x)$ to $f_1(p)$ in $\widehat{\mathbb{P}^1}$
      and this path lifts uniquely to a path in
      $\widehat{C}$, we deduce that 
      the images of $P'_1$ and $P'_2$ must coincide. 
      
      We identify the path $P_1$ with the closed interval $[t,\infty]$.
      Our discussion above implies that the path $r \mapsto h'_{22}(r,x)$ moves along $P_1$ for $r \in [t',\infty]$. 
      Let $t'_1$ be the largest point on $[t,\infty]$ that belongs 
      to $\Upsilon'_1[\gamma_1]$ and similarly, let $t'_2$ be the largest 
      element that belongs to $\Upsilon'_2[\gamma_2]$. 
      Recall that $h'_1$ and $h'_2$ are cut-offs of the homotopies 
      $h'_{11}$ and $h'_{22}$ respectively.
      Hence, we see that 
      $h'_1[\gamma_1]$ defines an injective path from $\infty$ to $t'_1$ and 
      fixes $t'_1$. Likewise, 
      $h'_2[\gamma_2]$ defines an injective path from $\infty$ to $t'_2$ and 
      fixes $t'_2$. 
    
      Suppose $t'_1 \leq t'_2$. In this case, $t'_1 \in \Upsilon'_2[\gamma_2]$ and 
     $$h'_2[\gamma_2] \circ h'_1[\gamma_1] (0,x) = t'_1$$
      which belongs to the intersection $\Upsilon'_1[\gamma_1] \cap \Upsilon'_2[\gamma_2]$.
      Now, suppose $t'_1 > t'_2$. 
      This implies that $t'_2 \in \Upsilon'_1[\gamma_1]$ and we see that 
      the image of the composition is $t'_2$ which lies in the 
      intersection 
      $\Upsilon'_1[\gamma_1] \cap \Upsilon'_2[\gamma_2]$.
      This completes the proof. 
    \end{proof} 
    
   \begin{cor} \label{statement 1 wbc}
    Let $S$ be a smooth connected $K$-curve and $X$ be a quasi-projective $K$-variety. 
    Let $\phi \colon X \to S$ be a surjective morphism such that every irreducible component of 
    $X$ dominates $S$. We assume in addition that the fibres of $\phi$ are of dimension $1$. 
 Let $\{\xi_i \colon X \to \Gamma_{\infty}\}$ be a finite collection of 
   $K$-definable functions.
  %(in the model theoretic sense for the theory $\mathrm{ACVF}$). 
  Recall that the functions $\xi_i$ extend to functions $\xi_i \colon \widehat{X} \to \Gamma_\infty$.
   Let $G$ be a finite algebraic group acting on $X$ such that the action of $G$ respects the fibres 
   of the morphism $\phi$. 
    There exists compatible deformation retractions
     $(H,\Upsilon)$ of $\widehat{S}$ and $(H',\Upsilon')$ of $\widehat{X}$ such that
    \begin{enumerate}
    \item The images $\Upsilon \subset \widehat{S}$ and $\Upsilon' \subset \widehat{X}$ are $\Gamma$-internal. 
    \item  
     The homotopy $H'$ respects the functions $\xi_i$ i.e. $\xi_i(H'(t,p)) = \xi_i(p)$ for every $p \in \widehat{X}$ and 
     $t \in I$. 
    \item  The action of the group $G$ on $X$ extends to an action on $\widehat{X}$.  
      The homotopy $H'$ can be taken to be $G$-equivariant i.e. for every 
      $g \in G'$ and $p \in \widehat{X}$, $H'(t,g(p)) = g(H'(t,p))$. 
      \item The homotopy $H'$ is Zariski generalizing i.e. if $U \subset X$ is a Zariski open subset 
      then $H'$ restricts to a well defined homotopy on $\widehat{U}$. 
    \end{enumerate} 
 \end{cor}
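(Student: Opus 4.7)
The plan is to combine the fibrewise relative homotopies produced in the proof of Theorem \ref{weak cdr for curves} with the extension mechanism of Lemma \ref{foreshortening}, and then compose the resulting global homotopies, using Lemma \ref{lem : to ensure image is fixed} to ensure that the composition is a deformation retraction. First I would carry out the usual preliminary reductions (Remark \ref{prove the simpler case wbc} and Lemma \ref{simplifications}) and assume that $\phi \colon X \to S$ is a flat $G$-equivariant morphism of projective $K$-varieties, with $S$ the smooth projective curve compactifying the original base and fibres of $\phi$ pure of dimension $1$; the $\xi_i$ and the $G$-action extend to this compactification.

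Next, for each closed point $s \in S(K)$ I would extract from the proof of Theorem \ref{weak cdr for curves} (notably from Lemma \ref{curves for families wbc} together with the inflation step supplied by Lemma \ref{inflation for families}) a Zariski open neighbourhood $U_s \subset S$ of $s$ and a $G$-equivariant, $\xi_i$-preserving relative homotopy
\[
h_{U_s}\colon [0,\infty]\times X_{U_s}\to \widehat{X_{U_s}/U_s}
\]
whose image is iso-definable and relatively $\Gamma$-internal over $U_s$. The point, which crucially exploits relative dimension one, is that such a homotopy exists over the \emph{entire} neighbourhood $U_s$ rather than only generically over $S$. Since $S$ is projective, finitely many $U_1,\ldots,U_n$ suffice to cover $S$; set $D_i:=S\smallsetminus U_i$, which is a finite set of closed points, and choose $\epsilon_i\in\Gamma(K)$ so that the closed subsets $S_i:=\{s\in S:\rho_{D_i}(s)\leq\epsilon_i\}$ still cover $S$ (this is possible because the $U_i$ already cover the compact curve $S$).

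Applying Lemma \ref{foreshortening} to $\phi$, $D_i$ and $h_{U_i}$ I obtain a global relative homotopy
\[
g_i := g_{h_{U_i},\epsilon_i}\colon [0,\infty]\times X\to \widehat{X/S}
\]
which over every fibre $X_s$ with $s\in S_i$ coincides at time $0$ with $h_{U_i}(0,X_s)$. Canonically extending and concatenating $g_1,\ldots,g_n$ over a generalized interval $J$ produces a $G$-equivariant, $\xi_i$-preserving homotopy $G_{\mathrm{rel}}\colon J\times\widehat{X}\to\widehat{X}$ lying over the identity on $\widehat{S}$, whose image is contained in a relatively $\Gamma$-internal subset of $\widehat{X/S}$.

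The hard part will be to arrange that $G_{\mathrm{rel}}$ actually \emph{fixes} its image. Each restriction $g_i\rvert_{X_s}$ is a cut-off of a lift of a standard homotopy on $\mathbb{P}^1$ along an appropriate finite morphism $f_i\colon X_s\to\mathbb{P}^1$, so at the outset I would enlarge in a coordinated way the stopping divisors of the various $h_{U_i}$ so that on every fibre the data on $X_s$ satisfies the hypotheses of Lemma \ref{lem : to ensure image is fixed}: a common finite anchor set $D'\subset X_s$ containing the preimages of $\{0,\infty\}$ under each $f_i$, together with the homotopy-liftability of both the inner and outer stopping divisors. That lemma, applied inductively on the number of pieces, then forces $g_k\circ\cdots\circ g_1$ to retract onto the intersection of the component images, so that $G_{\mathrm{rel}}(0,-)$ is idempotent and its image is a relatively $\Gamma$-internal, iso-definable subset of $\widehat{X/S}$. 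Finally, I would apply Theorem \ref{main theorem} to obtain a deformation retraction $H\colon I'\times\widehat{S}\to\widehat{S}$ onto an iso-definable $\Gamma$-internal set $\Upsilon$, lift it along $\widehat{\phi}$ through the $\Gamma$-internal image of $G_{\mathrm{rel}}(0,-)$ using the simple-point formalism, and concatenate with $G_{\mathrm{rel}}$ to produce the desired compatible pair $(H',\Upsilon')$ and $(H,\Upsilon)$; the Zariski generalizing property, the $G$-equivariance, and the preservation of the $\xi_i$ are inherited from the constituent pieces.
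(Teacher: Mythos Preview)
Your overall strategy coincides with the paper's: build local relative curve homotopies on a finite cover of $S$, extend each to all of $X$ via Lemma \ref{foreshortening}, concatenate, use Lemma \ref{lem : to ensure image is fixed} to check the composition fixes its image, and then lift a base deformation through a pseudo-Galois cover as in \cite[Theorem 6.4.4]{HL}. Your remark about enlarging the stopping divisors ``in a coordinated way'' is exactly what the paper does: it takes the Zariski closure $P'$ in $X$ of the union of the local divisors $P'_{ii}$ and sets $P_i := f_i(P'\cap X_{U_i})$, so that a common anchor set $D'$ is available fibrewise.

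There is, however, one substantive misstep: you should \emph{not} bring in the inflation step. In relative dimension $1$ the factorisation $X_{U}\to\mathbb{P}^1\times U$ from Lemma \ref{Local factorization} is already the curve fibration, so the lift $h_U$ of $\psi_{P}$ produced by Lemma \ref{the divisor in the lift} is defined on all of $X_{U}$, runs over the single interval $[0,\infty]$, and has relatively $\Gamma$-internal image; no blow-up and no inflation are needed. The paper flags this explicitly: it asks for ``a single homotopy and not a chain of homotopies as would appear if we went through the construction'' of Theorem \ref{weak cdr for curves}. This point matters for two reasons. First, Lemma \ref{foreshortening} is formulated for a $[0,\infty]$-homotopy, so a concatenation inflation $+$ curves does not feed into it directly. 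Second, and more seriously, Lemma \ref{lem : to ensure image is fixed} is specifically about cut-offs of lifts of standard homotopies along finite morphisms to $\mathbb{P}^1$; once you precompose with an inflation homotopy the fibrewise map is no longer of that shape, and the lemma does not apply. Dropping the inflation and working with the bare curve lift $h_{U_i}$ fixes both issues and brings your argument in line with the paper's.
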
 
 \begin{proof} 
 Similar to Remark \ref{prove the simpler case wbc}, we 
 can assume at the outset that $X$ and $S$ are projective, $S$ is a smooth connected $K$-curve, 
 and the fibres of the morphism $\phi \colon X \to S$ are pure.
 
   Let $s \in S(K)$. 
   We show that there exists a Zariski open neighbourhood $U$ of $s$ and a
    homotopy 
   $H' \colon [0,\infty] \times X_U \to \widehat{X_U/U}$ whose image 
   $H(0,X_U)$ is relatively $\Gamma$-internal over $U$. 
   Note that this is slightly different from the assertion in 
   Theorem \ref{weak cdr for curves} since
   we are asking for a single homotopy and not a chain of homotopies as would appear if we 
   went through the construction in the proof of loc.cit. 
   
  By the first part of Lemma \ref{Local factorization}, there exists a Zariski open set $U_0$ that contains 
  $s$ and is such that the morphism 
   $\phi \colon X \to S$ factors through a finite morphism $f_0 \colon X_{U_0} \to \mathbb{P}^1 \times U_0$.
   We have the following commutative diagram. 
    
           $$
\begin{tikzcd}[row sep = large, column sep = large]
X_{U_0} \arrow[r, "{f_0}"] \arrow[d,"\phi"] & \mathbb{P}^{1} \times U_0 \arrow[ld,"p_2"] \\
U_0 
\end{tikzcd}
$$ 
 
  We fix coordinates on $\mathbb{P}^1$. 
  Let $B_0 := \mathbb{P}^1 \times U_0$. 
  Observe that any closed subset of $B_0$ that is generically finite over $U_0$ will be finite over $U_0$. 
     We apply Lemma \ref{the divisor in the lift} to obtain a divisor $P_{00} \subset B_0$ 
      that satisfies the following properties
      \begin{enumerate} 
      \item $P_{00}$ is finite over $U_0$ and contains the closed subset $\{0,\infty\} \times U_0$.
      \item  For any divisor $T \subset B_0$ that contains $P_{00}$ and is finite over $U_0$, 
     the homotopy 
     $\psi_T \colon [0,\infty] \times B_0 \to \widehat{B_0/U_0}$ (cf. \cite[\S 10.2]{HL}) 
     lifts 
     uniquely to a  
  homotopy $h_{T} \colon [0,\infty] \times A \to \widehat{A/U}$.
   Furthermore, for every $j$, $h_{T}$ preserves the levels of the function $\xi_j$ and 
  is $G$-invariant.  
 \end{enumerate} 
 The fact that $\psi_T$ is a homotopy follows from the fact that 
 $T$ is finite over $U_0$ which then implies that the lift $h_{T}$ is a homotopy 
 by \cite[Lemma 10.1.1]{HL}. Observe that the image of $h_{T}$ is 
 relatively $\Gamma$-internal. 
      
        Let $D := \{x_1,\ldots,x_n\}$ be the complement of the set $U_0 \subset S$. 
      For every $1 \leq i \leq n$, there exists a Zariski open neighbourhood $U_i$ 
      of $x_i$ and a divisor $P_{ii} \subset \mathbb{P}^1 \times {U_i}$ 
      that satisfies the analogous properties as fulfilled by $P_{00}$ which are listed above.
       
        Let $W := \bigcap_{0 \leq i \leq n} U_i$.
        For every $i$, let $P'_{ii} := f_i^{-1}(P_{ii})$. 
        Let $P'$ denote the Zariski closure in $X$ 
        of the closed subset $\bigcup_i P'_{ii}$. 
        For every $i$, define 
        $P_i := f_i(P' \cap X_{U_i})$ and 
        $P'_i := f_i^{-1}(f_i(P' \cap X_{U_i}))$. 
         Observe that $P_{i}$ also satisfies the analogous versions of the points (1) - (2) 
         as fulfilled by $P_{00}$ above. 
        Hence, we see that for every $i$, the homotopies $\psi_{P_{i}}$
        lift to homotopies $h_i$ which respect the definable function 
        $\xi_j$ and the action of the group $G$. 
         
%            a homotopy 
%      $h_{i}: [0,\infty] \times X_{U_i} \to \widehat{X_{U_i}/U_i}$ such that 
%      $h_{i}$ preserves the functions $\xi_i$, is $G$-invariant and
%      the image of $h_{i}$
%      is relatively $\Gamma$-internal. 
%      Recall that the metric $m_3$ on 
%      $S$ can be used to define a 
%      distance from the divisor $D$. 
%      Let $\rho_D : S \to [0,\infty]$ be defined to be 
%      $x \mapsto \mathrm{sup}\{m_3(x,d) | d \in D\}$.  
      Let $D_i := S \smallsetminus U_i$. Let $m$ be a metric on 
      $S$. Recall the function $\rho_D \colon S \to [0,\infty]$ defined as
      $x \mapsto \mathrm{sup}\{m(x,d) | d \in D\}$.  
      Let $\epsilon \in [0,\infty](K)$ and $\delta \in [0,\infty](K)$ be such that the following is satisfied. 
      For every $i$, 
      $B(x_i,m,\epsilon) \subset \{x \in S | \rho_{D_i}(x) \leq \delta\}$ where 
      $B(x_i,m,\epsilon) := \{y \in S | m(x_i,y) \geq \epsilon\}$. 
       
      For every $i$, we extend the homotopy $h_i$ to 
      the whole of $X$ such that the image of the composition 
      of the extended $h_i$ will be relatively $\Gamma$-internal over $S$. 
      We proceed as follows. 
      
       Note
       that 
      $$\{x \in S | \rho_{D_0}(x) \geq \epsilon\} = \bigcup_{x \in D_0} B(x,m,\epsilon).$$
     By Lemma \ref{foreshortening}, 
     we can extend $h_0$ to a homotopy 
     $h_{f0} \colon [0,\infty] \times X \to \widehat{X/S}$ such that for every 
     $x \in S$ with $\rho_{D_0}(x) \leq \epsilon$, 
     $h_{f0}(0,X'_x) = h_0(0,X'_x)$. This implies in particular that the image 
     of $h$ is relatively $\Gamma$-internal over
     $\{x \in S | \rho_{D_0}(x) \leq \epsilon\}$. Likewise,
     for every $i \geq 1$, we apply Lemma \ref{foreshortening} 
     to extend $h_i$ to a homotopy
     $h_{fi} \colon [0,\infty] \times X \to \widehat{X/S}$ such that
%      for every 
%     $x \in S$ with $\rho_{D_i}(x) \leq \delta$, 
%     $h_{fi}(0,X'_x) = h_0(0,X'_x)$. This implies in particular 
%     that 
     the image 
     of $h_{fi}$ is relatively $\Gamma$-internal over $\{x \in S | \rho_{D_i}(x) \leq \delta\}$. Hence
     the image of $h_{fi}$ is relatively $\Gamma$-internal over 
     $B(x_i,m,\epsilon)$ since $B(x_i,m,\epsilon) \subset \{x \in S | \rho_{D_i}(x) \leq \delta\}$.     
     
     Let $h := h_{fn} \circ \ldots h_{f0} \colon [0,\infty] \cup \ldots \cup [0,\infty] \times X \to \widehat{X/S}$
     be the composition of the homotopies described above. Observe from the construction that the 
     image of $h$ is relatively $\Gamma$-internal over $S$.
     Let $\Upsilon \subset \widehat{X/S}$ denote the image of the homotopy 
     $h$. By Lemma \ref{lem : to ensure image is fixed}, 
     the homotopy $h$ fixes its image. 
     
%      By Lemma \ref{getting around compactness}, there exists a 
%    pseudo-Galois cover $\alpha : S' \to S$
%    and a morphism
%    $\kappa : \Upsilon' := \Upsilon \times_{S} S' \to S' \times \Gamma_\infty^M$ for 
%    some $M \in \mathbb{N}$ such that the restriction
%    $\widehat{\kappa} : \widehat{\Upsilon'} \to \widehat{S'} \times \Gamma^M_\infty$
%     is a homeomorphism onto its image . 
  
  By \cite[Theorem 6.4.4]{HL},
  there exists a 
 pseudo-Galois cover $\alpha \colon S' \to S$
and
%  pseudo-Galois cover $f : F' \to F \times U$ and 
  a finite collection of $K$-definable 
  functions $\mu_j \colon  S' \to \Gamma_{\infty}$ such that,
  for $I$ a generalised interval, 
  a homotopy $\beta \colon I \times S \to \widehat{S}$
  which lifts to a homotopy $a' \colon I \times S' \to \widehat{S'}$ that preserves the levels 
  of the functions $\mu_j$ also induces a homotopy 
  $a \colon I \times \widehat{\Upsilon} \to \widehat{\Upsilon}$
   that is $G$-invariant and 
  respects the levels of the functions $\xi_i$.
  The homotopy $a$ is compatible with the homotopy 
  $\beta$ for the morphism $\widehat{\phi}$. 
  We apply \cite[Remark 11.1.3 (2)]{HL} to complete the proof while noting that 
  since each of the deformations
   in the composition are Zariski generalizing, the composition is Zariski generalizing as well. 
\end{proof}

\section{Locally trivial morphisms} \label{rank 1 projective bundles} 
 
            In this section, we show that compatible homotopies
            can be constructed 
            for locally trivially morphisms. Unfortunately, our method doesn't guarantee
            that the homotopy on the source fixes its image and hence it might not be a deformation retraction. 
            It is possible that in this situation an additional hypothesis is required. We 
            demonstrate this 
            explicitly for $\mathbb{P}^1$-bundles
             which satisfy an additional finiteness requirement. 
             The method presented below adapts the proof of the relative curve 
             homotopy in \cite{HL}. 
%             In \S \ref{section : locally trivial morphisms}, we prove the existence of compatible deformations 
%             for locally trivial $\mathbb{A}^1$-bundles without the added hypothesis. 
%             Nonetheless, it is instructive to study the case of rank-$1$ bundles which satisfy the 
%             hypothesis $(F)$ if only to highlight the limitations of the Hrushovski-Loeser method. 
\\

\subsection{Locally trivial morphisms} \label{section : locally trivial morphisms}

Let $m$ be a definable metric on a $K$-variety $S$. Suppose $D \subset S$ is a Zariski closed subset.
Recall the $v+g$-continuous function, $\rho_{D} \colon  S \to \Gamma_\infty$ given by 
$x \mapsto \mathrm{sup} \{m(x,d) | d \in D\}$. 

\begin{prop}\label{non-Archimedean distance problem} 
  Let $S$ be a projective $K$-variety. 
%  and $U \subset S$ be a Zariski open subset. 
%  Let $D := S \smallsetminus U$. 
  Let $m$ be a definable metric on $S$. 
  Suppose that for every $x \in S(K)$, we are given a Zariski open neighbourhood $U_x \subset S$
  of $x$
  defined over $K$.
% Observe that for every $x$, $S \smallsetminus U_x$ must be of the form $D_x + D := D_x \cup D$ such that 
% $D_x \cap U$ is dense in 
Let $D_x := S \smallsetminus D_x$. 
   We have that there exists finitely many points 
 $\{x_1,\ldots,x_n\} \subset  S(K)$ 
 and $\{\epsilon_1,\ldots,\epsilon_n\} \subset \Gamma(K)$ such that 
 $$S = \bigcup_{1 \leq i \leq n} \{x \in U_{x_i} | \rho_{D_{x_i}}(x) \leq \epsilon_i\}.$$
\end{prop}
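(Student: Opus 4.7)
The plan is to combine two compactness observations: first, the quasi-compactness of the projective scheme $S$ to shrink the indexing set to finitely many points $x_i$; second, the definable compactness of $\widehat{S}$ to bound the escape rates $\rho_{D_{x_i}}$ uniformly. (I read the ``$D_x := S \smallsetminus D_x$'' in the statement as the intended $D_x := S \smallsetminus U_x$.)

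First I would note that since each $x \in S(K)$ lies in $U_x$, the Zariski-open family $\{U_x\}_{x \in S(K)}$ covers the closed points of $S$, hence covers $S$ as a scheme. As $S$ is projective, hence quasi-compact, finitely many $U_{x_1},\ldots,U_{x_n}$ already cover $S$. The resulting identity $S = \bigcup_{i=1}^n U_{x_i}$ also holds as an equality of $K$-definable sets, and a stably dominated type $p$ on $S$ must concentrate on some $U_{x_i}$ (otherwise $p$ would concentrate on $\bigcap_i D_{x_i} = \emptyset$), so we obtain $\widehat{S} = \bigcup_{i=1}^n \widehat{U_{x_i}}$ at the level of Hrushovski--Loeser spaces as well.

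Next I would introduce the $K$-definable family of open subsets
\[
O_{i,\epsilon} := \{y \in \widehat{U_{x_i}} : \rho_{D_{x_i}}(y) < \epsilon\} \subset \widehat{S}, \qquad 1 \leq i \leq n,\ \epsilon \in \Gamma.
\]
By Lemma \ref{lem : distance to closed sets}, $\rho_{D_{x_i}}$ is $\Gamma$-valued on $U_{x_i}$; it is $v+g$-continuous, and its canonical extension to $\widehat{U_{x_i}}$ remains $\Gamma$-valued because any stably dominated type on $U_{x_i}$ cannot concentrate on $\rho_{D_{x_i}}^{-1}(\infty) \subseteq D_{x_i}$. Consequently, for any $y \in \widehat{S}$, choosing $i$ with $y \in \widehat{U_{x_i}}$ and $\epsilon > \rho_{D_{x_i}}(y)$ places $y \in O_{i,\epsilon}$, so the family $\{O_{i,\epsilon}\}$ covers $\widehat{S}$.

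Finally, I would invoke the fact that $\widehat{S}$ is definably compact (since $S$ is projective, cf.\ \cite[Theorem 4.2.20]{HL}) applied to the $K$-definable family of closed complements $\{\widehat{S} \smallsetminus O_{i,\epsilon}\}$: this family has empty intersection, so some finite sub-family has empty intersection. Passing back to opens, finitely many $O_{i_j,\epsilon_j}$ cover $\widehat{S}$. Restricting to simple points and using $O_{i,\epsilon} \cap S \subset V_{i,\epsilon} := \{x \in U_{x_i} : \rho_{D_{x_i}}(x) \leq \epsilon\}$ yields $S = \bigcup_j V_{i_j,\epsilon_j}$ as an equality of definable sets. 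The witnesses $\epsilon_j$ can be taken in $\Gamma(K)$ because the statement ``$\exists \epsilon_1,\ldots,\epsilon_m \in \Gamma\ \forall x \in S\ \bigvee_j x \in V_{i_j,\epsilon_j}$'' is first-order and $K \prec \mathbb{U}$ by quantifier elimination in ACVF. The main obstacle I anticipate is the bridge from definable compactness (customarily phrased via definable types having limit points) to the finite-subcover property for a $K$-definable family of $v+g$-opens in the pro-definable space $\widehat{S}$; the standard contradiction argument through complements should work, but care is needed to ensure that the index set is genuinely a $K$-definable set---which is precisely why it is important to reduce to a finite list of base points $x_i$ at the scheme level before invoking compactness in $\widehat{S}$.
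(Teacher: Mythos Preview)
Your proof is correct and takes a genuinely different route from the paper's. The paper argues constructively: it picks $x_1, x_2, \ldots$ iteratively, setting $x_{i+1} \in F_i := S \smallsetminus \bigcup_{j\le i} U_{x_j}$ and using Noetherianity of $S$ to terminate at some $n$; then, working backwards from $i = n-1$ down to $i = 1$, it manufactures auxiliary constants $\delta_i \in \Gamma(K)$ and finally $\epsilon_1,\ldots,\epsilon_n$ by repeatedly exploiting that each $\rho_{F_{i-1}}$ is $v+g$-continuous and hence $\Gamma$-bounded on any bounded definable set disjoint from $F_{i-1}$. Your argument replaces this chain of explicit inequalities by two compactness moves: Zariski quasi-compactness of $S$ to fix the finite index set, then definable compactness of $\widehat{S}$ to produce the $\epsilon_i$ simultaneously. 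Your route is shorter and can in fact be streamlined further: once the $U_{x_i}$ cover $S$, the map $x \mapsto \min_i \rho_{D_{x_i}}(x)$ is a single $K$-definable $v+g$-continuous function $S \to \Gamma$, its canonical extension to the definably compact $\widehat{S}$ is continuous and $\Gamma$-valued, hence has bounded image, and one witness $M \in \Gamma(K)$ (obtained via $K \prec \mathbb{U}$, as you note) serves as every $\epsilon_i$; this reformulation sidesteps entirely the finite-subcover issue you flag. What the paper's approach buys is explicitness: every constant is built by hand from the metric $m$ without ever passing to $\widehat{S}$, and the construction visibly allows the $\epsilon_i$ to be chosen above any prescribed thresholds in $\Gamma(K)$.
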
 
\begin{proof} 
% Let $E \subset S$ be a Zariski closed subset 
% such that there exists  
%  finite sets $\{x_1,\ldots,x_m\} \subset U(k)$, 
% $\{\epsilon_1,\ldots,\epsilon_m\} \subset \Gamma(k)$ 
% and 
%$$E = S \smallsetminus (\bigcup_{1 \leq i \leq m} \{x \in U_{x_i} | \rho_{D_{x_i} + D}(x) \leq \epsilon_i\})$$
%with the notation introduced in the statement of the lemma. 
% We can write $E = E' \cup D$ such that $E' \cap U$ is dense $E$.
% Let $i(E')$ denote the number of irreducible components of $E'$. 
% We assume 
% that $E$ is such that $(\mathrm{dim}(E'),i(E'))$ is minimal amongst 
% all possible choices with respect to the lexicographical ordering on $\mathbb{Z} \times \mathbb{Z}$. 
% We show that we can choose $x_{m +1} \in S(k)$ such that 
% %$U_{x_{m+1}} \subset U$ and 
% $$E_1 := S \smallsetminus (\bigcup_{1 \leq i \leq m + 1} \{x \in U_{x_i} | \rho_{D_{x_i} + D}(x) \leq \epsilon_i\})$$
% contradicts our choice of $E$. 
% 
%   Let $x_{m+1} \in E'$. 
 Let $\langle a_i \rangle$ be a sequence in $\Gamma(K)$. 
      We define a sequence $\langle x_i \rangle_i \subset S(K)$
      as follows. 
  Let $x_1 \in S(K)$. Given $x_i$, let $x_{i+1}$ be a point in $F_i := S \smallsetminus (\bigcup_{1 \leq j \leq i} U_{x_j})$ if 
   $F_i$ is not empty. 
   Observe that the sequence must terminate. Let us assume that it terminates at $n \in \mathbb{N}$ i.e. $F_n = \emptyset$. 
 %    Let $i \in \mathbb{N}$ and
%    Suppose we have chosen $x_1,\ldots,x_i \in U(k)$ and 
%     $\epsilon_1,\ldots,\epsilon_i \in \Gamma$.  
% $F_i := S \smallsetminus (\bigcup_{1 \leq j \leq i} U_{x_j})$.
   
   We choose an $n-1$-tuple
   $(\delta_1,\ldots,\delta_{n-1}) \in \prod_{1 \leq i \leq n-1} [a_i,\infty)(K)$ 
   which satisfies the following properties. 
   Firstly, $\delta_{n-1}$ is such that 
   $\{y \in S| \rho_{F_{n-1}}(y) \geq \delta_{n-1}\} \cap D_{x_{n}} = \emptyset$.
   This is possible because $F_{n-1}$ is disjoint from $D_{x_n}$ and 
   $\rho_{F_{n-1}}$ is $v+g$-continuous and hence bounded in $\Gamma$ on $D_{x_n}$. 
   
   Suppose 
   $1 \leq i \leq n-1$. Having chosen $\delta_i$, we choose $\delta_{i-1}$ such that
    %there exists $\delta_{i-1} \in \Gamma(k)$ satisfying 
    the following is true. 
   \begin{enumerate}
   \item $\delta_{i-1} \in [a_{i-1},\infty)$ and $\delta_{i-1} > \delta_i$. 
   \item If $$F_{i-1,\delta_{i}} := \{x \in F_{i-1}|\rho_{F_{i}}(x) \leq \delta_{i}\}$$
    and $$D_{x_{i},\delta_{i}} := \{x \in D_{x_{i}}|\rho_{F_{i}}(x) \leq \delta_{i}\}$$
   then 
   $\{y \in S| \rho_{F_{i-1}}(y) \geq \delta_{i-1}\} \cap D_{x_{i},\delta_{i}} = \emptyset$. 
   \end{enumerate} 
    The existence of $\delta_{i-1}$
    can be deduced as follows. 
    Observe that $F_i = D_{x_i} \cap F_{i-1}$. 
    Hence, we see that 
    $D_{x_{i},\delta_{i}}$ is disjoint from $F_{i-1}$. We now use the fact that
    $\rho_{F_{i-1}}$ is $v+g$-continuous and hence bounded on 
    $D_{x_{i},\delta_{i}}$ .  
  
      We choose an $n-1$-tuple $(\epsilon_1,\ldots,\epsilon_{n-1}) \in [0,\infty)(K)$ such that
      the following holds.
%     We write $F_i = F_i' \cup D$ where $F_i'$ is a Zariski closed subset of $S$ such that
%     $F_i' \cap U$ is dense in $F'_i$. 
  %   Suppose $F_i$ is not empty. 
    For $1 \leq i \leq n-1$, if 
       $$A_i := S \smallsetminus (\bigcup_{1 \leq j \leq i} \{x \in U_{x_j} | \rho_{D_{x_j}}(x) \leq \epsilon_j\})$$
    then $A_i \subseteq \{x \in S | \rho_{F_i}(x) \geq \delta_i\}$.
%    and where 
%    $F_i := S \smallsetminus (\bigcup_{1 \leq j \leq i} U_{x_j})$.
     We construct $(\epsilon_1,\ldots,\epsilon_{n-1})$ as follows. 
     We set $\epsilon_1 = \delta_1$. Suppose, we have chosen 
     $(\epsilon_1,\ldots,\epsilon_i)$ with $i \leq {n-2}$ appropriately. 
    Let $\epsilon_{i+1} \in \Gamma(K)$ be such that $\epsilon_{i+1} > \delta_i$.
    We claim that 
     $A_{i+1} \subseteq \{x \in S | \rho_{F_{i+1}}(x) \geq \delta_{i +1}\}$.
     
     Let $z \in A_{i+1}$ and suppose that $z \notin \{x \in S |\rho_{F_{i+1}}(x) \geq \delta_{i+1}\}$ i.e. $\rho_{F_{i+1}}(z) < \delta_{i+1}$. 
 Since $z \in A_{i+1}$, we have that
    $z \in A_{i}$ and hence $z \in  \{x \in S | \rho_{F_i}(x) \geq \delta_i\}$.
    Let $y \in F_i$ be such that 
    $\rho_{F_i}(z) = m(z,y)$. We must have that 
    $y \in F_{i,\delta_{i+1}}$. Indeed, 
    suppose that for some 
    $y' \in F_{i+1}$, $m(y',y) \geq \delta_{i+1}$. 
    It follows that 
    \begin{align*} 
    m(y',z) &\geq \mathrm{inf}\{m(y',y),m(y,z)\} \\
                                          & \geq \mathrm{inf}\{\delta_{i+1},\delta_i\} \\
                                          & \geq \delta_i &\mbox{ (by our choice of } \delta_i). 
                                          \end{align*} 
     This contradicts our initial assumption that 
     $\rho_{F_{i+1}}(z) < \delta_{i+1}$. 
     
     By assumption, $\rho_{D_{x_{i+1}}}(z) > \epsilon_{i+1}$. 
      Using arguments as above and the fact that $\epsilon_{i+1} > \delta_i > \delta_{i+1}$, we get 
     that 
     if $x \in D_{x_{i+1}}$ is such that $\rho_{D_{x_{i+1}}}(z) = m(x,z)$ then 
     $x \in D_{x_{i+1},\delta_{i+1}}$. 
     
     Observe that 
     \begin{align*} 
      m(x,y)  &\geq \mathrm{inf}\{m(x,z),m(y,z)\} \\
                  &\geq \mathrm{inf}\{\epsilon_{i+1},\delta_i\} \\
                  & \geq \delta_i .     
     \end{align*}   
        However this is not possible by our choice of $\delta_i$.

     Finally, we choose $\epsilon_n \in \Gamma(K)$ such that 
    the following holds. 
    By construction, $F_{n} := D_{x_n} \cap F_{n-1}$  
    is empty.
    Recall that we chose $\delta_{n-1}$ such that 
     $\{y \in S| \rho_{F_{n-1}}(y) \geq \delta_{n-1}\} \cap D_{x_{n}} = \emptyset$.
     It follows that $\rho_{D_{x_n}}$ is bounded in $\Gamma$ when restricted to 
     $\{y \in S| \rho_{F_{n-1}}(y) \geq \delta_{n-1}\}$. 
     Let $\epsilon_n$ be such that 
     $\{a \in S | \rho_{D_{x_n}}(a) \geq \epsilon_n\} \cap \{y \in S| \rho_{F_{n-1}}(y) \geq \delta_{n-1}\} = \infty$.  
     One checks easily from the construction that 
     $(\epsilon_1,\ldots,\epsilon_n)$ satisfies the required property. 
     
\end{proof}

 Proposition \ref{non-Archimedean distance problem} used in conjunction with Lemma 
 \ref{foreshortening} allows us to construct compatible deformations for certain locally trivial 
 morphisms. 
 
\begin{cor} \label{locally trivial morphisms}
  Let $V$ be a projective $K$-variety and 
  $\psi \colon [0,\infty] \times V \to \widehat{V}$ be a homotopy whose image is $\Gamma$-internal. 
%  Let $m'$ be a definably metric on $V$. 
%  We assume that $\psi$ satisfies the following assumption. 
%  For every $\lambda \in [0,\infty)$, let 
%  $g_{\lambda}(v)$ be the largest element in $[0,\infty]$ such that 
%  $\psi(v,g_{\lambda}(v))$ is contained in the ball 
%  $B(v,m',\lambda) := \{x \in \widehat{V} | m'(x,v) \geq \lambda\}$. 
    Let $\phi \colon X \to S$ be a morphism of projective varieties such that for every 
  $s \in S(K)$, there exists a Zariski open neighbourhood $U_s$ of $s$ and 
  a $U_s$-isomorphism 
  $f_{U_s} \colon X_{U_s} \to U_s \times V$. 
  Let $\{\xi_i \colon S \to \Gamma_\infty\}$ be finitely many $K$-definable functions. 
    We then have that there exists a homotopy
  $H' \colon I \times \widehat{X} \to \widehat{X}$ and a deformation retraction
  $H \colon I \times \widehat{S} \to \widehat{S}$ which are compatible for the morphism 
  $\widehat{\phi}$. Furthermore, 
  the deformation $H$ respects the levels of the functions $\xi_i$ for every $i$. 
\end{cor}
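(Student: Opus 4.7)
The strategy is to first construct a fibrewise homotopy $h$ on $\widehat{X}$ over $\widehat{S}$ whose image is iso-definable and relatively $\Gamma$-internal over $\widehat{S}$, using the local trivializations together with Lemma \ref{foreshortening} and Proposition \ref{non-Archimedean distance problem}; then to construct the deformation retraction $H$ of $\widehat{S}$ via Theorem \ref{main theorem} and lift its action to this relatively $\Gamma$-internal image through the pseudo-Galois descent machinery of \cite[Theorem 6.4.4]{HL}.

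For each $s \in S(K)$, the trivialization $f_{U_s} \colon X_{U_s} \to U_s \times V$ together with $\psi$ yields a fibrewise homotopy $\tilde{h}_s \colon [0,\infty] \times X_{U_s} \to \widehat{X_{U_s}/U_s}$ acting as $\psi$ on the $V$-factor and trivially on $U_s$. Applying Proposition \ref{non-Archimedean distance problem} to the cover $\{U_s\}_{s \in S(K)}$ produces finitely many points $x_1, \ldots, x_n \in S(K)$ and constants $\epsilon_1, \ldots, \epsilon_n \in \Gamma(K)$ such that the shrunken sets $N_i := \{z \in U_{x_i} : \rho_{D_{x_i}}(z) \leq \epsilon_i\}$, with $D_{x_i} := S \smallsetminus U_{x_i}$, cover $S$. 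For each $i$, applying Lemma \ref{foreshortening} with $f = \phi$, $D = D_{x_i}$, $h = \tilde{h}_{x_i}$, and $\epsilon = \epsilon_i$ produces an extension $h_i \colon [0,\infty] \times X \to \widehat{X/S}$ satisfying $h_i(0, X_z) = \tilde{h}_{x_i}(0, X_z)$ for every $z \in N_i$. I would then set $h := h_n \circ \cdots \circ h_1$ over the generalized interval $I_h := [0,\infty] + \cdots + [0,\infty]$. The image $\Upsilon'_0 := h(0, \widehat{X})$ is iso-definable and relatively $\Gamma$-internal over $\widehat{S}$: for $z \in S$, pick the largest $i$ with $z \in N_i$; then $h_i$ sends $\widehat{X_z}$ into $f_{U_{x_i}}^{-1}(\{z\} \times \psi(0, V))$, iso-definable $\Gamma$-internal by hypothesis; each subsequent $h_j$ either acts as identity on $\widehat{X_z}$ (when $z \notin U_{x_j}$) or via a cut-off of $\widehat{\psi}$ through $f_{U_{x_j}}$, and in either case the fibrewise image remains inside an iso-definable $\Gamma$-internal subset of $\widehat{X_z}$, since iso-definable $\Gamma$-internal subsets of $\widehat{V}$ are preserved by the canonical extension $\widehat{\psi}$.

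With $\Upsilon'_0$ constructed, \cite[Theorem 6.4.4]{HL} supplies a pseudo-Galois cover $\alpha \colon S' \to S$ and finitely many $K$-definable functions $\mu_j \colon S' \to \Gamma_\infty$ such that any $\mathrm{Aut}(S'/S)$-equivariant deformation retraction of $\widehat{S'}$ preserving the $\mu_j$ descends to a deformation retraction of $\widehat{S}$ inducing a compatible homotopy on $\Upsilon'_0$. Applying Theorem \ref{main theorem} on $\widehat{S'}$ with functions $\{\mu_j\} \cup \{\xi_i \circ \alpha\}_i$ and the group $G' := \mathrm{Aut}(S'/S)$ produces such a deformation retraction, descending to $H \colon I_H \times \widehat{S} \to \widehat{S}$ (respecting the $\xi_i$, since $\xi_i \circ \alpha$ is $G'$-invariant) together with an induced homotopy $H_{\Upsilon'_0}$ on $\Upsilon'_0$. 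Setting $H' := H_{\Upsilon'_0} \circ h$ on the concatenated interval $I := I_H + I_h$ gives the desired homotopy, compatible with $H$ via $\widehat{\phi}$ by construction. The principal obstacle is in the middle paragraph: verifying uniformly in $z$ that the iterated cut-offs through overlapping trivializations confine the fibrewise image to a single iso-definable $\Gamma$-internal family; this reduces to the statement that $\widehat{\psi}$ preserves iso-definable $\Gamma$-internal subsets of $\widehat{V}$, which itself follows from their identification with definable subsets of $\Gamma_\infty^n$ on which all stably dominated types concentrate at points.
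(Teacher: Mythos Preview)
Your proposal is correct and follows essentially the same route as the paper's proof: construct local fibrewise homotopies from the trivializations, use Proposition \ref{non-Archimedean distance problem} to pass to a finite subcover with shrinking constants, globalize each via Lemma \ref{foreshortening}, concatenate, then descend a deformation retraction of the base through \cite[Theorem 6.4.4]{HL} and Theorem \ref{main theorem}. The paper is in fact terser than you on the point you flag as the principal obstacle --- it simply asserts that the composed image is relatively $\Gamma$-internal without the fibrewise analysis you supply.
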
 
\begin{proof}
    Recall from \cite[Remark 9.8.4]{HL} that if 
    $V_1$ and $V_2$ are varieties over a valued field and 
    $\psi \colon I \times  {V_1} \to \widehat{V_1}$ is $v+g$-continuous then 
    the map 
     $(t,u,v) \mapsto \psi(t,u) \otimes v$ defines a $v+g$-continuous map  
     $\psi' \colon I \times (V_1 \times V_2) \to \widehat{(V_1 \times V_2)/V_2}$.  
     It follows from this and 
     our assumptions on $\phi$ and $V$ 
      that for every 
     $s \in S(K)$ there exists a 
     Zariski open neighbourhood $U_s \subseteq S$ of $s$ and a
     homotopy 
     $h'_{U_s} \colon [0,\infty] \times X_{U_s}  \to \widehat{X_{U_s}/U_s}$ such that the image is relatively $\Gamma$-internal. 
     
     Let $m$ be a definable metric on $V$. 
     We apply Proposition \ref{non-Archimedean distance problem} to the given data and obtain finite sets 
   $\{s_1,\ldots,s_m\} \subset S(K)$ and $\{\epsilon_1,\ldots,\epsilon_m\} \subset \Gamma(K)$ such that  
   $S = \bigcup_{1 \leq i \leq m} \{x \in S|\rho_{D_{s_i}}(x) \leq \epsilon_i\}$ where 
   $D_{s_i}:= S \smallsetminus U_{s_i}$. 
   For every $1 \leq i \leq m$, we simplify notation and write $U_i := U_{s_i}, D_{i} := D_{s_i}$ and $h'_{U_{s_i}} := h'_i$.
   
%     Let $i \in \{1,\ldots,m\}$. 
%   Observe that $X_i := X_{U_{s_i}} \simeq V \times U_{s_i}$ is equipped with a natural definable metric via $m'$ and $m$. 
%   To simplify notation, let $U_i := U_{s_i}, D_{i} := D_{s_i}$ and $h'_{U_{s_i}} := h'_i$.    
%   More precisely, 
%   $m''((v_1,s_1),(v_2,s_2)) = \mathrm{inf}\{m'(v_1,v_2),m(s_1,s_2)\}$. 
%   The assumption on the 
%   deformation $\psi$ implies that we can apply 
%   Lemma \ref{foreshortening} to the deformation $h'_{U_{s_i}}$. 
%   Indeed, we must show that if 
%    $\gamma : U_i \to [0,\infty]$ is defined as
%     $\gamma(u) := \mathrm{sup}\{\gamma'(u') | u' X_u\}$ 
%     where $\gamma'(u')$ is the largest element  
%     in $[0,\infty]$
%     such that 
%     $h'_i(\gamma'(u'),u')$ belongs to the ball 
%     $$B(u',m'',\rho_{D_i}(\phi(u'))) := \{x \in \widehat{U'} | m''(u',x) \geq \rho_{D_i}(f(u')) \}.$$
%    then 
%     $\gamma$ is locally bounded on $U$ in the sense of 
%     \cite[\S 10.1]{HL}.  
%    By assumption on the homotopy $\psi$ and using that 
%    $m'' = \mathrm{inf}\{m,m'\}$, we see that 
%    for every 
%    $u \in U_i$, $\gamma(u) \in \Gamma$ and 
%    $\gamma$ is constant on the set 
%    $\{x \in U_i | \rho_{D_i}(x) = \rho_{D_i}(u)\}$ which is a 
%    neighbourhood of $u$ for the valuative topology. 
%    This implies in particular that 
%    $\gamma$ is locally bounded. 
    By Lemma \ref{foreshortening}, 
    we can extend 
    the homotopy $h'_i : [0,\infty] \times X_i \to \widehat{X_i/U_i}$ to a homotopy 
    $h'_{fi} \colon [0,\infty] \times X \to \widehat{X/S}$ such that for every 
    $s$ with $\rho_{D_i}(s) \leq \epsilon_i$, 
    $h'_{fi}(0,X_{is}) = h'_i(0,X_{is})$. In particular 
    the image of $h'_{fi}$ is relatively 
    $\Gamma$-internal over 
    $\{s \in S | \rho_{D_i}(s) \leq \epsilon_i\}$. 
    Let $h'_f:= \Sigma_{1 \leq i \leq m} h'_{fi} \colon [0,\infty] \cup \ldots [0,\infty] \times X \to \widehat{X/S}$ be the 
    composition of the homotopies $h'_{fi}$. 
    Let $I_1$ be the glueing of the $m$-copies of the intervals $[0,\infty]$. 
    By construction, $h'_f \colon I_1 \times X \to \widehat{X/S}$ and its image $\Upsilon'$ is relatively $\Gamma$-internal. 
    
       %By Lemma \ref{getting around compactness},
       By \cite[Theorem 6.4.4]{HL}
        there exists a finite pseudo-Galois cover 
       $f \colon S' \to S$ and a
%        homeomorphism 
%       $\kappa : \Upsilon' \times_S S' \to S' \times \Gamma_\infty^M$ for some $M \in \mathbb{N}$. 
%         We now proceed as in \cite[Theorem 6.4.4]{HL}. By loc.cit., we see that 
  %   there exists
      a finite number of $K$-definable functions 
     $\{\mu_j \colon S \to \Gamma_\infty\}_j$ such that a homotopy 
     $H \colon I_2 \times S \to \widehat{S}$ that lifts to a
     homotopy $I_2 \times S' \to \widehat{S'}$ and respects the level of the functions 
     $\{\mu_j\}_j$ must also give a homotopy 
     $H'_1 \colon I_2 \times \Upsilon \to \widehat{\Upsilon}$ that is compatible 
     with the map $\Upsilon \to S$. 
     By \cite[Theorem 11.1.1]{HL}, there exists a deformation $H$ 
     that satisfies these properties, has a $\Gamma$-internal image and also 
     respects the definable functions $\xi_i$.  
     By construction, the composition
     $H' := H'_1 \circ H'_f \colon I_2 + I_1  \times X \to \widehat{X}$ is a homotopy 
     compatible with $H$ 
     and has $\Gamma$-internal image.        
\end{proof} 

\begin{cor} \label{locally trivial morphisms and curves} 
Let $C$ be a projective $K$-curve and $S$ be a projective $K$-variety. 
Let $\phi \colon X \to S$ be a projective morphism such that for every 
$s \in S$, there exists a Zariski open neighbourhood $U$ of $s$ and a 
$U$-isomorphism $f_U \colon X_U \to C \times U$. 
  Let $\{\xi_i \colon S \to \Gamma_\infty\}$ be a  finite family of $K$-definable functions. 

We then have that there exists a homotopy 
  $H' \colon I \times \widehat{X} \to \widehat{X}$ and a deformation
  $H \colon I \times \widehat{S} \to \widehat{S}$ which are compatible for the morphism 
  $\widehat{\phi}$. Furthermore, 
  the deformation $H$ respects the levels of the functions $\xi_i$ for every $i$. 
\end{cor}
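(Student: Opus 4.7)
The plan is to obtain this statement as a direct application of Corollary \ref{locally trivial morphisms} with $V := C$. The only piece of data missing from the present hypotheses is a homotopy $\psi \colon [0,\infty] \times C \to \widehat{C}$ whose image is iso-definable and $\Gamma$-internal, so the main task is to produce such a $\psi$ whose parametrising interval is a single copy of $[0,\infty]$ rather than a more general generalised interval.

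First I would choose a finite surjective $K$-morphism $g \colon C \to \mathbb{P}^1$, which exists because $C$ is a projective $K$-curve. Next I would invoke the curve-homotopy machinery of \cite[\S 7.4, \S 7.5]{HL}: for any sufficiently rich finite $K$-definable divisor $D \subset \mathbb{P}^1$ containing the points $\{0,1,\infty\}$ together with the images under $g$ of the forward branching points and of the ramification locus, the standard homotopy $\psi_D \colon [0,\infty] \times \mathbb{P}^1 \to \widehat{\mathbb{P}^1}$ is a deformation retraction onto the convex hull of $D$ (which is $\Gamma$-internal), and it lifts uniquely along $\widehat{g}$ to a homotopy $\psi \colon [0,\infty] \times C \to \widehat{C}$ whose image is again iso-definable and $\Gamma$-internal, being a finite cover of a $\Gamma$-internal set. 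Because everything takes place over the one-dimensional base $\widehat{\mathbb{P}^1}$, this lift is parametrised by the same single interval $[0,\infty]$.

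Having produced $\psi$, I would then feed $V = C$, the homotopy $\psi$ just constructed, the given morphism $\phi \colon X \to S$, the local trivialisations $f_U \colon X_U \to C \times U$, and the family of definable functions $\{\xi_i\}$ into Corollary \ref{locally trivial morphisms}. This produces the compatible pair $(H', H)$ of the required form, with $H$ a deformation retraction of $\widehat{S}$ respecting the levels of the $\xi_i$.

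The only genuine, and essentially mild, obstacle is to ensure that the lift $\psi$ of the standard homotopy along $g$ really does run over $[0,\infty]$ and not a concatenation of intervals; this is exactly what the one-dimensional constructions of \cite[\S 7]{HL} provide, and it is precisely the reason why the curve hypothesis on the fibre $C$ (as opposed to a general projective $V$) allows Corollary \ref{locally trivial morphisms} to be applied without any additional modification.
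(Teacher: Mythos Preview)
Your proposal is correct and follows essentially the same approach as the paper: produce a homotopy $\psi \colon [0,\infty] \times C \to \widehat{C}$ with $\Gamma$-internal image, then feed it into Corollary \ref{locally trivial morphisms}. The only difference is that the paper simply cites \cite[Theorem 7.5.1]{HL} for the existence of $\psi$, whereas you unpack its proof by lifting the standard homotopy on $\mathbb{P}^1$ along a finite map $g \colon C \to \mathbb{P}^1$.
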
 
\begin{proof}
  Let $C$ be as in the statement of the corollary. 
  \cite[Theorem 7.5.1]{HL} implies the existence of a deformation 
  $\psi \colon [0,\infty] \times C \to \widehat{C}$ whose image is $\Gamma$-internal. 
%  One checks that this deformation satisfies the technical assumption 
%  from Corollary \ref{locally trivial morphisms} by observing that $\psi$ is the lift of the 
%  cut-off of the standard homotopy on $\mathbb{P}^1$. 
  Applying Corollary \ref{locally trivial morphisms} proves the result. 
\end{proof} 

\subsubsection{Compatible deformations for $\mathbb{P}^1$-bundles}  
             
   Note that Corollary \ref{locally trivial morphisms and curves} 
   doesn't guarantee that the existence of compatible deformation retractions. 
   It is possible that this is true only after an additional hypothesis. 
   We present such an instance of adding a hypothesis to guarantee the existence 
   of compatible deformations in the case of  $\mathbb{P}^1$-bundles.

  Let $S$ be a quasi-projective $K$-variety.
  \begin{defi}
 \emph{A morphism $\phi \colon X \to S$ is a} $\mathbb{P}^1$-bundle
  \emph{if there exists a
  finite Zariski open covering $\{U_i\}_{1 \leq i \leq m}$ of $S$ such that 
  for every $i$, there exists an isomorphism $g_i \colon X_i := \phi^{-1}(U_i) \to U_i \times \mathbb{P}^1_k$ over $U_i$. We encode the data of the $\mathbb{P}^1$-bundle using the 
  tuple $(\phi \colon X \to S,  \{U_i\}_i, \{g_i\}_i)$.}      
   %  We identify $X_i$ with $U_i \times \mathbb{P}^1_k$. 
 \end{defi}
 
 \begin{rem}
  \emph{For $i,j$, the transition maps
   \begin{align*}
  g_{ij} := [g_j \circ g_i^{-1}]_{U_i \cap U_j} \colon (U_i \cap U_j) \times \mathbb{P}^1_k \to (U_i \cap U_j) \times \mathbb{P}^1_k
  \end{align*}
     are isomorphisms over $U_i \cap U_j$ i.e. 
     for every $u \in U_i \cap U_j$, 
     $g_{ij}^u := {g_{ij}}_{|\phi^{-1}(u)} \colon \mathbb{P}^1_K \to \mathbb{P}^1_K$  
   is a well defined automorphism.  Let $X_{ij} := X_i \cap X_j$. } 
 \end{rem} 
 
      The projective bundles we consider will satisfy the following finiteness hypothesis.
       For the remainder of this section, we fix a system of coordinates on 
     $\mathbb{P}^1_K$. 
   \\ 
   
    ($\mathbf{F}$) \emph{There exists a divisor $D \subset X$ such that 
       \begin{enumerate}
       \item The morphism $\phi$ restricts to a finite 
    map from $D$ onto $S$. 
       \item For every $1 \leq i \leq m$, $g_i(D \cap X_i)$ contains $U_i \times \{0,\infty\}$.    
       \end{enumerate}  }  

  \begin{rem} \label{conditions for glueing}
   \emph{Let $(\phi \colon X \to S, \{U_i\}_i, \{g_i\}_i\}$ be a rank-$1$ projective bundle. 
   Let $X_i = \phi^{-1}(U_i)$.  
%   Let $Y_i := U_i \times \mathbb{P}^1_K$. 
   To construct a homotopy $H' \colon I \times X \to \widehat{X/S}$ 
%   which for every $s \in S$, restricts to a well defined homotopy
%   on $\widehat{X_s}$
%    is equivalent to 
it suffices to construct 
    a family of homotopies
   $H'_i \colon I \times Y_i \to \widehat{Y_i/U_i}$
%   that restricts to well defined homotopies along the fibres and
    which satisfy the obvious glueing conditions. 
  Precisely, if $u \in U_i \cap U_j$ then $H'_i$ restricts to a homotopy on the fibre 
  $\widehat{Y'_{iu}}= \widehat{\mathbb{P}^1_K}$ such 
  that for every $t \in I$ and $x \in \widehat{Y'_{iu}}$, $H'_i(t,x) = \widehat{g_{ji}}(H'_j(t,\widehat{g_{ij}}(x)))$.}    
  \end{rem}

 \begin{thm} \label{horizontal divisor implies deformation}
      Let $\{\phi \colon X \to S,\{U_i\}_i, \{g_i\}_i\}$ be a $\mathbb{P}^1$-bundle which satisfies the hypothesis 
      $(\mathbf{F})$. There exists a pair 
      of deformation retractions  
      $(H' \colon I \times \widehat{X} \to \widehat{X}, \Upsilon')$ and 
      $(H \colon I \times \widehat{S} \to \widehat{S}, \Upsilon)$ 
      which are compatible for the morphism $\widehat{\phi}$ and whose images
      $\Upsilon' \subset \widehat{X},\Upsilon \subset \widehat{S}$
       are $\Gamma$-internal subsets. 
  \end{thm}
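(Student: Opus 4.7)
The plan is to construct $H'$ by gluing the fiberwise standard homotopies with stopping divisor supplied by the restriction of $D$ to each trivializing chart, and then use \cite[Theorem 6.4.4]{HL} and \cite[Theorem 11.1.1]{HL} to produce a compatible deformation retraction on the base.

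On a chart $g_i \colon X_i \xrightarrow{\sim} U_i \times \mathbb{P}^1$, the divisor $g_i(D \cap X_i) \subset U_i \times \mathbb{P}^1$ is finite over $U_i$ by hypothesis $(\mathbf{F})$. The relative standard homotopy with stopping divisor from \cite[\S 10.2]{HL} yields a homotopy $\psi_{g_i(D \cap X_i)} \colon [0,\infty] \times (U_i \times \mathbb{P}^1) \to \widehat{(U_i \times \mathbb{P}^1)/U_i}$, which transports along $g_i$ to a relative homotopy $h'_i \colon [0,\infty] \times X_i \to \widehat{X_i/U_i}$. Since the stopping divisor is finite over $U_i$ and contains $\{0,\infty\}$ fiberwise, $h'_i$ is a deformation retraction onto a relatively $\Gamma$-internal subset $\Upsilon'_i$ whose fiber over $u \in U_i$ is the convex hull of $g_i(D_u)$ in $\widehat{\mathbb{P}^1_K}$.

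The crux is the glueing condition of Remark \ref{conditions for glueing}: for each $u \in U_i \cap U_j$, the Möbius transformation $g_{ij}^u$ should intertwine $h'_i$ and $h'_j$ on the fibers. Since $D$ is global, $g_{ij}^u$ maps $g_i(D_u)$ bijectively onto $g_j(D_u)$, and both sets contain $\{0,\infty\}$. My claim is that the standard homotopy on $\widehat{\mathbb{P}^1}$ with stopping divisor $E$ (when $E$ is finite and contains $\{0,\infty\}$) is intrinsic to the pair $(\widehat{\mathbb{P}^1}, E)$: the retraction at time $0$ is onto the convex hull of $E$, and the path from $x$ traces the unique geodesic in the tree $\widehat{\mathbb{P}^1}$ from $x$ to this convex hull, parametrized by valuative distance. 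Consequently, any $\mathrm{PGL}_2(K)$-element conjugates the homotopy for $E$ into that for its image. Applying this to $g_{ij}^u$ yields the required fiberwise compatibility, so the local $h'_i$ glue to a deformation retraction $h' \colon [0,\infty] \times X \to \widehat{X/S}$ with relatively $\Gamma$-internal image $\Upsilon'$.

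With $h'$ in hand, I would apply \cite[Theorem 6.4.4]{HL} to obtain a finite pseudo-Galois cover $f \colon S' \to S$ together with finitely many $K$-definable functions $\mu_j \colon S' \to \Gamma_\infty$ such that any deformation retraction of $\widehat{S}$ which lifts to $\widehat{S'}$ and preserves the levels of the $\mu_j$ induces a compatible deformation retraction of $\widehat{\Upsilon'}$. By \cite[Theorem 11.1.1]{HL} such a deformation retraction $H$ of $\widehat{S}$ with $\Gamma$-internal image $\Upsilon$ exists. The induced deformation on $\widehat{\Upsilon'}$, composed with $h'$, gives the desired deformation retraction $H'$ of $\widehat{X}$, compatible with $H$ via $\widehat{\phi}$ and with $\Gamma$-internal image. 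The principal obstacle is the Möbius-equivariance of the standard homotopy in the glueing step: while the time-$0$ retraction onto the convex hull is manifestly intrinsic, the definable parametrization needs verification, and if a direct check fails one should instead reparametrize each $h'_i$ using the intrinsic tree distance so that the glueing on overlaps becomes automatic.
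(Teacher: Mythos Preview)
Your overall strategy coincides with the paper's: build the relative standard homotopy with stopping divisor $g_i(D\cap X_i)$ on each chart, glue, and then invoke \cite[Theorem 6.4.4]{HL} together with \cite[Theorem 11.1.1]{HL} on the base to finish. The only substantive divergence is in how the compatibility on overlaps is justified.

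Your blanket claim of $\mathrm{PGL}_2$-equivariance of the parametrized homotopy is not correct as stated: the image of each path is indeed intrinsic (the geodesic from $x$ to the convex hull of $E$), but the valuative-radius parametrization depends on the chosen affine coordinate, so a general M\"obius transformation does not intertwine $\psi_{E_i,u}(t,\cdot)$ with $\psi_{E_j,u}(t,\cdot)$ at equal times $t$. You rightly single this out as the principal obstacle. The paper does not appeal to global equivariance; instead it uses definability of $\widehat{\mathbb{P}^1}$ to reduce to $x\in\mathbb{P}^1(K^{\max})$ and $t\in[0,\infty](\mathbb{R})$, so that one may argue in the Berkovich line. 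There it observes that $\widehat{g_{iju}}$ carries each connected component $O$ of $\widehat{\mathbb{P}^1}(K^{\max})\smallsetminus\Upsilon_i$ (a Berkovich open ball) onto a component $O'$ of $\widehat{\mathbb{P}^1}(K^{\max})\smallsetminus\Upsilon_j$, and that on such a ball the algebraic map $g_{iju}$ restricts to an affine map $z\mapsto az+b$; the identity $\widehat{g_{iju}}(\psi_{E_i,u}(t,x))=\psi_{E_j,u}(t,g_{iju}(x))$ is then checked by an explicit calculation ball by ball, rather than by any global symmetry. Your proposed fallback of reparametrizing by an ``intrinsic tree distance'' is not the route taken and would require a separate verification of $v{+}g$-continuity and definability within the Hrushovski--Loeser framework.
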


\begin{proof} 
For every $i$, let $Y_i := U_i \times \mathbb{P}^1_K$. 
%     The hypothesis $(\mathbf{F})$ which we imposed on $\phi : X \to S$ implies that 
    By assumption, there exists a closed subset $D \subset X$ such that $\phi_{|D} \colon D \to S$
     is finite and for every $i \in \{1,\ldots,n\}$,
     $g_i(D \cap X_i) \subset Y_i$ contains
     $U_i \times \{0,\infty\}$. Let 
     $E_i := g_i(D \cap X_i)$. 
      Let $p_i \colon Y_i \to U_i$ be the projection map. 
     We have that ${p_i}_{|E_i} \colon E_i \to U_i$ is finite. 
     Recall from the paragraph above \cite[Lemma 9.5.3]{HL}, the construction of the 
     homotopy
      $\psi_{E_i}$. 
      The homotopy $\psi_{E_i}$ is such that 
     for every $u \in U_i$, $\psi_{E_i}$ restricts to the homotopy on the 
     fibre $Y_{iu} = \mathbb{P}^1_K$ defined by the standard homotopy with cut-off 
     determined by $E_i \cap Y_{iu}$.  
    By \cite[Lemma 10.2.1]{HL}, the homotopy   
     $\psi_{E_i} \colon [0,\infty] \times Y_i \to \widehat{Y_i/U_i}$ is 
     well defined and continuous.

     By pulling back via the isomorphism
     $g_i$, we have 
     for every 
     $i$, a homotopy 
     $$\psi'_{i} \colon [0,\infty] \times X_i \to \widehat{X_i/U_i}$$ whose image is relatively
     $\Gamma$-internal. We claim that these homotopies glue to give a homotopy on
     $\widehat{X/S}$. We verify the claim as follows. 
     
           Let $i,j \in \{1,\ldots,n\}$. 
             Let $u \in U_i \cap U_j$. The homotopies $\psi'_{i}$ and $\psi'_{j}$ restrict 
             to define homotopies 
             $\psi'_{i,u},\psi'_{j,u} \colon [0,\infty] \times X_u \to \widehat{X_u}$ where 
             $X_u := \phi^{-1}(u)$. 
             We show that these homotopies coincide. 
           Since, $i$ and $j$ were chosen arbitrarily, this will imply that the homotopies
           $\psi'_{i}$ glue together. 
           Since $\psi'_{i}$ and $\psi'_{j}$ are definable maps, 
           it suffices to consider the case when $u$ is defined over $K$.
%           a model of ACVF that 
%           extends $K$. 
           %\S \ref{model theoretic berkovich space}. 
           %We assume that $u \in U_i \cap U_j(K^{max})$. 

              The isomorphisms $g_i$ and $g_j$ imply a definable 
              automorphism $g_{iju} := g_{j,u} \circ g_{i,u}^{-1} \colon \mathbb{P}_{K(u)}^1 \to \mathbb{P}_{K(u)}^1$ 
              where 
              $g_{i,u} := {g_i}_{|X_u}$. 
              Let $\psi_{E_i,u}, \psi_{E_j,u} \colon [0,\infty] \times \mathbb{P}^1 \to \widehat{\mathbb{P}^1}$
              be the homotopies induced by $\psi_{E_i}$ and $\psi_{E_j}$. 
              As in Remark \ref{conditions for glueing}, it suffices to verify that 
              for every $x \in \mathbb{P}_{K(u)}^1$ and $t \in [0,\infty]$, 
              $\widehat{g_{iju}}(\psi_{E_i,u}(t,x)) = \psi_{E_j,u}(t,g_{iju}(x))$. 
              Since, $\widehat{\mathbb{P}^1}$ is definable, we reduce to showing the above equality 
              when $x \in {\mathbb{P}^1_{K(u)}}(K^{max})$ and $t \in [0,\infty](K^{max}) = [0,\infty](\mathbb{R})$.
              In this situation, we make use of the fact that 
              $\widehat{\mathbb{P}^1_{K(u)}}(K^{max})$ is homeomorphic to the Berkovich space 
              $B_{K^{max}}(\mathbb{P}^1)$. 
              
              If $x \in E_i$ then for every $t$, 
%              $\widehat{g_{iju}}(\psi'_{E_i,u}(t,x)) = g_{iju}(x)$
%               and hence 
               $\widehat{g_{iju}}(\psi'_{E_i,u}(t,x)) = \psi'_{E_j,u}(t,g_{iju}(x))$.
              Let $\Upsilon_i \subset \widehat{\mathbb{P}^1}$ denote the convex hull of 
               $E_{iu}$ and likewise, $\Upsilon_j \subset \widehat{\mathbb{P}^1}$ denote the 
              convex hull of $E_{ju}$.
             Note that 
              $\widehat{g_{iju}}^{-1}(\Upsilon_j) = \Upsilon_i$. 
              Let $x \in \mathbb{P}^1(K^{max}) \smallsetminus \Upsilon_i$ and $O$ be the connected 
              component of $\widehat{\mathbb{P}^1}(K^{max}) \smallsetminus \Upsilon_i$ that contains $x$.  
               As $\widehat{g_{iju}}$ is a homeomorphism and 
               $\widehat{g_{iju}}(\Upsilon_i) = \Upsilon_j$, 
               there exists a connected component $O'$ of 
              $\widehat{\mathbb{P}^1} \smallsetminus \Upsilon_j$ such that
               $\widehat{g_{iju}}(O) = O'$. Note that these connected components 
               are Berkovich open balls.
%                By \cite[Lemma 2.12]{WE2}, the 
%               homeomorphism $\widehat{{g_{iju}}}_{|O}$ maps any Berkovich closed sub disk 
%              of $O$ onto a Berkovich closed disk of $O'$. 
              Since the morphism $g_{iju}$ is algebraic,
              the restriction ${g_{iju}}_{|O}$ at the level of algebras must be of the form $x \mapsto ax + b$ where 
              $a \in K^*$ and $b \in K$. It can then be checked by explicit calculation that   
                $\widehat{g_{iju}}(\psi'_{E_i,u}(t,x)) = \psi'_{E_j,u}(t,g_{iju}(x))$.
                
                We have thus shown that there exists a definable homotopy 
                $\psi \colon [0,\infty] \times X \to \widehat{X/S}$ whose image is relatively
                $\Gamma$-internal. Let $\Upsilon$ denote the image of $\psi$. 
      Let $\overline{S}$ be a projective $K$-variety that contains 
      $S$ as an open dense subvariety. 
      Let $d_{bord} \colon \overline{S} \to \Gamma_{\infty}$ be the 
      schematic distance to $S_{bord} := \overline{S} \smallsetminus S$. 
      Observe that $\Upsilon$ is $\sigma$-compact with respect to the function 
      $d_{bord} \circ \widehat{\phi}$. 
%       Using the arguments in \ref{getting around compactness}
 By \cite[Theorem 6.4.4]{HL}, there exists a 
    pseudo-Galois cover $f \colon S' \to S$
    and a morphism
    $\kappa \colon  \Upsilon' := \Upsilon \times_{S} S' \to S' \times \Gamma_\infty^M$ for 
    some $M \in \mathbb{N}$ such that the restriction
    $\widehat{\kappa}_{|\widehat{\Upsilon'}} \colon \widehat{\Upsilon'} \to \widehat{S'} \times \Gamma^M_\infty$ is a homeomorphism onto its image.
     By loc.cit., we see that 
     there exists a finite number of $K$-definable functions 
     $\{\mu_j \colon S \to \Gamma_\infty\}_j$ such that a homotopy 
     $H \colon I \times S \to \widehat{S}$ that lifts to a
     homotopy $I \times S' \to \widehat{S'}$ and respects the level of the functions 
     $\{\mu_j\}_j$ must also give a homotopy 
     $H'_1 \colon I \times \Upsilon \to \widehat{\Upsilon}$ that is compatible 
     with the map $\Upsilon \to S$. 
     By \cite[Theorem 11.1.1]{HL}, the homotopy $H$ can be 
     chosen so that it satisfies these properties and in addition has a $\Gamma$-internal image. 
     By construction, the composition
     $H'_1 \circ \psi \colon I + [0,\infty] \times X \to \widehat{X}$ is compatible with $H$ 
     and has $\Gamma$-internal image.  
     Clearly, $H'_1 \circ \psi$ is a deformation retraction. 
                          \end{proof}

           \bibliographystyle{plain}
\bibliography{library}

\end{document}